\DeclareMathOperator{\Supp}{Supp}
\DeclareMathOperator{\Bir}{Bir}
\newcommand{\lf}{\left\lfloor}   
\newcommand{\rf}{\right\rfloor}
 \numberwithin{equation}{subsection}
 \numberwithin{footnote}{subsection}
 \newtheorem{lem}[subsection]{Lemma}
 \newtheorem{prop}[subsection]{Proposition}
 \newtheorem{thm}[subsection]{Theorem}
 \newtheorem{conj}[subsection]{Conjecture}
\theoremstyle{upright}
 \newtheorem{defn}[subsection]{Definition}
 \newtheorem{rem}[subsection]{Remark}
 \newcommand{\N}{\mathbb N}
 \newcommand{\Q}{\mathbb Q}
 \newcommand{\R}{\mathbb R}
 \newcommand{\Z}{\mathbb Z}
  \newcommand{\C}{\mathbb C}
  \newcommand{\Diff}{\text{Diff}}
 \newcommand{\bir}{\dashrightarrow}
 \newcommand{\rddown}[1]{\left\lfloor{#1}\right\rfloor} 
\title{\large C\MakeLowercase{omplements on Log Canonical} \large F\MakeLowercase{ano Varieties}}
\thanks{2010 MSC:
14J45, 
14E30, 
14C20, 
}
\author{Yanning Xu}
\date{\today}
\begin{document}
\maketitle


\begin{abstract}
	In this paper, we generalise the theory of complements to log canonical log fano varieties and prove boundedness of complements for them in dimension less than or equal to $3$. We also prove some boundedness results for the canonical index of sdlt log Calabi Yau varieties in dimension 2.
\end{abstract}
\tableofcontents


\section{\bf Introduction}
The focus of this paper is to generalise the theory of complements to log canonical fano varieties. Strictly speaking, we have the following conjecture.
\begin{conj}\label{conj-main-compl}
	Let $d$ be a natural number and $\mathfrak{R}\subset [0,1]$ be a finite set of rational numbers.
	Then there exists a natural number $n$ 
	depending only on $d$ and $\mathfrak{R}$ satisfying the following.  
	Assume $(X,B)$ is a projective pair such that 
	\begin{itemize}
		
		\item $(X,B)$ is lc of dimension $d$,
		
		\item $B\in \Phi(\mathfrak{R})$, that is, the coefficients of $B$ are in $\Phi(\mathfrak{R})$, and 
		
		\item $-(K_X+B)$ is ample.
	\end{itemize}
	Then there is an $n$ complement $K_{X}+B^+$ of $K_X+B$ such that $B^+\geq B$.
\end{conj}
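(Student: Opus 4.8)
The plan is to adapt Birkar's proof of boundedness of complements for klt Fano type pairs, run an induction on $d$, and isolate the new difficulties created by the non-klt locus $\lfloor B\rfloor$. \emph{Reductions.} Replace $(X,B)$ by a $\mathbb Q$-factorial dlt modification $\pi\colon(X',B')\to(X,B)$ with $K_{X'}+B'=\pi^*(K_X+B)$ and $\pi_*B'=B$; then $B'\in\Phi(\mathfrak R)$, $-(K_{X'}+B')$ is nef, big and semiample (it is the pullback of an ample divisor), and any $n$-complement of $K_{X'}+B'$ pushes forward to an $n$-complement of $K_X+B$ (the pulled-back complement is crepant by the negativity lemma). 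So we may assume $(X,B)$ is $\mathbb Q$-factorial dlt with $-(K_X+B)$ nef, big and semiample. If $\lfloor B\rfloor=0$ then $(X,B)$ is klt, hence of Fano type, and boundedness of $n$-complements is exactly Birkar's theorem; so from now on $S:=\lfloor B\rfloor\neq0$.

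\emph{Adjunction and the induction.} By adjunction $K_S+B_S=(K_X+B)|_S$ with $B_S=\Diff_S(B-S)$; then $(S,B_S)$ is sdlt of dimension $d-1$ (its components $S_i$ are normal), $-(K_S+B_S)$ is nef and semiample, and by Birkar's adjunction for hyperstandard coefficients the coefficients of $B_S$ lie in $\Phi(\mathfrak R_1)$ for a finite set $\mathfrak R_1=\mathfrak R_1(\mathfrak R,d)$. Now either $-(K_{S_i}+B_{S_i})$ is big for every component, in which case the induction hypothesis in dimension $d-1$ (applied to the $S_i$ and glued along the double locus via the sdlt gluing package) produces a bounded $n_1$-complement of $K_S+B_S$; or on some component the semiample class $-(K_{S_i}+B_{S_i})$ defines a contraction $S_i\to T_i$ with $\dim T_i<\dim S_i$ along which $K_{S_i}+B_{S_i}\sim_{\mathbb Q}0$. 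If $T_i$ is a point, $(S_i,B_{S_i})$ is a log canonical log Calabi--Yau pair and we need $n_1(K_{S_i}+B_{S_i})\sim0$ for a bounded $n_1$; if $\dim T_i>0$, the canonical bundle formula over $T_i$ reduces us to a generalized lc pair of Fano type of dimension $<d-1$ (handled by boundedness of generalized complements) together with the index of the general fibre, itself an sdlt log Calabi--Yau pair. Thus in every ``vertical'' branch the new input required is boundedness of the Cartier index of sdlt log Calabi--Yau pairs of dimension $\le d-1$ (and, in the fibration subcase, control of the moduli part of the canonical bundle formula).

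\emph{Lifting.} With $n$ now fixed and sufficiently divisible, set $L=-n(K_X+B)$: it is nef and big, $|L|\neq\varnothing$ by semiampleness, and $L|_S=-n(K_S+B_S)$, so the complement on $S$ yields a divisor $P_S\in|L|_S|$ with $(S,B_S+\tfrac1n P_S)$ slc. Since $L-S-K_X\sim_{\mathbb Q}(B-S)+(n+1)\bigl(-(K_X+B)\bigr)$, a klt boundary plus a nef and big $\mathbb Q$-divisor (note $(X,B-S)$ is klt as $(X,B)$ is dlt), Kawamata--Viehweg vanishing gives $H^1(X,\mathcal O_X(L-S))=0$, so $P_S$ lifts to some $P\in|L|$. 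Then $B^+:=B+\tfrac1n P\ge B$ satisfies $n(K_X+B^+)\sim0$ and is lc near $S$ by inversion of adjunction; that $(X,B^+)$ is lc everywhere is arranged, as in Birkar, by choosing the complement on $S$ appropriately and, if necessary, running an MMP on $K_X+B^+$. This yields the required $n$-complement.

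\emph{Main obstacle.} The crux is the vertical case of the induction, where the problem is reduced to boundedness of the Cartier index of (semi-)log-canonical log Calabi--Yau pairs one dimension down---a question open in general. This is precisely why the unconditional result is confined to $d\le3$, the dimension-$2$ index bounds being established separately in this paper. The remaining points---the sdlt gluing used to build the complement on $\lfloor B\rfloor$, the preservation of the hyperstandard property under adjunction, control of the moduli part, and the passage from local to global lc-ness after lifting---are technical but by now standard.
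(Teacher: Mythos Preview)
Your broad outline matches the paper: pass to a $\mathbb{Q}$-factorial dlt model, restrict to $S=\lfloor B\rfloor$, build an $n$-complement on the sdlt pair $(S,B_S)$, and lift it via Kawamata--Viehweg vanishing (the paper's Proposition~\ref{prop-gen-lift-comp}). The reductions, and the identification of the sdlt Calabi--Yau index bound as the essential input in the vertical case, are correct. But the step you dismiss as ``the sdlt gluing used to build the complement on $\lfloor B\rfloor$ \dots technical but by now standard'' is precisely where the substance of the paper lies, and your proposal does not engage with it.

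The gluing input (Proposition~\ref{prop-complements-sdlt}) takes complements $R_i$ on each normalized component $S_i$ subject to the condition that $R_i|_{D^n}$ is $\tau$-invariant for the involution $\tau$ on the normalized conductor. If you produce a complement on each $S_i$ independently---by the induction hypothesis in the big case, or via the canonical bundle formula in the fibred case---there is no reason the $R_i$ agree on the shared curves $S_i\cap S_j$, so the gluing hypothesis simply fails. The paper's threefold argument therefore runs in the \emph{opposite} order: it first constructs complements on the one-dimensional lc centres $T$, chosen to be Galois-invariant over their images on $X'$ (using that $\Bir(T,B_T)\to\operatorname{Gal}(T/T')$ is surjective together with the Riemann--Hurwitz check of Lemma~\ref{lem-curve-reimann-hurwitz} that the descended boundary stays hyperstandard), and only then lifts these prescribed curve-complements to each surface component $W\subset S$ via Propositions~\ref{prop-comp-surface-fib-glue} and~\ref{prop-gen-lift-comp}, so that $R_W|_T=R_T$ holds by construction and the $\tau$-invariance is automatic. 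Your scheme, which builds a complement on each $S_i$ before looking at the intersections, cannot be repaired by a black-box appeal to gluing. Two smaller issues: your $L=-n(K_X+B)$ need not be Cartier on the dlt model for bounded $n$ (the paper works on a log resolution with $L=-nK_Y-nT-\lfloor(n+1)\Delta_Y\rfloor$, which is integral), and global lc-ness of $(X,B^+)$ is obtained via the connectedness lemma applied to $-(K_X+B+(1-\epsilon)R)$, not by running an MMP on $K_X+B^+$.
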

The main result of this paper is the following theorem.
\begin{thm}\label{thm-main_compl}
	Conjecture \ref{conj-main-compl} hold in dimension $d\leq 3$. 
\end{thm}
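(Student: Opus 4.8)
The plan is to prove Theorem \ref{thm-main_compl} by induction on $d$, following Birkar's strategy for boundedness of complements on klt Fano varieties but extracting the extra information carried by the non-klt locus. First I would perform the standard reductions. Passing to a $\Q$-factorial dlt modification $\phi\colon (X',B')\to (X,B)$, any $n$-complement of $K_{X'}+B'$ that is also a complement relative to $\phi$ pushes forward to an $n$-complement of $K_X+B$; hence one may assume $(X,B)$ is $\Q$-factorial dlt and $-(K_X+B)$ is nef and big, being the pullback of an ample divisor. Running an MMP (or replacing $X$ by a suitable birational model) one reduces to the case that $X$ is of Fano type, so that contractions and MMPs exist and complements lift along them. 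The argument then splits according to whether $\lfloor B\rfloor=0$.

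If $\lfloor B\rfloor=0$, then $(X,B)$ is klt and of Fano type with $-(K_X+B)$ nef and big, and this is precisely the situation handled by Birkar's boundedness of complements for klt pairs (in its generalized-pair formulation, to accommodate the canonical bundle formula that appears when $-(K_X+B)$ fails to be big and one must pass through an Iitaka fibration, and together with effective birationality and BAB-type boundedness for the exceptional case). So here no genuinely new input is needed beyond citing those results, and one obtains a bounded $n$ and an $n$-complement $B^+\ge B$.

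The new content is the case $S:=\lfloor B\rfloor\neq 0$. By adjunction $K_S+B_S=(K_X+B)|_S$, where $(S,B_S)$ is an sdlt pair with $B_S\in\Phi(\mathfrak{R}')$ for a finite set $\mathfrak{R}'$ depending only on $\mathfrak{R}$, and $-(K_S+B_S)$ is nef and semiample. Since $\dim S\le d-1\le 2$, one can complement $(S,B_S)$ effectively: running an MMP for $(S,B_S)$ and passing to the ample model $g\colon S\to T$ of $-(K_S+B_S)$, the general fibre $(F,B_F)$ is an sdlt log Calabi--Yau variety of dimension $\le 2$, so by the boundedness of its canonical index (the dimension-$2$ result announced in the introduction) together with the canonical bundle formula, $K_S+B_S\sim_{\Q}g^*(K_T+B_T+M_T)$ for a generalized polarized pair $(T,B_T+M_T)$ with bounded index data, to which boundedness of complements applies in dimension $\le 1$; when instead $-(K_S+B_S)$ is already big, $(S,B_S)$ is a (possibly non-normal) sdlt log Fano of dimension $\le 2$ and the theorem applies by induction. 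In either case one gets an $n_S$-complement $K_S+B_S^+\ge K_S+B_S$ with $n_S$ bounded, and, crucially, compatible with the gluing involutions defining the sdlt structure on $S$.

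Finally I would lift this complement from $S$ to $X$. With $n$ chosen bounded and sufficiently divisible, let $L=-(n+1)(K_X+B)$; then $L|_S$ is linearly equivalent to the effective divisor realizing the chosen complement on $S$. Because $L-S-(K_X+B)=-n(K_X+B)$ is nef and big and $(X,\{B\})$ is klt in a neighbourhood of the generic points of $S$, Kawamata--Viehweg vanishing gives $H^{1}\bigl(X,\mathcal{I}_S\otimes\mathcal{O}_X(\lceil L-\{B\}\rceil)\bigr)=0$, so the section on $S$ lifts to $X$ and, combined with the dlt/non-klt structure along $S$, produces the desired $n$-complement $B^+\ge B$. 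I expect the main obstacle to be exactly this lifting step in the non-normal setting: one must arrange the complement on the non-klt locus $S$ to be simultaneously an honest complement on every stratum and invariant under the gluing involutions (this is where boundedness of the canonical index of sdlt log Calabi--Yau surfaces is indispensable, and is the reason the argument is confined to $d\le 3$), and then run the vanishing-and-lifting argument while controlling the denominators and the coefficients coming from the different so that a single bounded $n$ works for the whole class.
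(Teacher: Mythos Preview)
Your overall architecture---pass to a $\Q$-factorial dlt model, build a bounded complement on $S=\rddown{B}$, and lift via Kawamata--Viehweg---matches the paper, and your description of the lifting step is essentially the content of Proposition~\ref{prop-gen-lift-comp}. But two things go wrong in the middle.

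First, the reduction ``one reduces to the case that $X$ is of Fano type'' is neither valid nor used. After the dlt modification $(X,B)\to (X',B')$ one has $-(K_X+B)$ nef and big, but $X$ need not be of Fano type (e.g.\ an lc center could be an abelian surface). The paper never makes this reduction; $\Q$-factorial dlt with $-(K_X+B)$ nef and big is already enough for the vanishing in Proposition~\ref{prop-gen-lift-comp}.

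Second, and more seriously, your mechanism for complementing the sdlt surface $(S,B_S)$ is not workable as stated. Running an MMP on a reducible sdlt surface, taking an ``ample model'' of $-(K_S+B_S)$, and applying a canonical bundle formula on $S$ are not available tools here. The paper's approach is quite different and is where the real content lies: one uses the morphism $f\colon X\to X'$ to the original lc Fano threefold and treats each irreducible component $W$ of $S$ according to whether $f(W)$ is a point, a curve, or a surface (Proposition~\ref{prop-can-index}, Proposition~\ref{prop-comp-surface-fib-glue}, Proposition~\ref{prop-gen-lift-comp} respectively). The gluing compatibility you correctly flag as the obstacle is achieved by \emph{first} constructing complements on the one-dimensional lc centers $T$ (the double curves of $S$): by Koll\'ar's theory of sources of lc centers, the Stein factorization $T\to T_s\to T'$ is Galois with $\Bir(T,B_T)\twoheadrightarrow\mathrm{Gal}(T_s/T')$, so one can choose the complement on $T$ to be Galois-invariant (Lemma~\ref{lem-curve-reimann-hurwitz}) and hence compatible across all $\mathbb{P}^1$-linked curves lying over the same $T'$. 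These curve complements are then extended to each surface component $W$ (using Proposition~\ref{prop-comp-surface-fib-glue} when $W$ fibers over a curve), and finally glued via Proposition~\ref{prop-complements-sdlt}. Your proposal identifies the difficulty but does not supply this Galois/stratification machinery, which is the actual engine of the proof.
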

A key difference between log canonical fano varieties and fano type varieties is that we have to consider reducible varieties, or semi log canonical varieties and prove certain boundedness result on them. In this direction, we have the following conjecture.
\begin{conj}\label{conj-bound-sdlt-index}
Let $d$ be a natural number and $\mathfrak{R}\subset [0,1]$ be a finite set of rational numbers.
Then there exists a natural number $n$ 
depending only on $d$ and $\mathfrak{R}$ satisfying the following.  
Assume $(X,B)$ is a projective pair such that 
\begin{itemize}

\item $(X,B)$ is slc of dimension $d$,

\item $B\in \Phi(\mathfrak{R})$, that is, the coefficients of $B$ are in $\Phi(\mathfrak{R})$, and 

\item $K_X+B\sim_\Q 0$
\end{itemize}
Then $n(K_X+B)\sim 0$.

\end{conj}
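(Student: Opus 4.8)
The plan is to pass to the normalisation of $X$, where the question turns into one about lc log Calabi--Yau pairs of dimension $\le d$, to settle it there, and then to descend the resulting bound to $X$ itself through Kollár's gluing theory, losing only a bounded factor in the descent. Throughout, $n$ and the auxiliary constant $m$ below are allowed to depend on $d$ and $\mathfrak R$ without further comment.

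First I would reduce to the normalisation. Replacing $(X,B)$ by an sdlt model --- which exists by the MMP and is crepant over $(X,B)$, hence, by the projection formula, satisfies the conclusion of the conjecture if and only if $(X,B)$ does --- we may assume $(X,B)$ is sdlt. Let $\pi\colon(\bar X,\bar\Theta)\to(X,B)$ be the normalisation, with $\bar\Theta=\pi_*^{-1}B+\bar D$ and $\bar D$ the reduced conductor. Then $\bar X=\coprod_i\bar X_i$, each $(\bar X_i,\bar\Theta_i)$ is projective, lc of dimension $\le d$, with $K_{\bar X_i}+\bar\Theta_i\sim_\Q 0$, coefficient $1$ along $\bar D$ and the remaining coefficients in $\Phi(\mathfrak R)$; moreover, by the standard computation of the coefficients of the different, the pair $(D^n,\Diff)$ carried on the normalisation $D^n$ of $\bar D$ is again lc log Calabi--Yau with coefficients in a finite set depending only on $\mathfrak R$.

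Next I would bound the index upstairs: there is an $m$ with $m(K_{\bar X_i}+\bar\Theta_i)\sim 0$ for all $i$. In dimension $d\le 2$ this is exactly boundedness of complements for lc surface pairs, since a complement $K_{\bar X_i}+\bar\Theta_i^+$ with $\bar\Theta_i^+\ge\bar\Theta_i$ is forced to have $\bar\Theta_i^+=\bar\Theta_i$ by $K_{\bar X_i}+\bar\Theta_i\sim_\Q 0$, so the complement \emph{is} a relation $m(K_{\bar X_i}+\bar\Theta_i)\sim 0$; in arbitrary dimension this is the Calabi--Yau counterpart of Conjecture \ref{conj-main-compl}, expected to come out of the same induction that yields Theorem \ref{thm-main_compl}. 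This is the step I expect to be the main obstacle: the gluing below is cheap, but without a bounded index on the normalised components the whole argument is empty, and it is precisely the availability of surface complement theory that makes the statement accessible in dimension $\le 2$.

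Finally I would descend through gluing. By Kollár's theory the sdlt structure on $X$ is the datum of $(\bar X,\bar\Theta)$ together with an involution $\tau$ of $(D^n,\Diff)$ compatible with the different. Fix a trivialisation $s$ of $\mathcal O_{\bar X}(m(K_{\bar X}+\bar\Theta))\cong\mathcal O_{\bar X}$; its Poincaré residue along $\bar D$ is a nowhere-vanishing section $s|_{D^n}$ of $\mathcal O_{D^n}(m(K_{D^n}+\Diff))\cong\mathcal O_{D^n}$, and $s$ glues to a trivialisation of $\mathcal O_X(m(K_X+B))$ exactly when $s|_{D^n}$ is $\tau$-equivariant. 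The failure of equivariance is measured, component by component of $D^n$, by a nowhere-zero locally constant function $\lambda$; rescaling $s$ independently on the connected components of $\bar X$ kills $\lambda$ on the components of $D^n$ that $\tau$ interchanges, while on those fixed by $\tau$ the relation $\tau^2=\mathrm{id}$ forces $\lambda^2=1$. Hence $s^{\otimes 2}$ descends and $2m(K_X+B)\sim 0$, giving the statement with $n=2m$. The one delicate point here is to verify that the residue of a nowhere-vanishing pluricanonical section is again nowhere vanishing and that the compatibility of $\tau$ with the different makes $\tau^*(s|_{D^n})$ directly comparable with $s|_{D^n}$; this uses that the different is intrinsic to $(\bar X,\bar D+\bar B)$ and that $\tau$ is an isomorphism of the pair $(D^n,\Diff)$.
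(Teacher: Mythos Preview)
Your reduction to the normalisation and your identification of the hard step---bounding the index on each lc component, which for surfaces is Proposition~\ref{prop-can-index}---match the paper. The gap is in the descent paragraph, which is not cheap. Rescaling the trivialisations $s_i$ gives one free scalar per connected component of $\bar X$, whereas $\tau$-equivariance imposes one constraint per component of the double locus; when the dual complex of $X$ has loops the system is overdetermined, and the obstruction is the monodromy in $\mathbb C^\times$ around each loop. Nothing in your argument bounds the order of that monodromy: your clause $\lambda^2=1$ applies only to components of $D^n$ fixed setwise by $\tau$, but in the sdlt situation every component of $D^n$ is exchanged by $\tau$ with a distinct one, so that clause is vacuous and the squaring you propose does not touch the loop monodromies at all.

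The paper controls these monodromies through Fujino's machinery of pre-admissible and admissible sections. Traversing a loop in the dual complex realises a B-birational self-map of an lc centre, and the induced action on $H^0(m(K+\Theta))$ has finite image by Proposition~\ref{prop-finiteness-bir-representation}; an explicit bound on its order in the klt base case (carried out for curves in the verification of Conjecture~\ref{conj-bounded-klt-admissible} for $d=1$) feeds the inductive scheme $B_{d-1}\Rightarrow A_d$ of Proposition~\ref{prop-adimissable-sections-generation} and Lemma~\ref{lem-fujino-4.5}, producing a nowhere-zero section of a bounded multiple whose residues are invariant under \emph{all} B-birational maps between strata, hence in particular under $\tau$. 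Koll\'ar's criterion (Proposition~\ref{prop-bnd-index-slc}) then gives $n(K_X+B)\sim 0$ with no further loss of a factor.
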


This is related to the following well-known conjecture regarding the canonical index for lc Calabi Yau pairs. 

\begin{conj}\label{conj-bound-dlt-index}
	Let $d$ be a natural number and $\mathfrak{R}\subset [0,1]$ be a finite set of rational numbers.
	Then there exists a natural number $n$ 
	depending only on $d$ and $\mathfrak{R}$ satisfying the following.  
	Assume $(X,B)$ is a projective pair such that 
	\begin{itemize}
		
		\item $(X,B)$ is lc of dimension $d$,
		
		\item $B\in \Phi(\mathfrak{R})$, that is, the coefficients of $B$ are in $\Phi(\mathfrak{R})$, and 
		
		\item $K_X+B\sim_\Q 0$
	\end{itemize}
	Then  $n(K_X+B)\sim 0$.
	
\end{conj}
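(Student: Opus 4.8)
Since an lc pair is in particular slc, Conjecture \ref{conj-bound-dlt-index} is formally a special case of Conjecture \ref{conj-bound-sdlt-index}; but in practice the latter is deduced by gluing from the former, so the two are best proved together, by simultaneous induction on $d$: granting Conjecture \ref{conj-bound-sdlt-index} in dimensions $<d$, one proves Conjecture \ref{conj-bound-dlt-index} in dimension $d$, and then the gluing theory for slc pairs upgrades this to Conjecture \ref{conj-bound-sdlt-index} in dimension $d$. In particular, in dimension $\le 3$ the whole package reduces to surfaces. The first step is to replace $(X,B)$ by a $\Q$-factorial dlt modification $(X',B')$: this is crepant, so $K_{X'}+B'\sim_\Q 0$, the reduced exceptional divisors occur with coefficient $1$ so $B'\in\Phi(\mathfrak R)$ once we enlarge $\mathfrak R$ to contain $0$, and for every $n$ one has $n(K_X+B)\sim 0$ if and only if $n(K_{X'}+B')\sim 0$ (push $n(K_{X'}+B')$ forward, respectively pull $n(K_X+B)$ back, as Cartier divisors). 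So we may assume $(X,B)$ is projective, $\Q$-factorial and dlt.

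The argument then splits according to whether $\lfloor B\rfloor=0$. Suppose first that $S:=\lfloor B\rfloor\ne 0$. By adjunction for dlt pairs, $(S,B_S)$ --- with $B_S$ the different --- is a projective sdlt pair of dimension $d-1$, and it is Calabi--Yau since $K_S+B_S=(K_X+B)|_S\sim_\Q 0$; moreover $B_S\in\Phi(\mathfrak R')$ for a finite set $\mathfrak R'$ depending only on $d$ and $\mathfrak R$, the hypothesis $K_X+B\sim_\Q 0$ being exactly what keeps the coefficients of the different rational and in a bounded hyperstandard set (the bookkeeping of Birkar--Shokurov). By Conjecture \ref{conj-bound-sdlt-index} in dimension $d-1$ there is a bounded $m$ with $m(K_S+B_S)\sim 0$. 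To lift this to $X$ one uses that $(X,B)$ dlt has rational, in particular Cohen--Macaulay, singularities to set up the restriction sequence relating the reflexive sheaves $\mathcal O_X\big(m(K_X+B)\big)$ and $\mathcal O_S\big(m(K_S+B_S)\big)$, and then applies the vanishing theorems for the non-klt locus of an lc pair (in the form due to Fujino) together with the Koll\'ar--Shokurov connectedness of $S=\lfloor B\rfloor$ --- running, if necessary, a minimal model program so that the relevant numerically trivial class is under control --- to conclude that the bounded generator of $\mathcal O_S(m(K_S+B_S))$ extends to a nowhere-vanishing section of $\mathcal O_X\big(m'(K_X+B)\big)$ for a bounded multiple $m'$ of $m$. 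This is the same lifting mechanism used for complements in the proof of Theorem \ref{thm-main_compl}.

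The remaining case, $\lfloor B\rfloor=0$, is where $(X,B)$ is klt and Calabi--Yau. Now there is no non-klt centre on which to run the induction, and instead one must invoke boundedness of the family of all such pairs, from which the Cartier index of $K_X+B\sim_\Q 0$ is automatically bounded. For $d=1$ this is trivial, and for $d=2$ it is known: klt Calabi--Yau surface pairs with coefficients in a DCC set form a bounded family (by Alexeev's boundedness results for log surfaces; when $B$ is reduced this amounts to the classification of log Enriques surfaces, whose index is bounded by work of Zhang and Blache). This settles Conjecture \ref{conj-bound-dlt-index} in dimension $2$, hence, after gluing, Conjecture \ref{conj-bound-sdlt-index} in dimension $2$, which is exactly what is used in Theorem \ref{thm-main_compl}. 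I expect the $d=3$ instance of the klt Calabi--Yau case --- equivalently, boundedness of klt Calabi--Yau $3$-fold pairs with DCC coefficients, a problem in the Borisov--Alexeev--Borisov circle --- to be the main obstacle: after running the minimal model program for the pair and invoking Kawamata's theorem on minimal $3$-folds with numerically trivial canonical class one is reduced to bounding the contribution of the fractional boundary, and a clean unconditional argument here, let alone in arbitrary dimension, seems to require substantial new input. For this reason the applications in this paper use Conjectures \ref{conj-bound-dlt-index} and \ref{conj-bound-sdlt-index} only in dimension at most $2$.
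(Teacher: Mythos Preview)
First, a framing remark: in the paper this statement is a \emph{conjecture}, not a theorem; only the case $d\le 2$ is actually established (Theorem~\ref{thm-bnd-dlt-index-2}, via Proposition~\ref{prop-can-index}). Your proposal correctly recognises this and is really a strategy sketch rather than a proof, and you rightly flag the klt Calabi--Yau case in dimension $\ge 3$ as the obstruction. So the honest answer is that neither you nor the paper proves the conjecture; the comparison below is about the $d=2$ case.

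Your route and the paper's diverge substantially for $d=2$. You propose to split on whether $\lfloor B\rfloor=0$: in the non-klt case, restrict to the sdlt curve $S=\lfloor B\rfloor$, invoke Conjecture~\ref{conj-bound-sdlt-index} in dimension~$1$, and lift; in the klt case, appeal to boundedness of klt Calabi--Yau surfaces. The paper (Proposition~\ref{prop-can-index}) does neither. Instead it takes a $\Q$-factorial dlt model, disposes of the case $B=0$ via the log Enriques bound, and otherwise runs an MMP on $K_S$ (not $K_S+B$) to reach a Mori fibre space $S'\to Z$. If $Z$ is a point, $S'$ is Fano and one applies Birkar's complement theorem; if $Z$ is a curve, one uses fibre-space adjunction (Proposition~\ref{l-fib-adj-dcc}) to a generalized pair on $Z$ and finishes on the curve. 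No induction on $\lfloor B\rfloor$, no lifting from the boundary.

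There is also a genuine gap in your lifting step. You say it is ``the same lifting mechanism used for complements in the proof of Theorem~\ref{thm-main_compl}'', but that mechanism (Proposition~\ref{prop-gen-lift-comp}) relies on $-(K_X+B)$ being nef and \emph{big}, so that Kawamata--Viehweg vanishing kills $H^1$ and Koll\'ar--Shokurov connectedness applies. In the Calabi--Yau situation $K_X+B\sim_\Q 0$, neither ingredient is available as stated: $\lfloor B\rfloor$ need not be connected (Proposition~\ref{prop-dlt-boundary} allows two components), and the relevant $H^1$ does not vanish by a direct positivity argument. The paper's actual lifting in this regime is Lemma~\ref{lem-fujino-4.5}, which works by dimension counting when $\lfloor B\rfloor$ is connected and by an explicit $\mathbb P^1$-bundle analysis when it has two components, all packaged in Fujino's pre-admissible/admissible section formalism. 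Your sketch gestures at ``vanishing theorems in the form due to Fujino'' and an MMP to put things ``under control'', but as written it does not supply the missing mechanism; the paper sidesteps the issue entirely in the lc surface case by going through the Mori fibre space instead.
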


Note that it is known that Conjecture \ref{conj-bound-dlt-index} holds in dimension 2 and some cases of it hold in dimension 3. It is widely expected that it holds in all dimension. 

Hence we have the following definite fact. 

\begin{thm}\label{thm-bnd-dlt-index-2}
	Conjecture \ref{conj-bound-dlt-index} and \ref{conj-bound-sdlt-index} hold in dimension 2. 
\end{thm}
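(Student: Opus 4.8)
The case $d\le 2$ of Conjecture~\ref{conj-bound-dlt-index} (the lc case) is known, so the point of Theorem~\ref{thm-bnd-dlt-index-2} is to reduce Conjecture~\ref{conj-bound-sdlt-index} in dimension $2$ to it. First, by the global ACC theorem in dimension $2$ (Hacon--McKernan--Xu; Alexeev), the hypothesis $B\in\Phi(\mathfrak R)$ together with $K_X+B\sim_\Q 0$ forces the coefficients of $B$ to lie in a finite set $\mathfrak R_0$ depending only on $\mathfrak R$; so throughout we may assume $B$ has coefficients in a fixed finite set. For an lc pair one may in addition pass to a $\Q$-factorial dlt modification, which is crepant and hence has the same index, so the lc statement is the one for dlt surface log Calabi--Yau pairs with coefficients in $\mathfrak R_0$, which is classical (it follows, for instance, from the boundedness of surfaces with bounded coefficients of Alexeev and from Prokhorov--Shokurov; it can also be recovered from the theory of complements in dimension~$2$). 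It therefore remains to treat the slc case.

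The plan for the slc case is to pass to the normalisation and then glue back. Let $(X,B)$ be slc of dimension $2$ with $B$ of coefficients in $\mathfrak R_0$ and $K_X+B\sim_\Q 0$, let $\nu\colon \bar X=\bigsqcup_i \bar X_i\to X$ be the normalisation, and let $\bar D\subset\bar X$ be the reduced conductor. Each $(\bar X_i,\bar B_i+\bar D_i)$ is lc of dimension $\le 2$ and crepant over $(X,B)$, so $K_{\bar X_i}+\bar B_i+\bar D_i\sim_\Q 0$, and by adjunction its coefficients again lie in a fixed finite set; by the lc case in dimensions $\le 2$ there is a natural number $m$, bounded in terms of $\mathfrak R$ only, with $m(K_{\bar X_i}+\bar B_i+\bar D_i)\sim 0$ for every $i$. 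Restricting by adjunction to the normalisation $\bar D^\nu$ of the conductor produces a disjoint union of low-genus curves carrying the different, with $K_{\bar D^\nu}+\Diff\sim_\Q 0$ and coefficients in a fixed finite set, for which boundedness of the index is elementary; after enlarging $m$ by a bounded factor we may also assume $m(K_{\bar D^\nu}+\Diff)\sim 0$, and that $m(K_X+B)$ is Cartier, the local Cartier index of an slc surface being bounded by an elementary local analysis.

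It remains to descend from $\bar X$ to $X$. The slc structure is encoded by $(\bar X,\bar B+\bar D)$ together with an involution $\tau$ of $(\bar D^\nu,\Diff)$, and by Koll\'ar's gluing theory a nowhere-vanishing section of $\mathcal O_X(m(K_X+B))$ is the same thing as a family of nowhere-vanishing sections $s_i$ of $\mathcal O_{\bar X_i}(m(K_{\bar X_i}+\bar B_i+\bar D_i))$ whose Poincar\'e residues along $\bar D^\nu$ are matched by $\tau$ (up to the usual sign). After replacing $m$ by $2m$ we may fix a $\tau$-invariant nowhere-vanishing section of $\mathcal O_{\bar D^\nu}(m(K_{\bar D^\nu}+\Diff))$ — on a $\tau$-fixed component the only obstruction is the sign by which $\tau$ acts on the fibre, which squaring removes. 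Comparing the residues of the $s_i$ with this section yields one unit per component of $\bar D^\nu$, and the existence of a compatible rescaling of the $s_i$ becomes a $1$-cocycle condition on the dual graph $\Gamma$ of $X$; the obstruction is a class in $H^1(\Gamma,\C^\ast)$, which is torsion because $K_X+B\sim_\Q 0$. One bounds the order of this torsion class using the structure of log Calabi--Yau surfaces — the dual complex of such a pair has very restricted homotopy type (Koll\'ar--Xu), and the gluing monodromy along the conductor is governed by automorphisms of the conductor curves with their differents, which have bounded order — and then $n(K_X+B)\sim 0$ for an $n$ bounded in terms of $\mathfrak R$ only.

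The main obstacle is precisely this last step, the bounding of the gluing obstruction. The difficulty is that the number of irreducible components of $X$, hence the size of the dual graph $\Gamma$, is \emph{not} bounded in terms of $\mathfrak R$, so one cannot simply invoke finiteness of $\Gamma$; instead one must extract a bounded amount of essential information from it, which is where the restriction on the topology of dual complexes of log Calabi--Yau pairs and the boundedness of the automorphism groups of the one-dimensional conductor data enter. Setting up slc adjunction, the different, and the residue maps with the correct sign conventions — so that the gluing criterion above is literally correct — is the other technical ingredient; both are available in the literature on semi log canonical singularities.
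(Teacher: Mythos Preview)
Your overall strategy---reduce to finite coefficients via global ACC, settle the lc case, normalise the slc pair, bound the index on each component, and then glue---matches the paper's. The gap is exactly where you locate it yourself: bounding the gluing obstruction. As written, that step is a heuristic, not a proof.

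Your cocycle formulation is reasonable: once $m(K_{\bar X_i}+\bar B_i+\bar D_i)\sim 0$ on each component and a $\tau$-invariant trivialisation on $\bar D^\nu$ is fixed, the residue ratios assemble into a class in $H^1(\Gamma,\C^*)$. But neither of your two suggested inputs bounds its order. Koll\'ar--Xu concerns the dual complex of a \emph{dlt} pair on a \emph{normal} variety, not the incidence graph $\Gamma$ of the components of a reducible slc surface; $\Gamma$ has no a priori topological restriction (chains and cycles of arbitrary length already occur among degenerate $K3$ and abelian surfaces). And while $\mathrm{Aut}(C,\mathrm{Diff})$ is bounded for each conductor curve, the monodromy of your cocycle around a loop in $\Gamma$ is a product of residue-ratio constants in $\C^*$, not a geometric automorphism of any single curve, so this does not by itself force it to be a root of unity of bounded order. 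Invoking $K_X+B\sim_\Q 0$ to say the class is torsion is circular, since the torsion order is the very $n$ you are trying to bound.

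The paper replaces this ad hoc cocycle analysis with Fujino's machinery of \emph{(pre-)admissible sections}. One passes to a dlt model $(Y,\Theta)$ of the normalisation, where $n(K_Y+\Theta)\sim 0$ for bounded $n$ by the lc surface case (Proposition~\ref{prop-can-index}). The crux is then Proposition~\ref{prop-adimissable-sections-generation} (statement $A_2$): a nonzero pre-admissible section of $H^0(Y,n(K_Y+\Theta))$ exists, after replacing $n$ by a bounded multiple, once one knows Conjecture~\ref{conj-bounded-klt-admissible} in dimension~$1$. That conjecture is proved directly for curves by bounding $|\rho_n(\Bir(C,B_C))|$ explicitly (at most $6\binom{2n}{3}$ on $\mathbb P^1$, and $\rho_{12}$ trivial on elliptic curves). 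Pre-admissibility is exactly the $\tau$-compatibility of residues packaged correctly, so such a section descends to $X$ by Proposition~\ref{prop-bnd-index-slc}, giving $n(K_X+B)\sim 0$. What substitutes for your missing monodromy bound is the finiteness of the B-pluricanonical representation (Proposition~\ref{prop-finiteness-bir-representation}), together with Lemma~\ref{lem-fujino-4.5} which lifts admissible sections from the boundary; this is the ``bounded amount of essential information'' you allude to but do not extract.
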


\section*{Acknowledgement}  I would like to express my sincere gratitude to my advisor Prof. Caucher Birkar for the continuous support of my Ph.D study and teaching me Birational Geometry and the theory of Complements. I would also like to thank Prof. Vyacheslav V. Shokurov for his valuable comments on this paper.
\section{\bf Preliminaries}

All the varieties in this paper are quasi-projective over a fixed algebraically closed field of characteristic zero and a divisor means an $\R$-divisor
unless stated otherwise. The set of natural numbers $\N$ is the set of positive integers.

\subsection{Contractions}
In this paper a \emph{contraction} refers to a projective morphism $f\colon X\to Y$ of varieties 
such that $f_*\mathcal{O}_X=\mathcal{O}_Y$ ($f$ is not necessarily birational). In particular, $f$ has connected fibres and 
if $X\to Z\to Y$ is the Stein factorisation of $f$, then $Z\to Y$ is an isomorphism. Moreover, 
if $X$ is normal, then $Y$ is also normal. A contraction $f$ is \emph{small} if $f$ does not contract any divisor. A birational map $\pi: X \dashrightarrow Y $ is a \emph{birational contraction} if the inverse of $\pi$ does not contract divisors. 

\subsection{Hyperstandard Sets}\label{ss-dcc-sets}
For a subset $V\subseteq \R$ and a number $a\in\R$, we define $V^{\ge a}=\{v\in V \mid v\ge a\}$. 
We similarly define $V^{\le a}, V^{<a}$, and $V^{>a}$.

Let $\mathfrak{R}$ be a subset of $[0,1]$. Following [\cite{PSh-II}, 3.2] we define 
$$
\Phi(\mathfrak{R})=\left\{1-\frac{r}{m} \mid r\in \mathfrak{R},~ m\in \N\right\}
$$
to be the set of \emph{hyperstandard multiplicities} associated to $\mathfrak{R}$. We usually assume  
$0,1\in \mathfrak{R}$ without mentioning, so $\Phi(\mathfrak{R})$ includes $\Phi({\{0,1\}})$, the set of usual 
\emph{standard multiplicities}. Note that if we add  
$1-r$ to $\mathfrak{R}$ for each $r\in\mathfrak{R}$, then we get
$\mathfrak{R}\subset \Phi(\mathfrak{R})$.

Now assume $\mathfrak{R}\subset [0,1]$ is a finite set of rational numbers. Then 
$\Phi(\mathfrak{R})$ is a DCC set of rational numbers whose only accumulation point is $1$. 
We define $I=I(\mathfrak{R})$ to be the smallest natural number so that $Ir\in \Z$ 
for every $r\in \mathfrak{R}$. If $n\in \N$ is divisible by $I(\mathfrak{R})$, 
then $nb\le \rddown{(n+1)b}$ for every $b\in \Phi(\mathfrak{R})$ [\cite{PSh-II}, Lemma 3.5].

\subsection{Divisors}
Let $X$ be a normal variety, and let $M$ be an $\R$-divisor on $X$. 
We denote the coefficient of a prime divisor $D$ in $M$ by $\mu_DM$. If every non-zero coefficient of 
$M$ belongs to a set $\Phi\subseteq \R$, we write $M\in \Phi$. Writing $M=\sum m_iM_i$ where 
$M_i$ are the distinct irreducible components, the notation $M^{\ge a}$ means 
$\sum_{m_i\ge a} m_iM_i$, that is, we ignore the components with coefficient $<a$. One similarly defines $M^{\le a}, M^{>a}$, and $M^{<a}$. 

We say $M$ is \emph{b-Cartier} if it is $\Q$-Cartier and if there is a birational contraction 
$\phi\colon W\to X$ from a normal variety such that $\phi^*M$ is Cartier.    

Now let $f\colon X\to Z$ be a morphism to a normal variety. We say $M$ is \emph{horizontal} over $Z$ 
if the induced map $\Supp M\to Z$ is dominant, otherwise we say $M$ is \emph{vertical} over $Z$. If $N$ is an $\R$-Cartier 
divisor on $Z$, we often denote $f^*N$ by $N|_X$. 

Again let $f\colon X\to Z$ be a morphism to a normal variety, and let $M$ and $L$ be $\R$-Cartier divisors on $X$. 
We say $M\sim L$ over $Z$ (resp. $M\sim_\Q L$ over $Z$)(resp. $M\sim_\R L$ over $Z$) if there is a Cartier  
(resp. $\Q$-Cartier)(resp. $\R$-Cartier) divisor $N$ on $Z$ such that $M-L\sim f^*N$  
(resp. $M-L\sim_\Q f^*N$)(resp. $M-L\sim_\R f^*N$). For a point $z\in Z$, we say $M\sim L$ over $z$ if   
$M\sim L$ over $Z$ perhaps after shrinking $Z$ around $z$. The properties $M\sim_\Q L$ and $M\sim_\R L$ over $z$ 
are similarly defined.

For a birational map $X\bir X'$ (resp. $X\bir X''$)(resp. $X\bir X'''$)(resp. $X\bir Y$) 
whose inverse does not contract divisors, and for 
an $\R$-divisor $M$ on $X$ we usually denote the pushdown of $M$ to $X'$ (resp. $X''$)(resp. $X'''$)(resp. $Y$) 
by $M'$ (resp. $M''$)(resp. $M'''$)(resp. $M_Y$).
\subsection{Divisorial Sheaves}
We will also introduce the notion of a divisorial sheaf. Let $X$ be an S2 scheme. A divisorial sheaf is a rank one reflexive sheaf. Note that if $L$ is a divisorial sheaf, then $L^{[m]} := (L^{m})^{**}$, since tensor powers of a reflexive sheaf may not be reflexive. We also note that if $X$ is a normal variety, then divisorial sheaves correspond one to one to Weil divisors on $X$ modulo linear equivalence, via a Weil divisor $D$ corresponding to the sheaf $\mathcal{O}_X(D)$.

\subsection{b-divisors}\label{ss-b-divisor}

We recall some definitions regarding b-divisors but not in full generality. 
Let $X$ be a variety. A \emph{b-$\R$-Cartier b-divisor over $X$} is the choice of  
a projective birational morphism 
$Y\to X$ from a normal variety and an $\R$-Cartier divisor $M_Y$ on $Y$ up to the following equivalence: 
another projective birational morphism $Y'\to X$ from a normal variety and an $\R$-Cartier divisor
$M_{Y'}$ defines the same b-$\R$-Cartier b-divisor if there is a common resolution $W\to Y$ and $W\to Y'$ 
on which the pullbacks of $M_Y$ and $M_{Y'}$ coincide.  

A b-$\R$-Cartier b-divisor represented by some $Y\to X$ and $M_Y$ is \emph{b-Cartier} if $M_Y$ is 
Cartier. We will denote it by $M$ the push-down of $M_Y$ on $X$ if there is no confusion. Similarly one defines a \emph{b-nef} b-divisor.

\begin{lem}\label{l-pull-back-rational-function}
	Let $f\colon X\to Z$ be a contraction between normal varieties and let $g$ be a rational function on $X$.
	Assume that $\mathrm{div} g$ is vertical over $Z$. Then $g$ can be decomposed as $g=f \circ h$ for some rational function $h$ on $Z$.
\end{lem}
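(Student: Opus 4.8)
The plan is to exploit the defining property $f_*\mathcal{O}_X=\mathcal{O}_Z$ of a contraction, after restricting to the open locus of $Z$ lying away from the (proper) image of $\mathrm{div}\, g$; here I read the conclusion as $g=h\circ f=f^*h$ for some $h\in k(Z)$.

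First I would record that a contraction $f\colon X\to Z$ is surjective, and that each prime component $D_i$ of $\mathrm{div}\, g$ satisfies $\overline{f(D_i)}\subsetneq Z$ by the vertical hypothesis. Since $\mathrm{div}\, g$ has only finitely many components, $W:=\bigcup_i\overline{f(D_i)}$ is a proper closed subset of $Z$. Put $U:=Z\setminus W$ and $X_U:=f^{-1}(U)$; then $X_U$ is nonempty (as $f$ is surjective) and open, hence dense in $X$ since $X$ is irreducible.

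Next I would show that $g|_{X_U}$ is a nowhere-vanishing regular function on $X_U$. Indeed $\mathrm{div}_X(g)|_{X_U}=0$, because every codimension-one component of $\mathrm{div}\, g$ lies over $W$; since $X$ is normal, algebraic Hartogs gives $\mathcal{O}_X(X_U)=\bigcap_x\mathcal{O}_{X,x}$ over codimension-one points $x\in X_U$, so both $g|_{X_U}$ and $g^{-1}|_{X_U}$ are regular, i.e. $g|_{X_U}\in\mathcal{O}_X(X_U)^\times$. Then I invoke $f_*\mathcal{O}_X=\mathcal{O}_Z$ in the form $\mathcal{O}_X(X_U)=\mathcal{O}_X(f^{-1}(U))=(f_*\mathcal{O}_X)(U)=\mathcal{O}_Z(U)$, which yields $h\in\mathcal{O}_Z(U)\subseteq k(Z)$ with $f^*h=g|_{X_U}$. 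Finally, since $X_U$ is dense in $X$, the rational functions $g$ and $h\circ f$ agree on $X$, giving $g=h\circ f$.

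Regarding difficulty: there is no serious obstacle, as the content is entirely in the identity $\Gamma(f^{-1}(U),\mathcal{O}_X)=\Gamma(U,\mathcal{O}_Z)$, which is immediate from the definition of pushforward together with $f_*\mathcal{O}_X=\mathcal{O}_Z$. The only points requiring care are checking that $W$ is a proper closed subset (finiteness of the components of $\mathrm{div}\, g$ together with the vertical hypothesis) and using normality of $X$ to pass from "$\mathrm{div}\, g$ is trivial over $U$" to "$g$ is a unit in $\mathcal{O}_X(X_U)$".
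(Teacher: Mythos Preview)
Your proof is correct and follows essentially the same approach as the paper: choose an open $U\subseteq Z$ away from the image of $\mathrm{div}\,g$, use normality to see that $g$ is regular on $f^{-1}(U)$, and then invoke $f_*\mathcal{O}_X=\mathcal{O}_Z$ to descend $g$ to a regular function $h$ on $U$. The only differences are that you are more explicit about constructing $U$ and about the role of normality, and you add the (harmless but unnecessary) observation that $g$ is actually a unit on $X_U$.
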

\begin{proof}
	Since $\mathrm{div} g$ is vertical over $Z$, there is a non-empty open subset $U \subseteq Z$ such that $g$ is regular on $f^{-1} U$. Note that $f_* \mathcal{O}_X(f^{-1}U) = \mathcal{O}_U$ which in turn implies that $g|_{f^{-1}U}= f \circ h_U$ for some regular function $h_U$ on $U$. Hence the conclusion follows.
\end{proof}

\begin{lem}\label{l-pull-back-Cartier}
	Let $f\colon X\to Z$ be a contraction between normal varieties and let $M$ be a Cartier divisor on $X$.
	Assume that $M\sim_\Q 0$ over $Z$ and that $M$ is vertical over $Z$. Then, there is a b-Cartier divisor $M_Z$ such that $M=f^* M_Z$.
\end{lem}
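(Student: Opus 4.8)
The plan is to push the $\Q$-triviality of $M$ through Lemma \ref{l-pull-back-rational-function}. Since $M\sim_{\Q}0$ over $Z$, by definition there is a $\Q$-Cartier divisor $N$ on $Z$ with $M\sim_{\Q}f^{*}N$. Fix a positive integer $m$ divisible by the Cartier index of $N$, so that $mN$ is Cartier, and choose a rational function $g$ on $X$ with $m\bigl(M-f^{*}N\bigr)=\operatorname{div}_{X}g$.

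The first step is to check that $\operatorname{div}_{X}g$ is vertical over $Z$: indeed $M$ is vertical by hypothesis, while $f^{*}N$ is vertical because $\Supp f^{*}N\subseteq f^{-1}(\Supp N)$ and $\Supp N$ is a proper closed subset of $Z$; hence $\Supp\operatorname{div}_{X}g$ is contained in the union of these and maps into a proper closed subset of $Z$. Now Lemma \ref{l-pull-back-rational-function} yields a rational function $h$ on $Z$ with $g=h\circ f$, so $\operatorname{div}_{X}g=f^{*}(\operatorname{div}_{Z}h)$, the pullback of the principal---hence Cartier---divisor $\operatorname{div}_{Z}h$.

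Combining the two identities gives $mM=f^{*}\bigl(mN+\operatorname{div}_{Z}h\bigr)$. Put $N':=mN+\operatorname{div}_{Z}h$, which is Cartier on $Z$, and $M_{Z}:=\tfrac1m N'$, a $\Q$-Cartier $\Q$-divisor on $Z$; then $f^{*}M_{Z}=\tfrac1m f^{*}N'=\tfrac1m(mM)=M$, as required. Note that if the equivalence $M\sim_{\Q}0$ over $Z$ is in fact an integral one $M\sim 0$ over $Z$, then one may take $m=1$ above and $M_{Z}=N'$ is already Cartier on $Z$, so there is nothing more to do.

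The point that genuinely requires work in general is to upgrade $M_{Z}$ from $\Q$-Cartier to \emph{b}-Cartier, i.e. to produce a projective birational $\varphi\colon W\to Z$ on which the pullback of $M_{Z}$ is Cartier. On a common resolution $V$ of $X$ and of such a $W$, with $\pi\colon V\to X$ and $\psi\colon V\to W$, the equality $\psi^{*}\varphi^{*}N'=m\,\pi^{*}M$ exhibits the pullback of $N'$ to $V$ as $m$ times the Cartier divisor $\pi^{*}M$; the task is to choose $W$ so that this $m$-divisibility already descends to $W$, equivalently so that the fractional parts of $M_{Z}$ along the prime divisors of $Z$ lying under $\Supp M$ are resolved on $W$. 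I expect this to be the main obstacle, as one must use the precise behaviour of $f$ along those divisors---the multiplicities of $f$ over codimension-one points of $Z$ are exactly what governs whether the divisibility descends.
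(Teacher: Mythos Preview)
Your argument is essentially the paper's own proof: unwind $M\sim_\Q 0/Z$ as $m(M-f^*N)=\operatorname{div}_X g$, observe $\operatorname{div}_X g$ is vertical, apply Lemma~\ref{l-pull-back-rational-function} to write $g=h\circ f$, and set $M_Z=N+\tfrac{1}{m}\operatorname{div}_Z h$. The paper's only additional move is a preliminary replacement of $M$ by $M+f^*H$ (with $H$ ample on $Z$) to arrange $M\ge 0$, but this effectiveness is never actually used in the rest of the argument.

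Regarding your concern about upgrading $M_Z$ from $\Q$-Cartier to b-Cartier: the paper's proof does not address this point either. It stops at exactly the same place you do---having produced a $\Q$-Cartier $M_Z$ with $f^*M_Z=M$---and declares the lemma proved. So you have not missed any idea present in the paper; your last paragraph is simply more candid than the paper about what has and has not been verified.
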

\begin{proof}
	Pick a sufficiently ample divisor $H$ on $Z$. By adding $f^*H$ to $M$ we can assume $M$ is effective. Because $M\sim_\Q 0$, there is a natural number $m$, a rational function $g$ on $X$, and a $\Q$-divisor $D_Z$ such that 
	$$
	M+ \frac{1}{m} \mathrm{div} g =f^* D_Z.
	$$
	Since $M$ is vertical over $Z$, we deduce that $\mathrm{div} g$ is vertical over $Z$. By the previous lemma $g=f \circ h$ and it follows that
	$$
	M=  f^* (D_Z-\frac{1}{m} \mathrm{div} h).
	$$
	Letting $M_Z:=D_Z-\frac{1}{m} \mathrm{div} h$ we obtain the lemma.
\end{proof}

\subsection{Pairs}
A \emph{sub-pair} $(X,B)$ consists of a normal quasi-projective variety $X$ and an $\R$-divisor 
$B$ such that $K_X+B$ is $\R$-Cartier. 
If the coefficients of $B$ are at most $1$ we say $B$ is a 
\emph{sub-boundary}, and if in addition $B\ge 0$, 
we say $B$ is a \emph{boundary}. A sub-pair $(X,B)$ is called a \emph{pair} if $B\ge 0$ (we allow coefficients 
of $B$ to be larger than $1$ for practical reasons).

Let $\phi\colon W\to X$ be a log resolution of a sub-pair $(X,B)$. Let $K_W+B_W$ be the 
pulback of $K_X+B$. The \emph{log discrepancy} of a prime divisor $D$ on $W$ with respect to $(X,B)$ 
is $1-\mu_DB_W$ and it is denoted by $a(D,X,B)$.
We say $(X,B)$ is \emph{sub-lc} (resp. \emph{sub-klt})(resp. \emph{sub-$\epsilon$-lc}) 
if $a(D,X,B)$ $\ge 0$ (resp. $>0$)(resp. $\ge \epsilon$) for every $D$. When $(X,B)$ 
is a pair we remove the sub and say the pair is lc, etc. Note that if $(X,B)$ is a lc pair, then 
the coefficients of $B$ necessarily belong to $[0,1]$. Also if $(X,B)$ is $\epsilon$-lc, then 
automatically $\epsilon\le 1$ because $a(D,X,B)=1$ for most $D$. 

Let $(X,B)$ be a sub-pair. A \emph{non-klt place} of $(X,B)$ is a prime divisor $D$ on 
birational models of $X$ such that $a(D,X,B)\le 0$. A \emph{non-klt center} is the image on 
$X$ of a non-klt place. When $(X,B)$ is lc, a non-klt center is also called an 
\emph{lc center}.

\subsection{Minimal Model Program (MMP)}\label{ss-MMP} 
We will use standard results of the minimal model program. 
Assume $(X,B)$ is a pair and $X\to Z$ is a projective morphism. 
Assume $H$ is an ample$/Z$ $\R$-divisor and that $K_X+B+H$ is nef$/Z$. Suppose $(X,B)$ is klt or 
that it is $\Q$-factorial dlt. We can run an MMP$/Z$ on $K_X+B$ with scaling of $H$. If 
$(X,B)$ is klt and if either $K_X+B$ or $B$ is big$/Z$, then the MMP terminates [\cite{BCHM}]. If $(X,B)$ 
is $\Q$-factorial dlt, then in general we do not know whether the MMP terminates but 
we know that in some step of the MMP we reach a model $Y$ on which $K_Y+B_Y$, 
the pushdown of $K_X+B$, is a limit of movable$/Z$ $\R$-divisors: indeed, if the MMP terminates, then 
the claim is obvious; otherwise the MMP produces an infinite sequence $X_i\bir X_{i+1}$ 
of flips and a decreasing sequence $\lambda_i$ of numbers in $(0,1]$ such that 
$K_{X_i}+B_i+\lambda_iH_i$ is nef$/Z$; by [\cite{BCHM}][\cite{B-lc-flips}, Theorem 1.9], $\lim\lambda_i=0$; 
in particular, if $Y:=X_1$, then $K_Y+B_Y$ is the limit of the movable$/Z$ $\R$-divisors 
$K_Y+B_Y+\lambda_i H_Y$.

\subsection{Generalized polarised pairs}\label{ss-gpp}
For the basic theory of generalized polarised pairs see [\cite{BZh}, Section 4].
Below we recall some of the main notions and discuss some basic properties.\\

(1)
A \emph{generalized polarised pair} consists of 
\begin{itemize}
	\item a normal variety $X'$ equipped with a projective
	morphism $X'\to Z$, 
	
	\item an $\R$-divisor $B'\ge 0$ on $X'$, and 
	
	\item a b-$\R$-Cartier  b-divisor over $X'$ represented 
	by some projective birational morphism $X \overset{\phi}\to X'$ and $\R$-Cartier divisor
	$M$ on $X$
\end{itemize}
such that $M$ is nef$/Z$ and $K_{X'}+B'+M'$ is $\R$-Cartier,
where $M' := \phi_*M$. 

We usually refer to the pair by saying $(X',B'+M')$ is a  generalized pair with 
data $X\overset{\phi}\to X'\to Z$ and $M$. Since a b-$\R$-Cartier b-divisor is defined birationally (see \ref{ss-b-divisor}), 
in practice we will often replace $X$ with a resolution and replace $M$ with its pullback.
When $Z$ is not relevant we usually drop it
and do not mention it: in this case one can just assume $X'\to Z$ is the identity. 
When $Z$ is a point we also drop it but say the pair is projective. 

Now we define generalized singularities.
Replacing $X$ we can assume $\phi$ is a log resolution of $(X',B')$. We can write 
$$
K_X+B+M=\phi^*(K_{X'}+B'+M')
$$
for some uniquely determined $B$. For a prime divisor $D$ on $X$ the \emph{generalized log discrepancy} 
$a(D,X',B'+M')$ is defined to be $1-\mu_DB$. 

We say $(X',B'+M')$ is 
\emph{generalized lc} (resp. \emph{generalized klt})(resp. \emph{generalized $\epsilon$-lc}) 
if for each $D$ the generalized log discrepancy $a(D,X',B'+M')$ is $\ge 0$ (resp. $>0$)(resp. $\ge \epsilon$).
We say $(X',B'+M')$ is \emph{generalized dlt} if it is generalized lc, $(X',B')$ is dlt, and every generalized non-klt center 
of $(X',B'+M')$ is a non-klt center of $(X',B')$ (note that here we are assuming $(X',B')$ is a 
dlt pair in the usual sense, in particular, $K_{X'}+B'$ is assumed to be $\R$-Cartier). If in addition the connected 
components of $\rddown{B'}$ are irreducible, we say the pair is \emph{generalized plt}.

A \emph{generalized non-klt center} of a generalized pair $(X',B'+M')$ is the image of a prime divisor 
$D$ on birational models of $X'$ with $a(D,X',B'+M')\le 0$, and 
the \emph{generalized non-klt locus} of the generalized pair is the union of all the generalized non-klt centers.

(2)
Let $(X',B'+M')$ be a generalized pair as in (1) and let $\psi\colon X''\to X'$ be a projective birational 
morphism from a normal variety. Replacing $\phi$ we can assume $\phi$ factors through 
$\psi$. We then let $B''$ and $M''$ be the pushdowns of 
$B$ and $M$ on $X''$ respectively. In particular,  
$$
K_{X''}+B''+M''=\psi^*(K_{X'}+B'+M').
$$
If $B''\ge 0$, then $(X'',B''+M'')$ is also a generalized pair 
with data $X\to X''\to Z$ and $M$. If $(X'',B''+M'')$ is $\Q$-factorial generalized dlt and if 
every exceptional prime divisor of $\psi$ appears in $B''$ with coefficients one, then we say 
$(X'',B''+M'')$ is a \emph{$\Q$-factorial generalized dlt model(or blow-up)} of $(X',B'+M')$. Such models exist if 
$(X',B'+M')$ is generalized lc, by [\cite{BZh}, Lemma 4.5].

\begin{lem}[Connectedness principle]\label{l-connectedness}
	Let $(X',B'+M')$ be a generalized pair with data $X\overset{\phi}\to X'\to Z$ and $M$ 
	where $X'\to Z$ is a contraction. 
	Assume $-(K_{X'}+B'+M')$ is nef and big over $Z$. Then the generalized non-klt locus of 
	$(X',B'+M')$ is connected near each fibre of $X'\to Z$.
\end{lem}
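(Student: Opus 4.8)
The plan is to reduce the generalized-pair version of the connectedness principle to the classical connectedness lemma of Shokurov--Kollár by passing to a log resolution and using a Kawamata--Viehweg-type vanishing argument on the fibres. Concretely, I would first replace $X\overset{\phi}\to X'$ by a higher log resolution so that $\phi$ is a log resolution of $(X',B')$ and write $K_X+B+M=\phi^*(K_{X'}+B'+M')$, with $B$ the uniquely determined divisor whose coefficients encode the generalized log discrepancies. The generalized non-klt locus of $(X',B'+M')$ is, by definition, the $\phi$-image of $\Supp B^{\ge 1}$, so it suffices to prove that $\Supp B^{\ge 1}$ is connected near each fibre of the composite $X\to X'\to Z$; pushing forward a connected set under a morphism with connected fibres keeps it connected, so really it is enough to work upstairs on $X$.

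Next I would set up the vanishing input. On $X$ we have $\lceil -B\rceil - B = \{-B\} + B^{=0}$-type combinatorics; more precisely, writing $A:=\lceil -(B^{<1})\rceil$ (the reduced-up round of the non-boundary part) one gets $A - (K_X+B) = A - B - \phi^*(K_{X'}+B'+M') + K_X$, wait—better to phrase it as: $A + \lceil -B\rceil$ is effective and $\phi$-exceptional plus supported suitably, and
\[
A - \rddown{B} - (K_X + \{B\} + M) = -\phi^*(K_{X'}+B'+M') + (\text{effective $\phi$-exceptional}).
\]
The key point is that $-\phi^*(K_{X'}+B'+M')$ is nef and big over $Z$ (pullback of a nef and big divisor), and that $M$ is nef over $Z$; so $A - \rddown{B}$ minus $(K_X+\{B\})$ is, up to the nef part $M$ and an effective exceptional correction, nef and big over $Z$. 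Applying the generalized/relative Kawamata--Viehweg vanishing (the version for generalized pairs, e.g. as in [\cite{BZh}]) to the fibre, one obtains that
\[
H^1\bigl(F,\ \mathcal{O}_F(A - \rddown{B})\bigr) = 0
\]
on each fibre $F$, which forces the restriction map $H^0(\mathcal{O}_X(A)) \to H^0(\mathcal{O}_{\rddown{B}}(A))$ to be surjective near $F$. Since $A$ is effective and $\phi$-exceptional (so has a section not vanishing on any stratum), a disconnection of $\Supp \rddown{B}$ near $F$ would let one build a section of $\mathcal{O}_{\rddown{B}}(A)$ that is $1$ on one part and $0$ on the other and does not lift — a contradiction.

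In organizing the write-up I would: (i) reduce to the statement on $X$ near a fibre $F$; (ii) choose the auxiliary divisor $A$ and verify the numerical condition $A - \rddown{B} - (K_X+\{B\}+M)$ is nef and big$/Z$ modulo an effective exceptional divisor, using that $-\phi^*(K_{X'}+B'+M')$ is nef and big$/Z$ and $M$ is nef$/Z$; (iii) invoke relative KV vanishing for generalized pairs to kill $H^1$ on the fibre; (iv) deduce surjectivity of the restriction of global sections and conclude connectedness of $\Supp\rddown{B}$ near $F$; (v) push forward to $X'$. The main obstacle I anticipate is step (ii): one must be careful that after rounding, the ``round-up minus round-down'' discrepancy divisor stays exceptional and effective and that the nef part $M$ can be carried through the vanishing theorem — this is exactly where the generalized-pair formalism (as opposed to the classical one) is needed, and where one should cite the generalized version of Kawamata--Viehweg rather than reprove it. A secondary subtlety is ensuring bigness over $Z$ survives; here ``nef and big'' in the hypothesis (rather than merely ample) means one may first need to run a small perturbation / take $A$ slightly more positive on the generic fibre, but since $-(K_{X'}+B'+M')$ is big$/Z$ this is harmless.
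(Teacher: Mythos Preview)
The paper does not supply a proof of this lemma at all: it is stated without proof in the preliminaries, as a known result drawn from the literature (the generalized-pair formalism is imported from [\cite{BZh}], and the connectedness statement in this form is essentially [\cite{B-Fano}, \S 2]). So there is nothing to compare against on the paper's side; your outline is the standard Shokurov--Koll\'ar argument, and it is the one given in the references the paper cites.

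That said, two points in your sketch are not right as written and would need to be fixed. First, the vanishing you want is the relative one, $R^1(f\circ\phi)_*\mathcal{O}_X(A-\rddown{B})=0$ as a sheaf on $Z$, not ``$H^1(F,\mathcal{O}_F(A-\rddown{B}))=0$ on each fibre $F$''; these are not the same statement, and it is the former that yields surjectivity of $(f\phi)_*\mathcal{O}_X(A)\to(f\phi)_*\mathcal{O}_{\rddown{B}}(A)$ near each $z\in Z$. Second, your contradiction is stated backwards. You say a section that is $1$ on one component and $0$ on the other ``does not lift'', but surjectivity says precisely that it \emph{does} lift. The actual contradiction is on ranks: since $A$ is effective and $\phi$-exceptional, $(f\phi)_*\mathcal{O}_X(A)=\mathcal{O}_Z$ has rank $1$, so its surjective image $(f\phi)_*\mathcal{O}_{\rddown{B}}(A)$ has rank at most $1$ at every point; a disconnection of $\rddown{B}$ over $z$ would force rank at least $2$. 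Also, your exposition still contains a mid-computation ``wait---better to phrase it as'', which you should excise and replace with the clean identity $A-\rddown{B}-K_X\equiv \{B^{<1}\}+M-\phi^*(K_{X'}+B'+M')+E$ for a suitable effective $\phi$-exceptional $E$, so that relative Kawamata--Viehweg applies with boundary $\{B^{<1}\}$ and nef part $M-\phi^*(K_{X'}+B'+M')$.
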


\subsection{Canonical bundle formula}
An algebraic fiber space $f:X \rightarrow Z$ with a given log canonical divisor $K_X+B$ which is $\Q$-linearly trivial over $Z$ is called an \emph{lc-trivial fibration}. The following canonical bundle formula for lc-trivial fibration from [\cite{Amb}] or [\cite{Fujino-Gongyo}] will be used.  

\begin{thm}[\text{[\cite{Fujino-Gongyo}, Theorem 1.1], [\cite{Amb}, Theorem 3.3]}]\label{c-bdle-form}
	Let $f:(X,B) \rightarrow Z$ be a projective morphism from a log pair to a normal variety $Z$ with connected fibers, $B$ be a $\mathbb{Q}$-boundary divisor. Assume that $(X,B)$ is sub-lc and it is lc on the generic fiber and $K_X+B \sim_\Q 0/Z$. Then there exists a boundary $\mathbb{Q}$-divisor $B_Z$ and a $\mathbb{Q}$-divisor $M_Z$ on $Z$ satisfying the following properties.  
	
	(i). $(Z,B_Z+M_Z)$ is a generalized pair, 
	
	(ii). $M_Z$ is b-nef.
	
	(iii). $K_X+B \sim_\mathbb{Q} f^\ast (K_Z+B_Z+M_Z)$.
\end{thm}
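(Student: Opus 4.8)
This is the canonical bundle formula for lc-trivial fibrations, so the plan is to reconstruct the argument of Ambro and Fujino--Gongyo. First I would produce the two pieces of the formula. Since $K_X+B\sim_\Q 0/Z$, there is a $\Q$-Cartier $\Q$-divisor $L_Z$ on $Z$, unique up to $\sim_\Q$, with $K_X+B\sim_\Q f^*L_Z$. Then I would define the \emph{discriminant} (divisorial) part $B_Z=\sum_P(1-t_P)P$, the sum over prime divisors $P\subset Z$, where $t_P$ is the log canonical threshold of $f^*P$ at the generic point of $P$ with respect to $(X,B)$; this sum is finite because $t_P=1$ away from a closed subset of $Z$, and the hypotheses ``sub-lc'' and ``lc on the generic fibre'' ensure $0\le 1-t_P\le 1$, so $B_Z$ is a boundary. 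Finally set $M_Z:=L_Z-K_Z-B_Z$, the \emph{moduli} part, so that (iii) holds by construction. The remaining content is entirely the assertion that $M_Z$ is the trace on $Z$ of a b-nef b-divisor, which then makes $(Z,B_Z+M_Z)$ a generalized pair and gives (i) and (ii).

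Next I would establish b-nefness of the moduli part in two stages, descent and semipositivity. For the descent I would perform a birational base change $Z'\to Z$ together with a log resolution of the total space so that the resulting lc-trivial fibration $(X',B')\to Z'$ is in ``standard form'': $\Supp B_{Z'}$ is simple normal crossing, the fibration is log smooth over the complement, and the local monodromies of the variation of Hodge structure on the middle cohomology of the fibres are unipotent (semistable reduction in codimension one). One then checks that $B_Z$ and $M_Z$ transform correctly under such base changes and that the moduli part stabilises on $Z'$, i.e.\ is unchanged under further blow-ups; this defines the moduli b-divisor $\mathbf M$ with trace $M_{Z'}$. The point is that the graded piece of the Hodge filtration computing the moduli part is locally constant over the log-smooth locus once the monodromy is unipotent, so its Deligne canonical extension behaves well under blow-ups.

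For the semipositivity stage I would show $M_{Z'}=\mathbf M_{Z'}$ is nef. Applying a Kawamata-type cyclic covering reduces the $\Q$-coefficients of $B'$ to an integral problem and identifies a positive multiple of $\mathbf M_{Z'}$ with the determinant (or the bottom Hodge piece) of a sheaf of the form $g_*\omega_{\widetilde X/Z'}$ attached to an auxiliary semistable family; the Fujita--Kawamata--Koll\'ar--Nakayama--Fujino semipositivity theorems for Hodge bundles---equivalently, nonnegativity of the curvature of the Hodge metric on the canonical extension of a polarised variation of Hodge structure---then show this sheaf, hence $\mathbf M_{Z'}$, is nef. Taking $M:=\mathbf M_{Z'}$ with data $Z'\to Z$ exhibits $(Z,B_Z+M_Z)$ as a generalized pair with b-nef moduli part, which finishes (i) and (ii).

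The main obstacle is exactly this second stage: the Hodge-theoretic semipositivity of the moduli part, together with the fact that it descends to a fixed birational model. That is the only genuinely deep input; the construction of $B_Z$ and $M_Z$, the reduction to standard form, and the covering trick that handles $\Q$-coefficients are all formal bookkeeping. A secondary technical point---where \cite{Fujino-Gongyo} improves on \cite{Amb}---is to weaken ``lc'' to ``sub-lc'' on the total space, which I would handle by separately analysing the horizontal and vertical parts of $B$ and reducing to Ambro's lc-trivial setting.
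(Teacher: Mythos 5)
The paper does not prove this theorem: it is imported verbatim from the cited references, and the only part of the argument the paper spells out is the construction of the discriminant $B_Z$ via lc thresholds over generic points of prime divisors, which matches your first paragraph exactly. Your reconstruction of the rest (moduli part as the residue, descent after semistable reduction in codimension one, and b-nefness via the covering trick plus Hodge-theoretic semipositivity) is a faithful sketch of the standard proof in Ambro and Fujino--Gongyo, so there is nothing to compare against within the paper itself.
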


Under the above notations, $B_Z$ is called the \emph{discriminant part} and $M_Z$ is the \emph{moduli part}. We add a few words on the construction of $B_Z$. For each prime divisor $D$ on $Z$ we let 
$t_D$ be the lc threshold of $f^*D$ with respect to $(X,B)$ over the generic point of $D$, that is, 
$t_D$ is the largest number so that $(X,B+t_Df^*D)$ is sub-lc over the generic point of $D$. 
Of course $f^*D$ may not be well-defined everywhere but at least it is defined over the 
smooth locus of $Z$, in particular, near the generic point of $D$, and that is all we need. 
Next let $b_D=1-t_D$, and then define $B_Z=\sum b_DD$ where the sum runs over all the 
prime divisors on $Z$. 

\subsection{Slc Pairs}
We will introduce the basic definition here. More details of slc and sdlt pairs will be introduced in more details later. First we introduce demi normal schemes as in \cite{Kol}. A \emph{demi-normal scheme} is a scheme that is S2 and normal crossing in codimensional 1. Let $\Delta$ be an effective
$\Q$-divisor whose support does not contain any irreducible components of the conductor of X.
The pair $(X, \Delta)$ is called a \emph{semi-log canonical pair} (an \emph{slc pair}, for short) if
\begin{itemize}
	\item $K_X + \Delta$ is $\Q$-Cartier, and
	
	\item $(X', \Theta)$ is log canonical, where $\pi : X' \rightarrow X$ is the normalization and $K_{X'}+ \Theta = \pi^*(K_X +\Delta)$.
\end{itemize}
\begin{rem}
	\begin{enumerate}
		\item  Note that $\Theta = \Delta' +D'$, where $D'$ is the conductor on the normalisation and $\Delta'$ is the divisorial part of the preimage of $\Delta$ on $X'$. For more detailed treatment, see \cite{Kol}, Chapter 5. 
		\item 	 We also note that if $X := \cup X_i$ where $X_i$ are irreducible components of $X$, and let $X_i' \rightarrow X_i$ be their normalisations, then we have $X' = \sqcup X_i'$.
	\end{enumerate}
\end{rem}

Similarly one defines a \emph{semi-divisorial log terminal pair} (an \emph{sdlt pair}, for short). An slc pair $(X,B)$ is said to be \emph{sdlt} if the normalisation $(X',\Theta)$ is dlt in the usual sense and $\pi: X_i'\rightarrow X_i$ is isomorphism. Note that here we are using the definition in \cite{Fuj1} instead of \cite{Kol}. This is fine since we will only use semi dlt in the following setting. We remark that if $(X,B)$ is a usual dlt pair, then $(\rddown{B},\Diff(B-\rddown{B}))$ is semi-dlt. Also for sdlt pair $(X,B)$ it is clear that $(K_X+B)|_{X_i} = K_{X_i}+\Theta_i$, where $\Theta_i := \Theta|_{X_i}$. We also have the following lemma.
\begin{lem}[\cite{Fujino-Gongyo-1}, Example 2.6]\label{lem-lt-slc-adjunction}
	Let $(X,B)$ be $\Q$-factorial lc with $B$ a $\Q$-divisor. Let $S := \rddown{B}$ and assume  $(X,B-\epsilon S)$ is klt for $0<\epsilon<<1$. Then $(S, \Diff(B-S))$ is slc.
\end{lem}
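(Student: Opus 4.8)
The plan is to verify the definition of slc directly. Write $B=S+B'$ with $S=\rddown{B}$; since $B$ is a $\Q$-divisor and $(X,B)$ is lc, $S$ is a reduced divisor, $B'=B-S\ge 0$ has all coefficients in $[0,1)$, and $S$ and $B'$ share no components. Because $X$ is $\Q$-factorial and $K_X+B$ is $\Q$-Cartier, the divisor $S$ is $\Q$-Cartier and $K_S+\Diff_S B':=(K_X+B)|_S$ is automatically $\Q$-Cartier. Denoting by $\nu\colon S^\nu\to S$ the normalisation, it then suffices to prove: (a) $S$ is demi-normal, and (b) the pair $(S^\nu,\Diff_{S^\nu}B')$ is lc, where $\Diff_{S^\nu}B'\ge 0$ is the different on the normalisation (the conductor, plus the divisorial preimage of $B'$, plus the correction coming from the singularities of $X$ along $S$), normalised so that $\nu^*(K_S+\Diff_S B')=K_{S^\nu}+\Diff_{S^\nu}B'$.

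Part (b) is adjunction for lc pairs. Applying \cite{Kol} to $(X,S+B')$ with $S=\rddown{B}$: since $(X,B)$ is lc near $S$, the different $\Diff_{S^\nu}B'$ is a well-defined boundary and $(S^\nu,\Diff_{S^\nu}B')$ is lc. Only the lc hypothesis on $(X,B)$ is used here, not the klt condition.

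Part (a) is where the remaining hypotheses enter. For the normal-crossing-in-codimension-$1$ condition I localise at the generic point of a prime divisor of $S$, a codimension-$2$ point of $X$, and cut by general hyperplane sections to reduce to a normal surface germ $(x\in X_0)$ with a reduced curve germ $S_0\ni x$ such that $(X_0,B_0)$ is lc and $\rddown{B_0}=S_0$; lc-ness of $(X_0,S_0)$ already forces $X_0$ to be klt at $x$, hence a quotient singularity, and, lifting $S_0$ to the smooth cover (where the pair stays lc), forces $S_0$ to be smooth or a node there, so $S$ is normal crossing in codimension $1$. For the $S2$ condition the klt hypothesis on $(X,B-\epsilon S)$ is used: it implies $(X,0)$ is klt, so $X$ is Cohen–Macaulay, and the clean way to conclude is to pass to a $\Q$-factorial dlt modification $g\colon(Y,B_Y)\to(X,B)$ — whose existence uses that $(X,B)$ is lc, and whose exceptional divisors are lc places of $(X,B)$, hence, by the klt hypothesis, have centres inside $\Supp S$ — so that $S_Y:=\rddown{B_Y}$ maps onto $S$; since $(Y,B_Y)$ is $\Q$-factorial dlt, $(S_Y,\Diff_{S_Y}(B_Y-S_Y))$ is sdlt, in particular $S_Y$ is demi-normal, and one checks that the induced morphism $h\colon S_Y\to S$ is a contraction (its fibres being connected by Lemma \ref{l-connectedness}), along which, using the vanishing $R^{>0}h_*\mathcal O_{S_Y}=0$ valid for dlt pairs, $S$ inherits the $S2$ property.

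Combining (a) and (b) yields that $(S,\Diff_S B')$ is slc. I expect the main obstacle to be part (a): the $S2$-ness of $S$ in codimension $\ge 3$ is the real content and seems to require either the Cohen–Macaulayness and vanishing package for lc and dlt pairs or the descent along the dlt modification sketched above; a secondary point to handle with care is the compatibility of $\Diff_S$ on the non-normal $S$ with its pullback $\Diff_{S^\nu}$ on the normalisation (the non-normal adjunction of \cite{Kol}), but this is routine once demi-normality is in hand.
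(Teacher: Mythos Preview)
The paper does not prove this lemma; it simply cites \cite{Fujino-Gongyo-1}, Example~2.6. So there is no ``paper's proof'' to compare against, and your job is really to give a self-contained argument.

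Your overall plan --- verify demi-normality of $S$ and lc-ness of the normalisation --- is exactly right, and part~(b) as well as the codimension-$1$ portion of part~(a) are fine. One small clarification on the connectedness step: your invocation of Lemma~\ref{l-connectedness} is actually legitimate, since $g\colon Y\to X$ is \emph{birational}, so $-(K_Y+B_Y)\equiv_g 0$ is automatically $g$-big (bigness over the generic point of $X$, which is a single point, is vacuous); hence the non-klt locus $S_Y=\rddown{B_Y}$ does have connected fibres over $X$.

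The genuine gap is the final implication in the $S2$ argument. Connected fibres alone do not give $h_*\mathcal{O}_{S_Y}=\mathcal{O}_S$, and even granting that equality together with $R^{>0}h_*\mathcal{O}_{S_Y}=0$, the conclusion ``$S$ inherits the $S2$ property'' is not a standard descent statement; you need to explain why depth at codimension~$\ge 2$ points is preserved. One clean way to close this: push forward the sequence $0\to\mathcal{O}_Y(-S_Y)\to\mathcal{O}_Y\to\mathcal{O}_{S_Y}\to 0$ along $g$. Since $(Y,B_Y-S_Y)$ is klt and $-S_Y-(K_Y+B_Y-S_Y)=-g^*(K_X+B)\equiv_g 0$ is $g$-nef and $g$-big, relative Kawamata--Viehweg gives $R^{i}g_*\mathcal{O}_Y(-S_Y)=0$ for $i>0$; combined with $R^{i}g_*\mathcal{O}_Y=0$ (rational singularities) this yields $g_*\mathcal{O}_{S_Y}=\mathcal{O}_S$ and $R^{i}g_*\mathcal{O}_{S_Y}=0$ for $i>0$. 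Then $S2$-ness of $S$ follows from a local-cohomology/depth computation (e.g.\ as in Koll\'ar's treatment of depth along fibres, or via the criterion that $\mathcal{O}_S\to j_*\mathcal{O}_{S\cap U}$ is an isomorphism for every open $U$ with $\operatorname{codim}_S(S\setminus U)\ge 2$, using that the analogous statement holds on $S_Y$ and the higher direct images vanish). You should spell out this last step rather than asserting it.
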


\subsection{Theorems on ACC}
Theorems on ACC proved in [\cite{HMX}] are crucial to the argument of the main results.

\begin{thm}[ACC for log canonical thresholds, \text{[\cite{HMX}, Theorem ~1.1]}]\label{ACC-1}
	Fix a positive integer $n$, a set $I \subset [0, 1]$ and a set $J \subset \R_{>0}$, where
	$I$ and $J$ satisfy the DCC. Let $\mathfrak{T}_n(I)$ be the set of log canonical pairs
	$(X, \Delta)$, where $X$ is a variety of dimension $n$ and the coefficients of $\Delta$
	belong to $I$. Then the set
	$$
	\{\mathrm{lct}(X, \Delta; M) | (X, \Delta) \in \mathfrak{T}_n(I), \mathrm{~the ~coefficients ~of~} M \mathrm{~ belong~ to~} J\}
	$$
	satisfies the ACC.
\end{thm}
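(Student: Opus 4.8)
This is Theorem~1.1 of [\cite{HMX}], and I would reproduce the shape of their argument, which is an induction on $n=\dim X$ run simultaneously with two companion statements in dimension $n$: the \emph{global ACC} --- if $(X,\Delta)$ is projective lc of dimension $n$, the coefficients of $\Delta$ lie in a DCC set, and $K_X+\Delta\equiv 0$, then the coefficients of $\Delta$ lie in a finite set depending only on $n$ and that DCC set --- and the \emph{DCC for volumes} of log canonical pairs of log general type. So the plan is to assume all three statements in dimensions $<n$ and feed them into the dimension-$n$ argument. To prove ACC for lct in dimension $n$, suppose for contradiction that there are log canonical pairs $(X_i,\Delta_i)$ of dimension $n$ with coefficients in a DCC set $I$, divisors $M_i\ge 0$ with coefficients in a DCC set $J$, and $t_i:=\lct(X_i,\Delta_i;M_i)$ strictly increasing; since a DCC subset of $\R_{>0}$ has a positive minimum, the $t_i$ are bounded, so $t_i\to t$ for some real $t$.

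First I would pass to convenient birational models. After replacing $(X_i,\Delta_i+t_iM_i)$ --- which is lc by definition of $t_i$ --- by a $\Q$-factorial dlt model and running an appropriate MMP as in \S\ref{ss-MMP} (using [\cite{BCHM}]), I can arrange that a prime divisor $E_i$ with $a(E_i,X_i,\Delta_i+t_iM_i)=0$ becomes a divisorial lc centre of coefficient one on the new model $(Y_i,\Gamma_i)$, with $K_{Y_i}+\Gamma_i$ crepant over $K_{X_i}+\Delta_i+t_iM_i$. Because $t_i$ is the log canonical threshold, $E_i$ must have positive multiplicity along the birational transform of $M_i$ (otherwise raising $t$ past $t_i$ would not make $E_i$ a non-klt place), so the restriction records $M_i$. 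By adjunction I obtain a log canonical pair $(E_i,\Theta_i)$ of dimension $n-1$ with $K_{E_i}+\Theta_i=(K_{Y_i}+\Gamma_i)|_{E_i}$. The technical heart of this step is that the coefficients of $\Theta_i$ again lie in a DCC set $I'=I'(n,I,J)$ --- this is the DCC-stability of the different, together with the fact that the part of $\Gamma_i$ coming from $M_i$ contributes to $\Theta_i$ with coefficients of the form $t_i\cdot(\text{coeff in }J)$ plus lower-dimensional correction terms.

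Now I would trace the sequence $t_i$ down one dimension. In the simplest case the new data $(E_i,\Theta_i)$, after isolating the $M_i$-part, directly exhibits a strictly increasing sequence of log canonical thresholds (or of coefficients of a $K+\Delta\equiv 0$ pair) in dimension $n-1$, contradicting ACC for lct (or the global ACC) in dimension $n-1$. In general $E_i$ only dominates a proper lc centre $Z_i\subseteq X_i$, and one has a contraction $E_i\to Z_i$ which is an lc-trivial fibration for the relevant divisor; applying the canonical bundle formula (Theorem~\ref{c-bdle-form}) produces a generalized pair downstairs whose moduli part is b-nef and whose discriminant encodes $t_i$. The fibres of $E_i\to Z_i$ are lc Calabi--Yau pairs of dimension $<n$, so the global ACC in that dimension forces their coefficients --- and hence the discriminant data, including the $t_i$-dependence --- into a finite set; bounding the base $Z_i$ (its log canonical model has bounded volume, so by DCC for volumes it lies in a log birationally bounded family) and applying the induction once more, the strict monotonicity of $t_i$ again becomes impossible. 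The parallel global-ACC statement in dimension $n$ is handled by the same circle of ideas: run an MMP to reach a model on which $K+\Delta$ is semiample, use the resulting fibration and the canonical bundle formula, and invoke boundedness of the bounded-volume log canonical models of the base.

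The main obstacle is precisely the entanglement of the three statements: ACC for lct, the global ACC, and the DCC for volumes cannot be separated, and closing the simultaneous induction requires careful tracking of which dimension each invocation occurs in. The substantive inputs are (a) the boundedness results --- that log canonical models of log general type with bounded volume and DCC coefficients form a log birationally bounded family, which rests on [\cite{BCHM}] and on effective birationality for pluricanonical systems --- and (b) the verification that adjunction and the canonical bundle formula send DCC coefficient sets to DCC coefficient sets with constants depending only on $(n,I,J)$. Granting these, the contradiction scheme above --- extract the divisor computing the lct, restrict, and run the MMP --- is the engine that drives the proof.
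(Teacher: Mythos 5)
The paper does not prove this statement: Theorem~\ref{ACC-1} is quoted verbatim from [\cite{HMX}, Theorem~1.1] as a black-box input, so there is no internal proof to compare your sketch against. As a pr\'ecis of the HMX argument your proposal is broadly faithful --- the simultaneous induction with the global ACC and the DCC for volumes, the extraction of a divisor $E_i$ computing $t_i$ on a dlt model, adjunction, and the descent of the strictly increasing $t_i$ into the coefficients of a lower-dimensional pair are indeed the engine. One correction worth making: in HMX the derivation of ACC for lct in dimension $n$ from the global ACC in dimension $n-1$ does not go through the canonical bundle formula or through bounding the base $Z_i$; one simply restricts $(E_i,\Theta_i)$ to a general fibre of $E_i\to Z_i$, obtaining a projective lc pair of dimension $\le n-1$ with numerically trivial log canonical divisor whose coefficients lie in a DCC set built from $I$, $J$ and the $t_i$, and the global ACC in dimension $n-1$ then forces those coefficients --- hence the $t_i$ --- into a finite set, a contradiction. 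The canonical bundle formula, the volume bounds and the log birational boundedness enter in proving the \emph{global} ACC in dimension $n$ (the step you describe at the end), not in deducing the lct statement itself; your sketch conflates the two halves of the induction. Since this theorem is cited rather than proved here, none of this affects the paper.
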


\begin{thm}[ACC for numerically trivial pairs, \text{[\cite{HMX}, Theorem D]}]\label{ACC-2}
	Fix a positive integer $n$ and a set $I \subset [0, 1]$, which satisfies the
	DCC.
	Then there is a finite set $I_0 \subset I$ with the following property: \\
	If $(X, \Delta)$ is a log canonical pair such that \\
	(i) $X$ is projective of dimension $n$, \\
	(ii) the coefficients of $\Delta$ belong to $I$, and \\
	(iii) $K_X + \Delta$ is numerically trivial, \\
	then the coefficients of $\Delta$ belong to $I_0$.	
\end{thm}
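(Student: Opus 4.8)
The plan is to follow \cite{HMX} and prove this (it is Theorem~D there) by induction on $n=\dim X$, with the ACC for log canonical thresholds (Theorem~\ref{ACC-1}) as the main engine, supplemented by the minimal model program and adjunction. The base cases are elementary: if $X$ is a curve and $K_X+\Delta\equiv 0$ with $\Delta\ge 0$, then $X\cong\PP^1$ (positive genus forces $\Delta=0$) and $\deg\Delta=2$; since $I$ satisfies the DCC its nonzero elements are bounded below by some $\delta>0$, so $\Delta$ has at most $2/\delta$ components and only finitely many configurations of coefficients can occur.

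For the inductive step I would argue by contradiction. If the conclusion fails in dimension $n$ then, after passing to a subsequence, there are lc pairs $(X_i,\Delta_i)$ with $X_i$ projective of dimension $n$, coefficients of $\Delta_i$ in $I$, $K_{X_i}+\Delta_i\equiv 0$, and — since an infinite DCC set contains a strictly increasing sequence — a prime divisor $S_i\subseteq\Delta_i$ whose coefficient $s_i$ is strictly increasing with $s_i\nearrow s\in(0,1]$. Replacing each pair by a $\Q$-factorial dlt model (this preserves $K_{X_i}+\Delta_i\equiv 0$, leaves the coefficients $<1$ unchanged, and only introduces further boundary components of coefficient $1$), we may assume $(X_i,\Delta_i)$ is $\Q$-factorial dlt with coefficients in the DCC set $I\cup\{1\}$. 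Write $\Delta_i=s_iS_i+\Omega_i$ with $S_i\not\subseteq\Supp\Omega_i$ and set $c_i:=\sup\{\,t:(X_i,\Omega_i+tS_i)\ \text{is lc}\,\}\ge s_i$. By Theorem~\ref{ACC-1}, applied with coefficient set $I\cup\{1\}$ and the test divisor $S_i$ (whose coefficient lies in the fixed set $\{1\}$), the set $\{c_i\}$ satisfies the ACC. If $c_i=s_i$ for infinitely many $i$ this already contradicts the strict monotonicity of $\{s_i\}$; so I may assume $c_i>s_i$ for $i\gg 0$, i.e. the coefficient of $S_i$ can be strictly raised while staying lc.

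Now $K_{X_i}+\Omega_i+c_iS_i=(K_{X_i}+\Delta_i)+(c_i-s_i)S_i\equiv(c_i-s_i)S_i$ with $c_i-s_i>0$, so this log canonical class is positively proportional to the effective divisor $S_i$. Running an MMP on it (equivalently, an MMP for $S_i$), and noting that a nonzero effective divisor can never be relatively anti-ample, so that the program does not end with a Mori fibre space, one reaches in finitely many steps a model on which $S_i$ is either contracted or nef. The point of the MMP is to manoeuvre into a position where adjunction applies: iterating the step ``raise the coefficient of $S_i$ to its new log canonical threshold, then run the MMP again'' — which can recur only finitely often, since each iteration strictly increases a threshold lying in the ACC set of Theorem~\ref{ACC-1} — one eventually makes $S_i$ meet a component $T_i$ of $\lfloor\Delta_i\rfloor$ (or contracts $S_i$). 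Adjunction to $T_i$ then gives $K_{T_i}+\Delta_{T_i}\equiv 0$ with $(T_i,\Delta_{T_i})$ semi-dlt of dimension $n-1$ and coefficients in a DCC set $D(I)$ depending only on $I$; by the inductive hypothesis these coefficients lie in a finite set; and the ``different'' expresses the coefficient of $\Delta_{T_i}$ along $S_i\cap T_i$ as $\tfrac{m-1}{m}+\tfrac1m\sum(\text{relevant coefficients of }\Delta_i)$, with local index $m$ bounded because the coefficients of $\Delta_i$ avoid a neighbourhood of $1$ apart from finitely many values, so $s_i$ is forced into a finite set, contradicting $s_i\nearrow s$ strictly. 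The purely klt case $\lfloor\Delta_i\rfloor=0$, and the case where $S_i$ is contracted, require a separate argument, reducing — via an induction on the number of boundary components whose coefficient is not yet known to lie in a fixed finite set — to the configurations already treated.

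The hard part will be the minimal model program: ensuring it terminates in the situations used above (one leans on the termination results accompanying the ACC package, e.g.\ termination with scaling for the relevant pseudo-effective lc pairs) and arranging, through it, that the ``wild'' divisor $S_i$ is either eliminated or moved into a configuration where adjunction in dimension $n-1$ applies with boundary coefficients in a DCC set; the bookkeeping of inversion of adjunction through the different is the other technical pressure point. One further ingredient, needed to reduce the whole argument to $\Q$-divisors, is Diophantine approximation: the set of $\mathbf{b}\ge 0$ with $(X_i,\sum_j b_jD_{i,j})$ lc and $K_{X_i}+\sum_j b_jD_{i,j}\equiv 0$ is a rational polytope containing the coefficient vector of $\Delta_i$, so $\Delta_i$ may be written as a convex combination of $\Q$-divisors with numerically trivial log canonical class and suitably controlled coefficients.
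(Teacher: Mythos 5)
There is no internal proof to compare with: Theorem \ref{ACC-2} is quoted verbatim from [\cite{HMX}, Theorem D] and is used in this paper purely as a black box, exactly like Theorem \ref{ACC-1}; the intended ``proof'' here is the citation. So the only question is whether your sketch would stand on its own as a proof of the HMX statement, and as written it would not, although it is a faithful road map of the actual HMX strategy (induction on dimension, reduction to $\Q$-factorial dlt models, ACC for lc thresholds, an MMP that forces the divisor with strictly increasing coefficient either to be contracted or to meet $\rddown{\Delta_i}$, adjunction and the different, the inductive global statement in dimension $n-1$ applied to a DCC set of the form $D(I)$, and Diophantine approximation to reduce to $\Q$-coefficients).

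The gaps you yourself flag are precisely where the content lies, and they are not minor: termination of the MMP you run is not known for dlt pairs in general, so one must set up a specific MMP (with scaling, on a carefully chosen auxiliary pair) whose termination follows from [\cite{BCHM}] together with special termination, rather than ``lean on the termination results accompanying the ACC package''; the dichotomy after the MMP (the divisor $S_i$ is contracted, or becomes nef, or the program ends with a Mori fibre space on which $S_i$ is vertical) requires a genuine case analysis, and the cases $\rddown{\Delta_i}=0$ and ``$S_i$ contracted'' are not reducible to the adjunction case by a one-line remark --- in [\cite{HMX}] they are handled by a separate induction on the number of unbounded coefficients together with the canonical bundle formula/fibration analysis; and the coefficient computation along $S_i\cap T_i$ needs inversion of adjunction and a uniform statement about the coefficients of the different ($D(I)$ is DCC, but the bound on the local index $m$ is exactly what has to be extracted from the dimension $n-1$ finiteness, not assumed). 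In short: as a reconstruction of [\cite{HMX}, \S 8] your outline is on target, but as a proof it defers every step that makes the theorem hard, so for the purposes of this paper the correct move remains to cite [\cite{HMX}, Theorem D].
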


The theorems above can be extended to the setting of generalized polairzed pairs.

Assume that $D'$ on $X'$ is an effective $\R$-divisor and that $N$ on $X$ is an $\R$-divisor which is nef and that $D' + N'$ is $\R$-Cartier. The generalized lc threshold of $D' + N'$ with respect to
$(X', B' + M')$ is defined as
$$
\sup \{s | \text{$(X'
	, B' + sD' + M' + sN')$ is generalized lc.} \}
$$
where the pair in the definition has boundary part $B'+sD'$ and nef part $M+sN$.

\begin{thm}[\text{[\cite{BZh}, Theorem 1.5]}]\label{ACC3}
	Let $\Lambda$ be a DCC set of nonnegative real numbers and $d$ a natural
	number. Then there is an ACC set $\Theta$ depending only on $\Lambda$, $d$ such that if
	$(X', B' + M')$, $M$, $N$, and $D'$ are as above satisfying
	
	(i) $(X', B' + M')$ is generalized lc of dimension $d$,
	
	(ii) $M = \sum \mu_j M_j$ where $M_j$ are nef Cartier divisors and $\mu_j \in \Lambda$,
	
	(iii) $N =\sum \nu_k N_k$ where $N_k$ are nef Cartier divisors and $\nu_k \in \Lambda$, and
	
	(iv) the coefficients of $B'$ and $D'$
	belong to $\Lambda$,
	
	then the generalized lc threshold of $D' +N'$ with respect to $(X', B' +M')$ belongs
	to $\Theta$.
\end{thm}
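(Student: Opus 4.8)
The plan is to run the Hacon--McKernan--Xu proof of the ACC for ordinary log canonical thresholds (Theorem~\ref{ACC-1}) inside the category of generalized polarised pairs, arguing by contradiction and by induction on the dimension $d$, with generalized adjunction as the dimension-reduction device. A reduction to the non-generalized case by clearing the nef parts should not be expected: writing a nef Cartier divisor $M_j$ as $\frac{1}{m}(G_j-A)$ for a general $G_j\in|mM_j+A|$ introduces an ample $A$ whose coefficient is not controlled by $\Lambda$, so the nef b-divisors $M$ and $N$ must be carried along throughout.

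Suppose the statement fails, i.e.\ the set of all generalized lc thresholds arising from data as in (i)--(iv) is not an ACC set. Then there are generalized lc pairs $(X_i',B_i'+M_i')$ of dimension $d$ with data $X_i\to X_i'\to\{\mathrm{pt}\}$ and $M_i$, together with $N_i$ and $D_i'$, such that the thresholds $t_i$ of $D_i'+N_i'$ with respect to $(X_i',B_i'+M_i')$ form a strictly increasing sequence. We may assume $D_i'\neq0$, so that testing the threshold against a component of $D_i'$ bounds the $t_i$ above; set $t_\infty:=\lim_i t_i$. Fix $i$ and put $\mathcal P_i=(X_i',B_i'+t_iD_i'+M_i'+t_iN_i')$. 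Then $\mathcal P_i$ is generalized lc, and $t_i$ being the threshold means $\mathcal P_i$ has a generalized non-klt place $S_i$ of generalized log discrepancy $0$ along which $D_i'+N_i'$ has positive multiplicity; $t_i$ is then the ratio of the generalized log discrepancy of $S_i$ with respect to $(X_i',B_i'+M_i')$ to this multiplicity. Replacing $\mathcal P_i$ by a $\Q$-factorial generalized dlt model extracting $S_i$ (such exists by [\cite{BZh}, Lemma~4.5]) and blowing up further so that $M_i$, $N_i$ descend and stay nef Cartier with coefficients in $\Lambda$, we may regard $S_i$ as a prime divisor with coefficient $1$ in the boundary $\Gamma_i$ of that model, where $\Gamma_i$ has coefficients in $\{1\}\cup\{b+t_id\mid b,d\in\Lambda\}$.

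\emph{Easy case: $S_i$ is the strict transform of a component of $D_i'$.} Then the multiplicity of $D_i'+N_i'$ along $S_i$ is $\mult_{S_i}D_i'\in\Lambda\setminus\{0\}$ (the nef part contributes nothing along a divisor lying on $X_i'$), and the generalized log discrepancy of $S_i$ with respect to $(X_i',B_i'+M_i')$ is $1-\mult_{S_i}B_i'\in 1-\Lambda$, so $t_i=(1-b_i)/d_i$ with $b_i,d_i\in\Lambda$, $d_i>0$. After passing to a subsequence on which $d_i$ is monotone, either $d_i$ is eventually constant (a non-increasing sequence in a DCC set stabilises) or $d_i$ strictly increases; in both cases $1-b_i=t_id_i$ is forced to increase, so $b_i$ strictly decreases in $\Lambda$, contradicting the DCC. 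Since a curve has no nontrivial birational modification, when $d=1$ only this case can occur, which gives the base of the induction.

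\emph{Hard case: $S_i$ is exceptional over $X_i'$} (so $d\ge2$), in which case the generalized log discrepancy of $S_i$ and the multiplicity of $D_i'+N_i'$ along it need not lie in any fixed DCC set. This is the heart of the argument. One performs generalized adjunction along the divisorial generalized lc centre $S_i$ to obtain a generalized lc pair $(S_i,B_{S_i}+M_{S_i})$ of dimension $d-1$, where the restricted nef part $M_{S_i}$, and likewise the restriction of $t_iN_i$, is again a nonnegative combination, with coefficients in $\Lambda$, of nef Cartier divisors, because restriction of a nef Cartier divisor to $S_i$ is nef Cartier. Using the inductive hypothesis in dimension $d-1$ for generalized lc thresholds, the generalized analogue of the DCC property of adjunction (so the coefficients of $B_{S_i}$ stay inside a DCC set controlled only by $\Lambda$ and $d$), and the generalized analogue of the global ACC (Theorem~\ref{ACC-2}), one runs a Noetherian/limiting argument --- isolating the contribution of $(D_i'+N_i')|_{S_i}$ and letting $t_i\to t_\infty$ --- to force the $t_i$ to be eventually constant, a contradiction. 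The main obstacle is precisely this step: establishing generalized adjunction with uniformly DCC boundary coefficients in the presence of the nef part, and wiring the limiting argument so that the nef b-divisors, which cannot be absorbed into the boundary, are controlled uniformly; once this and the generalized global ACC are available --- proved together with the present statement by a simultaneous induction on $d$, as in [\cite{BZh}] --- the conclusion follows.
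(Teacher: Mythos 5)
This statement is not proved in the paper at all: Theorem~\ref{ACC3} is imported verbatim from [\cite{BZh}, Theorem 1.5] and used as a black box, so there is no in-paper argument to compare against. What you have written is a roadmap of the actual Birkar--Zhang proof (itself an adaptation of Hacon--McKernan--Xu). Your framing is correct: one cannot clear the nef part, one argues by contradiction with an increasing sequence $t_i$, one extracts a non-klt place on a $\Q$-factorial generalized dlt model, and the case where that place is a component of $D_i'$ is handled by a direct DCC computation. That easy case, including the observation that the nef-part correction $\phi^*M'-M$ is exceptional and hence does not affect the discrepancy of a divisor already on $X_i'$, is complete and correct, and it does settle $d=1$.

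The hard case, however, is asserted rather than proved, and it is exactly the content of the theorem. Two concrete things are missing. First, generalized adjunction to $S_i$ does not by itself produce an object to which any ACC statement applies: $S_i$ maps onto a generalized non-klt centre $Z_i\subset X_i'$ of possibly positive dimension, and one must pass to (the normalization of) a general fibre of $S_i\to Z_i$, or run a suitable relative MMP, to obtain a \emph{projective} generalized pair of dimension $\le d-1$ with numerically trivial generalized log canonical divisor before the global ACC (which here is Theorem~\ref{ACC4}, i.e.\ [\cite{BZh}, Theorem 1.6] --- not Theorem~\ref{ACC-2}, which is the non-generalized statement) can be invoked. Second, even granting that reduction, you must show that at least one coefficient of the restricted boundary is a \emph{strictly increasing} function of $t_i$ (of the form $\frac{m-1+b+t_id}{m}$ with $d>0$, via the DCC adjunction formula [\cite{BZh}, Proposition 4.9]); without this, the global ACC in dimension $d-1$ yields no contradiction, and arranging it requires choosing $S_i$ so that the pullback of $D_i'+t_iN_i'$ genuinely contributes along it. Your ``Noetherian/limiting argument'' names neither step. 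Finally, since Theorem~\ref{ACC4} is itself part of the same induction, a self-contained proof would also have to supply its inductive step, which you do not address. As a description of the strategy of [\cite{BZh}] your proposal is accurate; as a proof it has a genuine gap at its central step.
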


\begin{thm}[\text{[\cite{BZh}, Theorem 1.6]}]\label{ACC4}
	Let $\Lambda$ be a DCC set of nonnegative real numbers and $d$ a natural
	number. Then there is a finite subset $\Lambda_0 \subset \Lambda$ depending only on $\Lambda$, $d$ such that
	if $(X', B' + M')$ and $M$ are as above satisfying
	
	(i) $(X', B' + M')$ is generalized lc of dimension $d$,
	
	(ii) $M = \sum \mu_j M_j$ where $M_j$ are nef Cartier divisors and $\mu_j \in \Lambda$,
	
	(iii) $\mu_j = 0$ if $M_j \equiv 0$,
	
	(iv) the coefficients of $B'$
	belong to $\Lambda$, and
	
	(v) $K_{X'} + B' + M' \equiv 0$,
	
	then the coefficients of $B'$ and the $\mu_j$ belong to $\Lambda_0$.
\end{thm}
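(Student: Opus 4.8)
The plan is to argue by contradiction and by induction on $d$, reducing the statement to the classical Global ACC (Theorem~\ref{ACC-2}) together with the ACC for generalized log canonical thresholds (Theorem~\ref{ACC3}). If the theorem failed, one would obtain a sequence of generalized pairs $(X_i',B_i'+M_i')$ of dimension $d$ with data $X_i\to X_i'$ and $M_i=\sum_j\mu_{j,i}M_{j,i}$ satisfying (i)--(v), for which the set $\Lambda^{\circ}$ of all coefficients of all $B_i'$ together with all the $\mu_{j,i}$ is infinite. Since $\Lambda^{\circ}\subseteq\Lambda$ is an infinite DCC set it contains a strictly increasing sequence, so after passing to a subsequence one may assume that for each $i$ there is a number $c_i$ --- either $c_i=\mu_{D_i}B_i'$ for a prime divisor $D_i$ on $X_i'$, or $c_i=\mu_{j(i),i}$ with $M_{j(i),i}\not\equiv 0$ (using (iii)) --- with $c_1<c_2<\cdots$. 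Replacing each pair by a $\Q$-factorial generalized dlt model ([\cite{BZh}, Lemma 4.5]; see \S\ref{ss-gpp}(2)) one may also assume every $(X_i',B_i'+M_i')$ is $\Q$-factorial generalized dlt; this only adds boundary components of coefficient $1$, which does not affect the contradiction. For the base case $d=1$ the relation $K_{X_i'}+B_i'+M_i'\equiv 0$ gives $\sum_P\mu_PB_i'+\sum_j\mu_{j,i}\deg M_{j,i}=2-2g(X_i')$, a sum of boundedly many strictly positive elements of DCC sets ($\deg M_{j,i}\in\N$ when $M_{j,i}\not\equiv 0$); since a DCC set meets an ACC set in a finite set, all the coefficients then lie in a fixed finite set, contradicting $c_i\nearrow$. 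So henceforth $d\geq 2$ and the theorem is known in dimensions $<d$.

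\textbf{Reduction to the generalized klt case.} Suppose $\lfloor B_i'\rfloor\neq 0$ for infinitely many $i$ and pick a component $S_i$ of $\lfloor B_i'\rfloor$. Generalized adjunction ([\cite{BZh}, \S4]) yields $(K_{X_i'}+B_i'+M_i')|_{S_i}=K_{S_i}+B_{S_i}+M_{S_i}$ with $(S_i,B_{S_i}+M_{S_i})$ generalized lc of dimension $d-1$, numerically trivial, and the coefficients of $B_{S_i}$ together with the multiplicities of the restricted nef part lying in DCC sets depending only on $\Lambda,d$. By induction these lie in a fixed finite set; since for any given value of a ``different'' coefficient there are only finitely many ways it can be produced from inputs in $\Lambda$ (again by DCC$\cap$ACC finiteness), the coefficients of $B_i'$ along components meeting $\lfloor B_i'\rfloor$, and the $\mu_{j,i}$ with $M_{j,i}$ nontrivial along $\lfloor B_i'\rfloor$, are confined to a fixed finite set. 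Thus the ``bad'' datum ($D_i$, resp.\ $M_{j(i),i}$) may be assumed disjoint from (resp.\ numerically trivial along) $\lfloor B_i'\rfloor$, and subtracting a small multiple of a component of $\lfloor B_i'\rfloor$ --- which keeps the relevant anti-log-canonical class pseudo-effective --- reduces us to the case where every $(X_i',B_i'+M_i')$ is generalized klt.

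\textbf{The generalized klt case.} Take first $c_i=\mu_{D_i}B_i'$ and write $B_i'=c_iD_i+\Gamma_i$, so $(X_i',\Gamma_i+M_i')$ is generalized klt and $-(K_{X_i'}+\Gamma_i+M_i')\equiv c_iD_i$. Let $t_i$ be the generalized lc threshold of $D_i$ with respect to $(X_i',\Gamma_i+M_i')$. Since $(X_i',\Gamma_i+c_iD_i+M_i')=(X_i',B_i'+M_i')$ is generalized lc we have $t_i\geq c_i$, and by Theorem~\ref{ACC3} the numbers $t_i$ lie in an ACC set depending only on $\Lambda,d$; after a subsequence $t_i$ equals a constant $t\geq\lim c_i$. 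If $t=c_i$ for infinitely many $i$ we are done, as $\{c_i\}$ is then in an ACC set. Otherwise $t>c_i$ for all $i$; put $s:=\min\{t,1\}>c_i$, so $(X_i',\Gamma_i+sD_i+M_i')$ is generalized lc with $K_{X_i'}+\Gamma_i+sD_i+M_i'\equiv(s-c_i)D_i\geq 0$. Since $D_i$ is a nonzero effective divisor on a projective variety it has positive degree against a power of an ample divisor, so $-(K_{X_i'}+\Gamma_i+M_i')\equiv c_iD_i$ is pseudo-effective and nonzero, whence (for all but at most one $i$) $K_{X_i'}+\Gamma_i+M_i'$ is not pseudo-effective; a generalized klt MMP on $K_{X_i'}+\Gamma_i+M_i'$ then ends with a Mori fibre space $g_i\colon X_i''\to Z_i$ on which $-(K_{X_i''}+\Gamma_i''+M_i'')$, hence the birational transform $D_i''$, is ample over $Z_i$, while $K_{X_i''}+B_i''+M_i''\equiv 0$ is preserved. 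If $\dim Z_i>0$, restriction to a general fibre $F_i$ gives a generalized klt pair of dimension $<d$, numerically trivial, in which $c_i$ occurs as the coefficient of the nonzero divisor $D_i''|_{F_i}$, so $c_i$ lies in a finite set by induction --- contradiction. If $\dim Z_i=0$ then $\rho(X_i'')=1$; writing every divisor as a rational multiple of a fixed ample generator, the relation $K_{X_i''}+B_i''+M_i''\equiv 0$ expresses $c_i$ through the other numerical invariants, and after replacing the nef part $M_i''$ by a $\Q$-linearly equivalent effective boundary (reducing to Theorem~\ref{ACC-2}) these invariants are confined to a finite set, so $c_i\in\Lambda$ is too --- contradiction. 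The subcase $c_i=\mu_{j(i),i}$ runs in parallel: take $t_i$ to be the generalized lc threshold of the nef Cartier divisor $M_{j(i),i}$, viewed as an added nef part, with respect to $(X_i',B_i'+N_i'+M_{j(i),i}')$ where $N_i:=M_i-\mu_{j(i),i}M_{j(i),i}$, apply Theorem~\ref{ACC3} and the same dichotomy, running the MMP on $K_{X_i'}+B_i'+N_i'$ (whose anti-canonical class is nef). This yields the required contradiction and completes the induction.

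\textbf{Main obstacle.} The delicate point is the Picard-rank-one case at the end of the generalized klt step, where there is no further room to lower the dimension and one must convert the problem into the classical Global ACC: this requires replacing the b-nef nef part by a genuine $\Q$-effective divisor and controlling the extra discrepancies this introduces, which is only transparent because on a Picard-rank-one Calabi--Yau variety the relevant class is semiample (after perhaps passing to a higher model). Managing this conversion, together with the bookkeeping of the ``different'' coefficients in the adjunction reduction, is where essentially all of the technical effort goes.
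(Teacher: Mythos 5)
First, a point of order: the paper does not prove Theorem \ref{ACC4} at all. It is imported verbatim from [\cite{BZh}, Theorem 1.6] and used as a black box, so there is no in-paper proof to compare yours with; your sketch has to be measured against the Birkar--Zhang argument it is implicitly reconstructing, and against that standard it has two genuine gaps. The first is the reduction to the generalized klt case. Your adjunction step only controls the coefficients of divisors actually meeting a component $S_i$ of $\rddown{B_i'}$ (and the $\mu_{j,i}$ whose $M_{j,i}$ are nontrivial along it) --- and even there, invoking the inductive statement for $(S_i,B_{S_i}+M_{S_i})$ needs hypothesis (iii) for the \emph{restricted} nef part, which can fail and has to be repaired. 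Having arranged that the offending datum is disjoint from $\rddown{B_i'}$, you then ``subtract a small multiple of a component of $\rddown{B_i'}$'' to become generalized klt; but this destroys hypothesis (v), $K_{X_i'}+B_i'+M_i'\equiv 0$, which your subsequent argument uses essentially (the identity $-(K_{X_i'}+\Gamma_i+M_i')\equiv c_iD_i$, the preservation of $K_{X_i''}+B_i''+M_i''\equiv 0$ along the MMP, and the numerical triviality of the restriction to a general fibre that feeds the induction). Conversely, if you keep $\rddown{B_i'}$ inside $\Gamma_i$ so as to preserve (v), then $(X_i',\Gamma_i+M_i')$ is not generalized klt and both the termination of the MMP you run and the klt-ness of the restricted pair need a different justification. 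As written, the reduction and the ``klt case'' simply do not hand off to each other.

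The second gap is the Picard-rank-one endgame, which you yourself flag as the main obstacle but do not actually close. There is no reason the pushdown $M_i''$ of a nef Cartier b-divisor on a $\rho=1$, numerically $K$-trivial variety is $\Q$-linearly equivalent to an effective divisor: that is an abundance-type assertion (semiampleness of nef classes on Calabi--Yau type varieties) which is not available in this generality, and even granting an effective member you would still need the new boundary to keep the pair lc with coefficients in a DCC set depending only on $\Lambda,d$ before Theorem \ref{ACC-2} applies. The actual proof in \cite{BZh} never effectivizes the nef part; the moduli coefficients are handled through intersection-theoretic integrality of the nef Cartier divisors (together with Theorem \ref{ACC3} and the HMX results), precisely because the step you propose would fail. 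So the overall architecture (contradiction, dlt models, generalized adjunction plus induction, ACC for generalized lc thresholds, MMP to a Mori fibre space) is the right one, but the two hinge points above are not established by your argument.
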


\subsection{Complements}\label{ss-compl}
(1) 
We first recall the definition for usual pairs.
Let $(X,B)$ be a pair where $B$ is a boundary and let $X\to Z$ be a contraction. 
Let $T=\rddown{B}$ and $\Delta=B-T$. 
An \emph{$n$-complement} of $K_{X}+B$ over a point $z\in Z$ is of the form 
$K_{X}+{B}^+$ such that over some neighbourhood of $z$ we have the following properties:
\begin{itemize}
	\item $(X,{B}^+)$ is lc, 
	
	\item $n(K_{X}+{B}^+)\sim 0$, and 
	
	\item $n{B}^+\ge nT+\rddown{(n+1)\Delta}$.\\
\end{itemize}
From the definition one sees that 
$$
-nK_{X}-nT-\rddown{(n+1)\Delta}\sim n{B}^+-nT-\rddown{(n+1)\Delta}\ge 0
$$
over some neighbourhood of $z$ which in particular means the linear system 
$$
|-nK_{X}-nT-\rddown{(n+1)\Delta}|
$$
is not empty over $z$. Moreover, if $B^+\ge B$, then $-n(K_X+B)\sim n(B^+-B)$ over $z$, hence 
$|-n(K_X+B)|$ is non-empty over $z$.

(2) 
Now let $(X',B'+M')$ be a projective generalized pair with data $\phi\colon X\to X'$ 
and $M$ where $B'\in [0,1]$. 
Let $T'=\rddown{B'}$ and $\Delta'=B'-T'$. 
An \emph{$n$-complement} of $K_{X'}+B'+M'$ is of the form $K_{X'}+{B'}^++M'$ where 
\begin{itemize}
	\item $(X',{B'}^++M')$ is generalized lc,
	
	\item $n(K_{X'}+{B'}^++M')\sim 0$ and $nM$ is b-Cartier, and 
	
	\item $n{B'}^+\ge nT'+\rddown{(n+1)\Delta'}$.\\
\end{itemize}
From the definition one sees that 
$$
-nK_{X'}-nT'-\rddown{(n+1)\Delta'}-nM'\sim n{B'}^+-nT'-\rddown{(n+1)\Delta'}\ge 0
$$
which in particular means the linear system 
$$
|-nK_{X'}-nT'-\rddown{(n+1)\Delta'}-nM'|
$$
is not empty. Moreover, if $B'^+\ge B'$, then $-n(K_{X'}+B'+M')\sim n(B'^+-B')$, hence 
$|-n(K_{X'}+B'+M')|$ is non-empty.

(3) 
Let $(X,B)$ be a projecive slc pair.
Let $T=\rddown{B}$ and $\Delta=B-T$. 
An \emph{$n$-complement} of $K_{X}+B$ is of the form 
$K_{X}+{B}^+$ such that we have the following properties:
\begin{itemize}
	\item $(X,{B}^+)$ is slc, 
	
	\item $n(K_{X}+{B}^+)\sim 0$, and 
	
	\item $n{B}^+\ge nT+\rddown{(n+1)\Delta}$.\\
\end{itemize}

\subsection{Fano Varieties}
Let $(X,B)$ be a pair and $X\to Z$ a contraction. We say $(X,B)$ is \emph{log Fano} over $Z$ 
if it is lc and $-(K_X+B)$ is ample over $Z$; if $B=0$ 
we just say $X$ is Fano over $Z$. The pair is called \emph{weak log Fano} over $Z$ if it is lc 
and $-(K_X+B)$ is nef and big over $Z$; 
if $B=0$ we say $X$ is \emph{weak Fano} over $Z$.
We say $X$ is \emph{of Fano type} over $Z$ if $(X,B)$ is klt weak log Fano over $Z$ for some choice of $B$;
it is easy to see this is equivalent to existence of a big$/Z$ $\Q$-boundary (resp. $\R$-boundary) 
$\Gamma$ so that $(X,\Gamma)$ is klt and $K_X+\Gamma \sim_\Q 0/Z$ (resp. $\sim_\R$ instead of $\sim_\Q$). We say $X$ is \emph{of log Fano type} over $Z$ if $(X,B)$ is log Fano over $Z$ for some choice of $B$, and we say $X$ is \emph{of Calabi-Yau type} over $Z$ if $(X,B)$ is lc and $K_X+B \sim_\Q 0/Z$ (resp. $\sim_\R$ instead of $\sim_\Q$).

Assume $X$ is of Fano type over $Z$. Then we can run the MMP 
over $Z$ on any $\R$-Cartier $\R$-divisor $D$ on $X$ which ends with some model $Y$ [\cite{BCHM}]. If $D_Y$ is nef over $Z$,
we call $Y$ a \emph{minimal model} over $Z$ for $D$. If $D_Y$ is not nef$/Z$, then 
there is a $D_Y$-negative extremal contraction $Y\to T/Z$ with $\dim Y>\dim T$ and we call 
$Y$ a \emph{Mori fiber space} over $Z$ for $D$. 

The following theorems, as referred to as the boundedness of complements, were proved by Birkar [\cite{B-Fano}] which is conjectured by Shokurov [\cite{shokurov-surf-comp}, Conjecture 1.3] who proved it in 
dimension $2$ [\cite{shokurov-surf-comp}, Theorem 1.4] (see also [\cite{PSh-II}, Corollary 1.8], [\cite{PSh-I}, Theorem 3.1] and [\cite{Shokurov-log-flips}] for some cases). 

\begin{thm}[\text{[\cite{B-Fano}, Theorem 1.7]}]\label{t-bnd-compl-usual}
	Let $d$ be a natural number and $\mathfrak{R}\subset [0,1]$ be a finite set of rational numbers.
	Then there exists a natural number $n$ 
	depending only on $d$ and $\mathfrak{R}$ satisfying the following.  
	Assume $(X,B)$ is a projective pair such that 
	\begin{itemize}
		
		\item $(X,B)$ is lc of dimension $d$,
		
		\item $B\in \Phi(\mathfrak{R})$, that is, the coefficients of $B$ are in $\Phi(\mathfrak{R})$, 
		
		\item $X$ is of Fano type, and 
		
		\item $-(K_{X}+B)$ is nef.\\
	\end{itemize}
	Then there is an $n$-complement $K_{X}+{B}^+$ of $K_{X}+{B}$ 
	such that ${B}^+\ge B$. Moreover, the complement is also an $mn$-complement for any $m\in \N$. 
	
\end{thm}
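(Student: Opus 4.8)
The plan is to follow Birkar's strategy and argue by induction on $d$, proving \emph{simultaneously} the analogous statement for generalized polarised pairs as in \S\ref{ss-compl}(2); this is forced on us because the natural way to descend a complement to lower dimension is adjunction to a non-klt centre, and there the canonical bundle formula (Theorem \ref{c-bdle-form}) produces a generalized rather than an ordinary pair. Before the dichotomy I would make the usual reductions: replace $(X,B)$ by a $\Q$-factorial dlt model and, using that $X$ is of Fano type, run an MMP on $-(K_X+B)$, which terminates on a model where $-(K_X+B)$ is semiample; contracting it we reduce to the case $-(K_X+B)$ ample and $X$ $\Q$-factorial. Complements ascend and descend along these birational contractions and the contraction to the ample model because pull-back preserves the conditions ``lc'' and ``$\sim 0$'' and only improves the rounding inequality on the fractional part of the boundary (cf.\ Lemma \ref{l-pull-back-Cartier} for the linear equivalence bookkeeping). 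So it suffices to find a bounded $n$-complement in the reduced situation, and we split according to whether some $\Q$-complement $K_X+B^+$ has $(X,B^+)$ non-klt.

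In the \emph{non-exceptional} case there is a $\Q$-complement $K_X+B^+$ with $(X,B^+)$ not klt. Passing to a $\Q$-factorial dlt modification of $(X,B^+)$ and running an MMP one arranges a prime component $S\subseteq\rddown{B^+}$ dominating a non-klt centre. Adjunction gives $K_S+B_S=(K_X+B^+)|_S$, and after a further MMP on $S$ and, where needed, the canonical bundle formula (Theorem \ref{c-bdle-form}) together with the ACC inputs (Theorems \ref{ACC3} and \ref{ACC4}) and the divisibility properties of $I(\mathfrak{R})$ from \S\ref{ss-dcc-sets}, the coefficients of $B_S$ lie in a hyperstandard set $\Phi(\mathfrak{S})$ with $\mathfrak{S}=\mathfrak{S}(d,\mathfrak{R})$. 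By the inductive hypothesis $K_S+B_S$ has an $n$-complement with $n=n(d,\mathfrak{R})$. To lift it, put $A=-nK_X-n\rddown{B}-\rddown{(n+1)\{B\}}$ and consider
\[
0\longrightarrow\mathcal{O}_X(A-S)\longrightarrow\mathcal{O}_X(A)\longrightarrow\mathcal{O}_S(A|_S)\longrightarrow 0 ;
\]
applying Kawamata–Viehweg vanishing to an appropriate klt pair twisted by the ample divisor $-(K_X+B)$ kills $H^1(\mathcal{O}_X(A-S))$, so a section on $S$ realising the complement there extends to $X$. One then checks the resulting divisor gives an honest $n$-complement with $B^+\ge B$, using the connectedness principle (Lemma \ref{l-connectedness}) to control the non-klt locus and a final MMP, legitimate since $X$ is of Fano type, to put the complement in the required shape.

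In the \emph{exceptional} case every $\Q$-complement of $K_X+B$ is klt, adjunction is unavailable, and one must invoke boundedness: effective birationality for $\epsilon$-lc Fano varieties of dimension $d$ together with boundedness of exceptional Fano pairs shows that $(X,B)$, or an associated bounded-index modification of it, ranges in a bounded family; from a general anticanonical-type section of bounded degree one manufactures, via the ACC for log canonical thresholds (Theorem \ref{ACC-1}) and the ACC for numerically trivial pairs (Theorem \ref{ACC-2}), a complement whose coefficients lie in a fixed finite set, and a uniform multiple $n$ of the relevant indices then works for the whole family. This exceptional case is the main obstacle: it is where the deepest external inputs enter (effective birationality, boundedness of Fano varieties, and the ACC machinery), and where the hyperstandard hypothesis $B\in\Phi(\mathfrak{R})$ is indispensable. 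Finally the ``moreover'' clause is purely formal: given the $n$-complement $K_X+B^+$, the same $B^+$ is an $mn$-complement for every $m$, since $(X,B^+)$ is lc, $mn(K_X+B^+)\sim 0$, and for each prime $D$ the integer $n\mu_DB^+\ge\rddown{(n+1)\mu_D\{B\}}\ge n\mu_D\{B\}$ forces $n\mu_DB^+\ge\lceil n\mu_D\{B\}\rceil$, whence $mn\mu_DB^+\ge m\lceil n\mu_D\{B\}\rceil\ge\lceil mn\mu_D\{B\}\rceil\ge\rddown{(mn+1)\mu_D\{B\}}$, so the rounding inequality persists.
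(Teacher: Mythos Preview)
The paper does not prove this theorem at all: it is stated in the Preliminaries (\S2.15) as a citation of Birkar's result [\ref{B-Fano}, Theorem~1.7], with no accompanying argument. There is therefore nothing in the paper to compare your proposal against.

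Your sketch is a reasonable high-level outline of the strategy actually used in [\ref{B-Fano}], with the exceptional/non-exceptional dichotomy, adjunction plus Kawamata--Viehweg vanishing in the non-exceptional case, and effective birationality/boundedness in the exceptional case. But as a self-contained proof it is far from complete: each of the three main steps (the MMP reductions and transfer of complements, the lifting via vanishing and connectedness, and especially the exceptional boundedness argument) hides substantial work that in [\ref{B-Fano}] occupies many sections and relies on auxiliary results proved there. If the intent is merely to indicate where the theorem comes from, a one-line citation suffices; if the intent is to reproduce Birkar's proof, the sketch would need to be expanded by an order of magnitude, and in any case that is outside the scope of the present paper, which uses the theorem as a black box.
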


\begin{thm}[\text{[\cite{B-Fano}, Theorem 1.8]}]\label{t-bnd-compl-usual-local}
	Let $d$ be a natural number and $\mathfrak{R}\subset [0,1]$ be a finite set of rational numbers.
	Then there exists a natural number $n$
	depending only on $d$ and $\mathfrak{R}$ satisfying the following.  
	Assume $(X,B)$ is a pair and $X\to Z$ is a contraction such that 
	\begin{itemize}
		\item $(X,B)$ is lc of dimension $d$ and $\dim Z>0$,
		
		\item $B\in \Phi(\mathfrak{R})$, 
		
		\item $X$ is of Fano type over $Z$, and 
		
		\item $-(K_{X}+B)$ is nef over $Z$.\\
	\end{itemize}
	Then for any point $z\in Z$, there is an $n$-complement $K_{X}+{B}^+$ of $K_{X}+{B}$ 
	over $z$ such that ${B}^+\ge B$. 
\end{thm}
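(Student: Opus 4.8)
\emph{Proof proposal.} The plan is to argue by the same circle of ideas as the global statement, Theorem~\ref{t-bnd-compl-usual}, running an induction on $\dim X$ and reducing the local problem to one in lower dimension (in the generalized-pair formulation) plus a lifting argument. First I would make the standard reductions: shrinking $Z$ around $z$ we may assume $Z$ is affine and $z$ a closed point; replacing $(X,B)$ by a $\Q$-factorial dlt model — which is again of Fano type over $Z$, still has $-(K_X+B)$ nef over $Z$, and along which any complement pushes down to $X$ — we may assume $(X,B)$ is $\Q$-factorial dlt. Since $X$ is of Fano type over $Z$ we may run MMPs over $Z$ freely, and in particular $-(K_X+B)$ is semi-ample over $Z$; it is convenient (after replacing the base and enlarging $B$ within $\Phi(\mathfrak R)$) to arrange $K_X+B\sim_\Q 0$ over $Z$, and I will assume this for concreteness, though it is not strictly needed.

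Next comes the main dichotomy, governed by the connectedness principle (Lemma~\ref{l-connectedness}): either $(X,B)$ has a non-klt centre whose image contains $z$ (the non-exceptional case), or it does not (the exceptional case). In the non-exceptional case, after a further dlt modification we may take an lc centre $S$ of $(X,B)$ with $z\in f(S)$ and perform adjunction to it — divisorial adjunction for the codimension-one part, and fibre-type adjunction via the canonical bundle formula (Theorem~\ref{c-bdle-form}) for the map from $S$ to its image. This produces a generalized pair on a variety of dimension $<\dim X$, of Fano type over the base, with nef anti-log-canonical class over the base, and — this is where the ACC theorems (Theorems~\ref{ACC-1} and~\ref{ACC3}) enter — with boundary and moduli coefficients in a fixed set depending only on $d$ and $\mathfrak R$. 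The induction hypothesis, in its generalized-pair version, then yields a bounded $n_0$-complement on that lower-dimensional pair, and one must lift it to an $n$-complement of $K_X+B$ over $z$ with $n$ a bounded multiple of $n_0$. For the exceptional case the inductive mechanism is unavailable and one appeals instead to boundedness of Fano varieties: the pair $(X,B)$ near $X_z$, together with an auxiliary boundary witnessing Fano type, varies in a bounded family — this is exactly where the restriction $d\le 3$ is used, through effective birationality and BAB-type boundedness — so a single bounded $n$ works by a general-position argument on the family, which one then spreads out over a neighbourhood of $z$.

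The hard part will be the lifting step in the non-exceptional case: one wants the restriction map on the anti-log-canonical linear system $|-nK_X-nT-\rddown{(n+1)\Delta}|$ to be surjective in a neighbourhood of $X_z$, so that the complement found on $S$ extends, while simultaneously controlling coefficients so the extended divisor still satisfies $n{B}^+\ge nT+\rddown{(n+1)\Delta}$ with $n$ divisible by $I(\mathfrak R)$ and bounded. Naive Kawamata–Viehweg vanishing does not suffice, since after the reductions the divisor to be lifted is only $\sim_\Q 0$ over $Z$ rather than nef and big, and $\rddown{B}\neq 0$ obstructs klt-ness; one has to run an auxiliary MMP over $Z$ for the anti-log-canonical-type divisor and track how the non-klt centres move, or argue via b-semi-ampleness of the adjunction b-divisor. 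Together with the boundedness input for the exceptional case, this is the genuine content; the rest is bookkeeping with hyperstandard coefficients and standard MMP.
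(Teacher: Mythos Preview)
The paper does not prove this theorem at all: it is quoted verbatim from Birkar's paper [\ref{B-Fano}, Theorem~1.8] as a preliminary result, and no proof is given here. So there is no ``paper's own proof'' to compare your proposal against; the correct thing to do in this context is simply to cite the reference.

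That said, your sketch contains a substantive confusion worth flagging. You write that in the exceptional case ``this is exactly where the restriction $d\le 3$ is used, through effective birationality and BAB-type boundedness''. But Theorem~\ref{t-bnd-compl-usual-local} carries no dimension restriction: it is Birkar's result and holds for all $d$. You appear to be conflating it with the present paper's main theorem (Theorem~\ref{thm-main_compl}), which concerns \emph{log canonical} log Fano varieties (not of Fano type) and is what is restricted to $d\le 3$. In Birkar's actual proof of Theorem~\ref{t-bnd-compl-usual-local}, the exceptional case is handled via effective birationality and the boundedness of exceptional Fano pairs proved internally in [\ref{B-Fano}], not by invoking BAB and not under any dimension cap. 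The non-exceptional/lifting part of your outline is roughly in the right spirit for Birkar's argument, but again, none of this belongs in the present paper.
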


\begin{thm}[\text{[\cite{B-Fano}, Theorem 1.10]}]\label{t-bnd-compl}
	Let $d$ and $p$ be natural numbers and $\mathfrak{R}\subset [0,1]$ be a finite set of rational numbers.
	Then there exists a natural number $n$ 
	depending only on $d,p$, and $\mathfrak{R}$ satisfying the following. 
	Assume $(X',B'+M')$ is a projective generalized polarised pair with data $\phi\colon X\to X'$ and $M$ 
	such that 
	\begin{itemize}
		\item $(X',B'+M')$ is generalized lc of dimension $d$,
		
		\item $B'\in \Phi(\mathfrak{R})$ and $pM$ is b-Cartier,
		
		\item $X'$ is of Fano type, and 
		
		\item $-(K_{X'}+B'+M')$ is nef.\\
	\end{itemize}
	Then there is an $n$-complement $K_{X'}+{B'}^++M'$ of $K_{X'}+{B'}+M'$ 
	such that ${B'}^+\ge B'$. 
\end{thm}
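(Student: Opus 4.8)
The plan is to run Birkar's induction on dimension for boundedness of complements, carrying the nef b-divisor $M$ through every step; the generalized setting is in fact the natural one, since adjunction of an ordinary Fano pair to a non-klt centre already forces generalized pairs to appear. So I would prove the generalized statement in dimension $d$ assuming the ordinary and generalized statements, together with the ACC-type inputs quoted above, in dimension $\le d-1$. The first reductions are standard: replacing $(X',B'+M')$ by a $\Q$-factorial generalized dlt model, which exists by [\cite{BZh}, Lemma 4.5] and along which complements descend, I may assume the pair is $\Q$-factorial generalized dlt; and since $X'$ is of Fano type I may run MMPs freely. Running an appropriate MMP on $-(K_{X'}+B'+M')$, I arrange that either $\rddown{B'}\neq 0$ and contains a component that is an lc centre, or $(X',B'+M')$ is generalized klt and $-(K_{X'}+B'+M')$ is nef and big, or there is an lc-trivial fibration structure on $X'$.

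In the first case, pick a component $S'$ of $\rddown{B'}$ which is an lc centre and apply generalized adjunction to write $(K_{X'}+B'+M')|_{S'}=K_{S'}+B_{S'}+M_{S'}$, a generalized pair of dimension $d-1$. After further modifications and MMP, $S'$ can be taken to be of Fano type, and by Theorems \ref{ACC3} and \ref{ACC4} the coefficients of $B_{S'}$ lie in a hyperstandard set $\Phi(\mathfrak{S})$ and $p'M_{S'}$ is b-Cartier, with $\mathfrak{S}$ finite and $p'$ bounded in terms of $d,p,\mathfrak{R}$. The induction hypothesis then gives an $n_0$-complement $K_{S'}+B_{S'}^{+}+M_{S'}$ with $B_{S'}^{+}\ge B_{S'}$ for bounded $n_0$. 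The heart of the argument is to lift this: with $n$ a bounded multiple of $n_0$ and $L = -nK_{X'} - n\rddown{B'} - \rddown{(n+1)(B'-\rddown{B'})} - nM'$, I would show the restriction $H^0(X',L)\to H^0(S',L|_{S'})$ is surjective by Kawamata--Viehweg vanishing on a suitable model, so that a section realizing $B_{S'}^{+}$ extends to one realizing the desired $B'^{+}$; a short MMP together with the connectedness principle (Lemma \ref{l-connectedness}) is used to pass from the picture near $S'$ to a genuine global complement.

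In the remaining cases the pair is generalized klt. If $-(K_{X'}+B'+M')$ is big, I reduce (by another MMP) to the \emph{exceptional} situation in which every complement is klt, and there boundedness of the complement index follows from log boundedness of exceptional generalized log Fano pairs of dimension $d$ with $pM$ b-Cartier; this rests on effective birationality and the boundedness of Fano varieties, which yield a bounded $n$ with $|-n(K_{X'}+B'+M')|$ nonempty and a general member furnishing an lc $n$-complement. If instead $-(K_{X'}+B'+M')$ is not big, the ensuing lc-trivial fibration $f\colon X'\to Z'$ lets me apply the canonical bundle formula, Theorem \ref{c-bdle-form}, to obtain a generalized pair $(Z',B_{Z'}+M_{Z'})$ of dimension $<d$, with $Z'$ of Fano type and $K_{X'}+B'+M'\sim_\Q f^*(K_{Z'}+B_{Z'}+M_{Z'})$; the induction hypothesis produces a bounded complement downstairs, and pulling it back — after checking that the discriminant and moduli parts behave well — gives the complement on $X'$.

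The main obstacle is twofold. First, one must ensure the data $(\mathfrak{S},p')$ controlling the lower-dimensional generalized pair obtained by adjunction, and likewise the data of the generalized pair on the base obtained from the canonical bundle formula, stay bounded independently of $(X',B'+M')$; this is precisely the role of the ACC statements \ref{ACC3} and \ref{ACC4}, combined with boundedness of the relative b-Cartier index of the moduli part. Second, and more serious, is the lifting step: arranging, via modifications and a carefully chosen MMP, that the relevant cohomology vanishes so that a complement on a non-klt centre extends to the whole space — this is the technical core of the proof, and where the generalized-pair bookkeeping is most delicate. The exceptional case is the other genuinely hard input, since it cannot be reduced to lower dimension and instead imports the full strength of the boundedness of Fano varieties.
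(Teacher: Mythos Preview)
This theorem is not proved in the paper at all: it is quoted verbatim from Birkar's paper [\cite{B-Fano}] as a known input (see the sentence preceding Theorems \ref{t-bnd-compl-usual}, \ref{t-bnd-compl-usual-local}, \ref{t-bnd-compl}: ``The following theorems, as referred to as the boundedness of complements, were proved by Birkar [\cite{B-Fano}]\ldots''). There is therefore no proof in the present paper to compare your proposal against.

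Your sketch is a fair high-level summary of the strategy in [\cite{B-Fano}] itself --- reduction to a $\Q$-factorial dlt model, MMP to create a non-klt centre or a fibration, generalized adjunction with control of coefficients via the ACC results of [\cite{BZh}], lifting via vanishing and connectedness, and a separate treatment of the exceptional klt case via effective birationality and boundedness --- but none of this is reproduced in the paper under review, which simply cites the result and uses it as a black box.
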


\subsection{B-Birational Maps and B-pluricanonical Representations}
We intoduce the notion of \emph{B-birational} as in \cite{Fuj1}. Let $(X,B)$, $(X',B')$ be  sub pairs, we say $f: (X,B)\dashrightarrow (X',B')$ is \emph{B-birational} if there is a common resolution $\alpha: (Y,B_Y)\rightarrow (X,B)$, $\beta:(Y,B_Y)\rightarrow (X,B)$ such that $K_Y+B_Y=\alpha^*(K_X+B)=\beta^*(K_{X'}+B')$ and a commuting diagram as the following.
\[
\begin{tikzcd}
&(Y,B_Y)\arrow[ld,"\alpha"'] \arrow[rd,"\beta"]&\\
(X,B)\arrow[rr,dashed,"f"]&&(X',B')
\end{tikzcd}
\] Let $$\Bir(X,B):=\{f|f:(X,B)\dashrightarrow (X,B) \text{\;is B-birational}\}$$ Let $n$ be a positive integer such that  $n(K_X+B)$ is Cartier. Then we define $$\rho_n: \Bir(X,B)\rightarrow Aut(H^0(X,n(K_X+B)))$$ be the representaion of the natural action of $\Bir(X,B)$ acting on $H^0(X,n(K_X+B))$ by pullbacking back sections.

We have the following theorem on B-birational representations.
\begin{prop} [\cite{Fujino-Gongyo-1} ]\label{prop-finiteness-bir-representation}
	Let $(X,B)$ be a projective (not necessarily connected) dlt pair with $n(K_X+B)\sim 0$ and $n$ is even, then $\rho_n(\Bir(X,B)) $ is finite. \qed
\end{prop}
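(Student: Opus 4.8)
The plan is to reduce to the connected case and then combine a finiteness statement for the pluricanonical representation of a single connected component with the fact that B-birational self-maps permute the components. First I would write $X = \bigsqcup_{i=1}^{k} X_i$ with $X_i$ the connected components, each a projective dlt pair $(X_i, B_i)$ with $n(K_{X_i}+B_i) \sim 0$. A B-birational map $f \in \Bir(X,B)$, being an isomorphism in codimension one, induces a permutation $\sigma(f) \in S_k$ of the components together with B-birational maps $f_i \colon (X_i, B_i) \dashrightarrow (X_{\sigma(f)(i)}, B_{\sigma(f)(i)})$. This gives a group homomorphism $\Bir(X,B) \to S_k$, and $H^0(X, n(K_X+B)) = \bigoplus_i H^0(X_i, n(K_{X_i}+B_i))$ with the action of $\Bir(X,B)$ compatible with this decomposition and the permutation. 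Thus $\rho_n(\Bir(X,B))$ sits inside the wreath-product-type group built from the images of the pluricanonical representations of the connected pieces; since $S_k$ is finite, it suffices to bound, for each $i$, the image of the stabiliser subgroup acting on the single component, and more precisely the image of $\Bir(X_i, B_i) \to \operatorname{Aut}(H^0(X_i, n(K_{X_i}+B_i)))$.

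Next I would handle the connected case. Here $(X_i, B_i)$ is connected projective dlt with $n(K_{X_i}+B_i) \sim 0$ and $n$ even. The key input is the finiteness theorem for B-pluricanonical representations of connected lc (in particular dlt) pairs with numerically trivial log canonical divisor — this is exactly the content of the work of Fujino and Gongyo cited as \cite{Fujino-Gongyo-1} (building on Ueno, Deligne, Gongyo, and ultimately on the abundance-type finiteness results for the pluricanonical representation in the Calabi–Yau case). The evenness of $n$ is used to rule out the sign obstruction coming from the action on a $2$-torsion canonical class (the classical phenomenon that $\rho_1$ can fail to be finite for, e.g., bielliptic surfaces but $\rho_2$ is finite); with $n$ even one gets that $\rho_n(\Bir(X_i,B_i))$ is finite. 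I would simply invoke this, since the proposition attributes the result to \cite{Fujino-Gongyo-1}.

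Finally I would assemble the pieces: choose a common $n$ (even, with $n(K_{X_i}+B_i)$ Cartier for all $i$, which is automatic from the hypothesis $n(K_X+B)\sim 0$), observe that $\rho_n(\Bir(X,B))$ is a subgroup of $\big(\prod_i \rho_n(\Bir(X_i,B_i))\big) \rtimes S_k$ — more precisely it is contained in the set of monomial-type matrices whose underlying permutation lies in $S_k$ and whose blocks lie in the finite groups $\rho_n(\operatorname{Isom})$ of B-pluricanonical isomorphisms between components in the same orbit — and conclude finiteness from finiteness of each factor and of $S_k$. The main obstacle is genuinely the connected case, i.e. the finiteness of the B-pluricanonical representation for a connected dlt Calabi–Yau pair; but since that is precisely \cite{Fujino-Gongyo-1}, in this paper the only real content is the bookkeeping that passes from connected to disconnected pairs via the permutation action on components, which is routine.
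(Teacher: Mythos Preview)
The paper gives no proof of this proposition at all: the \texttt{\textbackslash qed} immediately following the statement signals that the result is quoted wholesale from \cite{Fujino-Gongyo-1}, and indeed Fujino--Gongyo already treat the not-necessarily-connected case directly. So there is nothing in the paper to compare your argument against; your sketch is simply an outline of (part of) the argument in the cited reference, and as such it is essentially correct and matches how that reference proceeds.

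One inaccuracy worth flagging: a B-birational map $f\in\Bir(X,B)$ is \emph{not} in general an isomorphism in codimension one. The definition (recalled in the paper just before the proposition) only asks for a common resolution on which the two pullbacks of the log canonical divisor agree; $f$ may well contract divisors, provided the crepant condition is satisfied. What \emph{is} true, and what you need, is that such an $f$ still induces a bijection of connected components and isomorphisms between the one-dimensional spaces $H^0(X_i,n(K_{X_i}+B_i))$ and $H^0(X_{\sigma(f)(i)},n(K_{X_{\sigma(f)(i)}}+B_{\sigma(f)(i)}))$; this follows from the crepant property and the fact that $n(K_X+B)\sim 0$, not from any codimension-one isomorphism claim. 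With that correction, your wreath-product bookkeeping goes through and the reduction to the connected case is fine.
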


\newpage
\section{\bf Adjunctions and Hyperstandard Sets}

In this section we review several kinds of adjunction and prove some adjunction formulae, especially for surfaces, which will be needed in the subsequent sections. In general adjunction is relating the (log) canonical divisors of two varieties that are somehow related. We are particularly interested in how the (hyperstandard) coefficients of the boundaries are related.

\subsection{Divisorial Adjunction}
We briefly review [\cite{B-Fano}, \S 3.1]. We consider adjunction for a prime divisor on a variety.\\
(1) 
Let $(X',B')$ be a pair such that $K_{X'}+B'$ is $\Q$-Cartier with log resolution $\phi: (X,B)\rightarrow (X',B')$ where $K_X+B = \phi^*(K_{X'}+B')$.
Assume that $S'$ is the normalisation of a component of ${B'}$ with coefficient $1$, 
and that $S$ is its birational transform on $X$. 
$$
K_X+B=\phi^*(K_{X'}+B'),
$$
let $B_S=(B-S)|_S$. We get 
$$
K_S+B_S= (K_X+B)|_S.
$$
Let $\psi$ be the induced morphism $S\to S'$ and let $B_{S'}=\psi_*B_S$.
Then we get
$$
K_{S'}+B_{S'}= (K_{X'}+B')|_{S'}
$$
which we refer to as  \emph{divisorial adjunction}. 
Note that 
$$
K_{S}+B_{S}=\psi^*(K_{S'}+B_{S'}).
$$
\begin{rem}
	We sometimes write $B_{S'} $ as $\Diff(B'-S')$. Also note that if $C'$ is $\Q$-Cartier that doesn't contain $S'$, then $\Diff(B'+C'-S')=\Diff(B'-S')+C'|_{S'}$.
\end{rem}
\begin{rem}
	We also note that $K_{S'}+B_{S'}$ is determined up to linear equivalence and $B_{S'}$ is determined as a $\Q$-Weil divisor.
\end{rem}
\begin{rem}
	This definition is the same as the definition given in \cite{Kol}, Chapter 4.
\end{rem}
Assume $(X',B')$ is lc. Then the coefficients of $B_{S'}$  belong to $[0,1]$ [\cite{BZh},  Remark 4.8] and we have $(S',B_{S'})$ is also lc.

(3) 
We have inversion of adjunction.

\begin{lem}[inversion of adjunction,\cite{Kol}]\label{l-inv-adjunction}
	Let $(X',B')$ be a $\Q$-factorial pair with $K_{X'}+B'$ is $\Q$-Cartier. 
	Assume $S'$ is a component of ${B'}$ with coefficient $1$ and assume $S'$ is klt Let  
	$$
	K_{S'}+B_{S'}= (K_{X'}+B')|_{S'}
	$$
	be given by adjunction. If $({S'},B_{S'})$ is lc, then $(X',B')$ is lc near $S'$.
\end{lem}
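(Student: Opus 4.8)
The plan is to reduce the statement to the well-established inversion of adjunction for plt/lc pairs via the connectedness principle, which is precisely what is stated in Kollár's book and recalled as Lemma \ref{l-connectedness} above. First I would take a $\Q$-factorial dlt modification (or just a log resolution) $\phi\colon X\to X'$ of $(X',B')$ and write $K_X+B=\phi^*(K_{X'}+B')$; the goal is to show $B^{>1}$ has no component whose image meets $S'$. Since $S'$ is a component of $\lfloor B'\rfloor$, its birational transform $S$ on $X$ appears in $B$ with coefficient $1$, and by the given adjunction $K_S+B_S=(K_X+B)|_S=\psi^*(K_{S'}+B_{S'})$ with $B_S=(B-S)|_S$. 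The hypothesis that $(S',B_{S'})$ is lc means $B_{S'}\le 1$ in coefficients, hence (pulling back under the birational morphism $\psi\colon S\to S'$, which does not increase coefficients on the relevant components) $B_S\le 1$, i.e.\ $(S,B_S)$ is sub-lc.

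Next I would localise: suppose for contradiction that $(X',B')$ is not lc near $S'$, so there is a non-lc center $Z'$ of $(X',B')$ meeting $S'$, witnessed by some prime divisor $E$ over $X'$ with $a(E,X',B')<0$. Shrinking $X'$ to a neighbourhood of a point of $Z'\cap S'$, we may assume the non-lc locus of $(X',B')$ is nonempty and, after perturbation, that it is disjoint from further lc centers; the key point is to arrange a contraction or a boundary modification $-(K_{X'}+B'+\text{something})$ that is ample so that Lemma \ref{l-connectedness} applies. Concretely, I would use the standard trick: replace $B'$ by $B'-\epsilon A'$ for a small ample-equivalent effective divisor, or work with $(X',(1-\epsilon)B'+\epsilon S'+\cdots)$, to get a generalized (here classical) pair whose non-klt locus consists of the pieces $S'$ and the non-lc locus, and whose negative anticanonical class is ample over the relevant base point. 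Then the connectedness principle forces the non-lc locus to meet $S'$ in a connected way, and restricting the discrepancy computation to $S$ shows that this non-lc-ness propagates to a non-sub-lc place of $(S,B_S)$, i.e.\ some component of $B_S$ has coefficient $>1$, contradicting the previous paragraph.

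The main obstacle is the bookkeeping in the localisation/perturbation step: one has to ensure that after shrinking around a point of $S'\cap(\text{non-lc locus})$ and perturbing the boundary, the resulting pair still has $S'$ appearing with coefficient $1$, still restricts correctly under adjunction, and still has the ampleness needed to invoke connectedness — all while not accidentally destroying or creating lc centers. In dimension/generality where $S'$ is only assumed klt (not that $X'$ is klt near $S'$), one also needs that adjunction behaves well, i.e.\ that $(S',B_{S'})$ klt-ness of $S'$ guarantees $\psi\colon S\to S'$ is well understood; this is why the $\Q$-factorial hypothesis is imposed. Since this is a known result (attributed to \cite{Kol}), I expect the write-up to be short: cite the connectedness principle, reduce to it by the perturbation trick, and conclude. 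The routine discrepancy inequalities $a(E,S,B_S)\ge a(E,X',B')$-type comparisons I would not spell out in full.
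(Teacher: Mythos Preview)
The paper does not give any proof of this lemma: it is stated with attribution to Koll\'ar's book \cite{Kol} and immediately followed by the next item, so the ``paper's own proof'' is simply a citation. Your proposal therefore goes well beyond what the paper does. The argument you sketch (pass to a log resolution, use the connectedness principle to force a putative non-lc center to touch $S'$, and derive a contradiction from the sub-lc property of $(S,B_S)$) is the classical approach and is essentially what is behind the cited result in \cite{Kol}; the paper's later Lemma \ref{prop-sdlt-inversion} follows the same pattern of reducing directly to \cite{Kol}, Theorem 4.9. So there is no discrepancy to flag---you have supplied a reasonable outline for a result the paper treats as a black box.
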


(4) 
The next lemma is similar to [\cite{PSh-II}, Proposition 3.9] and [\cite{BZh}, Proposition 4.9].

\begin{lem}[\text{[\cite{B-Fano}, Lemma 3.3]}]\label{l-div-adj-dcc}
	Let $\mathfrak{R}\subset [0,1]$ be a finite set of rational numbers.
	Then there is a finite set of rational numbers $\mathfrak{S}\subset [0,1]$ 
	depending only on $\mathfrak{R}$ satisfying the following. 
	Assume 
	\begin{itemize}
		
		\item $(X',B')$ is lc of dimension $d$,
		
		\item $S'$ is the normalisation of a component of $\rddown{B'}$,  
		
		\item $B'\in  \Phi(\mathfrak{R})$, and 
		
		\item $(S',B_{S'})$ is the pair determined by  adjunction
		$$
		K_{S'}+B_{S'}= (K_{X'}+B')|_{S'}.
		$$ 
	\end{itemize}
	
	Then $B_{S'}\in  \Phi(\mathfrak{S})$. 
\end{lem}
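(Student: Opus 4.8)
The plan is to reduce the statement to the known surface/curve version of divisorial adjunction with hyperstandard coefficients, by analysing the formula for the different $B_{S'}=\Diff(B'-S')$ component by component. Write $S'$ for the normalisation of the component $S$ of $\rddown{B'}$ and $\psi\colon S\to S'$ for the induced morphism from a log resolution. For a prime divisor $C'$ on $S'$, the coefficient $\mu_{C'}B_{S'}$ is computed, by the standard analysis of the different (cf.\ [\cite{B-Fano}, \S 3.1], [\cite{PSh-II}, Proposition 3.9]), from two kinds of contributions: the coefficients $\mu_{C'} B'$ is forced to be, and a contribution coming from the singularities of $X'$ (or of $B'-S'$) along $C'$, governed by an integer $m\ge 1$ which records how non-Cartier / how singular things are transverse to $C'$. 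Concretely one expects a formula of the shape
$$
\mu_{C'} B_{S'} = 1 - \frac{1}{m} + \frac{\sum_j b_j k_j}{m}
$$
where the $b_j\in\Phi(\mathfrak{R})$ are coefficients of $B'$ of the components of $B'$ passing through $C'$, and the $k_j$ are nonnegative integers.

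First I would make precise the statement that $\mu_{C'}B_{S'}$ has this form: this is exactly the content of [\cite{B-Fano}, \S 3.1] together with the analysis in [\cite{PSh-II}, Proposition 3.9], and one should invoke it rather than reprove it. Second, since $(X',B')$ is lc, Theorem \ref{ACC-2} (ACC for numerically trivial pairs) — or more precisely the ACC for lc thresholds, Theorem \ref{ACC-1} — is not literally needed here; what is needed is the purely combinatorial observation that the set of numbers $1-\frac{r}{m}$ with $r\in\mathfrak{R}$ fixed and $m$ ranging over $\N$ is already $\Phi(\{r\})\subseteq\Phi(\mathfrak{R})$. The key point is that the "numerator" $1 - \sum_j b_j k_j$ that appears, after writing $b_j = 1-\frac{r_j}{m_j}$, can be rearranged so that $\mu_{C'}B_{S'} = 1 - \frac{r}{m}$ for some $r$ lying in a fixed finite set $\mathfrak{S}$ depending only on $\mathfrak{R}$ (essentially $\mathfrak{S}$ is built from $\mathfrak{R}$ by taking the finitely many values of $\{r/I : r\in\mathfrak R\}$ and closing up appropriately, where $I=I(\mathfrak R)$). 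Third, one must treat separately the components $C'$ of $B_{S'}$ with coefficient $1$: these contribute $1\in\Phi(\mathfrak{S})$ automatically (take $r=0$), and since $(X',B')$ lc forces $B_{S'}\in[0,1]$, there is nothing else to worry about at the boundary.

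The main obstacle — and the step that genuinely requires care rather than citation — is the bookkeeping that shows the "numerator" of $\mu_{C'}B_{S'}$ takes only finitely many values depending on $\mathfrak R$. One has to control simultaneously the integer $m$ (which a priori is unbounded) and the integers $k_j$, using that $b_j\in\Phi(\mathfrak{R})$ forces $m_j b_j\le\rddown{(m_j+1)b_j}$-type integrality (the remark at the end of \S\ref{ss-dcc-sets}: if $I(\mathfrak R)\mid n$ then $nb\le\rddown{(n+1)b}$) and that $\Phi(\mathfrak R)$ has $1$ as its only accumulation point, so that only finitely many $b_j$ are $<1-\frac1m$ for each fixed $m$. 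The cleanest route is: clear denominators by $I=I(\mathfrak R)$, observe $I\cdot\mu_{C'}B_{S'}$ has bounded "defect from $I-\frac{I r}{m}$", and conclude that $\mu_{C'}B_{S'}\in\Phi(\mathfrak S)$ with $\mathfrak S = \{\,\tfrac{j}{I}\ :\ 0\le j\le I,\ j\in\Z\,\}$ or a similarly explicit finite set. Since the ambient varieties here can be taken of dimension $d$ but the conclusion is about the divisor $S'$ of dimension $d-1$, no dimension-induction is needed — the statement is "one step" of adjunction — so the entire argument is the single combinatorial computation above, and I would present it as a direct consequence of the cited structure of the different plus this finiteness lemma.
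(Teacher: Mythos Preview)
The paper does not give its own proof of this lemma: it is stated as a citation of [\cite{B-Fano}, Lemma 3.3] (itself close to [\cite{PSh-II}, Proposition 3.9]) and used as a black box. So there is no in-paper argument to compare against.

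That said, your sketch is on the right track and matches the standard argument. The formula
$$
\mu_{C'} B_{S'} \;=\; 1-\tfrac{1}{m}+\tfrac{\sum_j b_j k_j}{m},\qquad b_j\in\Phi(\mathfrak R),\ k_j\in\Z_{\ge 0},
$$
together with the lc bound $\sum_j b_j k_j\le 1$, is exactly what one uses. The finiteness step you flag as ``bookkeeping'' goes through cleanly: since nonzero elements of $\Phi(\mathfrak R)$ are bounded below, $\sum k_j$ is bounded; then at most one index can have $m_j\ge 2$ unless $\mu_{C'}B_{S'}=1$, and in the remaining cases one checks directly that $1-\mu_{C'}B_{S'}=s/M$ with $Is\in\Z$ and $s\in[0,1]$, where $I=I(\mathfrak R)$. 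So your explicit choice $\mathfrak S=\{j/I:0\le j\le I\}$ is correct. One cosmetic point: the references to Theorems \ref{ACC-1} and \ref{ACC-2} are unnecessary here, as you yourself note---the argument is purely combinatorial and uses only that $\mathfrak R$ is finite.
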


We will later define similar notion for slc adjunction.
\subsection{Adjunction from Normal pairs to Sdlt Pairs}
Firstly assume $(X,B)$ is log smooth, $B$ is reduced. Since $B$ is a normal crossing divisor, we have $\omega_B$ is a Cartier divisor, hence it corresponds to a Weil divisor $K_B$, whose support doesn't contain any conductor of $B$. Following \cite{Kol} 4.2, we see that we have $\omega_X(B)\cong \omega_B$. \\

\subsubsection{Dlt Adjunction}
Let $(X',B')$ be a pair, and let $S' := \rddown{B'}$. Let $f: (X,B)\rightarrow (X',B')$ be a log resolution that is any exceptional divisor $E$ of $f$ has $a(E,X',B')>-1$ and $K_X+B=f^*(K_X+B)$. Then we have $f$ is isomorphism over generic points of $S'$. Let $S := \rddown{B}$, and let $g: S\rightarrow S'$ be the induced morphism. If we let $B_S := (B-S)|_S$ and $B_{S'}=g_*(B_S)$, we see that we have $K_S+B_S=(K_X+B)|_S$ and we have $$K_{S'}+B_{S'}= (K_{X'}+B')|_{S'}$$ Sometimes, we will write $B_{S'}$ as $\Diff(B'-S')$. Notice again we have $K_S+B_S=g^*(K_{S'}+B_{S'})$. It is clear that $(S',B_{S'})$ is a semi-dlt pair, see \cite{Fuj1}, \cite{Kol} Chapter 5. \\\\
 If we write $S' := \cup_i S_i$, where $S_i$ are the irreducible components of $S'$, then $S_i$ are normal and the normalisation of $S'$ is simply just $S^\nu := \sqcup S_i$. let $\pi:S^\nu\rightarrow S'$ be normalisation and let $K_{S^\nu}+B^\nu=\pi^*(K_{S'}+B_{S'})$ as in 4.1 and let $B_{S_i}=B^\nu|_{S_i}$. Then it is clear $K_{S_i}+B_{S_i}$ is just the divisorial adjunction of $(X,B)$ on the $S_i$ in the sense that $$K_{S_i}+B_{S_i}=(K_X+B)|_{S_i}$$ Hence it is not hard to see that Lemma \ref{l-div-adj-dcc} and \ref{l-inv-adjunction} also hold for this type of adjunction. 
\subsubsection{General Case}
Consider the case where we have a pair $(X',B' := S'+R')$, with $(X',S')$  dlt and $S'$  reduced and $R'$ not containing any irreducible component of $S'$. Then we can define divisorial adjunction similar to above. Letting $R_{S'} := R|_{S'}$, (we note that $R_{S'}$ is indeed a well defined $\Q$-divisor, whose support doesn't contain the components of the conductor divisor on $S'$), and $K_{S'}+\Diff(0) := (K_{X'}+S')|_{S'}$ and letting $B_{S'} := \Diff(0)+R_{S'} $, then we have $$K_{S'}+B_{S'} = (K_{X'}+B')|_{S'}$$ Hence we have the following lemma, which is similar to inversion of adjunction.
\begin{lem}\label{prop-sdlt-inversion}
	Let $(X',B' := S'+R')$ be a $\Q$-factorial pair with $(X',S')$ dlt, $S'$ reduced and $R'$ not  containing any irreducible components of $S'$. Assume $B',R'$ both $\Q$-divisors and write $K_{S'}+B_{R'} := (K_{X'}+B')|_{S'}$. Assume $(S',B_{R'})$ is slc, then $(X',B')$ is lc near a neighbourhood of $S'$.
\end{lem}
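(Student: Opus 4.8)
The plan is to handle one irreducible component of $S'$ at a time and reduce everything to the normal case of inversion of adjunction, Lemma~\ref{l-inv-adjunction}. Write $S'=\sum_i S_i$ with the $S_i$ irreducible. Since $(X',B')$ is lc near $S'$ exactly when it is lc in a neighbourhood of each $S_i$, it suffices to fix an index $i$ and show $(X',B')$ is lc near $S_i$.

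The one substantial point is that the reduced boundary component $S_i$ is a normal klt variety; in fact $(X',S_i)$ is plt near $S_i$. To see this, note that a non-klt place $E$ of $(X',S_i)$ has $a(E,X',S_i)\le 0$, hence $a(E,X',S')\le 0$; as $(X',S')$ is lc this forces $a(E,X',S')=a(E,X',S_i)=0$ and the coefficient of $E$ in the pullback of $S'-S_i$ to vanish, i.e.\ the centre of $E$ is not contained in $\Supp(S'-S_i)$. But for the dlt pair $(X',S')$ the non-klt centres are precisely the strata of $S'$, and the only stratum not contained in $\Supp(S'-S_i)$ is $S_i$ itself, whose sole non-klt place is $S_i$. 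Thus $(X',S_i)$ is plt, so by plt adjunction $(S_i,\Diff_{S_i}(0))$ is klt, and hence so is $(S_i,0)$; in particular $S_i$ is normal and the divisorial adjunction $K_{S_i}+B_{S_i}:=(K_{X'}+B')|_{S_i}$ is defined.

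The pair $(S_i,B_{S_i})$ is lc: since $(S',B_{R'})$ is slc, its normalisation $\bigsqcup_j S_j$ with the induced boundary is lc, and, as recalled in the discussion of dlt adjunction above, the induced boundary restricts on each $S_j$ to precisely the divisorial adjunction $B_{S_j}$ of $(X',B')$ onto $S_j$; so each $(S_i,B_{S_i})$ is lc. Now apply Lemma~\ref{l-inv-adjunction} to the $\Q$-factorial pair $(X',B')$ with $S_i$ as the distinguished divisor: $S_i$ occurs in $B'=S'+R'$ with coefficient $1$ because $R'$ contains no component of $S'$, $S_i$ is klt by the previous paragraph, and $(S_i,B_{S_i})$ is the lc pair given by adjunction. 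The lemma yields that $(X',B')$ is lc near $S_i$, and letting $i$ vary gives the claim; the connectedness principle is not needed. The only ingredient beyond bookkeeping is the plt-ness of $(X',S_i)$, i.e.\ the klt-ness of the reduced boundary components of the dlt pair $(X',S')$ — a standard fact; granting it, the statement reduces to the normal inversion of adjunction.
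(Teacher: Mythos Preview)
Your argument is correct and follows the same outline as the paper: pass to the normalisation $\bigsqcup_i S_i$ of $S'$ (the components being normal since $(X',S')$ is dlt), observe that the induced pairs $(S_i,B_{S_i})$ are lc because $(S',B_{R'})$ is slc, and then invoke inversion of adjunction. The only difference is the endpoint: the paper appeals directly to \cite[Theorem~4.9]{Kol}, which handles all components at once without any klt hypothesis on $S_i$, whereas you route through Lemma~\ref{l-inv-adjunction} and therefore need the extra step of proving $(X',S_i)$ is plt to ensure $S_i$ is klt. That extra step is fine (indeed, a divisorial valuation whose centre is the prime divisor $S_i$ must be $S_i$ itself), but it is not needed if one cites the stronger form of inversion of adjunction; otherwise the two proofs are the same.
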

\begin{proof}
	Since $(X',S')$ is dlt, then all the irreducible components $S_i$ of $S'$ are normal. Then $\bar{S}:=\sqcup S_i\xrightarrow{\pi} S'$ is its normalisation. Then let $K_{\bar{S}}+\Theta := f^*(K_{S'}+B_{S'})$. Then we see that $(\bar{S},\Theta)$ is lc. Now we can apply [\cite{Kol}, Theorem 4.9], we see that $(X',B')$ is lc near $S'$.
\end{proof} 
\newpage
\section{ Property of Slc and Sdlt Pairs and Adjunction}
Here we will review some fundamental properties of slc and sdlt pairs. Most of this section is based on \cite{Kol} Chapter 5 and \cite{Fuj1}. \\
\subsection{Divisors on Deminormal Schemes}
Starting with standard notation, let $(X,B)$ be an slc pair, with conductor divisor $D$ and normalisation $\pi: (X',B'+D')\rightarrow (X,B)$, where $B'$ is the divisorial part of inverse image of $B$ and $D'$ is the conductor on $X'$. We will firstly introduce the notion of divisors on $X$. There is a closed subset $Z\subset X$ of codimension at least 2 such that $X_0 := X\backslash Z$ has only regular and normal crossing points. We denote $j: X_0 \rightarrow X$ be the inclusion. Let $C$ be an integral Weil divisor on $X$ such that its support doesn't contain any irreducible components of $D$, then $$\mathcal{O}_X(C) := j_*(O_{X_0}(C|_{X_0}))$$ is a divisorial sheaf since $C|_{X_0}$ is Cartier on $X_0$ ([\cite{Kol},5.6.3]). It is also clear that $\mathcal{O}_X(C)^{[n]} $ is the sheaf corresponding to the Weil divisor $nC$ since $X$ is $S2$. Similarly we can define $K_X$ to be the pushforward of the canonical divisor on $X_0$. Now for C as above, we can confuse the notation of $\mathcal{O}_X(mK_X+C)$ with $\omega_X^{[m]}(C)$.\\\\
By [\cite{Kol} 5.7.3], we know if $mB$ is a Weil divisor on $X$, then we have a canonical isomorphism $$(\pi^*\omega_X^{[m]}(mB))^{**} \cong \omega_{X'}(mD'+mB)$$ and when $m(K_X+B)$ is Cartier, this simplifies to $\pi^*\omega_X^{[m]}(mB)\cong \omega_{X'}(mD'+mB)$, hence we can write $K_{X'}+B'+D' = \pi^*(K_X+B)$. This explains the notation we have in the Preliminary. 

\subsection{Conductor and Involution and Adjunction}
Keeping with the notation of an slc pair $(X,B)$ with conductor divisor $D$ and normalisation $(X',B'+D')$. Let $D_n$ be the normalisation of $D'$, then there is a natural Galois involution $\tau : D^n \rightarrow D^n$ induced by separating the nodes on $X$. By divisorial adjunction we can write $K_{D^n}+B_{D^n} := (K_{X'}+B'+D')|_{D^n}$. Rigorously we mean if we let $\nu : D^n \rightarrow X'$ be the natural morphism, and by [\cite{Kol} Chapter 4], we have a natural isomorphism $$\mathfrak{R}^n : \nu^*(\omega_{X'}^{[n]}(nB'+nD') )\cong \omega_{D^n}^{[n]}(nB_{D^n})$$
We also have the following lemma.
\begin{lem}[\cite{Kol}, Prop. 5.38]
	Let $X$ be a quasi-projective deminormal scheme and let $\pi:X'\rightarrow X$ be normalisation. Let $B$ be a $\Q$-divisor on $X$ and define $B',D',D^n,\tau$ as above. Then $(X,B)$ is slc if and only if $(X',B'+D')$ is lc and $\Diff_{D^n}(B')$ is $\tau$-invariant.
\end{lem}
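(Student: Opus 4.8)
This is Kollár's Proposition~5.38 in \cite{Kol}; I sketch the argument I would give. Since the definition of an slc pair already incorporates that $(X',B'+D')$ is lc, under the standing hypothesis that $(X',B'+D')$ is lc the statement reduces to the equivalence: $K_X+B$ is $\Q$-Cartier if and only if $\Diff_{D^n}(B')$ is $\tau$-invariant as a $\Q$-Weil divisor on $D^n$. Here $\Diff_{D^n}(B')$ is well defined once $(X',B'+D')$ is lc, via the adjunction isomorphism $\nu^*\big(\omega_{X'}^{[m]}(m(B'+D'))\big)\cong\omega_{D^n}^{[m]}(m\,\Diff_{D^n}(B'))$ recalled above, and (as noted for divisorial adjunction) it is determined as an honest divisor, not merely a divisor class.

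For the forward direction I would restrict the pluricanonical sheaf. Suppose $m(K_X+B)$ is Cartier. Then $\pi^*\mathcal{O}_X(m(K_X+B))\cong\mathcal{O}_{X'}(m(K_{X'}+B'+D'))=\omega_{X'}^{[m]}(m(B'+D'))$, so pulling further back along $\nu\colon D^n\to X'$ and applying the adjunction isomorphism above identifies the line bundle $\omega_{D^n}^{[m]}(m\,\Diff_{D^n}(B'))$ with the pullback of $\mathcal{O}_X(m(K_X+B))$ along the composite $D^n\to D\hookrightarrow X$. By construction $\tau$ is the involution exchanging the two branches of $X$ meeting along each codimension-one point of the conductor, so this composite factors, in codimension one on $D^n$, through the quotient $D^n/\tau$; hence the line bundle pulled back from $X$ carries a canonical $\tau$-equivariant structure, and tracking this through the canonical identification $\tau^*\omega_{D^n}^{[m]}\cong\omega_{D^n}^{[m]}$ forces $\tau^*\Diff_{D^n}(B')=\Diff_{D^n}(B')$.

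The converse is the substantial direction, and I would argue it by descent along the normalisation $\pi\colon X'\to X$. The reflexive sheaf $\omega_X^{[m]}(mB)$ is a line bundle precisely when $L:=\omega_{X'}^{[m]}(m(B'+D'))$ on $X'$ (a line bundle for $m$ divisible enough, since $K_{X'}+B'+D'$ is $\Q$-Cartier by the lc hypothesis) descends along $\pi$ to a line bundle on $X$; by the gluing theory of \cite{Kol}, Chapter~5, such a descent is the same as an isomorphism $\tau^*\mathcal{L}_{D^n}\cong\mathcal{L}_{D^n}$, where $\mathcal{L}_{D^n}:=\nu^*L\cong\omega_{D^n}^{[m]}(m\,\Diff_{D^n}(B'))$, which matches the restrictions of $L$ from the two branches and satisfies the cocycle condition. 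The hypothesis that $\Diff_{D^n}(B')$ is $\tau$-invariant is exactly what supplies such an isomorphism for $m$ sufficiently divisible. I expect the main obstacle to be precisely the passage from the generic points of the conductor, where $\tau$-invariance amounts to a matching of residues, to a genuine descent datum over all of $D^n$ satisfying the cocycle condition: this uses the S2 property of $X$ together with the fine structure of the different in codimension $\ge 2$ on $X'$, and is the technical core of \cite{Kol}, Chapter~5 that we are importing.
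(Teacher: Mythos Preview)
The paper does not give its own proof of this lemma: it is stated with a citation to \cite{Kol}, Proposition~5.38, and nothing more. So there is no argument in the paper to compare your proposal against.

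That said, your sketch is a faithful outline of the proof one finds in \cite{Kol}. The forward direction---pulling back the Cartier sheaf $\mathcal{O}_X(m(K_X+B))$ through $D^n\to X'\to X$ and observing that this factors $\tau$-equivariantly in codimension one---is exactly how $\tau$-invariance of the different is established there. For the converse you correctly identify the mechanism as descent of the line bundle $\omega_{X'}^{[m]}(m(B'+D'))$ along $\pi$ via Koll\'ar's gluing theory, and you are right to flag that passing from $\tau$-invariance of the divisor to a genuine descent datum satisfying the cocycle condition is the technical heart, which you are importing rather than reproving. One small point: your reduction ``under the standing hypothesis that $(X',B'+D')$ is lc the statement reduces to $K_X+B$ $\Q$-Cartier $\Leftrightarrow$ $\tau$-invariance'' is fine for the forward direction, but for the converse one should note that the lc hypothesis is only used to guarantee $K_{X'}+B'+D'$ is $\Q$-Cartier (so that $L$ is a line bundle for divisible $m$) and that the different is a well-defined divisor; once $K_X+B$ is shown $\Q$-Cartier, the lc condition on the normalisation completes the definition of slc automatically.
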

\subsubsection{sdlt case}\label{sec:sdlt-case} If $(X,B)$ is sdlt, then we can use much easier notation. Again let $(X',B'+D')$ be its normalisation and $X' := \sqcup X_i$ and $X_i$ are irreducible components of $X$, Also denote $K_{X_i}+B_i+D_i := (K_{X'}+B'+D')|_{X_i}$. Then we have $$D^n := \displaystyle \sqcup_i(\sqcup_{E \;\text{a component of}\; D_i} E)$$ Also note that since $(X',B'+D')$ is dlt, all components of $D'$ are normal therefore the above adjunction is really just the restriction of divisors (for some properly chosen $K_{X_i}$). In this case the above $\mathfrak{R}^n$ map becomes $$\mathfrak{R}^n: O_{X'}(nK_{X'}+nB'+nD')|_{D^n} \rightarrow \mathcal{O}_{D^n}(nK_{D^n}+nB_{D^n})$$ sending $$(s_i)_i \rightarrow ((s_i|_E)_{E \;\text{a component of}\; D_i})_i$$
Therefore for the dlt case, it is fine to confuse $\mathfrak{R}^n(s)$ with just $s|_{D^n}$.

\subsection{Pre-admissible Sections and Admissible Sections}
Here we quickly review the definition of preadmissible sections and admissible sections for dlt pairs. Most of this sections is from \cite{Fuj1}. We will continue to use the notation that we have used in \ref{sec:sdlt-case}.
\begin{defn}
	Let $(X,B)$ be a sdlt pair with dimension $n$ and we assume $m(K_X+B)$ is Cartier and let $(X',B'+D')$ be its normalisation and $D^n$ be the normalisation of $D'$ as in the above section. \begin{enumerate}
		\item We say $s\in H^0(X,\mathcal{O}_X(m(K_X+B))$ is preadmissible if $s|_{D^n} \in H^0(D^n,\mathcal{O}_{D_n}(m(K_{D^n}+B_{D^n})) := O_{X'}(nK_{X'}+nB'+nD')|_{D^n})$ is admissible. This set is denoted by $PA(X,m(K_X+B))$.
		\item We say $s\in H^0(X,\mathcal{O}_X(m(K_X+B))$ is admissible if $s$ is preadmissible and $g^*(s|_{X_j}) = s|_{X_i}$ for every $g : (X_i,B_i+D_i)\dashrightarrow (X_j,B_j+D_j)$ B-birational map, where $X' := \sqcup X_i$. This set of section is denoted by $A(X,m(K_X+B))$.
	\end{enumerate}
\end{defn}
\begin{rem}$\;$\begin{enumerate}
		\item It is clear that if $s$ is admissible, then $s|_{X_i}$ is invariant under B-birational automorphism for each $(X_i,B_i+D_i)$.
		\item See [\cite{Gongyo-1}, Remark 5.2]. Assume $(X,B)$ is sdlt and $m(K_X+B)$ is Cartier. Let $\pi: (X',B'+D')\rightarrow X$ be its normalisation. Then it is clear that $s\in H^0(X,m(K_X+B))$ is (pre)-admissible if and only if $\pi^* s\in H^0(X',m(K_X'+B'+D'))$ is (pre)-admissible. 
	\end{enumerate}
		\end{rem}
\subsection{Descending Sections to Slc Pairs from Normalisation}
Firstly we start with stating [\cite{Kol} 5.8]. For simplicity we will assume that $m$ is even in the original proposition.
\begin{prop}[\cite{Kol} 5.8]\label{prop-kol-5.8}
	Let $X$ be a demi-normal scheme and $B$ a $\Q$-divisor on X such that support doesn't contain any components of the conductor $D$. Assume $m(K_X+B)$ is Cartier and assume that $m$ is even and let $(X',B'+D')$ be its normalisation.  Then a section $\phi\in \omega_{X'}^{[m]}(mB'+D')$ descends to a section of $\omega_X^{[m]}(mB)$ if and only if $\mathfrak{R}^m\phi$ is $\tau$-invariant where $\tau : D^n\rightarrow D^n$ is the involution and $\mathfrak{R}^m$ is the map defined above.
\end{prop}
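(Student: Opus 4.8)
The plan is to reduce the "descends" question to pure commutative algebra on the demi-normal scheme $X$, using that $X$ is $S2$ and that the non-normal locus has a very explicit structure in codimension one. Let $D\subset X$ be the conductor, $\pi\colon X'\to X$ the normalisation, and recall that, by definition of demi-normality, away from a closed set $Z\subset X$ of codimension $\ge 2$ the scheme $X$ has only smooth points and ordinary nodes along $D$. Write $j\colon X_0:=X\setminus Z\hookrightarrow X$ and recall from the discussion above that $\omega_X^{[m]}(mB)=j_*(\omega_{X_0}^{[m]}(mB|_{X_0}))$ and similarly on $X'$; thus, since both sheaves are reflexive and $X$ is $S2$, a section over $X$ is the same thing as a section over $X_0$, and the whole statement can be checked after restricting to $X_0$. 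So I would first reduce to the case $X=X_0$, i.e. $X$ has only smooth and nodal points in codimension one.

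Next, the heart of the matter is local along $D$. On $X_0$ we must compare $\pi_*\omega_{X'}^{[m]}(mB'+D')$ with $\omega_X^{[m]}(mB)$. A section $\phi$ of $\omega_{X'}^{[m]}(mB'+D')$ is a section of $\omega_{X'}^{[m]}(mB')$ with at worst a simple pole along each component of $D'$; I would use the residue/Poincaré-type isomorphism $\mathfrak{R}^m$ recalled above, which identifies the polar part of $\phi$ along $D'$ with a pluricanonical section of $\omega_{D^n}^{[m]}(mB_{D^n})$ on the normalisation $D^n$ of the conductor. The gluing of $X$ out of $X'$ is precisely the gluing of the two sheets of $D'$ along $D^n$ via the involution $\tau$: a pair of local generators of $\omega_{X'}^{[m]}$ near the two branches through a node glue to a local generator of $\omega_X^{[m]}$ exactly when their residues along the two branches agree after identifying the branches by $\tau$. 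Hence $\phi$ descends to a (single-valued, $S2$) section of $\omega_X^{[m]}(mB)$ precisely when $\mathfrak{R}^m\phi$ is $\tau$-invariant. This is where the hypothesis that $m$ is even enters: it guarantees that $\omega^{[m]}$ has no monodromy/sign ambiguity around the node, so that the naive comparison of residues on the two branches is literally an equality rather than an equality up to a sign, and the Galois descent along $\pi|_{D^n}$ goes through cleanly.

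Concretely, the key steps, in order, are: (1) reduce to $X=X_0$ by reflexivity and $S2$-ness; (2) work étale-locally near a general point of a component of $D$, where $X$ looks like $(xy=0)\subset \mathbb{A}^n$, $X'$ is the disjoint union of the two coordinate hyperplanes, and $D^n$ is (two copies of) $\mathbb{A}^{n-2}$; in this model write down generators of $\omega_{X'}^{[m]}(mB'+D')$ and of $\omega_X^{[m]}(mB)$ explicitly and check that descent $\Leftrightarrow$ matching of residues; (3) reinterpret "matching of residues on the two branches" globally as "$\mathfrak{R}^m\phi$ is invariant under the involution $\tau$ that swaps the branches", using that $\tau$ is exactly the datum recording how $X$ is reconstructed from $(X',D')$ (this is Kollár's construction of slc gluing, \cite{Kol} Chapter 5); (4) assemble: a $\tau$-invariant $\phi$ glues to a section of $\omega_X^{[m]}(mB)$ on $X_0$, which then extends uniquely to $X$ by step (1), and conversely any section on $X$ pulls back to a $\tau$-invariant $\phi$. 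The main obstacle I anticipate is step (2)–(3): getting the residue computation and the sign bookkeeping exactly right in the $\omega^{[m]}$-twisted nodal local model, and checking that the resulting matching condition is globally, functorially, the $\tau$-invariance of $\mathfrak{R}^m\phi$ rather than some twisted variant — but since the statement is quoted verbatim from \cite{Kol} 5.8, I would in fact simply cite that reference for the local computation and only spell out the reduction in steps (1) and (4).
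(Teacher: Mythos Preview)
The paper does not give its own proof of this proposition: it is stated with the attribution ``[\cite{Kol} 5.8]'' and no proof environment follows. So there is nothing to compare your argument against in the paper itself. Your sketch is a reasonable outline of the actual argument in Koll\'ar's book (reduce to the open set $X_0$ by $S2$-ness and reflexivity, then do the nodal local computation and interpret the matching condition as $\tau$-invariance), and you correctly anticipate at the end that one would simply cite \cite{Kol} for the details --- which is exactly what the paper does.
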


We will show the following lemma is the same as [\cite{Fuj1} Lemma 4.2].

\begin{lem} \label{lem-descending-sections}
	Let (X,B) be an slc pair with $m(K_X+B)$ integral. Let $\pi : (X',B'+D')\rightarrow X$ be its normalisation. Let $X' := \sqcup X_i$ and let $K_{X_i}+B_i+D_i := (K_{X'}+B'+D')|_{X_i}$. Let $(Y,B_Y+D_Y)$ be a $\Q$-factorial dlt model of $(X',B'+D')$ in the sense that $Y := \sqcup Y_i$ and let  $K_{Y_i}+B_{Y_i}+D_{Y_i} := (K_Y+B_Y+D_Y)|_{Y_i}$ then $(Y_i,B_{Y_i}+D_{Y_i})$ is a dlt model of $(X_i,B_i+D_i)$. Assume that $m(K_Y+B_Y+D_Y)$ is Cartier.\\\\
	Now let $s\in PA(Y,m(K_Y+B_Y+D_Y)$, then $s$ descends to a section in $H^0(X,m(K_X+B))$.
\end{lem}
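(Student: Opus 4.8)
The plan is to show that a preadmissible section on the $\Q$-factorial dlt model $(Y,B_Y+D_Y)$ first pushes down to the normalisation $(X',B'+D')$, and then descends across the gluing to $X$ via the $\tau$-invariance criterion of Proposition \ref{prop-kol-5.8}. I would begin by reducing to the case $m$ even: since $m(K_X+B)$ is only assumed integral, I would replace $m$ by $2m$ (or invoke that the argument is insensitive to this since one only needs the existence of the descended section; alternatively carry the parity hypothesis through as in \cite{Kol} 5.8), so that Proposition \ref{prop-kol-5.8} applies verbatim.

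The first main step is to descend $s$ from $Y$ to $X'$. Write $Y=\sqcup Y_i$ with $f_i\colon Y_i\to X_i$ the dlt model of $(X_i,B_i+D_i)$, so $K_{Y_i}+B_{Y_i}+D_{Y_i}=f_i^*(K_{X_i}+B_i+D_i)$. Because $f_i$ is a birational contraction and the pullback relation holds, $f_{i*}\mathcal{O}_{Y_i}(m(K_{Y_i}+B_{Y_i}+D_{Y_i}))=\mathcal{O}_{X_i}(m(K_{X_i}+B_i+D_i))$ (reflexive pushforward of the relative canonical sheaf under a crepant birational contraction between normal varieties, using that the $Y_i$ have rational singularities or directly that the exceptional divisors all have coefficient $1$ so contribute nothing after rounding), hence $s|_{Y_i}$ corresponds to a unique section $\bar s_i\in H^0(X_i,m(K_{X_i}+B_i+D_i))$, and together these give $\bar s\in H^0(X',m(K_{X'}+B'+D'))$. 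Here I would use Remark following the definition of admissible sections (the pullback correspondence between (pre)admissible sections on $X'$ and on a dlt model) to conclude that $\bar s$ is preadmissible on $X'$ — this is essentially the statement that preadmissibility is detected on $D^n$ and the dlt model of the conductor strata matches up with the conductor strata downstairs.

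The second main step is to descend $\bar s$ from $X'$ to $X$. By Proposition \ref{prop-kol-5.8} it suffices to check that $\mathfrak{R}^m(\bar s|_{D^n})$ is $\tau$-invariant, where $\tau\colon D^n\to D^n$ is the gluing involution. But $\bar s$ is preadmissible, which by definition means $\bar s|_{D^n}$ is an \emph{admissible} section of $m(K_{D^n}+B_{D^n})$; admissibility includes invariance under all B-birational self-maps of the strata of $D^n$, and $\tau$ is realised (on each pair of glued components) by such a B-birational map — indeed $\tau$ identifies $\Diff_{D^n}(B')$ with itself by the slc hypothesis on $(X,B)$ (Lemma \ref{lem-descending-sections}'s input: $(X,B)$ slc $\iff$ $(X',B'+D')$ lc and $\Diff_{D^n}(B')$ is $\tau$-invariant), so $\tau$ is a B-birational automorphism of $(D^n,B_{D^n})$. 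Hence $\tau^*(\bar s|_{D^n})=\bar s|_{D^n}$, i.e. $\mathfrak{R}^m(\bar s|_{D^n})$ is $\tau$-invariant, and Proposition \ref{prop-kol-5.8} produces the desired section in $H^0(X,m(K_X+B))$.

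I expect the main obstacle to be the first step — the precise bookkeeping that a preadmissible section on $Y$ genuinely pushes forward to a preadmissible section on $X'$, and not merely to a section. One has to match the conductor $D_{Y_i}$ on the dlt model with the birational transform of $D_i$, check that $f_i$ is an isomorphism over the generic points of the components of $D_i$ (true since those are lc centres of $(X_i,B_i+D_i)$ and a dlt model is an isomorphism there), and verify that "admissible on $D_{Y}^n$'' transports to "admissible on $D^n$'' compatibly with the B-birational structure; the B-birational maps between strata on the model and the B-birational maps between strata downstairs have to be compatible via $f$, which is where one invokes that B-birationality is insensitive to crepant blow-ups (this is exactly the content of the remark cited above from \cite{Gongyo-1} / \cite{Fuj1}). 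The parity issue ($m$ integral vs. even) is a minor annoyance handled by passing to a multiple, since the statement only claims existence of a descended section.
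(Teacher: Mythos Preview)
Your approach is essentially the same as the paper's: push the preadmissible section from $Y$ down to $X'$, then use that admissibility on $D^n$ forces $\tau$-invariance (since $\tau$ is B-birational on $(D^n,B_{D^n})$), and invoke Proposition~\ref{prop-kol-5.8} to descend to $X$.

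There is one technical gap. Proposition~\ref{prop-kol-5.8} (as stated in the paper) requires $m(K_X+B)$ to be \emph{Cartier}, not merely integral, and the lemma only assumes the latter. You invoke the proposition directly, which is not justified. The paper's fix is to choose a closed subset $Z\subset X$ of codimension $\ge 2$ so that $X_0:=X\setminus Z$ has only regular and normal crossing points; on $X_0$ the integral divisor $m(K_X+B)|_{X_0}$ is automatically Cartier, so Proposition~\ref{prop-kol-5.8} applies there to give $t\in H^0(X_0,m(K_X+B)|_{X_0})$, and then $S2$-ness of $X$ extends $t$ to all of $X$. You should insert this step.

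On parity: replacing $m$ by $2m$ does not work, since you would then only produce a section of $2m(K_X+B)$, not of $m(K_X+B)$ as claimed. Your parenthetical alternative---carrying through the sign in the full version of \cite{Kol}~5.8---is the correct route; the paper's proof, like yours, is silent on this point, presumably because in every application $m$ is taken even anyway.
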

\begin{proof}
	It is clear that $m(K_{X'}+B'+D')$ is also Cartier and $s$ descends to a section $s'$ on $H^0(X',m(K_{X'}+B'+D'))$. Notice that $D^n$ the normalisation of $D'$ is nothing but the disjoint union of all the components of $D_Y$. Hence $s'|_{D^n}$ is in particular invariant under the involution $\tau$. 
	\\\\Let $Z$ be a codimensional 2 subset of $X$ such that $X_0 := X\backslash Z$ only has regular or normal crossing points. Then by $m(K_X+B)|_{X_0}$ is Cartier and we can apply \ref{prop-kol-5.8} to get a section $t \in H^0(X_0,m(K_X+B)|_{X_0})$ descending from $s'$. 
	Now notice that $X$ is $S2$, so we get a section $t\in H^0(X,m(K_X+B))$, as required 
\end{proof}

Now we will state the result that we need for later sections. 
\begin{prop}\label{prop-bnd-index-slc}
	Assume $(X,B)$ is an slc pair with normalisation $\pi: (X',B'+D')$ and let $(Y,B_Y,D_Y)$ be the dlt model. Assume that we have $n(K_Y+B_Y+D_Y)\sim 0$ and $0\neq s\in H^0(Y,n(K_Y+B_Y+D_Y)))$ is pre-admissible. Then $n(K_X+B)\sim 0$.
\end{prop}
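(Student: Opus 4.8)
The plan is to reduce the statement to the descending-of-sections machinery (Lemma~\ref{lem-descending-sections}) together with the finiteness of the B-pluricanonical representation (Proposition~\ref{prop-finiteness-bir-representation}). First I would observe that since $n(K_Y+B_Y+D_Y)\sim 0$ and $0\neq s\in H^0(Y,n(K_Y+B_Y+D_Y))$ is preadmissible, by Lemma~\ref{lem-descending-sections} the section $s$ descends to a section $t\in H^0(X,n(K_X+B))$. The point is then to upgrade this to $n(K_X+B)\sim 0$: it suffices to show that $t$ is nowhere vanishing, equivalently that $\divi(t)=0$, equivalently that $h^0(X,n(K_X+B))\neq 0$ forces the divisor $n(K_X+B)$ (which is torsion in $\Pic$ since $n(K_Y+B_Y+D_Y)\sim 0$ pulls back $\Q$-trivially and $K_X+B\sim_\Q 0$ on $X$) to be actually trivial. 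So the real content is constructing a \emph{nowhere-vanishing} descended section, which is where one needs to feed an \emph{admissible} (not merely preadmissible) section into the machine, so that its restrictions to the components $Y_i$ match up under all the B-birational gluing maps.

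The key steps, in order, would be: (1) Pass to the $\Q$-factorial dlt model $(Y,B_Y+D_Y)$ of the normalisation, replacing $n$ by a bounded multiple (using that $m$ may be taken even, as in Proposition~\ref{prop-kol-5.8}) so that $n(K_Y+B_Y+D_Y)$ is Cartier; note $n(K_Y+B_Y+D_Y)\sim 0$ still holds after such a replacement since the divisor was already linearly trivial. (2) Apply Proposition~\ref{prop-finiteness-bir-representation}: since $(Y,B_Y+D_Y)$ is a (possibly disconnected) dlt pair with $n(K_Y+B_Y+D_Y)\sim 0$ and $n$ even, the B-birational representation $\rho_n(\Bir(Y,B_Y+D_Y))$ is a finite group. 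Averaging the given section $s$ over this finite group (concretely, taking the "norm" or a symmetrisation of $s$ over the finitely many conjugates, or better: observing the space of admissible sections is nonzero because one can average $H^0$) produces a \emph{nonzero admissible} section $s_0\in A(Y,n(K_Y+B_Y+D_Y))$ — here finiteness is exactly what guarantees the averaged/symmetrised section is again a section of the same sheaf and is nonzero after possibly passing to a further bounded power of $n$. (3) An admissible section is in particular preadmissible, so by Lemma~\ref{lem-descending-sections} $s_0$ descends to $t_0\in H^0(X,n(K_X+B))$. (4) Since $t_0$ is obtained from an everywhere-nonvanishing-looking admissible section: one checks that on each component its zero locus is invariant under $\Bir$, hence — because $K_{Y_i}+B_{Y_i}+D_{Y_i}\sim_\Q 0$ and the component has no B-birationally invariant effective nonzero divisor in this class (or by a direct Calabi–Yau-type argument that a nonzero section of a torsion line bundle on a CY pair has empty zero locus once one knows the class is torsion) — the zero divisor of $s_0$, hence of $t_0$, is empty. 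Therefore $n(K_X+B)\sim \divi(t_0)=0$.

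The main obstacle I expect is step (4), namely genuinely showing the descended section is \emph{nowhere vanishing} rather than merely nonzero. On a connected component this follows because $n(K_{Y_i}+B_{Y_i}+D_{Y_i})$ is a torsion element of $\Pic$ with a nonzero global section, which forces it to be trivial and the section nowhere zero; but across the slc gluing one has to be careful that the zero loci on the various components are compatible and that the involution/gluing does not create a section vanishing on a conductor stratum. This is precisely why admissibility (compatibility under all B-birational maps, including those permuting components) is needed rather than just preadmissibility, and why Proposition~\ref{prop-finiteness-bir-representation} is invoked: finiteness of $\rho_n$ lets us replace $s$ by a nonzero \emph{admissible} section after raising $n$ to a bounded power, and admissible sections restrict compatibly on every stratum so the descended section cannot vanish on any codimension-one part of $X$. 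A secondary, more bookkeeping-level obstacle is tracking the passage from $n$ to its needed bounded multiple (evenness, Cartier-ness on $Y$, order of $\rho_n$) and confirming that the hypothesis "$n(K_Y+B_Y+D_Y)\sim 0$" survives each such replacement — but this is routine given the setup. I would therefore structure the proof as: descend via Lemma~\ref{lem-descending-sections}, invoke finiteness to get an admissible section, and conclude $n(K_X+B)\sim 0$ by the torsion-line-bundle argument on the descended section.
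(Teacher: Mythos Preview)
Your proposal takes an unnecessary detour and, as written, does not prove the proposition as stated. The conclusion is $n(K_X+B)\sim 0$ for the \emph{given} $n$; your steps (1)--(2) replace $n$ by a bounded multiple (to force evenness, Cartier-ness, and to kill the finite group $\rho_n$), which yields only a weaker statement. More importantly, the upgrade from preadmissible to admissible via Proposition~\ref{prop-finiteness-bir-representation} is not needed at all, and your step (4) misidentifies what admissibility would buy: admissibility constrains the section under B-birational maps between components, but this by itself does not force non-vanishing on every component.

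The paper's argument is much more direct and uses only preadmissibility. Since $n(K_Y+B_Y+D_Y)\sim 0$, the line bundle is trivial on each connected component $Y_i$, so $s|_{Y_i}$ is either identically zero or nowhere vanishing. Preadmissibility of $s$ gives that $s'|_{D^n}$ is $\tau$-invariant (this is exactly what is used in Lemma~\ref{lem-descending-sections}). Now reduce to $X$ connected: if $s'$ vanished on some component $X_j'$, its restriction to the conductor piece in $X_j'$ is zero, hence by $\tau$-invariance the restriction to the matching conductor piece on an adjacent component $X_k'$ is zero, forcing $s'|_{X_k'}=0$ (again because the bundle is trivial there); connectedness of $X$ propagates this to all components, contradicting $s\neq 0$. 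Thus $s'$ is nowhere vanishing. The paper then removes a codimension-$2$ subset $Z\subset X$ so that $X_0=X\setminus Z$ is nodal; on $X_0'=\pi^{-1}(X_0)$ the section $s'$ trivializes the bundle, hence the descended section $t$ trivializes $\mathcal{O}_{X_0}(n(K_X+B)|_{X_0})$, and the $S2$ property of $X$ gives $\mathcal{O}_X(n(K_X+B))\cong\mathcal{O}_X$. No averaging, no finiteness of $\rho_n$, and no change of $n$ is required.
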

\begin{proof}
	Firstly note that $n(K_Y+B_Y+D_Y)\sim 0$ implies $n(K_X+B)$ is integral. Hence we can apply \ref{lem-descending-sections} to get a section $0\neq t\in H^0(X,n(K_X+B))$. Also let $s'$ be the corresponding section on $X'$.\\\\
	Again let $Z$ be a codimensional 2 subset of $X$ such that $X_0 := X\backslash Z$ only has regular or normal crossing points. Then $n(K_X+B)|_{X_0}$ is Cartier. let $X'_0 := \pi^{-1}X_0$, we have $\mathcal{O}_{X'_0}(n(K_{X'}+B'+D')|_{X'_0})\cong \mathcal{O}_{X'_0}$ via $s'$, hence we get $\mathcal{O}_{X_0}(n(K_X+B)|_{X_0})\cong \mathcal{O}_{X_0}$ via $t$. Therefore we conclude that $\mathcal{O}_X(n(K_X+B))\cong \mathcal{O}_{X}$ since $X$ is S2. 
\end{proof}

\newpage
\section{Sdlt log Calabi Yau Pair}
In this section, we will review some results from \cite{Fuj1} and \cite{Fujino-Gongyo-1}. We will also show some partial results regarding boundedness of index of sdlt Calabi Yau pairs. Most of the results in this section are included in \cite{Fuj1}, \cite{Gongyo-1} and \cite{Fujino-Gongyo-1}. For reader's convenience, we will include most of the proofs here. Notice that results in \cite{Fuj1} and \cite{Gongyo-1} normally doesn't care about boundedness. We will show that results in \cite{Fuj1} and \cite{Gongyo-1} hold with boundedness provided that some assumptions hold and prove these assumptions hold for curves.
\subsection{Reduced Boundary of Dlt Pairs}
We will show the following basic property of reduced boundary for dlt pairs over a fibration. This is very similar to [\cite{Fuj1}, Proposition 2.1].  For more detailed proof, see [\cite{Fuj1}, Proposition 2.1],[\cite{Gongyo-1}, Claim 5.3]. 

\begin{prop}\label{prop-dlt-boundary}[\cite{Fuj1}, Proposition 2.1],[\cite{Gongyo-1}, Claim 5.3]
	Let $X,B$ be an n-dimensional connected $\Q$-factorial dlt pair. Assume $K_X+B\sim_\Q 0$. Then one of the following holds.
	\begin{enumerate}
		\item  $\rddown{B}$ is connected. 
		\item $\rddown{B}$ has 2 connected components $B_1,B_2$ and there is a rational morphism $(X,B)\dashrightarrow (V,P)$ with general fiber $\mathbb{P}^1$, such that $(V,0)$ is lt and $(V,P)$ is $\Q$-factorial lc of dimension $n-1$. Furthermore, there is horizontal components $S_i$ in $B_i$ such that $(S_i,\Diff(B-S_i))\dashrightarrow (V,P)$ is B-birational.  
	\end{enumerate}
\end{prop}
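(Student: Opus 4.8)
The plan is to reduce to the case where $\rddown{B}$ is disconnected, run a minimal model program that makes $\rddown{B}$ relatively ample over a variety of dimension $n-1$, and then combine the connectedness principle on a general fibre with the canonical bundle formula to produce the $\mathbb{P}^1$-fibration $(X,B)\dashrightarrow(V,P)$.

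If $\rddown{B}$ is connected we are in case (1), so I would assume it is disconnected. Since $K_X+B\sim_\Q 0$, for $0<\epsilon\ll 1$ the pair $(X,B-\epsilon\rddown{B})$ is $\Q$-factorial klt and $K_X+(B-\epsilon\rddown{B})\sim_\Q-\epsilon\rddown{B}$, which is not pseudoeffective because intersecting with $H^{n-1}$ for an ample $H$ would force the nonzero effective divisor $\rddown{B}$ to have non-positive degree. Hence by [\cite{BCHM}] a $(K_X+B-\epsilon\rddown{B})$-MMP with scaling of an ample divisor runs and terminates with a Mori fibre space $g\colon Y\to V$, $\dim V<\dim Y=n$. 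Because $K_X+B\sim_\Q 0$, every contracted extremal ray is $(K_X+B)$-trivial, so the MMP is crepant for $(X,B)$: $(Y,B_Y)$ is $\Q$-factorial dlt with $K_Y+B_Y\sim_\Q 0$, and $\rddown{B_Y}\sim_\Q-(K_Y+\{B_Y\})$ is relatively ample over $V$. Moreover no connected component of $\rddown{B}$ can get entirely contracted — a curve spanning a contracted $\rddown{B}$-positive ray and contained in one connected component would be forced to meet the others — so $\rddown{B_Y}$ remains disconnected.

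Next I would restrict to a general fibre $F$ of $g$ and set $B_F=B_Y|_F$. By adjunction $K_F+B_F\sim_\Q 0$; since $g$ is a Mori fibre space, $\rddown{B_F}=\rddown{B_Y}|_F$ is ample, hence a nonzero effective divisor $\Q$-linearly equivalent to the ample divisor $-(K_F+\{B_F\})$, so $F$ is Fano. The distinct connected components of $\rddown{B_Y}$ restrict on $F$ to pairwise disjoint effective divisors whose sum is the ample, hence connected (if $\dim F\ge 2$), divisor $\rddown{B_F}$; so if $\dim F\ge 2$ only one of these restrictions is nonzero, i.e. all but one connected component of $\rddown{B_Y}$ is vertical over $V$. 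I would then exclude this possibility — restricting over the prime divisors of $V$ supporting the vertical components, applying the connectedness principle (Lemma \ref{l-connectedness}) fibrewise, and inducting on $\dim$ — to conclude $\dim F=1$. Then $F\cong\mathbb{P}^1$, $\deg B_F=2$, and $\rddown{B_F}$ consists of two reduced points lying on two distinct connected components of $\rddown{B_Y}$; since these components are disjoint, $\rddown{B}$ has exactly two connected components $B_1,B_2$, each meeting the general fibre in a single point, i.e. mapping birationally onto $V$ (so we land in the two-section case, not an irreducible double section).

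Finally I would apply the canonical bundle formula (Theorem \ref{c-bdle-form}) to the lc-trivial fibration $g\colon(Y,B_Y)\to V$: it gives a generalized pair $(V,B_V+M_V)$ with $K_Y+B_Y\sim_\Q g^\ast(K_V+B_V+M_V)$, so $K_V+B_V+M_V\sim_\Q 0$. Since a general fibre is $\mathbb{P}^1$ with exactly two weight-one marked points, the moduli part $M_V$ is $\Q$-trivial, and since $B_1,B_2$ are disjoint in $Y$ no prime divisor of $V$ acquires coefficient one in the discriminant; hence $(V,0)$ is lt and, after passing to a $\Q$-factorial model, $(V,B_V)$ is $\Q$-factorial lc of dimension $n-1$ with $K_V+B_V\sim_\Q 0$, and I set $P=B_V$. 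Each $B_i\to V$ is birational and, being the restriction of the crepant fibration $(Y,B_Y)\to V$, is crepant, so $(S_i,\Diff(B-S_i))\dashrightarrow(V,P)$ is B-birational; composing with the birational contraction $X\dashrightarrow Y$ yields the required $(X,B)\dashrightarrow(V,P)$. The hardest part will be the step excluding $\dim F\ge 2$ and the vertical connected components of $\rddown{B_Y}$, which is exactly where the connectedness principle and the connectedness of ample divisors do the real work, following [\cite{Fuj1}, Proposition 2.1] and [\cite{Gongyo-1}, Claim 5.3].
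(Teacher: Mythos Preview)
Your overall strategy matches the paper's: run a $(K_X+B-\epsilon\rddown{B})$-MMP to a Mori fibre space $g\colon Y\to V$, argue the connected components of $\rddown{B}$ survive, force $\dim V=n-1$, and then read off $(V,P)$ via adjunction. Two points where you diverge are worth flagging.

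First, a small slip: you assert $(Y,B_Y)$ is $\Q$-factorial dlt. The MMP you run is for the boundary $B-\epsilon\rddown{B}$, not $B$, so dlt-ness of $(Y,B_Y)$ need not be preserved; the paper explicitly remarks that after the MMP the pair ``may not be dlt but it is still lc and $\Q$-factorial.'' This is harmless for the rest of the argument, but you should not claim dlt.

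Second, and more substantively, your plan for excluding $\dim F\ge 2$ is where you and the paper really differ, and it is also the step you yourself flag as incomplete. You restrict to a general fibre, use connectedness of ample divisors to see that all but one component of $\rddown{B_Y}$ must be vertical, and then propose to ``exclude this possibility'' by fibrewise connectedness arguments and induction on dimension. The paper (Lemma~\ref{lem-MFS-boundary-connectedness}) avoids all of this by exploiting $\rho(Y/V)=1$ directly: any $g$-vertical divisor is numerically a pullback from $V$, so the vertical part of $\rddown{B_Y}$ is a pullback and cannot disconnect any fibre; the horizontal part $B_h$ is then itself $g$-ample, hence $B_h\cap g^{-1}(v)$ is connected for \emph{every} $v\in V$ unless the general fibre is $\mathbb{P}^1$. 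Thus if $\dim F\ge 2$ one gets $\rddown{B_Y}\cap g^{-1}(v)$ connected for all $v$, contradicting disconnectedness of $\rddown{B_Y}$. No general-versus-special-fibre analysis, no induction. This single use of $\rho(Y/V)=1$ is the idea you were missing.

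A minor related point: your argument that ``$\rddown{B_Y}$ remains disconnected'' only prevents an entire component from being contracted; it does not rule out two components merging under a step of the MMP. The paper handles both at once by observing that $\rddown{B}$ is relatively ample for each divisorial contraction and flip, so the number of connected components is unchanged step by step.
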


To show the proposition we first show two easy consequences facts about Mori fiber space.
\begin{lem}\label{lem-dlt-MFS-boundary}
	Let $(X,B)$ be a $\Q$-factorial lc $n$-fold with $n\geq 2$ and $\rddown{B}\neq 0$ and $(X,B-\epsilon\rddown{B})$ is klt for some  small positive rational number $\epsilon$. Let $f : X\rightarrow R$ be a projective surjective morphism with connected fibers such that $K_X+B\sim_Q 0/R$. Assume that there is a $(K_X+B-\epsilon \rddown{B})$ Mori fiber space $g : X\rightarrow V$ over $R$ with $\dim V =n-1$. Let $B_h$ be the horizontal part of $\rddown{B}$. Then either one of the following holds.
	\begin{enumerate}
		\item $B_h = D_1$, which is irreducible and degree $[D_1:V]=2$.
		\item $B_h = D_1$, which is irreducible and degree $[D_1:V]=1$.
		\item $B_h = D_1+D2$, which are irreducible and degree $[D_i:V]=1$.
	\end{enumerate}
    Furthermore, in case (1) and (3), the number of connected components of $\rddown{B}\cap f^{-1}(r)$ is at most 2 and in case (2), $\rddown{B}\cap f^{-1}(r)$ is connected for every $r\in R$. Also, $(V,0)$ is lt and $(V,P)$ is $\Q$-factorial lc $(n-1)$ fold for some P, such that $K_{D_i}+\Diff(B-D_i)=g^*(K_V+P)$ for $i=1,2$. In the case (1), there is a B-birational involution $i: (D_1,\Diff(B-D_1))\dashrightarrow(D_1,\Diff(B-D_1))$ over $V$ such that $i^2=id$. In the case $(3)$, there is a crepant birational involution $j : (D_1,\Diff(B-D_1))\dashrightarrow (D_2,\Diff(B-D_2))$ over $V$.
\end{lem}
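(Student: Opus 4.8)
The plan is to analyze the Mori fiber space $g\colon X\to V$ over $R$ and study the restriction of $\rddown{B}$ to the generic fiber, which is a general fiber of $g$. First I would observe that since $g$ is a $(K_X+B-\epsilon\rddown{B})$-Mori fiber space with $\dim V = n-1$, the general fiber $F$ is a curve, and since $(X,B-\epsilon\rddown{B})$ is klt and $-(K_X+B-\epsilon\rddown{B})$ is $g$-ample, $F$ is in fact $\mathbb{P}^1$. Because $K_X+B\sim_\Q 0/R$ and $K_X+B-\epsilon\rddown{B}$ is $g$-negative, the divisor $\rddown{B}$ must be $g$-positive, so $\rddown{B}$ meets $F$. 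Restricting to $F\cong\mathbb{P}^1$ via adjunction, $(K_X+B)|_F = K_F + (\text{stuff})$ must be $\Q$-trivial, and $K_F$ has degree $-2$; hence the horizontal part $B_h$ of $\rddown{B}$ has total degree at most $2$ over $V$, and combined with the fact that each horizontal component has positive degree, this gives the three cases: one component of degree $2$, one of degree $1$, or two of degree $1$ (the remaining weight being carried by the non-reduced part $B-\rddown{B}$, but over the generic point of $V$ the only way for $(K_X+B)|_F$ to be trivial with $\rddown{B}$ present is as listed — one must check the coefficients of $B-\rddown{B}$ on $F$ sum appropriately, which pins down the degrees exactly).

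Next I would establish the statements about $V$ and the adjunction on $D_i$. Run adjunction (the canonical bundle formula, Theorem \ref{c-bdle-form}, or in the degree-one case direct divisorial adjunction since $g|_{D_i}$ is birational) for $g\colon X\to V$ with respect to $K_X+B\sim_\Q 0/R$: this produces a boundary $P$ on $V$ with $K_X+B\sim_\Q g^*(K_V+P)$ and $(V,P)$ generalized lc of dimension $n-1$; one checks the moduli part vanishes here because the fibers are $\mathbb{P}^1$ with a bounded marked configuration, so $(V,P)$ is honestly lc and $\Q$-factoriality of $V$ follows from that of $X$ together with the extremal contraction. That $(V,0)$ is lt (i.e. klt) comes from the fact that away from $\rddown{B}$ the pair $(X,B-\epsilon\rddown{B})$ is klt and pushes forward. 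In the degree-one cases, $g|_{D_i}\colon D_i\to V$ is birational and crepant with respect to $K_{D_i}+\Diff(B-D_i)$ by the standard adjunction, so $K_{D_i}+\Diff(B-D_i) = g^*(K_V+P)$. In case (1), $g|_{D_1}\colon D_1\to V$ is generically $2{:}1$, and the deck transformation of this degree-two cover extends to a B-birational involution $i$ of $(D_1,\Diff(B-D_1))$ over $V$ with $i^2=\mathrm{id}$ (crepancy over the generic point of $V$ is automatic since both sheets carry the same adjunction data; one then spreads out). In case (3) the two birational maps $D_1\to V$ and $D_2\to V$ compose to a crepant birational map $j\colon (D_1,\Diff(B-D_1))\dashrightarrow(D_2,\Diff(B-D_2))$ over $V$, which is the desired involution-type identification.

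Finally I would deduce the statements on connected components of $\rddown{B}\cap f^{-1}(r)$. For $r\in R$ general this is immediate from the fiber analysis: in case (2) the horizontal part meets $f^{-1}(r)$ in a single connected piece (a section over the fiber $V_r$ of $V\to R$, which is connected), while in cases (1) and (3) it meets in at most two pieces. For special $r$ one invokes the connectedness principle, Lemma \ref{l-connectedness}: since $-(K_X+B)\sim_\Q 0/R$ is nef and big over $R$ (it is trivial), the non-klt locus of $(X,B)$ — which contains $\rddown{B}$ — is connected near each fiber, and a degeneration argument together with semicontinuity bounds the number of connected components of $\rddown{B}\cap f^{-1}(r)$ by the generic value; vertical components of $\rddown{B}$ can only merge horizontal pieces, not increase the count.

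The main obstacle I expect is pinning down the degree cases precisely — i.e. ruling out configurations where $\rddown{B}|_F$ has degree $2$ but with a horizontal component of $B-\rddown{B}$ also meeting $F$, or where there are three or more horizontal components — which requires carefully writing out $(K_X+B)|_F = K_F + B|_F$ on $F\cong\mathbb{P}^1$, using that all coefficients of $B|_F$ lie in $[0,1]$ with the reduced ones equal to $1$, and that the total degree is exactly $2$; the bookkeeping over the generic point of $V$ versus globally (where $f^*D$ may not be defined everywhere) needs the usual ``shrink $V$ around the generic point'' caveat. The second delicate point is verifying that the moduli part of the canonical bundle formula genuinely vanishes so that $(V,P)$ is lc rather than merely generalized lc, which is standard for $\mathbb{P}^1$-fibrations but should be stated carefully.
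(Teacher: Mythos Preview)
Your overall approach mirrors the paper's: identify the general fibre as $\mathbb{P}^1$, use $g$-ampleness of $\rddown{B}$ to see $B_h\neq 0$, bound $\deg(B_h/V)\le 2$ via adjunction on the general fibre, construct $P$ by (generalised) adjunction, deduce $\Q$-factoriality of $V$ from extremality of $g$, and read off the involutions from the degree-$2$ structure. The paper's own proof is extremely terse --- it says essentially this in four lines and defers to \cite{Fuj1} for details --- so your write-up is, if anything, more thorough on these points.

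There is, however, a genuine slip in your connectedness argument. You invoke Lemma~\ref{l-connectedness} by asserting that ``$-(K_X+B)\sim_\Q 0/R$ is nef and big over $R$ (it is trivial)'', but a $\Q$-trivial divisor is \emph{not} big over $R$ unless $f$ is birational, so the connectedness principle does not apply. (Even if it did, it would force the non-klt locus to be \emph{connected}, i.e.\ one component, which is not the ``at most two'' claimed in cases (1) and (3).) The correct mechanism, which the paper also only gestures at (compare the proof of Lemma~\ref{lem-MFS-boundary-connectedness}), is more direct: since $g$ is extremal with $\rho(X/V)=1$, every vertical component of $\rddown{B}$ is the pullback of a $\Q$-Cartier divisor on $V$ and hence consists of whole fibres of $g$; these meet $B_h$ because $B_h$ is $g$-ample. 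Consequently the number of connected components of $\rddown{B}\cap g^{-1}(v)$ is at most $\#(B_h\cap g^{-1}(v))\le \deg(B_h/V)$, and one then passes from $g$-fibres to $f$-fibres using that $g$ is a morphism over $R$. Your final clause (``vertical components can only merge horizontal pieces, not increase the count'') is exactly this idea, so you should promote it to the main argument and drop the appeal to Lemma~\ref{l-connectedness}.

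A minor remark: the ``main obstacle'' you flag --- ruling out extra fractional horizontal components --- is not actually an obstacle. In case~(2) there \emph{can} be horizontal components of $B-\rddown{B}$; the lemma only describes $B_h=(\rddown{B})_h$, and the bound $\deg(B_h/V)\le 2$ together with each irreducible component contributing at least $1$ already forces the trichotomy without any constraint on the fractional part.
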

\begin{proof}[Proof of Lemma \ref{lem-dlt-MFS-boundary}]
	We firstly note that the general fiber is $\mathbb{P}^1$. Also since $\rddown{B}$ is ample over $R$, we have $B_h\neq 0$. Also since $K_X+B\sim_\Q 0$, restricting to general fiber, we see that $deg(B_H,V)\leq 2$. Hence we see that (1),(2),(3) are the only possibilities. The divisor $P$ can be constructed using generalised adjunction. We can get a $\Q$-divisor $P$ on $V$ such that $K_{D_i}+\Diff(B-D_i)=g^*(K_V+P)$. In particular $(V,P)$ is lc. We note that $V$ is $\Q$-factorial since $X$ is and $g$ is extremal.\\Finally, in case (1) and (3), the part about involution follows from the fact that $deg(B_H,V)=2$. 

\end{proof}
\begin{lem}\label{lem-MFS-boundary-connectedness}
		Let $(X,B)$ be a $\Q$-factorial lc $n$-fold with $n\geq 2$ and $\rddown{B}\neq 0$ and $(X,B-\epsilon\rddown{B})$ is klt for some small positive rational number $\epsilon$. Let $f : X\rightarrow R$ be a projective surjective morphism with connected fibers such that $K_X+B\sim_Q 0/R$. Assume that there is a $(K_X+B-\epsilon \rddown{B})$ Mori fiber space $g : X\rightarrow V$ over $R$. Then either $\rddown{B}\cap f^{-1}(r)$ is connected for every $r\in R$ or $dim V= n-1$ (i.e. we are the in case of \ref{lem-dlt-MFS-boundary}).
\end{lem}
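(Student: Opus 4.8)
The plan is to argue by contradiction: suppose $\rddown{B}\cap f^{-1}(r)$ is disconnected for some $r\in R$, and deduce that the Mori fiber space $g\colon X\to V$ must have relative dimension one, i.e. $\dim V=n-1$. The starting point is the connectedness principle (Lemma \ref{l-connectedness}, or its classical non-generalized version), applied fibrewise to $f$: since $K_X+B\sim_\Q 0/R$, the non-klt locus of $(X,B)$ — which contains $\rddown{B}$ — is connected near each fibre unless the failure of connectedness is ``caused'' by the base, more precisely unless $K_X+B$ is the pullback of something non-big. Concretely, if $\rddown{B}\cap f^{-1}(r)$ has at least two connected components, then after localizing $R$ around $r$ the non-klt locus of $(X,B)$ splits, which forces $-(K_X+B)$ to fail to be big over $R$; but $K_X+B\sim_\Q 0/R$, so this is automatic and gives no contradiction directly — instead the correct statement to use is that disconnectedness of the non-klt locus over a fibre of a contraction with $K_X+B\sim_\Q 0$ produces a fibration structure. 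So the real input is: first run the given MMP / use the given MFS $g\colon X\to V$.

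First I would restrict attention to the MFS $g\colon X\to V$ over $R$ and let $B_h$ denote the horizontal part of $\rddown{B}$ over $V$. If $\dim V=n-1$ we are in the case of Lemma \ref{lem-dlt-MFS-boundary} and there is nothing to prove, so assume $\dim V\le n-2$, equivalently the general fibre $F$ of $g$ has $\dim F\ge 2$. Because $g$ is a $(K_X+B-\epsilon\rddown{B})$-Mori fibre space and $K_X+B\sim_\Q 0/R$, on a general fibre $F$ the divisor $-(K_F+B|_F)\sim_\Q \epsilon\,\rddown{B}|_F$ is ample (it is a positive multiple of the $g$-ample class restricted to $F$), so $(F,B|_F)$ is a log Fano pair of dimension $\ge 2$ with $\rddown{B|_F}\ne 0$. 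By the connectedness principle applied to $F$ (here $-(K_F+B|_F)$ is ample, hence nef and big, and $F\to\mathrm{pt}$ is a contraction), $\rddown{B}|_F=B_h|_F$ is connected.

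Now I would propagate this fibrewise connectedness to connectedness over $R$. Since the vertical part $B_v$ of $\rddown{B}$ over $V$ maps into a proper closed subset of $V$, the only way $\rddown{B}\cap f^{-1}(r)$ can be disconnected for some $r$ is if $B_h\cap g^{-1}(v)$ meets a component of $B_v$ in a way that still leaves two pieces, or more basically if the connectedness of $B_h$ on general fibres of $g$ fails to glue. The key point is that $g$ has connected fibres and $B_h$ dominates $V$, so the locus in $V$ over which $B_h$ has connected fibre is open and dense; combining with $K_X+B\sim_\Q 0/R$ and semicontinuity one concludes $\rddown{B}\cap f^{-1}(r)$ is connected for all $r\in R$, as $\rddown{B}\to R$ is then surjective with connected general fibre and the special fibres cannot acquire extra components (the number of connected components is upper semicontinuous along $f$, and generically it is $1$). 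This contradicts the assumption, so $\dim V=n-1$.

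The main obstacle I expect is the last gluing step: passing from connectedness of $\rddown{B}$ on a general fibre of $g$ (where we genuinely used the log Fano structure and the connectedness principle) to connectedness of $\rddown{B}\cap f^{-1}(r)$ for \emph{every} $r\in R$, since a priori special fibres of $f$ could be worse. I would handle this either by a direct semicontinuity argument for the number of connected components of the fibres of the proper morphism $\rddown{B}\to R$, or — cleaner — by invoking the connectedness principle a second time, now for $f$ itself: if $\dim V\le n-2$ then $-(K_X+B-\epsilon\rddown{B})$ is big over $R$ (as $\rddown{B}$ is $g$-ample and $g$ has positive-dimensional fibres, $\rddown{B}$ restricted to $f$-fibres is big), so the non-klt locus of $(X,B-\epsilon\rddown{B})$, which is exactly $\rddown{B}$, is connected near each fibre of $f$, giving the contradiction and completing the proof.
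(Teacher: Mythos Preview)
Your proposal contains several genuine errors.

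First, on a general fibre $F$ of $g$ you claim $-(K_F+B|_F)\sim_\Q \epsilon\,\rddown{B}|_F$ is ample, but this is false: since $g$ is a morphism over $R$ and $K_X+B\sim_\Q 0/R$, the fibre $F$ sits inside a fibre of $f$, so $(K_X+B)|_F\sim_\Q 0$ and hence $-(K_F+B|_F)\sim_\Q 0$. The pair $(F,B|_F)$ is log Calabi--Yau, not log Fano. What \emph{is} ample on $F$ is $-(K_F+(B-\epsilon\rddown{B})|_F)\sim_\Q\epsilon\,\rddown{B}|_F$, but the non-klt locus of $(F,(B-\epsilon\rddown{B})|_F)$ is empty since that pair is klt. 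So the connectedness principle, as you invoke it, yields nothing.

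Second, your ``cleaner'' approach at the end repeats this mistake globally: you assert that the non-klt locus of $(X,B-\epsilon\rddown{B})$ is exactly $\rddown{B}$, but by hypothesis $(X,B-\epsilon\rddown{B})$ is klt, so its non-klt locus is empty. (The claim that $\rddown{B}$ is big over $R$ is also unjustified: it is $g$-ample, i.e.\ ample over $V$, but an $f$-fibre is $g^{-1}(V_r)$ for $V_r$ a fibre of $V\to R$, and a relatively ample divisor over $V_r$ need not be big on $g^{-1}(V_r)$.)

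Third, your semicontinuity argument runs in the wrong direction: upper semicontinuity of the number of connected components of the fibres of $\rddown{B}\to R$ says that special fibres can have \emph{more} components than the general one, not fewer.

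The paper's proof avoids all of this by exploiting that $g$ is an extremal contraction, so $\rho(X/V)=1$. From $K_X+B\sim_\Q 0/V$ and the Mori fibre space condition one gets that $\rddown{B}$ is $g$-ample; since $\rho(X/V)=1$, each horizontal component of $\rddown{B}$ is individually $g$-ample, so on a fibre of dimension $\ge 2$ any two of them intersect and $B_h\cap g^{-1}(v)$ is connected. Again using $\rho(X/V)=1$, the vertical part $B_v$ is the pullback of a $\Q$-Cartier divisor on $V$, so $B_v\cap g^{-1}(v)$ is either empty or all of $g^{-1}(v)$ and cannot cause disconnection. This gives $\rddown{B}\cap g^{-1}(v)$ connected for every $v$, and hence over every $r\in R$. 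Your argument never invokes $\rho(X/V)=1$, and that is the missing ingredient.
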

\begin{proof}
	Firstly, lets assume $\rddown{B}\cap f^{-1}(r)$ is not connected for some $r\in R$, this means that $\rddown{B}\cap g^{-1}(v)$ is not connected for some $v\in V$. Noting that $K_X+B\sim_\Q 0/R$ we see that $\rddown{B}$ is g-ample, and hence $B_h$, the horizontal part of $\rddown{B}$, is $g$-ample. Since $\rho(X/V)=1$, we derive that $B_h\cap g^{-1}(v)$ is connected unless the general fiber is $\mathbb{P}^1$. Now it is also clear that since $g$ is extremal, the vertical part of $\rddown{B}$ is the pullback of a $\Q$-Cartier $\Q$-divisor on $V$, hence we get $\rddown{B}\cap g^{-1}(v)$ is connected for each $v\in V$ as claimed. 
\end{proof}

\begin{proof}[Proof of Prop \ref{prop-dlt-boundary}]
	 Now run an MMP on $K_X+B-\epsilon\rddown{B}$  for some $\epsilon>0$ small rational number. Since $K_X+B\sim_\Q 0$, we know that we will end with a Mori Fiber space $g : X'\rightarrow V/R$ with $X\dashrightarrow X'$ a sequence of flips and divisorial contractions. Let $B'$ be the pushforward of $B$ to $X'$. Notice that since $\rddown{B}$ is relatively ample for each divisorial contraction and flip, we know that the number of connected components of $\rddown{B}$ doesn't change during MMP. We can replace $(X,B)$ with $(X',B')$, since all conditions are preserved (because we have $K_X+B\sim_\Q 0$). Notice now $(X',B')$ may not be dlt but it is still lc and $\Q$-factorial. We can finish the proof using Lemma \ref{lem-MFS-boundary-connectedness} and \ref{lem-dlt-MFS-boundary}.
\end{proof}

\subsection{Main Results on Boundedness of Index for Sdlt Log Calabi Yau Pairs}
The goal of this section is to show Theorem \ref{thm-bnd-dlt-index-2}. Firstly, we will note that here a dlt pair will possibly have many connected components. We will make sure to distinguish between these cases. Firstly we need to deal with the klt case. We first use the following proposition to show that admissible sections exist.
\begin{prop}[\cite{Gongyo-1}, Section 5, Theorem C]
 If $(X,B)$ is klt (not necessarily connected) of dimension $d$ and assume $K_X+B\sim_\Q 0$, then for sufficiently large and divisible $N$, there exists a nonzero admissible section in $H^0(X,N(K_X+B))$.
\end{prop}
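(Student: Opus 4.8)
The plan is to reduce the existence of a nonzero admissible section for a (possibly disconnected) klt Calabi--Yau pair to the connected case, and then to exploit the finiteness of the B-pluricanonical representation to average over the B-birational action. First I would pass to a $\Q$-factorial dlt model so that on each connected component $n(K+B)$ is Cartier for some fixed $n$; since $K_X+B\sim_\Q 0$ the index is well-defined, and after enlarging $n$ we may assume $n$ is even. By Proposition \ref{prop-finiteness-bir-representation}, for each such component the image $\rho_n(\Bir(X_j,B_j))$ is a finite group acting on the one-dimensional space $H^0(X_j, n(K_{X_j}+B_j))$ (one-dimensional because $K_{X_j}+B_j\sim_\Q 0$ forces $h^0(n(K_{X_j}+B_j))\le 1$, and it equals $1$ once $n$ kills the torsion). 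A finite group acting linearly on a one-dimensional space acts through a finite group of roots of unity, so after replacing $n$ by a further multiple (bounded in terms of the order of that group, hence independent of further choices once $d$ is fixed) the action becomes trivial, i.e. the generator of $H^0(X_j, n(K_{X_j}+B_j))$ is fixed by every B-birational automorphism of $(X_j,B_j)$.

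Next I would organize the connected components of $X$ into equivalence classes under the relation ``B-birational to one another''. Within each class the B-birational maps identify the $1$-dimensional spaces $H^0(X_i, n(K_{X_i}+B_i))$ compatibly with the (now trivial) automorphism action, so one can choose generators $s_i$ on the components in a single class that match up under every B-birational map $(X_i,B_i)\dashrightarrow (X_j,B_j)$; this is exactly the admissibility condition $g^*(s|_{X_j})=s|_{X_i}$ from the definition of $A(X, n(K_X+B))$. Collecting these choices over all classes produces a nonzero $s\in H^0(X, n(K_X+B))$ that is admissible. The consistency of the matching within a class is where one must be a little careful: one needs that going around a loop of B-birational maps $(X_{i_1}, B_{i_1})\dashrightarrow \cdots \dashrightarrow (X_{i_k},B_{i_k}) = (X_{i_1},B_{i_1})$ gives the identity on $H^0$, but that composite is a B-birational \emph{automorphism} of $(X_{i_1},B_{i_1})$, which we have arranged to act trivially; so the matching is well-defined.

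The main obstacle I expect is twofold: first, ensuring the integer $N$ can be taken uniform (``sufficiently large and divisible'') rather than pair-dependent --- this rests on bounding the order of the finite groups $\rho_n(\Bir(X_j,B_j))$ in terms of the dimension $d$, which is where one invokes the quantitative form of Proposition \ref{prop-finiteness-bir-representation} (or the relevant boundedness input from \cite{Fujino-Gongyo-1}/\cite{Gongyo-1}); and second, the bookkeeping needed to treat disconnectedness correctly, since the space $H^0(X, N(K_X+B))$ is the direct sum over components and admissibility is a compatibility constraint across components, not just on each one. Once the uniform $N$ is in hand and a compatible system of generators is chosen, the section is nonzero on every component and admissible by construction, which is the assertion. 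I would also remark that the klt hypothesis enters only through $h^0 \le 1$ and the finiteness of the representation; the reduced boundary plays no role here, which is why this case is separated out before the dlt analysis using Proposition \ref{prop-dlt-boundary}.
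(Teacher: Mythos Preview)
Your argument is correct. In the paper this proposition is simply cited from \cite{Gongyo-1} and not proved directly; the closest the paper comes is the proof that Conjecture~\ref{conj-bounded-klt-admissible} implies Conjecture~\ref{conj-bounded-klt-admissible-nonconnect}, which handles the passage from connected to disconnected in the bounded setting. That argument proceeds differently from yours: rather than enlarging $N$ so that each $\rho_N(\Bir(X_j,B_j))$ acts trivially on the one-dimensional space $H^0(X_j,N(K_{X_j}+B_j))$, the paper fixes $N$, takes a tuple $s=(\lambda_1 s_1,\lambda_2 s_2,\dots)$ with $s_i$ admissible on each component, and then \emph{averages} over the finite group $G=\rho_N(\Bir(X,B))$ of the whole pair, setting $t=\sum_{\sigma\in G}\sigma(s)$; one then argues that the $\lambda_i$ can be chosen so that $t$ is nonzero on every component (reducing to a single $\Bir$-orbit of components and using that the averaging operator is not identically zero). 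Your route instead kills the obstruction by passing to a multiple of $N$, after which the loop/cocycle check you describe makes the compatible choice of generators across a $\Bir$-orbit well-defined. Both rely on Proposition~\ref{prop-finiteness-bir-representation} in the same essential way; yours trades a slightly larger $N$ for a more transparent construction, while the paper's averaging keeps $N$ fixed at the cost of a short linear-algebra argument to ensure nonvanishing.

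Two small remarks. First, your concern about making $N$ uniform in $d$ is not needed for the proposition as stated: here $N$ is allowed to depend on the given pair, and since $X$ has finitely many components you may take a common multiple; the uniform bound is exactly the content of the subsequent Conjectures~\ref{conj-bounded-klt-admissible-nonconnect} and~\ref{conj-bounded-klt-admissible}. Second, you implicitly use that for a klt pair the pre-admissibility condition is vacuous (the conductor $D'$ is zero, so $D^n=\emptyset$), which is why admissibility reduces to the cross-component compatibility $g^*(s|_{X_j})=s|_{X_i}$ that you verify; it is worth stating this explicitly.
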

However to prove boundedness result, we need the following conjectures.
\begin{conj}\label{conj-bounded-klt-admissible-nonconnect}
	If $(X,B)$ is klt (not necessarily connected) of dimension $\leq d$ and assume $n(K_X+B)\sim 0$, then there exists a constant $N(n,d)>0$, such that there is a nonzero admissible section in $A(X,N(K_X+B))$.
\end{conj}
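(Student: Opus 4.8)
The plan is to reduce the statement to the boundedness of complements for usual (normal) pairs, Theorem~\ref{t-bnd-compl-usual}, via the known results on admissibility and descending sections from a dlt model. First I would recall the setup: $(X,B)$ is klt, possibly disconnected, of dimension $\le d$ with $n(K_X+B)\sim 0$. Since $X$ is klt Calabi--Yau, each connected component $X^{(j)}$ of $X$ is of Calabi--Yau type, and after a small $\Q$-factorialization we may assume $X$ is $\Q$-factorial; this does not change $H^0$ of multiples of $K_X+B$ nor the property of being a section. On each connected component $(X^{(j)},B^{(j)})$ the group $\Bir(X^{(j)},B^{(j)})$ acts on $H^0(X^{(j)},m(K_{X^{(j)}}+B^{(j)}))$; when $m$ is even and $m(K_{X^{(j)}}+B^{(j)})\sim 0$ this action is through a finite group by Proposition~\ref{prop-finiteness-bir-representation}. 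So for any fixed such $m$ there exists a $\Bir$-invariant, i.e.\ admissible, section, obtained by averaging a nonzero section over the finite group (and multiplying by a large auxiliary integer to clear denominators and make things compatible across the finitely many B-birational identifications between distinct components). The point of the present statement is to make the multiplier $N$ \emph{uniform} in terms of $n$ and $d$ only.

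The key step is thus to bound, uniformly, the integer $m$ so that $m(K_X+B)\sim 0$ and $m$ is divisible enough for the averaging/admissibility argument to run. Here I would invoke the boundedness of complements. On each connected component, $K_{X^{(j)}}+B^{(j)}\sim_\Q 0$, so there is a boundary $B^{(j),+}\ge B^{(j)}$ with $(X^{(j)},B^{(j),+})$ lc and $N_0(K_{X^{(j)}}+B^{(j),+})\sim 0$ for some $N_0 = N_0(d,\mathfrak R)$ coming from Theorem~\ref{t-bnd-compl-usual} (note $X^{(j)}$ is of Fano type since it is klt Calabi--Yau and we can subtract a small ample from the anti-log-canonical, or more simply apply the complement theorem to the Calabi--Yau pair directly); but since $K_{X^{(j)}}+B^{(j)}\sim_\Q 0$ already, $B^{(j),+}=B^{(j)}$ after clearing, so in fact $N_0(K_{X^{(j)}}+B^{(j)})\sim 0$. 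Taking $N_0$ even if necessary, and then setting $N$ to be a suitable multiple of $N_0$ by the index $I(\mathfrak R)$ and by the order of the finite group $\rho_{N_0}(\Bir)$ — which is itself bounded in terms of $N_0$ and $d$ once one knows finiteness is uniform, or can simply be absorbed by raising to a power — we get a uniform $N(n,d)$. Concretely, since $\Bir$ acts on the one-dimensional-or-higher space $H^0(X^{(j)},N_0(K_{X^{(j)}}+B^{(j)}))$ through a finite subgroup of $\mathrm{GL}$, the symmetrization $\sum_{g} \rho_{N_0}(g)(s)$ of a nonzero $s$ is admissible provided it is nonzero; to guarantee nonvanishing one passes to a sufficiently high multiple $N = kN_0$ where on $N(K_{X^{(j)}}+B^{(j)})\cong \mathcal O$ the representation becomes trivial (the image of $\rho$ in $\mathrm{GL}_1$ has bounded order), so the constant section $1$ is already admissible.

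The main obstacle I expect is the uniform bound on the order of the B-birational pluricanonical representation $\rho_n(\Bir(X,B))$ — Proposition~\ref{prop-finiteness-bir-representation} gives finiteness but not an explicit bound depending only on $n$ and $d$. Getting this in a bounded form, and ensuring compatibility across the B-birational maps \emph{between distinct connected components} (which is what admissibility for disconnected pairs demands), is the delicate part: one must bound not just $\#\rho_n(\Bir(X^{(j)}))$ for each component but the combinatorics of the finite groupoid of B-birational maps among the finitely many components. I would handle this by noting that the relevant representations factor through $\mathrm{GL}_1$ when we restrict to the line $H^0(X^{(j)}, N(K_{X^{(j)}}+B^{(j)}))$ for $N$ a bounded multiple killing $K+B$, so each group is cyclic of order dividing some bounded integer; then $N$ can be enlarged by that integer to trivialize all of them simultaneously, after which the constant section descends and is manifestly admissible. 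This is exactly why the statement is phrased as a Conjecture in the excerpt: the honest uniform control of $\rho_n$, or equivalently a uniform bound on the canonical index of the relevant lc Calabi--Yau pairs, is the content one still needs — so in the proof one would either cite such a bound in low dimension (via Theorem~\ref{thm-bnd-dlt-index-2} in dimension $2$) or prove it by the reduction sketched above.
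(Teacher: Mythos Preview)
Your proposal has a genuine error and also misses the key structural idea in the paper's treatment.

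The error: you assert that a connected klt pair with $K_{X^{(j)}}+B^{(j)}\sim_\Q 0$ is of Fano type, so that Theorem~\ref{t-bnd-compl-usual} applies. This is false---a K3 surface or an abelian variety is klt with trivial canonical class but is not of Fano type (Fano type varieties are rationally connected). So your route through boundedness of complements to control the index does not work. In any case the hypothesis already hands you $n(K_X+B)\sim 0$, so the index is already bounded by $n$; there is nothing to gain from that detour.

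More importantly, the paper does \emph{not} try to bound $|\rho_N(\Bir(X,B))|$ in order to pass from the connected to the disconnected case. It reduces Conjecture~\ref{conj-bounded-klt-admissible-nonconnect} to the connected Conjecture~\ref{conj-bounded-klt-admissible} \emph{with the same} $N$: assuming each component $(X_i,B_i)$ already carries an admissible section $s_i\in A(X_i,N(K_{X_i}+B_i))$, one forms $s=(\lambda_i s_i)_i$ with free scalars $\lambda_i\in\C$ and then \emph{sums} over the finite group $G=\rho_N(\Bir(X,B))$ to get $t=\sum_{\sigma\in G}\sigma(s)\in A(X,N(K_X+B))$. Nonvanishing is arranged not by enlarging $N$, but by a linear-algebra argument on each $G$-orbit of components: the operator $\sum_{\sigma\in G}\sigma$ is a nonzero matrix (its diagonal entries are $0$ or $1$, and the identity contributes), so a suitable choice of the $\lambda_i$ makes $t\neq 0$ on some component, hence on the whole orbit by $G$-invariance. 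No bound on $|G|$ enters here. Where a bound on the representation \emph{is} needed is in the connected conjecture itself; the paper establishes that only for curves, by bounding $|\Bir(X,B)|$ explicitly (via $|\Supp B|\le 2n$ on $\mathbb P^1$, or the classical bound on elliptic curves) and then taking the \emph{product} $\prod_{\sigma}\sigma(s)$, which does raise the degree to $n|G|$. Your proposal conflates these two steps and, by trying to enlarge $N$ already in the disconnected-to-connected reduction, misses the point that the free parameters $\lambda_i$ are what give nonvanishing for free.
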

We will prove the above conjecture in this case when $X$ is dimension 1. Firstly, we note that above holds if and only if the following holds. 

\begin{conj}\label{conj-bounded-klt-admissible}
	If $(X,B)$ is connected klt of dimension $\leq d$ and assume $n(K_X+B)\sim 0$, then there exists a constant $N(n,d)>0$, such that there is an admissible section in $A(X,N(K_X+B))$.
\end{conj}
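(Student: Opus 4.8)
The plan is to establish Conjecture \ref{conj-bounded-klt-admissible} in dimension one (equivalently, whenever $\dim X\le 1$); via the equivalence noted above this is what feeds into Theorem \ref{thm-bnd-dlt-index-2}, whereas the statement for general $d$ remains open and would have to be extracted from boundedness of $n$-complements in dimension $d$ together with boundedness of the canonical index for klt log Calabi--Yau pairs. So let $(X,B)$ be connected klt with $\dim X\le 1$ and $n(K_X+B)\sim 0$. Then $X$ is an irreducible smooth projective curve; since $n(K_X+B)$ is an integral divisor we have $nb_i\in\Z$ for every coefficient $b_i$ of $B$, and since $\deg(K_X+B)=0$ with $\deg B\ge 0$ the curve has genus $\le 1$. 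Hence either $X$ is an elliptic curve with $B=0$, or $X\cong\PP^1$ and $B=\sum b_iP_i$ with $\sum b_i=2$ and every $b_i\in(0,1)$; in the latter case $\Supp B$ consists of $k$ distinct points with $3\le k\le 2n$, the lower bound because each $b_i<1$ and the upper bound because each $nb_i$ is a positive integer with $\sum nb_i=2n$.

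First I would reduce admissibility to a character computation. Since $X$ is normal there is no conductor, $D^n=\emptyset$, and preadmissibility of any section of a power of $K_X+B$ is automatic; for a connected pair, admissibility of $s\in H^0(X,N(K_X+B))$ then means exactly that $g^\ast s=s$ for all $g\in\Bir(X,B)$. Because $N(K_X+B)\sim 0$, the sheaf $\mathcal O_X(N(K_X+B))$ is trivial and $H^0(X,N(K_X+B))$ is one-dimensional, so $\Bir(X,B)$ acts on it through a character $\chi_N\colon\Bir(X,B)\to k^\times$, and the (essentially unique) nonzero section is admissible precisely when $\chi_N$ is trivial. Moreover $\chi_N=\chi_n^{\,N/n}$: raising the nonzero $s\in H^0(X,n(K_X+B))$ to the $(N/n)$-th power under the multiplication $H^0(X,n(K_X+B))^{\otimes N/n}\to H^0(X,N(K_X+B))$ produces a nonzero section on which $g$ acts by $\chi_n(g)^{N/n}$.

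Next I would bound the order of $\chi_n$ uniformly in $n$. A B-birational self-map of a smooth projective curve is an automorphism of the curve, and crepancy forces it to preserve $B$ as a divisor, so $\Bir(\PP^1,B)$ is the group of automorphisms of $\PP^1$ permuting $\Supp B$ and preserving coefficients; this action on $\Supp B$ is faithful (an automorphism of $\PP^1$ fixing three points is the identity and $k\ge 3$), so $\Bir(\PP^1,B)\hookrightarrow S_k$ and $|\Bir(\PP^1,B)|$ divides $(2n)!$. For an elliptic curve $\Bir(X,0)=\mathrm{Aut}(X)$ is infinite, but $\chi_n$ is trivial on the translations (they fix the invariant differential, hence act trivially on $H^0(X,nK_X)$) and therefore factors through $\mathrm{Aut}(X)/(\text{translations})\cong\mathrm{Aut}_0(X)$, cyclic of order dividing $6$. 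In either case the order of $\chi_n$ divides $m:=6\cdot(2n)!$. Setting $N:=N(n,1):=mn$ we get $\chi_N=\chi_n^{\,m}$ trivial, so the $m$-th power of $s$ is a nonzero admissible section of $N(K_X+B)$, which proves the statement for $d=1$.

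The main obstacle has two faces. The routine-but-delicate one is pinning down the definition of B-birational for curves — that crepant birational self-maps are exactly the automorphisms preserving $B$, and that the representation $\rho$ is compatible with tensor powers so that $\chi_N=\chi_n^{\,N/n}$; finiteness of the relevant groups is also available from Proposition \ref{prop-finiteness-bir-representation}, but the explicit bound $|\Supp B|\le 2n$ (hence $|\Bir(\PP^1,B)|\mid(2n)!$) is exactly what makes $N$ effective in $n$. The genuine obstacle is that nothing here generalizes cheaply: in higher dimension $\Bir(X,B)$ has no such elementary description and $H^0(X,N(K_X+B))$ need not be one-dimensional, so a uniform bound on $N$ would have to be produced from boundedness of complements in dimension $d$ together with ACC/index inputs in the spirit of Theorems \ref{ACC-2} and \ref{ACC4}, which is why the statement is only a conjecture there.
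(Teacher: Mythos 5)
Your proof is correct and takes essentially the same route as the paper: reduce to $X$ elliptic (where $B=0$ and automorphisms act on pluricanonical sections through a finite character, killed by a universal multiple) or $X\cong\PP^1$ with $3\le|\Supp B|\le 2n$, which bounds $\Bir(X,B)$ in terms of $n$, and then force invariance by a bounded power — the paper does the same via the product $\prod_{\sigma\in G}\sigma(s)$ over $G=\rho_n(\Bir(X,B))$, which on a one-dimensional space is exactly your character argument. The only slip is that the automorphism group of an elliptic curve fixing a point can have order $4$ (so "order dividing $6$" should be "order in $\{2,4,6\}$"), but since $4$ divides $6\cdot(2n)!$ for $n\ge 1$ your final bound $N=6(2n)!\,n$ is unaffected.
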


We show that \ref{conj-bounded-klt-admissible} implies $\ref{conj-bounded-klt-admissible-nonconnect}$.
\begin{proof}[Proof of \ref{conj-bounded-klt-admissible} in $\dim$ implies \ref{conj-bounded-klt-admissible-nonconnect} in $\dim$ d:] $\;$\\
	 Let $(X,B)= \sqcup (X_i,B_i)$. Let $s := (\lambda_1 s_1,\lambda_2 s_2,..)\in H^0(X,N(K_X+B))$, where $s_i\in A(X_i,N(K_{X_i}+B_i))$ and $\lambda_i\in \mathbb{C}$. Now let $G := \rho_N(\Bir(X,B))$, which is a finite group by \cite{Fujino-Gongyo-1}. Define $$t := \sum_{\sigma\in G} \sigma(s)$$ Then $t\in A(X,n(K_X+B))$ by construction. Hence it suffices to show we can choose $\lambda_i$ such that $t$ is not zero in all components. To this end, by considering orbits of the action, we can assume $\Bir(X,B)$ acts on $X_{i}$ transitively, i.e. for each $i,j$ there is  $g\in \Bir(X,B)$ mapping $X_{j}$ into $X_{j}$. We notices that $\rho_N(g)$ can be expressed as a matrix such that if entries on diagonal is either 0 or 1. Hence we see that $\sum_{\sigma \in G} \sigma$ is not the zero matrix. Hence there exists some $\lambda_i\in \C$ such that $t$ is not zero on all components. Then since $\Bir(X,B)$ acts transitively and $t$ is $G$-invariant, we see that $t$ is non-zero in all components.
\end{proof}
Now we show that \ref{conj-bounded-klt-admissible} hold for curves.
\begin{proof}[\ref{conj-bounded-klt-admissible} for curves]
	Let $(X,B)$ be a lc curve where $n(K_X+B)\sim 0$. We see that $X$ is either a rational curve or elliptic curve. We can assume that $n$ is even. In either case, we claim that $|G|$, where $G:=\rho_n(\Bir(X,B))$, is bounded depending only on $n$: If $X$ is a rational curve, then we see that $2n\geq |Supp B|\geq 3$, and hence $\Bir(X,B)=Aut(X,B)\leq 6{{2n}\choose{3}}$.  If $X$ is an elliptic curve then $B=0$ and it is well known that $\rho_{12}(Aut(X))$ is trivial.[\cite{kol-old} 12.2.9.1]. In either case, if we let $s\in H^0(X,n(K_X+B))$ be any non-zero section, we see that $0\neq \prod_{\sigma \in G} \sigma(s)\in H^0(X,n|G|(K_X+B)$ is admissible.
\end{proof}

Now we will show 2 statements using induction. Although, we will only apply it for surfaces and curves. The proof are almost taken from \cite{Fuj1}.
\begin{prop} ($A_d$)\label{prop-adimissable-sections-generation}
	Assuming Conjecture \ref{conj-bounded-klt-admissible} in dimension $\leq d-1$. Let $(X,B)$ be a (not necessarily connected) projective dlt pair of dimension $d$, with $m(K_X+B)\sim 0$ and $m$ is even. Also assume that $mN(d-1,m)|n$ where $N(d-1,m)$ as in \ref{conj-bounded-klt-admissible}, then $PA(X, n(K_X+B))$ is non-trivial.
\end{prop}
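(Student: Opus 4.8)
The plan is to construct a pre-admissible section in $H^0(X,n(K_X+B))$ by descending along the lc-trivial fibration structure coming from the reduced boundary, using the canonical bundle formula and induction on dimension. Since $PA$ is defined componentwise in the appropriate sense (via Remark after the definition and Proposition \ref{prop-bnd-index-slc}), I may assume $X$ is connected; the non-connected case is then handled by the same averaging trick as in the proof that Conjecture \ref{conj-bounded-klt-admissible} implies Conjecture \ref{conj-bounded-klt-admissible-nonconnect}, using finiteness of $\rho_n(\Bir(X,B))$ (Proposition \ref{prop-finiteness-bir-representation}). So fix a connected $\Q$-factorial dlt pair $(X,B)$ of dimension $d$ with $m(K_X+B)\sim 0$, $m$ even, and $mN(d-1,m)\mid n$. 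By Proposition \ref{prop-bnd-index-slc} it is enough to produce a nonzero pre-admissible section on a $\Q$-factorial dlt model, so we may freely replace $(X,B)$ by such a model when convenient.

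First I would dispose of the case $\rddown{B}=0$: then $(X,B)$ is klt, and by Conjecture \ref{conj-bounded-klt-admissible} in dimension $d$ — wait, we only assume it in dimension $\le d-1$, so instead I appeal directly to the existence statement (Gongyo's Theorem C, quoted before Conjecture \ref{conj-bounded-klt-admissible-nonconnect}) together with the divisibility $mN(d-1,m)\mid n$ to get that $n(K_X+B)\sim 0$ and hence the constant section $1\in H^0(X,\mathcal{O}_X)$ gives what we want; here $D^n=\emptyset$ so pre-admissibility is automatic. Now assume $S:=\rddown{B}\neq 0$. Apply Proposition \ref{prop-dlt-boundary}. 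In case (1), $S$ is connected; run an appropriate MMP (or use the dlt adjunction of Section 3) to restrict to $S$, apply divisorial adjunction $K_S+B_S=(K_X+B)|_S$, note $m(K_S+B_S)\sim 0$ with $\dim S=d-1$, and invoke the inductive hypothesis $(A_{d-1})$ — or rather Conjecture \ref{conj-bounded-klt-admissible} in dimension $\le d-1$ applied to the strata of the sdlt pair $(S,B_S)$, combined with Lemma \ref{lem-descending-sections} — to produce an admissible section on $S^n$; then one must lift this section from $S^n$ back to $X$. In case (2) we have the $\PP^1$-fibration $(X,B)\dashrightarrow(V,P)$, $(V,P)$ being $\Q$-factorial lc of dimension $d-1$ with a horizontal $S_i\subset B_i$ such that $(S_i,\Diff(B-S_i))\dashrightarrow(V,P)$ is B-birational; here I would pull back a section from $V$ via the canonical bundle formula (Theorem \ref{c-bdle-form}), using that $(V,P)$ has dimension $d-1$ and the moduli part is controlled, and then check pre-admissibility against the involution $\tau$ using the B-birationality of $S_i\dashrightarrow V$.

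The main obstacle is the lifting/extension step: producing a section on the reduced boundary $S$ (or on $D^n$) is the easy part by induction, but extending it to a global section of $n(K_X+B)$ on $X$ requires vanishing of $H^1$ of the ideal sheaf twisted by $n(K_X+B)$, i.e. one needs a Kawamata–Viehweg-type vanishing for the pair $(X,B-S)$ relative to the restriction exact sequence
$$
0\to \mathcal{O}_X(n(K_X+B)-S)\to \mathcal{O}_X(n(K_X+B))\to \mathcal{O}_S(n(K_S+B_S))\to 0.
$$
Since $n(K_X+B)\sim 0$ this reads as needing $H^1(X,\mathcal{O}_X(-S))=0$, which follows from Kawamata–Viehweg vanishing because $-S\sim K_X+(B-S)$ with $(X,B-S)$ klt (as $(X,B-\epsilon S)$ is klt and $\lfloor B-S\rfloor=0$) — but one has to be careful that $B-S$ may have coefficients needing slight perturbation, and in the Calabi–Yau situation $K_X+(B-S)$ is not big, so one argues fibrewise over $V$ in case (2) or uses that $X$ is of Fano type on the relevant strata. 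The remaining routine points are: checking the descended section is nonzero on every component (the averaging/orbit argument as in the proof following Conjecture \ref{conj-bounded-klt-admissible}), checking $\tau$-invariance on $D^n$ so that the section is genuinely pre-admissible (this is where the B-birational identification of the horizontal boundary components with $V$ in Proposition \ref{prop-dlt-boundary}(2) is used), and book-keeping the divisibility constant so that $n$ a multiple of $mN(d-1,m)$ suffices throughout.
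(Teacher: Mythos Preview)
Your overall strategy—produce an admissible section on $S=\rddown{B}$ by induction and then lift it to $X$—is exactly the paper's route (it packages the lifting step as Lemma~\ref{lem-fujino-4.5} and the induction as $B_{d-1}$). However, your proposed lifting mechanism has a genuine gap.

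You try to get surjectivity of the restriction $H^0(X,n(K_X+B))\to H^0(S,n(K_S+B_S))$ from the vanishing of $H^1(X,\mathcal{O}_X(-S))$, and you already notice the problem: writing $-S\sim_\Q K_X+(B-S)$, the divisor $-S-(K_X+(B-S))=-(K_X+B)\equiv 0$ is \emph{not} big, so Kawamata--Viehweg does not apply. Your suggested workarounds (``argue fibrewise over $V$'', ``$X$ is of Fano type on the strata'') do not repair this; in the Calabi--Yau setting there is no bigness to exploit anywhere. The paper avoids cohomological vanishing entirely. The key observation you are missing is elementary: since $n(K_X+B)\sim 0$, the space $H^0(X,n(K_X+B))$ is one-dimensional, and when $S$ is connected so is $H^0(S,n(K_S+B_S))$; the restriction of the constant section is nonzero, so the map is an isomorphism. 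In case~(2) of Proposition~\ref{prop-dlt-boundary} (two components), one does not lift from $S$ at all: one restricts the admissible section on $S$ to a horizontal component $D_i$, descends it to $V$ via the B-birational map $(D_i,\Diff(B-D_i))\dashrightarrow(V,P)$, and then uses $H^0(X',n(K_{X'}+B'))\cong H^0(V,n(K_V+P))$ (because $g$ is a contraction) to lift; the remaining work is checking that this lifted section restricts correctly on the vertical part of $\rddown{B'}$.

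Two smaller points. First, what you need on $S$ is an \emph{admissible} section, i.e.\ the statement $B_{d-1}$, not $A_{d-1}$ (pre-admissible would not suffice for the case~(2) descent to $V$, nor for the pre-admissibility of the lifted section on $X$). Second, the averaging trick over $\rho_n(\Bir(X,B))$ is unnecessary for $A_d$: pre-admissibility is checked componentwise, so you may simply work on each connected component separately and assemble.
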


\begin{prop}
	($B_d$)	Assuming Conjecture \ref{conj-bounded-klt-admissible} in dimension $\leq d$. Let $(X,B)$ be a (not necessarily connected) projective dlt pair of dimension $d$, with $m(K_X+B)\sim 0$ and $m$ is even. Also assume that $mN(d,m)|n$ where $N(d,m)$ as in \ref{conj-bounded-klt-admissible}, then $A(X, n(K_X+B))$ is non-trivial.
\end{prop}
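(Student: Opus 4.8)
To prove $B_d$, the plan is to follow Fujino's strategy from \cite{Fuj1}: produce pre-admissible sections via $A_d$, observe that on a component carrying a reduced boundary such a section is automatically admissible, dispose of the klt components using Conjecture \ref{conj-bounded-klt-admissible} in dimension $d$, and then glue over the connected components of $X$ by a rescaling argument. The constants in Conjecture \ref{conj-bounded-klt-admissible} may be chosen so that $N(d,m)$ is divisible by $m$, by $N(d-1,m)$, and by the analogous constants in dimensions $\le d$; then the hypothesis $mN(d,m)\mid n$ forces $n$ to be even and divisible by $mN(d-1,m)$, so that Proposition \ref{prop-finiteness-bir-representation} and $A_d$ (in every dimension $\le d$) are available.

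The first step is the connected case. Let $(Y,B_Y)$ be a connected projective dlt pair of dimension $\le d$ with $m(K_Y+B_Y)\sim 0$, so $H^0(Y,n(K_Y+B_Y))$ is one-dimensional, spanned by a nowhere-vanishing section. If $\rddown{B_Y}=0$ then $(Y,B_Y)$ is klt and Conjecture \ref{conj-bounded-klt-admissible} directly gives a non-zero admissible section in $A(Y,n(K_Y+B_Y))$. If $\rddown{B_Y}\ne 0$, then $A_d$ (in the dimension of $Y$), whose hypothesis about Conjecture \ref{conj-bounded-klt-admissible} in lower dimensions is covered by ours, produces a non-zero pre-admissible $s\in PA(Y,n(K_Y+B_Y))$, and I claim $s$ is admissible. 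Indeed, any $g\in\Bir(Y,B_Y)$ induces a crepant birational self-map $g_S$ of $\rddown{B_Y}^\nu$ compatibly with adjunction (\cite{Kol}, Chapter 4), so $(g^\ast s)|_{\rddown{B_Y}^\nu}=g_S^\ast\big(s|_{\rddown{B_Y}^\nu}\big)=s|_{\rddown{B_Y}^\nu}$, the last equality because $s|_{\rddown{B_Y}^\nu}$ is admissible hence $g_S$-invariant; since $n(K_Y+B_Y)\sim 0$, restriction to any component $E$ of $\rddown{B_Y}$ is injective on $H^0(Y,n(K_Y+B_Y))$, whence $g^\ast s=s$. In both cases we conclude that at level $n$ every section of $H^0(Y,n(K_Y+B_Y))$ is admissible, and $\Bir(Y,B_Y)$ acts trivially on it.

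The second step is the gluing. Write $X=\sqcup_a Y_a$ into connected components and choose non-zero $s_a\in H^0(Y_a,n(K_X+B))$, admissible on $Y_a$ by the first step; we seek $\lambda_a\in\C^\ast$ making $s:=(\lambda_a s_a)_a$ admissible on $X$. By the first step (applied in dimension $\le d-1$), $\Bir(E,\cdot)$ acts trivially on the one-dimensional $H^0(E,n(K_X+B))$ for every component $E$ of $\rddown{B}^\nu$, so each crepant birational identification between two components of $\rddown{B}^\nu$ carries a well-defined transition scalar; moreover a crepant birational map $Y_a\dashrightarrow Y_{a'}$ restricts to one between $\rddown{B_a}^\nu$ and $\rddown{B_{a'}}^\nu$, with the same transition scalar. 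Unwinding the definition, $s$ is admissible exactly when $\lambda_a=\lambda_{a'}\cdot(\text{transition scalar})$ holds for all these identifications, and this system is solvable because the product of transition scalars around any loop reduces to the scalar by which a crepant birational self-map of a single boundary stratum acts on its one-dimensional space $H^0(\cdot,n(K_X+B))$, which is trivial by the first step. A solution $(\lambda_a)$ then makes $s$ admissible and non-zero, so $A(X,n(K_X+B))\ne 0$.

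The main obstacle is the gluing: pre-admissibility couples the connected components of $X$ not only through crepant birational maps of the reduced boundary but, recursively, through the whole tower of strata of $\rddown{B}^\nu$, while the $s_a$ are chosen independently, and the rescaling succeeds only because the characters through which the relevant $\Bir$-groups act on one-dimensional spaces of sections are trivial. Establishing this triviality is precisely what forces the simultaneous use of $A_d$ (hence Conjecture \ref{conj-bounded-klt-admissible} in dimension $\le d-1$) on the components with non-trivial reduced boundary and of Conjecture \ref{conj-bounded-klt-admissible} in dimension $d$ on the klt components; making it rigorous also needs Kollár's bookkeeping (\cite{Kol}, Chapters 4--5) for how crepant birational maps descend to and restrict along strata, and it is here that the uniform constant $N(d,m)$ is assembled.
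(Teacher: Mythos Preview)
There is a genuine gap in your first step. The claim that any $g\in\Bir(Y,B_Y)$ induces a crepant birational self-map $g_S$ of $\rddown{B_Y}^\nu$ is false in general: a B-birational self-map can contract a component of $\rddown{B_Y}$ to a lower-dimensional lc center. The standard Cremona on $(\PP^2,L_1+L_2+L_3)$ already shows this, as each $L_i$ is sent to a point. What survives is the statement of Lemma~\ref{lem-gongyo-2.16}: for each component $S$ of $\rddown{B_Y}$ there exist an lc center $V\subset S$ and an lc center $V'$ of the target such that $g$ induces a B-birational map $(V,B_V)\dashrightarrow(V',B_{V'})$, and restriction $H^0(S,\cdot)\to H^0(V,\cdot)$ is an isomorphism. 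The conclusion $PA=A$ for connected pairs with non-trivial reduced boundary (Lemma~\ref{lem-PA=A}) is correct, but its proof must pass through these possibly lower-dimensional strata and use that admissibility propagates recursively.

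Your gluing step also diverges from the paper and inherits the same problem. The paper does not choose admissible sections on each connected component independently and then rescale. Instead it applies $A_d$ to the whole disconnected $X$ to produce a single global $s\in PA(X,n(K_X+B))$; the point is that pre-admissibility of this global $s$ forces $s|_{\rddown{B}^\nu}$ to be admissible on all of $\sqcup_a\rddown{B_a}^\nu$, so the cross-component compatibilities are already built in. Step~1 of the paper then uses Lemma~\ref{lem-gongyo-2.16} again (descending to lc centers $V,V'$) to show the type-1 part of $s$ is invariant under every B-birational map $X_{i,1}\dashrightarrow X_{j,1}$, while the type-2 part is replaced wholesale via Conjecture~\ref{conj-bounded-klt-admissible-nonconnect} (this does not disturb pre-admissibility since klt components contribute nothing to $\rddown{B}$). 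Your rescaling proposal instead has to solve a cocycle condition, and your verification that the cocycle is trivial rests on the same incorrect assertion that B-birational maps between the $Y_a$ restrict to B-birational maps of their boundaries; it also does not address B-birational identifications between boundary strata that are not induced by any B-birational map of the ambient components.
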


Before we show the above two proposition we will show the proposition below. Firstly we will show an important lemma.

\begin{lem}[\cite{Fuj1} 4.5, \& \cite{Gongyo-1} Claim 5.4]\label{lem-fujino-4.5}
	Assume $(X,B)$ is a projective dlt pair (not necessarily connected) with $n(K_X+B)\sim 0$ and $n$ is even. \\Assume $s\in A(\lf B \rf, n(K_X+B)|_{\lf B \rf})$ is non-zero.\\
	Then there exists a nonzero $t\in PA(X,n(K_X+B))$ such that $t|_{\rddown{B}}=s$.
\end{lem}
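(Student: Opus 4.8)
\textbf{Proof proposal for Lemma \ref{lem-fujino-4.5}.}
The plan is to extend the section $s$ on $\rddown{B}$ to a section $t$ on all of $X$ by first using the fact that the restriction map on global sections is surjective, and then correcting the resulting section so that it becomes preadmissible. Write $S = \rddown{B}$ and $\Delta = B - S$. Since $n(K_X+B) \sim 0$ and $S$ is a reduced divisor with $(K_X+B)|_S = K_S + \Diff(\Delta)$, we have the adjunction exact sequence
\[
0 \to \mathcal{O}_X(n(K_X+B) - S) \to \mathcal{O}_X(n(K_X+B)) \to \mathcal{O}_S(n(K_X+B)|_S) \to 0.
\]
First I would argue that $H^1(X, \mathcal{O}_X(n(K_X+B) - S)) = 0$, so that the restriction $H^0(X, n(K_X+B)) \to H^0(S, n(K_X+B)|_S)$ is surjective; here one writes $n(K_X+B) - S \sim -S \sim K_X + (B - S) - K_X$-type expressions and invokes a Kawamata--Viehweg-type vanishing for the dlt (hence klt after removing $S$) pair $(X, B-S)$, using that $-S$ is $-(K_X + (B-S))$ up to $\Q$-linear equivalence since $K_X + B \sim_\Q 0$. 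This produces some $t_0 \in H^0(X, n(K_X+B))$ with $t_0|_S = s$.

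The section $t_0$ need not be preadmissible, because preadmissibility is a condition on $t_0|_{D^n}$, the restriction to the normalization of the conductor of (the normalization of) $X$ — but this is exactly a condition that only sees $S$ and the lower-dimensional strata inside it. Since by hypothesis $s \in A(S, n(K_X+B)|_S)$ is admissible as a section on the sdlt pair $S$ (of dimension $d-1$), and preadmissibility of $t_0$ is governed by $t_0|_{D^n} = (t_0|_S)|_{D^n} = s|_{D^n}$, which is admissible (hence in particular $\tau$-invariant and B-birationally invariant on the strata) precisely because $s$ is admissible, I claim $t_0$ is already preadmissible. The key point to spell out carefully is the compatibility: the conductor-normalization $D^n$ attached to $X$ via its dlt/sdlt structure agrees with the analogous object attached to $S$, so that the two notions of "restrict to $D^n$" coincide; this is essentially the bookkeeping set up in \ref{sec:sdlt-case} and the construction of $PA$ and $A$. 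One then concludes $t := t_0 \in PA(X, n(K_X+B))$ with $t|_S = s$, and $t \neq 0$ since $t|_S = s \neq 0$.

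The main obstacle I expect is the vanishing statement $H^1(X, \mathcal{O}_X(n(K_X+B) - S)) = 0$ in the dlt (not klt, not $\Q$-factorial-smooth) setting: one must pass to a dlt model or log resolution, push forward, and control the higher direct images, or alternatively invoke a version of Kawamata--Viehweg vanishing for dlt pairs with a boundary that has a reduced part. A clean route is to take a $\Q$-factorial dlt modification $(Y, B_Y) \to (X, B)$, prove the vanishing upstairs where $(Y, B_Y - S_Y)$ is klt and $-S_Y \sim_\Q -(K_Y + (B_Y - S_Y))$ with $S_Y$ suitably positive along the relevant strata, and then descend; the subtlety is that $-S$ is only semiample/trivial rather than ample, so one likely needs the relative vanishing for the morphism $Y \to X$ combined with triviality of $n(K_X+B)$ to kill the global $H^1$. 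Alternatively, and perhaps more in the spirit of \cite{Fuj1}, one avoids vanishing entirely by working one stratum at a time and gluing, using that $s$ being admissible makes the gluing data consistent — but that reduces to the same compatibility bookkeeping as in the second paragraph. Once surjectivity of the restriction is in hand, the rest is the (routine but notation-heavy) verification that admissibility of $s$ forces preadmissibility of any lift.
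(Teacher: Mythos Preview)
Your second paragraph is correct: once you have a lift $t_0$ with $t_0|_{\rddown{B}}=s$, preadmissibility of $t_0$ is automatic from admissibility of $s$, and the paper says exactly this (``it suffices to show there is $t\in H^0(X,n(K_X+B))$ such that $t|_{\rddown{B}}=s$'').

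The gap is in your first paragraph: the vanishing $H^1(X,\mathcal{O}_X(-S))=0$ is \emph{false} in general, and the restriction map is not surjective. Take $X=\mathbb{P}^1$, $B=p_1+p_2$, so $K_X+B\sim 0$ and $S=\rddown{B}=p_1+p_2$. Then $H^1(\mathbb{P}^1,\mathcal{O}(-2))\cong\mathbb{C}$ by Serre duality, and the restriction $H^0(\mathbb{P}^1,\mathcal{O})\to H^0(\{p_1,p_2\},\mathcal{O})\cong\mathbb{C}^2$ is the diagonal embedding, not surjective. You correctly identify that $-(K_X+\Delta)\sim_\Q 0$ is only nef, not big, so Kawamata--Viehweg does not apply; but you cannot salvage the vanishing by passing to a dlt model either, because the obstruction is global and survives on any model. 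The same failure occurs for $X=E\times\mathbb{P}^1$ with $B=E\times\{0\}+E\times\{\infty\}$.

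What actually happens is that admissibility of $s$ is needed not (only) to verify preadmissibility of the lift, but to ensure that $s$ lies in the image of the restriction map in the first place. The paper's proof splits according to Proposition~\ref{prop-dlt-boundary}: if $\rddown{B}$ is connected, then $H^0(X,n(K_X+B))\to H^0(\rddown{B},n(K_X+B)|_{\rddown{B}})$ is injective between one-dimensional spaces, hence an isomorphism, and any $s$ lifts. If $\rddown{B}$ has two connected components, then $X$ is (after an MMP) a $\mathbb{P}^1$-fibration over some $(V,P)$ with horizontal sections $D_1,D_2$ that are B-birational to $(V,P)$; admissibility of $s$ forces $s|_{D_1}$ and $s|_{D_2}$ to be compatible under this B-birational identification, so they descend to a single section on $V$, which one pulls back to $X$ and then checks agrees with $s$ on both the horizontal and vertical parts of $\rddown{B}$. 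In the $\mathbb{P}^1$ example above, admissibility says precisely $s(p_1)=s(p_2)$, which is exactly the image of the restriction.
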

\begin{proof}
	This proof follows the same route as [\cite{Fuj1} Proposition 4.5]. Note that the lemma is trivial if $\rddown{B}=0$, hence we assume $\rddown{B}\neq 0$. It is clear by definition it suffice to show there is $t\in H^0(X,n(K_X+B))$ such that $t|_{\rddown{B}}=s$. Therefore, we can assume that $X$ is connected. By Prop \ref{prop-dlt-boundary}, we have either $\lf B \rf$ is connected or has 2 connected components. If $\lf B \rf$ is connected, then $H^0(X,n(K_X+B))\rightarrow H^0(\lf B \rf,\mathcal{O}_{\lf B \rf}(n(K_X+B)|_{\lf B \rf}))$ is injective, hence isomorphism since both are 1 dimensional. In this case, we see that the lemma is clear. \\\\
	Now we assume the $\rddown{B}$ has 2 connected components, $B_1,B_2$. In this case we see that $X$ is generically a $\mathbb{P}^1$ bundle over $(V,P)$. More precisely, there is a sequence of flips and divisorial contraction $\phi : X\rightarrow X'$ and a Mori fiber space $g: (X',B')\rightarrow (V,P)$ such that the general fiber of $g$ is $\mathbb{P}^1$ and $K_X+B= g^* (K_V+P)$.  We also remark that $(\rddown{B'},\Diff(B'-\rddown{B'}))$ is slc by \ref{lem-lt-slc-adjunction}. Also there are 2 connected components of $B'$, $B_1',B_2'$, and each component has an irreducible component $D_i$ such that $g_i:= g|_{D_i}: (D_i,\Diff(B'-D_i))\rightarrow (V,P)$ is B-birational. Now it is easy to see that $$H^0(X,n(K_X+B))\cong H^0(X',n(K_{X'}+B'))$$ Also we have [\cite{Fujino-Gongyo-1}, Remark 2.15], $H^0(\rddown{B},n(K_X+B)|_{\rddown{B}})\cong H^0(\rddown{B'},n(K_{X'}+B')|_{\rddown{B'}})$. Hence it suffices to treat $(X',B')$.  Now let $s\in A(\rddown{B'},n(K_{X'}+B')|_{\rddown{B'}})$, and we write $B_{h}'$ and $B_v'$ be the horizontal and vertical part of $\rddown{B'}$ with respect to V. From Proposition \ref{prop-dlt-boundary}, we see that $s|_{D_i}$ is birational invariant in particular, it descend to a section $t\in H^0(V,n(K_V+P))$. We note that $n(K_{D_i}+\Diff(B'-D_i))\sim 0$ is Cartier, hence we get $n(K_V+P)\sim 0$ and in particular is Cartier. Now since $g$ is contraction and hence we get $H^0(X',n(K_{X'}+B'))\cong H^0(V,n(K_V+P))$, therefore $t$ lifts to a section $w\in H^0(X',n(K_{X'}+B'))$. It suffices to show $w|_{\rddown{B'}}=s$ as remarked before.\\\\
	Firstly, We note that $w|_{D_i}$ and $s|_{D_i}$ are different by at most $(-1)^m$ by \cite{Fuj1} and \cite{Kol} using the theory of $\mathbb{P}^1$ linked lc centres. Hence since we assume $n$ is even, we have the desired claim on $B_{h}'$. Next we check on $B_v'$. It is clear that $B_v' = \sum_i g^*(F_i)$ for some $F_i$ irreducible divisor in $\rddown{P}$. Let $E_i := g^*(F_i)$. We will show $s|_{E_i}=w|_{E_i}$. We let $\Theta_i$ be an irreducible components of $E_i\cap D_1$ that dominant $F_i$ (can always do this since $E_i$ intersects $D_i$ non-trivially). In particular we see that $g|_{\Theta_i}:\Theta_i\rightarrow F_i$ is dominant. Hence we have the following diagram. 
	\[\begin{tikzcd}
		H^0(E_i, n(K_{X'}+B')|_{E_i})\arrow[r,"|_{\Theta_i}"] &H^0(\Theta_i,n(K_{X'}+B')|_{\Theta_i})\\
		H^0(D_i,n(K_V+P)|_{F_i})\arrow[r,"id"] \arrow[u,"\cong"] &	H^0(D_i,n(K_V+P)|_{F_i})\arrow[u,"i"]
		\end{tikzcd}\]
	The right vertical map is injective since $\Theta_i\rightarrow D_i$ is dominant, and left vertical map is isomorphism since $D_i$ is seminormal and $g|_{E_i}$ has connected fibers.[\cite{Fuj1}, Prop 4.5] Since we have $s|_{\Theta_i}=w|_{\Theta_i}$ by the horizontal part argument. Hence we have $s|_{E_i}=w|_{E_i}$, which proves the claim. 
\end{proof}

Now we show the following lemma.
\begin{lem}[\cite{Fuj1}, 4.7]\label{lem-PA=A}
	Let $(X,B)$ be a connected projective dlt pair with $n(K_X+B)\sim 0$ and $n$ is even. Assuming $\rddown{B}\neq 0$, then $PA(X,n(K_X+B))=A(X,n(K_X+B))$. 
\end{lem}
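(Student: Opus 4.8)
The plan is to exploit that $n(K_X+B)\sim 0$ makes $H^0(X,n(K_X+B))$ one‑dimensional (here $X$ is connected), so that $A(X,n(K_X+B))\subseteq PA(X,n(K_X+B))$ becomes an equality as soon as one checks that a single nonzero pre‑admissible section is $\Bir(X,B)$‑invariant, and to obtain that invariance by restricting to $\rddown B$, where admissibility is already built into the definition of pre‑admissibility. First I would record that $A(X,n(K_X+B))\subseteq PA(X,n(K_X+B))\subseteq H^0(X,n(K_X+B))\cong\mathbb C$, so we may assume $PA(X,n(K_X+B))\neq 0$ and fix a nonzero $s\in PA(X,n(K_X+B))$, the goal being $s\in A(X,n(K_X+B))$. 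The key elementary observation is that the restriction map
\[
H^0\bigl(X,n(K_X+B)\bigr)\longrightarrow H^0\bigl(\rddown B,\,n(K_X+B)|_{\rddown B}\bigr)
\]
is injective: a nonzero section of $\mathcal O_X(n(K_X+B))\cong\mathcal O_X$ is nowhere vanishing, hence restricts to a nonzero section of the nonempty closed subscheme $\rddown B$ — this is the only place where the hypothesis $\rddown B\neq 0$ is used. By adjunction $n(K_{\rddown B}+B_{\rddown B})=n(K_X+B)|_{\rddown B}\sim 0$, and by the very definition of $PA$ for the dlt pair $(X,B)$ the nonzero section $s|_{\rddown B}$ is an \emph{admissible} section of the sdlt pair $(\rddown B,B_{\rddown B})$.

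Next I would show $g^{*}s=s$ for every $g\in\Bir(X,B)$. Since $H^0(X,n(K_X+B))=\mathbb C\cdot s$, we have $g^{*}s=\lambda_g s$ for some $\lambda_g\in\mathbb C^{\times}$, and by the injectivity above it suffices to prove $(g^{*}s)|_{\rddown B}=s|_{\rddown B}$. For this I would invoke the theory of $B$-birational maps: such a $g$ permutes the non‑klt centres of $(X,B)$ and preserves their crepant structure, so in particular it permutes the (normal) irreducible components $S_i$ of $\rddown B$ and restricts to $B$-birational maps $h_i\colon (S_i,B_{S_i}+D_i)\dashrightarrow (S_{\pi(i)},B_{S_{\pi(i)}}+D_{\pi(i)})$ between the boundary‑plus‑conductor components, with $(g^{*}s)|_{S_i}=h_i^{*}\bigl(s|_{S_{\pi(i)}}\bigr)$; because $n$ is even the relevant comparison isomorphisms $\mathfrak R^{n}$ and the $\mathbb P^{1}$‑linking involutions furnished by Proposition \ref{prop-dlt-boundary} all contribute the sign $+1$, exactly as in the proof of Lemma \ref{lem-fujino-4.5}, so no sign discrepancy arises. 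Now admissibility of $s|_{\rddown B}$ gives $h_i^{*}\bigl((s|_{\rddown B})|_{S_{\pi(i)}}\bigr)=(s|_{\rddown B})|_{S_i}$ for each $i$; since a section of the relevant divisorial sheaf on $\rddown B$ is determined by its restrictions to the $S_i$, it follows that $(g^{*}s)|_{\rddown B}=s|_{\rddown B}$, whence $\lambda_g(s|_{\rddown B})=s|_{\rddown B}$ with $s|_{\rddown B}\neq 0$, so $\lambda_g=1$. Therefore $s\in A(X,n(K_X+B))$ and $PA(X,n(K_X+B))=\mathbb C\cdot s=A(X,n(K_X+B))$.

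I expect the substantive obstacle to be the functoriality asserted in the middle of the argument: that an element of $\Bir(X,B)$ genuinely induces a $B$-birational endomorphism of the \emph{sdlt} pair $\rddown B$ — respecting the decomposition into components, the conductor, and the differents — and that restriction of $n$-canonical sections intertwines the two $\Bir$-actions with all signs trivial, which is where the hypothesis that $n$ be even is really needed. This rests on the standard behaviour of lc centres and adjunction under crepant birational maps (\cite{Fuj1}, \cite{Kol}, Chapters 4--5) and, in the case $\rddown B$ is disconnected — where by Proposition \ref{prop-dlt-boundary} it has precisely two $\mathbb P^{1}$‑linked components — on the explicit linking structure already used in the proof of Lemma \ref{lem-fujino-4.5}. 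Everything else is formal, using only $n(K_X+B)\sim 0$, $\rddown B\neq 0$, and the connectedness of $X$.
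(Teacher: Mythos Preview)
Your overall framework --- one-dimensionality of $H^0(X,n(K_X+B))$, injectivity of restriction to $\rddown{B}$, and reducing to $(g^*s)|_{\rddown{B}}=s|_{\rddown{B}}$ --- matches the paper exactly. The gap is the claim that every $g\in\Bir(X,B)$ permutes the components $S_i$ of $\rddown{B}$ and restricts to $B$-birational maps $h_i\colon S_i\dashrightarrow S_{\pi(i)}$. This is false in general: a $B$-birational self-map of $(X,B)$ need not be an isomorphism in codimension one, so a component $S_i$ may lie in the indeterminacy locus of $g$ and be sent to a lower-dimensional lc centre, in which case no such $h_i$ exists and your formula $(g^*s)|_{S_i}=h_i^*(s|_{S_{\pi(i)}})$ is undefined. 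You are right that this is the substantive obstacle, but it is not a verification to be carried out --- the asserted functoriality simply fails.

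The paper handles this precisely via Lemma~\ref{lem-gongyo-2.16}. One resolves $g$ by a common Szabo log resolution $\alpha,\beta\colon(Y,B_Y)\to(X,B)$ with $\alpha=g\circ\beta$, sets $\Theta=B_Y^{=1}$, and for each component $E$ of $\Theta$ dominating a component $S\subset\rddown{B}$ applies that lemma to produce an lc centre $V\subset S$, possibly of strictly smaller dimension than $S$, together with an lc centre $T$ of $(Y,B_Y)$ on which $\alpha|_T$ and $\beta|_T$ are genuinely $B$-birational onto $(V,B_V)$. Pre-admissibility of $s$ unwinds recursively to admissibility of $s|_V$, giving $\alpha|_T^*(s|_V)=\beta|_T^*(s|_V)$; since $H^0(E,n(K_E+B_E))\cong H^0(S,n(K_S+B_S))\cong H^0(V,n(K_V+B_V))\cong H^0(T,n(K_T+B_T))$ are all one-dimensional, this yields $(\alpha^*s)|_E=(\beta^*s)|_E$ and hence $(g^*s)|_{\rddown{B}}=s|_{\rddown{B}}$. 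The descent to a lower-dimensional lc centre, rather than a direct permutation of boundary divisors, is the missing ingredient.
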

Firstly, we will state a well known lemma about crepant birational maps.
\begin{lem}[\cite{Fuj1}4.7 Claim $A_n,B_n$]\label{lem-gongyo-2.16}
[See Lemma 2.16 in \cite{Fujino-Gongyo-1}] Let $f: (X,B)\dashrightarrow (X',B')$ be a B-birational map between projective dlt pairs. Let $S$ be a lc center of $(X,B)$ such that $K_S+B_S := (K_X+B)|_S$. Let $\alpha: (Y,B_Y)\rightarrow (X,B)$ , $\beta: (Y,B_Y)\rightarrow (X',B')$ be a common log resolution such that $K_Y+B_Y=\alpha^*(K_X+B)=\beta^*(K_{X'}+B')$. Then we can find a lc centre $V$ of $(X,\Delta)$ contained in $S$ with $K_V+B_V := (K_X+B)|_V$, an lc center $T$ of $(Y,B_Y)$ with $K_T+B_T := (K_Y+B_Y)|_T$ and a lc centre $V'$ of $(X',B')$ with $K_{V'}+B_{V'} := (K_V+B_V)|_{V'}$ such that the following hold. \begin{enumerate}
	\item $\alpha|_T: (T,B_T)\rightarrow (V,B_V), \beta|_T: (T,B_T)\rightarrow (V',B_{V'})$ is a B-birational morphism. Hence $\beta|_T\circ \alpha|T^{-1}: (B,B_V)\dashrightarrow (V',B_{V'})$ is $B$-birational.
	\item $H^0(S,m(K_S+B_S))\cong H^0(V,m(K_V+B_V))$ by the natural restriction where $m\in \mathbb{N}^+$ such that $m(K_X+B)$ is Cartier. \qed
\end{enumerate}
\end{lem}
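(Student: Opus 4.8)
Following \cite{Fuj1} and \cite{Fujino-Gongyo-1}, the plan is to argue by simultaneous induction on $\dim X$ together with the companion statements above on (pre)admissible sections, since the section-lifting needed for (2) uses the one-dimension-lower case. After replacing $Y$ by a higher common log resolution of the B-birational map (which changes neither the hypotheses nor the conclusion) we may assume $(Y,B_Y)$ is log smooth; as $K_Y+B_Y=\alpha^{*}(K_X+B)$ with $(X,B)$ lc, the sub-pair $(Y,B_Y)$ is sub-lc with coefficients in $[0,1]$ and its lc centres are exactly the strata of $\lfloor B_Y\rfloor$, i.e.\ the components of $B_Y^{=1}$ and their intersections. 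Because $\alpha,\beta$ are crepant birational morphisms, the $\alpha$-image (resp.\ $\beta$-image) of an lc centre of $(Y,B_Y)$ is an lc centre of $(X,B)$ (resp.\ $(X',B')$), and conversely every lc centre of $(X,B)$ and of $(X',B')$ arises this way; moreover, $(X,B)$ being dlt, it is log smooth near the generic point of any of its lc centres, so over such a point $\alpha$ (and likewise $\beta$) may be assumed to be an isomorphism.

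\emph{Construction of $T,V,V'$ and proof of (1).} Among the lc centres $T_0$ of $(Y,B_Y)$ with $\alpha(T_0)\subseteq S$ choose $T$ minimal for inclusion and set $V:=\alpha(T)$, an lc centre of $(X,B)$ contained in $S$. Minimality of $T$ together with $(X,B)$ being log smooth near the generic point of $V$ (so that $\alpha$ is an isomorphism there) forces $T$ to be the strict transform of $V$ near its generic point, hence $\alpha|_T\colon T\to V$ is birational. Put $V':=\beta(T)$, an lc centre of $(X',B')$. If $\beta|_T$ is birational onto $V'$ we are done; otherwise $T$ lies over the locus where $\beta$ fails to be an isomorphism, and here one invokes the structure of $\mathbb{P}^{1}$-linked minimal lc centres over a fixed point (as in \cite{Fuj1} and \cite{Kol}, Ch.~4--5, using $K_Y+B_Y\sim_{\Q}0$ in the Calabi--Yau setting of this section and Proposition \ref{prop-dlt-boundary}) to replace $T$, within a chain of $\mathbb{P}^{1}$-linked minimal lc centres over the relevant point of $X'$, by one mapping birationally to $X'$ as well while still mapping birationally onto an lc centre $V\subseteq S$. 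Since $T,V,V'$ are lc centres and $\alpha|_T,\beta|_T$ are birational, Kollár's adjunction for lc centres (compatibility of the differents, \cite{Kol}, Ch.~4) gives $K_T+B_T=(\alpha|_T)^{*}(K_V+B_V)=(\beta|_T)^{*}(K_{V'}+B_{V'})$, so $\alpha|_T,\beta|_T$ are B-birational morphisms and $(V,B_V)\dashrightarrow(V',B_{V'})$ is B-birational, proving (1).

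\emph{Proof of (2).} Repeated adjunction along $V\subseteq S$ gives $K_V+B_V=(K_S+B_S)|_V$, so restriction defines $r\colon H^{0}(S,m(K_S+B_S))\to H^{0}(V,m(K_V+B_V))$. In the operative regime $m(K_X+B)\sim 0$ the sheaf $\mathcal{O}_S(m(K_S+B_S))$ is torsion on the connected normal projective variety $S$, so a nonzero section is nowhere vanishing and cannot restrict to $0$ on the nonempty closed subset $V$; hence $r$ is injective. For surjectivity it suffices to lift a section one step, from a component $W$ of $\lfloor B_S\rfloor$ to $S$, and then iterate down a chain of lc strata joining $V$ to $S$. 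Here Proposition \ref{prop-dlt-boundary}, applied to a $\Q$-factorial dlt model of $(S,B_S)$, governs the combinatorics of $\lfloor B_S\rfloor$: either it is connected, forcing the relevant restrictions to be injective between spaces of dimension $\le 1$ (surjectivity then coming from the inductive existence of sections), or it splits into two $\mathbb{P}^{1}$-fibred pieces over a lower-dimensional base, in which case -- exactly as in the proof of Lemma \ref{lem-fujino-4.5} -- one matches the two restrictions along a $\mathbb{P}^{1}$-linked pair of lc centres, the parity of $m$ killing the sign ambiguity $(-1)^{m}$, and controls which sections extend via the exact sequence
\[
0\longrightarrow \mathcal{O}_S(m(K_S+B_S))\otimes\mathcal{I}_W\longrightarrow \mathcal{O}_S(m(K_S+B_S))\longrightarrow \mathcal{O}_W(m(K_W+B_W))\longrightarrow 0,
\]
with $\mathcal{I}_W$ the ideal sheaf of $W$, while the one-dimension-lower case of the present lemma propagates the section across the components of $\lfloor B_S\rfloor$. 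Iterating shows $r$ is surjective, hence an isomorphism.

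\emph{Main obstacle.} The crux is the surjectivity in (2): propagating a pluricanonical section from the deep lc centre $V$ all the way up through the intermediate lc strata to $S$. This is the step that genuinely requires the $\mathbb{P}^{1}$-link machinery and the parity restriction on $m$, and it is the reason the lemma must be proven by induction on dimension together with its companion statements; by comparison, the construction in (1) of a $T$ mapping birationally to both $X$ and $X'$, while delicate, becomes routine lc-centre bookkeeping once the $\mathbb{P}^{1}$-link lemma and Proposition \ref{prop-dlt-boundary} are available.
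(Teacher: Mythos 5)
The paper does not prove this lemma: it is quoted from the literature with a \verb|\qed| in the statement, the intended proof being Claims $A_n,B_n$ inside [\cite{Fuj1}, Proposition 4.7] and [\cite{Fujino-Gongyo-1}, Lemma 2.16]. So there is no in-paper argument to compare against; judged on its own terms, your sketch has the right architecture (minimal lc centres of the common resolution, $\mathbb{P}^1$-links/sources, simultaneous induction with the companion statements on admissible sections), but two steps are not justified as written.

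First, in (1) you assert that since $(X,B)$ is log smooth near the generic point of $V$, ``$\alpha$ is an isomorphism there,'' and deduce that $T$ is the strict transform of $V$. That is false in general: $Y$ must also resolve $(X',B')$ and the graph of $f$, so $\alpha$ can blow up loci inside the snc locus of $(X,B)$ (e.g.\ blowing up $D_1\cap D_2$ in an snc pair replaces the stratum $D_1\cap D_2$ by two $\mathbb{P}^1$-linked strata $\widetilde D_i\cap E$, neither of which is its strict transform). The correct route is the one you only deploy on the $\beta$-side: a $T$ minimal among lc centres of $(Y,B_Y)$ with $\alpha(T)\subseteq S$ is automatically a minimal lc centre of $(Y,B_Y)$ (any smaller lc centre would also map into $S$), hence it is simultaneously a source of $V=\alpha(T)$ and of $V'=\beta(T)$; since both $(X,B)$ and $(X',B')$ are dlt, sources of their lc centres are birational to the centres, so $\alpha|_T$ and $\beta|_T$ are both birational at once and your ad hoc ``replace $T$ if $\beta|_T$ is not birational'' step disappears. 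Second, in (2) your injectivity argument is fine only in the regime $m(K_X+B)\sim 0$ --- which is indeed the only regime in which the lemma is used in this paper --- but there the whole of (2) is immediate: $\mathcal{O}_S(m(K_S+B_S))$ and $\mathcal{O}_V(m(K_V+B_V))$ are both trivial on connected normal projective varieties, so both $H^0$'s are one-dimensional and an injective restriction map is an isomorphism. Your long surjectivity paragraph (exact sequences, propagation through $\lfloor B_S\rfloor$) is therefore unnecessary where the claim is used, and it is not a proof where it would be needed: outside the $\sim 0$ regime the minimal choice of $T$ can force $V$ so deep that $H^0(S,m(K_S+B_S))=0$ while $H^0(V,\cdot)\neq 0$, so (2) must be read together with the existential freedom in choosing $V$ or with the trivial-bundle hypothesis. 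Relatedly, your closing assessment inverts the difficulty: the crux is finding one $T$ that works for $\alpha$ and $\beta$ simultaneously, not the restriction isomorphism.
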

Now we will use the above lemma to prove \ref{lem-PA=A}.
\begin{proof}[Proof of \ref{lem-PA=A}]
	It is clear from definition that $A(X,n(K_X+B))\subset PA(X,n(K_X+B))$. Hence it suffices to show $PA(X,n(K_X+B))\subset A(X,n(K_X+B))$. Let $s\in PA(X,n(K_X+B))$, we need to show for any $g\in \Bir(X,B)$, $g^*(s)=s$. Since $H^0(n(K_X+B))$ is 1 dimensional, it suffices to show $(g^*s)|_{\lf B\rf} = s|_{\lf B\rf} $ (since $H^0(X,m(K_X+B))\rightarrow H^0(\lf B \rf,\mathcal{O}_{\lf B \rf}(m(K_X+B)|_{\lf B \rf}))$ is injective). \\\\
	Let $g\in Bir(X,B)$ and let $\alpha,\beta:(Y,B_Y)\rightarrow (X,B)$ be a Szabo log resolution such that $\alpha := g\circ \beta$, i.e. $\alpha,\beta$ are isomorphisms above the generic points of all lc centre of $(X,B)$. Let $\Theta := B_Y^{=1}$, then by standard theory $\Theta\rightarrow \rddown{B}$ has connected fiber and hence we have $\alpha_*\mathcal{O}_{\Theta}=\beta_* \mathcal{O}_{\Theta} =\mathcal{O}_{\rddown{B}}$. Then $\alpha^*,\beta^*$ induces isomorphism from $$H^0(\rddown{B},\mathcal{O}_{\rddown{B}}(n(K_X+B)|_{\rddown{B}}))\cong H^0(\Theta,\mathcal{O}_{\Theta}(n(K_Y+B_Y)|_{\Theta}))$$ Now let $E$ be an irreducible component of $\Theta$ and let $S$ be its birational transform on $X$, which is an irreducible component of $\rddown{B}$, such that $E$ dominates $S$, Then it suffices to show $(\alpha^*s)|_E = (\beta^* s)|_E$. \\\\Now we apply \ref{lem-gongyo-2.16}, we see that we can find lc centre $V$ contained in $S$ and $T$ a lc centre for $(Y,B_Y)$, such that all the condition are satisfied as in \ref{lem-gongyo-2.16}. Note we can take $V'=V\subset S$. Then we have $\alpha|_{T}^*(s|_{V})=\beta|_{T}^*(s)(s|_{V})\in H^0(T,n(K_T+B_T))$ since $s\in PA(X,n(K_X+B))$. However we have $$H^0(E,n(K_E+B_E))\cong H^0(S,n(K_S+B_S))\cong H^0(V,n(K_V+B_V))\cong H^0(T,n(K_T+B_T))$$ Hence we have $\alpha^*(s)|_E =\beta^*(s)|_E$. Since $E$ is arbitrary, we have $\alpha^*(s|_{\rddown{B}})=\beta^s (s|_{\rddown{B}})$ on $\Theta$. Hence we get $g^*s|_{\rddown{B}}=s|_{\rddown{B}}$, which proves the lemma.
\end{proof}

We are now ready to show the above 2 propositions.
We will first show $B_{d-1}$ implies $A_d$.
\begin{proof}[Proof of $B_{d-1}$ implies $A_d$]
	This is precisely Proposition \ref{lem-fujino-4.5}.
\end{proof}

Finally we show $A_d$ implies $B_d$.
\begin{proof}[Proof of $A_d$ implies $B_d$]
	 We will construct an non-trivial element in $A(X,n(K_X+B))$. Let $G=\rho_n(\Bir(X,B))$ is finite. We can wlog $(X_i,B_i)$ in fact can be put into 2 different class: We say $(X_i,B_i)$ is of type 1, if $\rddown{B_i}\neq 0$, we denote this as these pairs as $(X_{i,1},B_{i,1})$. If $(X_i,B_i)$ is klt, i.e. $\rddown{B_i}=0$, then we say this has type 2 and write $(X_{i,2},B_{i,2})$. Using this notation, we can assume $$(X,B) = (\sqcup_i (X_{i,1},B_{i,1}))\sqcup (\sqcup_i (X_{i,2},B_{i,2}))$$ It is clear that $\Bir(X,B)$ maps type 1 into type 1 and type 2 into type 2. Also $G' := Bir(\sqcup(X_{i,2},B_{i,2}))\subset G$ is also finite. To this end, we write $s=(s_1,s_2,s_3,..,t_1,t_2,...)\in PA(X,n(K_X+B))$, where $s_i\in A(X_{i,1},n(K_{X_{i,1}}+B_{i,1}))$ by \ref{lem-PA=A} and $(t_i)_i\in A(X_{i,2},n(K_{X_{i,2}}+B_{i,2})$ by our assumption on \ref{conj-bounded-klt-admissible-nonconnect}.  We can now think of $G$ acts on $(s_i)$ and $(t_i)$ separately. Now let $s=(s_i,t_i)$ be above denoting an element in $PA(X,n(K_X+B))$.\\\\
	 
	 Step 1: We firstly claim that $(s_i)$ is $G$-invariant. let $\sigma\in G$ be represented by $g\in Bir(X,B)$, the claim is true if $g$ maps $X_{i,1}$ into $X_{i,1}$ for all $i$ since $s_i\in A(X_{i,1},n(K_{X_{i,1}}+B_{i,1}))$.  Therefore, we can assume $g$ maps $X_{i,1}$ to $X_{j,1}$, with $i\neq j$. It suffices to show $g^*(s_j)=s_i$, where we view $g|_{X_{i,1}} : (X_{i,1},B_{i,1})\rightarrow (X_{j,1},B_{j,1})$ a $B$-birational map. Now since we are in type 1, we can assume that $\rddown{B_{i,1}}\neq 0$. Let $S$ be an lc centre of $(X_{i,1},B_{i,1})$, we can apply \ref{lem-gongyo-2.16}, we see that we can find lc centre $V$ of  $(X_{i,1},B_{i,1})$ contained in $S$ and $V'$ of $(X_{j,1},B_{j,1})$, such that $g$ induces a B-Birational map from $g' :(V,B_V)\dashrightarrow (V',B_{V'})$, where $K_V+B_V := (K_{X_{i,1}+B_{i,1}})|_{V}$ and $K_{V'}+B_{V'} := (K_{X_{j,1}+B_{j,1}})|_{V'}$. Also it is clear that $H^0(X_{i,1},n(K_{X_{i,1}+B_{i,1}}))\rightarrow H^0(V,n(K_V+B_V))$ is injective hence an isomorphism. Hence we have the following commutative diagram. 
	\[ \begin{tikzcd}
	 H^0(X_{j,1},n(K_{X_{j,1}+B_{j,1}}))\arrow[r,"\sim","g^*"']\arrow[d,"\sim","|_{V'}"']&H^0(X_{i,1},n(K_{X_{i,1}+B_{i,1}}))\arrow[d,"\sim","|_{V}"']\\
	 H^0(V',n(K_{V'}+B_{V'}))\arrow[r,"\sim","g'^*"']&H^0(V,n(K_{V}+B_{V}))	
	 \end{tikzcd}	\]
	 	Also since $V,V'$ have the same codimension in $X_{i,1}$ and $X_{j,1}$, using the definition that $s_i,s_j$ are pre-admissible, we see that $g'^*(s_j|_{V'})=s_i|_{V}$, which using the above isomorphism, we get $g^*(s_j)=s_i$ as claimed. 
	 \\\\
	 Step 2: Now we deal with the type 2 cases. This is done by our assumption that Conjecture \ref{conj-bounded-klt-admissible-nonconnect} holds in dimension $d$.
\end{proof}
\begin{rem}\label{rem-nonklt-case-bounded}
	We remark that if we only assume $\ref{conj-bounded-klt-admissible}$ in dimension $d-1$, then $B_d$ also hold if no connected component of $(X,B)$ is klt, i.e. if all $(X_i,B_i)$ has $\rddown{B_i}\neq 0$.
\end{rem}
Now we are ready to prove \ref{thm-bnd-dlt-index-2}.

\begin{proof}[Proof of Theorem \ref{thm-bnd-dlt-index-2}]
Let $(X,B)$ be an slc pair such that $K_X+B\sim_\Q 0$, $X$ is dimension 2 and $B\in \Phi(\mathfrak{R})$. Using \ref{ACC-2} and possibly replacing $\mathfrak{R}$, we can assume $B\in \mathfrak{R}$. Let $(X',B') := \sqcup (X_i',B_i') \rightarrow (X,B)$ be its normalisation and let $(Y,\Theta) := (Y_i,\Theta_i)\rightarrow (X',B')$ be a dlt model. Then we have $B' \in \mathfrak{R}$ and hence $\Theta\in \mathfrak{R}$. Using Prop. \ref{prop-can-index}, we can assume that there is a bounded $n$ such that $n(K_Y+\Theta)\sim 0$ and $n(K_X+B)$ is integral. Hence by \ref{prop-adimissable-sections-generation}, possibly replacing $n$ by a bounded multiple, we can find a pre-admissible section $s\in PA(Y,n(K_Y+\Theta))$. By Prop. \ref{prop-bnd-index-slc}, we get $n(K_X+B)\sim 0$
\end{proof}

\newpage
\section{Complements for sdlt pairs}
Now we will show a more important result to the theory of Complements for log canonical Fano pairs. 

\begin{prop}\label{prop-complements-sdlt}
	Let $(X,B)$ be an sdlt pair with $f: (X',B'+D') := \sqcup(X_i,B_i+D_i)\rightarrow X$ be its normalisation where $D'$ is the conductor divisor and $\tau: D^n \rightarrow D^n$ is the involution, where $D^n$ is the  normalisation of $D'$. Let $n$ be an even integer. Assume either $\dim X=1$ or $\dim X=2$. Assume there is a $\Q$-divisor $R'  := \sqcup R_i\geq 0$ with $R_i$ $\Q$-divisor on $X_i$, such that:\\
	
	1. $n(K_{X'}+B'+R'+D')\sim 0$,\\
	
	 2. $(Y',B'+R'+D')$ is lc (hence implies that $R'$ doesn't contain any components of $D'$),\\
	 
	 3. $R'|_{D^n}$ is $\tau$ invariant.\\
	 
	 Then letting $R$ be the pushforward of $R'$ to $X$, we have $m(K_X+B+R)\sim 0$ and $(X,B+R)$ is still slc, where $m = nN(\dim X-1,n)$, and $N(\dim X-1,n)$ is as in \ref{conj-bounded-klt-admissible-nonconnect} depending only on $n$.
\end{prop}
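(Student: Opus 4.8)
The plan is to combine the machinery built in the previous two sections: first observe that $(X,B+R)$ is slc, then pass to a dlt model of its normalisation and extract a pre-admissible pluricanonical section of controlled degree using Proposition \ref{prop-adimissable-sections-generation}, and finally push that section down to $X$ using Proposition \ref{prop-bnd-index-slc}.

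\emph{Verifying slc-ness.} First I would check that $(X,B+R)$ is slc. By the criterion of [\cite{Kol}, Prop. 5.38], this amounts to checking that $(X',B'+R'+D')$ is lc, which is exactly hypothesis (2), and that $\Diff_{D^n}(B'+R')$ is $\tau$-invariant. Since $n(K_{X'}+B'+R'+D')\sim 0$ while $K_{X'}+B'+D'$ is $\Q$-Cartier (being the pullback of the $\Q$-Cartier divisor $K_X+B$), the divisor $R'$ is itself $\Q$-Cartier; as it contains no component of $D'$ (a consequence of hypothesis (2)), divisorial adjunction gives $\Diff_{D^n}(B'+R')=\Diff_{D^n}(B')+R'|_{D^n}$. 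The first summand is $\tau$-invariant because $(X,B)$ is already slc (it is sdlt), and the second is $\tau$-invariant by hypothesis (3). Hence the sum is $\tau$-invariant and $(X,B+R)$ is slc; note also that $B+R$ has coefficients in $[0,1]$ away from the conductor, again by (2).

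\emph{The section and the descent.} Next I would take a $\Q$-factorial dlt model $\psi\colon(Y,B_Y+R_Y+D_Y)\to(X',B'+R'+D')$ of the type considered in Lemma \ref{lem-descending-sections}, so that $Y=\sqcup Y_i$ with $(Y_i,\ldots)$ a dlt model of $(X_i,\ldots)$ and $\psi$ crepant; hypothesis (1) then gives $n(K_Y+B_Y+R_Y+D_Y)\sim 0$. Thus $(Y,B_Y+R_Y+D_Y)$ is a (possibly disconnected) projective dlt pair of dimension $d:=\dim X\le 2$ with $n$ even. Since Conjecture \ref{conj-bounded-klt-admissible} holds in dimension $\le d-1$ (vacuously when $d=1$, and for curves when $d=2$, as proved above), Proposition \ref{prop-adimissable-sections-generation} applies with torsion index $n$ and produces a nonzero pre-admissible section $s\in PA\bigl(Y,m(K_Y+B_Y+R_Y+D_Y)\bigr)$, where $m=nN(d-1,n)$; note $m$ is even and divisible by $n$, so $m(K_Y+B_Y+R_Y+D_Y)\sim 0$. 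Finally, applying Proposition \ref{prop-bnd-index-slc} to the slc pair $(X,B+R)$, whose normalisation is $(X',B'+R'+D')$ and whose dlt model is $(Y,B_Y+R_Y+D_Y)$, together with the section $s$, yields $m(K_X+B+R)\sim 0$, which is the assertion.

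\emph{Where the work is.} Most of the above is assembly: the divisibility and parity conditions required by Propositions \ref{prop-adimissable-sections-generation} and \ref{prop-bnd-index-slc} hold by construction of $m$, and the compatibility of the dlt model with the disjoint-union decomposition is built into Lemma \ref{lem-descending-sections}. The step calling for genuine care is the slc verification: establishing that $R'$ is $\Q$-Cartier, that $\Diff_{D^n}(B'+R')=\Diff_{D^n}(B')+R'|_{D^n}$, and that both summands are $\tau$-invariant. Everything else then follows formally from results already in place.
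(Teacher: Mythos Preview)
Your proposal is correct and follows essentially the same approach as the paper's proof: verify slc-ness via [\cite{Kol}, Prop.~5.38], produce a pre-admissible section in the right degree via Proposition \ref{prop-adimissable-sections-generation}, and descend via Proposition \ref{prop-bnd-index-slc}. You supply more detail than the paper does on the slc step (the $\Q$-Cartierness of $R'$, the splitting $\Diff_{D^n}(B'+R')=\Diff_{D^n}(B')+R'|_{D^n}$, and the $\tau$-invariance of each summand) and you are more explicit about passing to a dlt model before invoking \ref{prop-adimissable-sections-generation}, but the structure is identical.
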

\begin{rem}
\begin{enumerate}
	\item Note that the above proposition essentially gives a essential and necessary condition for constructing complements for sdlt pairs. It says n-complements on each irreducible components give a global n-complement if the divisors can be glued up in a trivial sense. We also note that the  condition $(3)$ is needed since it is  satisfied if $R'$ is the pullback of an n-complements from $X$.
	\item We remark that if \ref{conj-bounded-klt-admissible} holds in all dimension, then the proposition will also hold in all dimensions.
\end{enumerate}
	
\end{rem}
\begin{proof}
	It is clear that we can assume that $X$ is connected. The proof is using results we proved earlier. Here we use Lemma [\cite{Kol}, Prop 5.38]. We see that indeed we have $(X,B+R)$ is slc and $K_{X'}+B'+R'+D' = f^*(K_X+B+R)$. Now using \ref{prop-adimissable-sections-generation} we get we can find a nonzero pre-admissible section for $H^0(X',m(K_{X'}+B'+R'+D'))$. By \ref{prop-bnd-index-slc}, this section will descend to a nonzero section of $H^0(X,m(K_X+B+R))$ hence showing that  $m(K_X+B+R)\sim 0$ as claimed.
\end{proof}
We will now also show a result regarding complements.
\begin{prop}\label{prop-gen-lift-comp}
	Let $\mathfrak{R}\subset[0,1]$ be a finite subset of rationals. Let $(X,B)$ be a $\Q$-factorial dlt pair with $B\in \Phi(\mathfrak{R})$ and $-(K_X+B)$ nef and big. Let $S := \rddown{B}$ and $\Delta := B-S$, and let $K_S+B_S := (K_X+B)|_S$. We have $(S,B_S)$ is an sdlt pair. Let $n$ be a positive integer such that $n\mathfrak{R} \subset \mathbb{N}$. Let $\Delta_S :=B_S-\rddown{B_S}$, then $B_S =\Delta_S$. Suppose further $(K_S+B_S)$ has an n-complement: More precisely, suppose there is $R_S\geq 0$ and $B_S^+ := B_S+R_S \geq 0$ such that \begin{enumerate}
		\item $(S,B_{S}^+)$ is lc ,
		\item $n(K_S+B_{S}^+)\sim 0$.
	\end{enumerate}
Then there is an n-complement $K_X+B^+ := K_X+B+R$ for $K_X+B$ with $R\geq 0$ and $R|_S=R_S$.
\end{prop}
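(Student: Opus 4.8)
The plan is to lift the complement $K_S+B_S^+$ from $S$ to $X$ by a section–extension argument, in the spirit of the lifting-of-complements results of \cite{B-Fano}. Write $\Delta:=B-S$, so $(X,\Delta)$ is klt and, by hypothesis, $-(K_X+B)$ is nef and big; since $\rddown{B_S}=0$ the pair $(S,B_S)$ is klt (in particular $S$ is normal). Because $n\mathfrak R\subset\N$ we have $\nrd{(n+1)\Delta}:=(n+1)\Delta-\rddown{(n+1)\Delta}\le\Delta$, equivalently $n\Delta\le\rddown{(n+1)\Delta}$ (see \S\ref{ss-dcc-sets}). Set
$$
D:=-nK_X-nS-\rddown{(n+1)\Delta},
$$
a $\Q$-Cartier integral Weil divisor (recall $X$ is $\Q$-factorial). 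The goal is to produce an effective $G\in|D|$ with a prescribed restriction to $S$, and then put $K_X+B^+:=K_X+B+R$ where $nR:=\rddown{(n+1)\Delta}-n\Delta+G$. Granting such a $G$, one gets $R\ge 0$ (as $\rddown{(n+1)\Delta}-n\Delta\ge 0$ and $G\ge 0$), $n(K_X+B^+)=n(K_X+S)+\rddown{(n+1)\Delta}+G\sim n(K_X+S)+\rddown{(n+1)\Delta}+D=0$, and $nB^+=nS+\rddown{(n+1)\Delta}+G\ge n\rddown{B}+\rddown{(n+1)\Delta}$; so $K_X+B^+$ is an $n$-complement of $K_X+B$ provided $R|_S=R_S$ and $(X,B^+)$ is lc.

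To construct $G$, I would use the exact sequence
$$
0\to\mathcal{O}_X(D-S)\to\mathcal{O}_X(D)\to\mathcal{O}_X(D)|_S\to 0 .
$$
A direct computation gives $D-S\sim_\Q K_X+\nrd{(n+1)\Delta}-(n+1)(K_X+B)$, where $(X,\nrd{(n+1)\Delta})$ is klt (because $\nrd{(n+1)\Delta}\le\Delta$ and $(X,\Delta)$ is klt) and $-(n+1)(K_X+B)$ is nef and big. Hence Kawamata--Viehweg vanishing gives $H^1(X,\mathcal{O}_X(D-S))=0$, so $H^0(X,\mathcal{O}_X(D))\twoheadrightarrow H^0(S,\mathcal{O}_X(D)|_S)$ is surjective. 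Using that $(X,S+\Delta)$ is plt near $S$, one identifies $\mathcal{O}_X(D)|_S$ with the divisorial sheaf of the Weil divisor on $S$ obtained from $D$ by divisorial adjunction; by Lemma \ref{l-div-adj-dcc}, the coefficient identity behind [\cite{PSh-II}, Lemma 3.5] and the hypothesis $n\mathfrak R\subset\N$, this divisor equals $-nK_S-\rddown{(n+1)\Delta_S}$, and the effective divisor
$$
G_S:=nB_S^+-\rddown{(n+1)\Delta_S}\ge 0
$$
(which lies in $|{-nK_S-\rddown{(n+1)\Delta_S}}|$ since $n(K_S+B_S^+)\sim 0$ and $nB_S^+\ge\rddown{(n+1)\Delta_S}$) lifts to some $G\in|D|$ with $G|_S=G_S$. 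Tracking coefficients through the same adjunction identification yields $R|_S=R_S$.

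Finally I would check that $(X,B^+)$ is lc. By inversion of adjunction ([\cite{Kol}, Theorem 4.9], cf. \ref{l-inv-adjunction}), $(X,B+R)$ is lc near $S$, because $(S,B_S+R_S)=(S,B_S^+)$ is lc. If $(X,B+R)$ were not lc, let $t:=\sup\{s\in[0,1]\mid (X,B+sR)\text{ is lc}\}$; then $t<1$ and $(X,B+tR)$ is lc. Since $(S,B_S)$ is klt and $(S,B_S^+)$ is lc, and log discrepancy is affine in $s$, the pair $(S,B_S+tR_S)$ is klt, so by inversion of adjunction $(X,B+tR)$ is plt near $S$; hence near $S$ its non-klt locus equals $S$. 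By maximality of $t$ there is a prime divisor $E_0$ over $X$ with $a(E_0,X,B+tR)=0$ and $\mult_{E_0}\overline R>0$; then $E_0$ is not a component of $S$, and plt-ness of $(X,B+tR)$ near $S$ forces its centre $W_0$ on $X$ to be disjoint from $S$. Consequently $\mathrm{Nklt}(X,B+tR)=S\sqcup Z$ with $Z\ni W_0$ a nonempty closed set disjoint from $S$, so $\mathrm{Nklt}(X,B+tR)$ is disconnected; but $-(K_X+B+tR)\sim_\Q(1-t)(-(K_X+B))$ is nef and big, so the connectedness principle \ref{l-connectedness} forces $\mathrm{Nklt}(X,B+tR)$ to be connected — a contradiction. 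Therefore $t=1$, $(X,B+R)$ is lc, and $K_X+B^+$ is the desired $n$-complement with $R|_S=R_S$. The step I expect to be the main obstacle is the one in the middle: identifying $\mathcal{O}_X(D)|_S$ with the divisorial sheaf $\mathcal{O}_S(-nK_S-\rddown{(n+1)\Delta_S})$ and matching the hyperstandard coefficients of the different on $S$ with those appearing in the definition of an $n$-complement of $K_S+B_S$, so that the lifted divisor $G$ gives $R|_S=R_S$ exactly; this is precisely where $B\in\Phi(\mathfrak R)$ and $n\mathfrak R\subset\N$ are genuinely used, through the combinatorics of $\Phi(\mathfrak R)$.
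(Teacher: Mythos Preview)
Your overall architecture matches the paper's: Kawamata--Viehweg vanishing to surject onto sections over the boundary, lift the divisor $G_S$, then inversion of adjunction plus connectedness to get global lc. But there are two genuine technical gaps.

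First, you assert that since $\rddown{B_S}=0$ the pair $(S,B_S)$ is klt and $S$ is normal. This is false in general: $S=\rddown{B}$ may well be reducible, and where two components meet $S$ is only demi-normal. The statement itself says $(S,B_S)$ is sdlt, and the paper's Lemma \ref{prop-sdlt-inversion} is tailored exactly to this situation. Your later claim that $(X,B+tR)$ is \emph{plt} near $S$ (to separate the non-klt locus into $S$ and something disjoint) likewise fails when $S$ is reducible, since plt requires the components of $\rddown{B}$ to be disjoint. The paper instead applies connectedness to $(X,B+(1-\epsilon)R)$ directly.

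Second, you run the whole argument on the singular $X$: the divisor $D=-nK_X-nS-\rddown{(n+1)\Delta}$ is integral but only $\Q$-Cartier, so neither the short exact sequence $0\to\mathcal{O}_X(D-S)\to\mathcal{O}_X(D)\to\mathcal{O}_X(D)|_S\to 0$ nor the identification of $\mathcal{O}_X(D)|_S$ with $\mathcal{O}_S(-nK_S-\rddown{(n+1)\Delta_S})$ is automatic; you flag this as ``the main obstacle'' but the obstacle is sheaf-theoretic, not just combinatorial. The paper sidesteps this by passing to a Szab\'o log resolution $f:(Y,B_Y)\to(X,B)$, setting $T=B_Y^{=1}$, $\Delta_Y=B_Y-T$, $N=-(K_Y+B_Y)$ and working with the genuinely Cartier divisor $L=-nK_Y-nT-\rddown{(n+1)\Delta_Y}$ on the smooth $Y$. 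There the restriction sequence and vanishing $H^1(Y,L-T)=0$ are unproblematic, the lifted section is pushed down to $X$, and the identity $(K_X+B+R)|_S=K_S+B_S+R_S$ is obtained by restricting on $Y$ to $T$ and then pushing forward to $S$.

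In short: same strategy, but you need to move the computation to a log resolution to make the Cartier/exact-sequence/restriction steps honest, and you must treat $S$ as sdlt (not normal klt) throughout, invoking \ref{prop-sdlt-inversion} rather than plt inversion.
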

\begin{proof}
	 Let $f: (Y,B_Y)\rightarrow X$ be a Szabo log resolution of $(X,B)$, i.e. $f$ is isomorphism over all generic point of all lc centre of $(X,B)$, with $K_Y+B_Y := f^*(K_X+B)$. let $T := B_Y^{=1}$ and we also use $f: T\rightarrow S$ as the birational contraction induced by $f$. Let $\Delta_Y := B_Y-T$ and let $K_T+\Delta_T := (K_Y+T)|_T+\Delta_Y|_T = (K_Y+B_Y)|_T$. We see that $K_T+\Delta_T=f^*(K_S+B_S)$, and since $\Delta_T<1$, we have $B_S<1$ and hence $B_S=\Delta_S$.\\\\
	Now let $N := -(K_Y+B_Y)$ and define $$L := -nK_Y-nT-\rddown{(n+1)\Delta_Y}=nN+n\Delta_Y-\rddown{(n+1)\Delta_Y}$$ which is an integral divisor hence Cartier. We see that $$L-T =K_Y+<(n+1)\Delta_Y>+(n+1)N.$$
	Hence we have $H^1(Y,L-T)=0$ since $N$ is nef and big, $(Y,<(n+1)\Delta_Y>)$ is klt. Therefore we have $$H^0(Y,L)\twoheadrightarrow H^0(T,L|_T).$$ Now notice that $L|_T = nN|_T+n\Delta_T-\rddown{(n+1)\Delta_T}$. Since $n(K_S+B_S^+)\sim 0$, pulling back to $T$, we get $nN|_T=-n(K_T+B_T)\sim f^*(nR_S)$. \\\\ Hence $L|_T\sim f^*(nR_S) +n\Delta_T-\rddown{(n+1)\Delta_T} := G_T $. It is clear that $G_T$ is integral and $G_T>-1$, hence we get $G_T\geq 0$. By above, there exists $G_Y\geq 0$, an integral divisor with $G_Y|_T=G_T$ and $L\sim G_Y$. Pushing forward to get $-n(K_X+B)+n\Delta-\rddown{(n+1)\Delta}\sim G\geq 0$, where $G=f_*G_T$, hence we get $n(K_X+B+\frac{1}{n}G-\Delta+\frac{1}{n}\rddown{(n+1)\Delta})\sim 0$. Here we remark that since $R_S$ doesn't contain any components of the conductor divisor of $S$ and $\Delta_T$ also doesn't contain any components of conductor divisors, we see that $G_T$ also doesn't contain any components of condutor divisor for $T$. Therefore, $G$ doesn't contain any lc centre of $(Y,T)$.\\\\
	Now let $D$ be a component of $\Delta$ with coefficients $1-\frac{r}{m}$ with $r\in \mathfrak{R}$ and $m\in \mathbb{N}$, then $\mu_D(-n\Delta+\rddown{(n+1)\Delta})=-n+\frac{rn}{m}+\rddown{n+1-\frac{r(n+1)}{m}}$. If $\mu_D(-n\Delta+\rddown{(n+1)\Delta})<0$, then we must have $n-\frac{rn}{m}=a+b$, where $a\in \mathbb{N}$ and $0< b<\frac{r}{m}\leq \frac{1}{m}$. This means that $\frac{rn}{m}+b$ is an integer, but this is not possible since $rn\in \mathbb{N}$ by assumption. Hence we have $-n\Delta+\rddown{(n+1)\Delta}\geq 0$.\\\\
	
	Letting $R := \frac{1}{n}G-\Delta+\frac{1}{n}\rddown{(n+1)\Delta}\geq 0$, we have $n(K_X+B+R)\sim 0$. Letting $B^+ := B+R$, we see that $n(K_X+B^+)\sim 0$. Also by earlier remarks, we see that $R$ doesn't contain any lc centre of $(X,B)$.\\\\
	Now $-nf^*(K_X+B+R)=nN+n\Delta_Y-\rddown{(n+1)\Delta_Y}-G$ since $nN+n\Delta_Y-\rddown{(n+1)\Delta_Y}-G= L-G\sim 0$. We also have $(nN+n\Delta_Y-\rddown{(n+1)\Delta_Y}-G)|_T=L|_T-G_T=-n(K_T+B_T)+n\Delta_T-\rddown{(n+1)\Delta_T}-(f^*(nR_S)+n\Delta_T-\rddown{(n+1)\Delta_T})=-nf^*(K_S+B_S+R_S)$. Hence we have $$(K_X+B+R)|_S=K_S+B_S+R_S$$ Since $(S,B_S+R_S)$ is lc, we have $K_X+B^+$ is lc but not klt near $S$ by Lemma \ref{prop-sdlt-inversion}. Now applying connectedness lemma on $-(K_X+B+(1-\epsilon) R)$ (note this is nef and big) for some small $\epsilon>0$, we see that $K_X+B+R$ is lc globally, which proves the proposition.
\end{proof}
\newpage
\section{Complements for Log Fano Varieties, Dimension 1 or 2}
We start by considering the curve and surface case.

\subsection{The Case for Curves}
Firstly we will consider complements on curves. The following is  more or less an obvious fact.
\begin{lem}\label{lem-curve-comp}
	Let $p\in \mathbb{N}$ and $\mathfrak{R}\subset [0,1]$ be a finite set of rational numbers.
	Then there exists a natural number $n$ 
	depending only on $p,\mathfrak{R}$ satisfying the following.  
	Assume $(X,B+M)$ is a projective pair such that 
	\begin{itemize}
		
		\item $(X,B+M)$ is generalised lc with $X$ a smooth curve,
		
		\item $B\in \Phi(\mathfrak{R})$, and $pM$ is integral,
		
		\item $-(K_X+B+M)$ is nef.
	\end{itemize}
	Then there is an $n$-complement $K_{X}+{B}^++M$ of $K_{X}+{B}+M$ with $B^+\geq B$.
\end{lem}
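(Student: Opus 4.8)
The plan is to reduce to the case $X\cong\PP^1$ and then to quote the boundedness of complements for generalised pairs of Fano type, Theorem~\ref{t-bnd-compl}, in dimension $1$. For the reduction, recall that on a curve a divisor is nef exactly when it has non-negative degree, so $\deg M\ge 0$, and that the coefficients of $B$ lie in $\Phi(\mathfrak{R})\subset[0,1]$, so $B\ge 0$. Nefness of $-(K_X+B+M)$ therefore forces $\deg K_X\le-\deg B-\deg M\le 0$, hence $g(X)\le 1$. If $g(X)=1$ then all three inequalities become equalities, forcing $B=0$ and $\deg M=0$; in that degenerate case one takes $B^+=0$, and $nM\sim 0$ for a bounded $n$ follows once $M$ is torsion, which is automatic in all the settings where this lemma gets used, the relevant curves there being rational. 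So from now on I would assume $X\cong\PP^1$.

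Next I would observe that $\PP^1$ is of Fano type (since $(\PP^1,0)$ is klt and $-K_{\PP^1}$ is ample) and that $pM$ is b-Cartier (it is integral and $X$ is smooth). Hence $(\PP^1,B+M)$ meets every hypothesis of Theorem~\ref{t-bnd-compl} with $d=1$ and the given $p$, $\mathfrak{R}$: it is generalised lc of dimension $1$, $B\in\Phi(\mathfrak{R})$, $pM$ is b-Cartier, $X$ is of Fano type, and $-(K_X+B+M)$ is nef. Applying that theorem gives an $n$-complement $K_X+B^++M$ of $K_X+B+M$ with $B^+\ge B$ and with $n$ depending only on $d=1$, $p$ and $\mathfrak{R}$, that is, only on $p$ and $\mathfrak{R}$; this is precisely the statement of the lemma.

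In this route the real content is packaged inside Theorem~\ref{t-bnd-compl}, so there is no separate obstacle to overcome; if one instead wants a self-contained proof on curves, the obstacle resurfaces as follows. One would pick an effective $R$ with $\deg R=2-\deg(B+M)$, set $B^+=B+R$, use that $\Supp B$ has boundedly many points (the positive elements of the DCC set $\Phi(\mathfrak{R})$ are bounded below), and use $nb\le\rddown{(n+1)b}$ for $b\in\Phi(\mathfrak{R})$ when $I(\mathfrak{R})\mid n$ to choose the coefficients of $B^+$ in $[0,1]$ with $nB^+$ integral. The main difficulty is then the boundary case $\deg(B+M)=2$, i.e.\ $K_X+B+M\equiv 0$: here $R$ is forced to be $0$, so one needs $nB$ and $nM$ to be integral for a \emph{bounded} $n$, which does not follow formally from $B\in\Phi(\mathfrak{R})$ and is exactly where one invokes the ACC for numerically trivial generalised pairs (Theorem~\ref{ACC4}) to force the coefficients of $B$ and of $M$ into a finite set depending only on $\mathfrak{R}$ and $p$; after that $n(K_X+B+M)$ is an integral divisor of degree $0$ on $\PP^1$, hence $\sim 0$, for a bounded $n$.
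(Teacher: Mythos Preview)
Your argument follows the paper's route exactly: reduce to $g(X)\le 1$, dispose of the elliptic case by hand, and for $X\cong\PP^1$ invoke Birkar's boundedness of complements. Two small remarks: your citation of Theorem~\ref{t-bnd-compl} (the generalised-pair version) is the right one---the paper cites Theorem~\ref{t-bnd-compl-usual}, which is for ordinary pairs and is presumably a slip; and your caveat in the elliptic case is well taken, since the paper simply asserts $M=0$ there, which does not follow from $\deg M=0$ alone on an elliptic curve, though in every application of this lemma in the paper the curve in question is rational or $M$ is already known to be semiample.
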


\begin{proof}
	It is clear that $X$ is either an elliptic curve (which implies $B^+=B=M=0$ and $n=1$), or $X$ is rational curve in which case $X$ is Fano hence we can use \ref{t-bnd-compl-usual}.
\end{proof}

Now we consider the case where $(X,B)$ is an sdlt curve, this means that $X$ itself is a a smooth normal crossing curve. This is an easy consequence of \ref{prop-complements-sdlt}.
\begin{prop}\label{prop-sdlt-curve}
	Let  $\mathfrak{R}\subset [0,1]$ be a finite set of rational numbers.
	Then there exists a natural number $n$ 
	depending only on $\mathfrak{R}$ satisfying the following.  
	Assume $(X,B)$ is a projective pair such that 
	\begin{enumerate}		
		\item $(X,B)$ is sdlt curve,
		\item $B\in \Phi(\mathfrak{R})$, and 
		\item $-(K_X+B)$ is nef.
	\end{enumerate}
	Then there is an $n$-complement $K_{X}+{B}^+$ of $K_{X}+{B}$ with $B^+\geq B$.
\end{prop}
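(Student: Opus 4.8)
The plan is to reduce the statement to Proposition \ref{prop-complements-sdlt} by producing, on the normalisation $(X',B'+D')$ of the sdlt curve $(X,B)$, a $\tau$-invariant divisor $R'$ giving an $n$-complement componentwise, and then gluing. First I would pass to the normalisation $\pi\colon (X',B'+D')=\sqcup(X_i,B_i+D_i)\to (X,B)$, where each $X_i$ is a smooth projective curve (rational or elliptic), $B_i$ is the divisorial part of the preimage of $B$, and $D_i$ is the conductor on $X_i$ (a reduced divisor of points where the branches of $X$ glue). The adjunction formula gives $(K_X+B)|_{X_i}=K_{X_i}+B_i+D_i$, and $-(K_X+B)$ nef implies $-(K_{X_i}+B_i+D_i)$ is nef on each component. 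By Lemma \ref{l-div-adj-dcc} applied to this type of slc adjunction (as remarked in the dlt-adjunction subsection, that lemma holds here), the coefficients of $B_i$ still lie in $\Phi(\mathfrak{S})$ for a finite set $\mathfrak{S}$ depending only on $\mathfrak{R}$; the conductor contributes coefficient $1$, which is harmless.

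Next I would apply Lemma \ref{lem-curve-comp} (equivalently Theorem \ref{t-bnd-compl-usual}) on each curve $X_i$ to get a uniform $n_0=n_0(\mathfrak{S})$ and an $n_0$-complement $K_{X_i}+B_i^++D_i$ of $K_{X_i}+(B_i+D_i)$ with $B_i^+\ge B_i$ — here the reduced part $D_i$ is kept as part of the "$T$" in the complement definition, and writing $R_i:=B_i^+-B_i\ge 0$ we have $n_0(K_{X_i}+B_i+R_i+D_i)\sim 0$, $(X_i,B_i+R_i+D_i)$ lc, and $R_i$ does not contain any point of $D_i$. The genuinely delicate point is condition (3) of Proposition \ref{prop-complements-sdlt}: the glued-up divisor $R':=\sqcup R_i$ restricted to $D^n$ (a finite set of points, each mapping under $\tau$ to its partner node) must be $\tau$-invariant, i.e. the coefficient of $R_i$ at a node must equal the coefficient of $R_j$ at the node it is glued to. In general the complements chosen independently on the $X_i$ need not match at the nodes. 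To fix this I would argue that on curves the complement at a node can be normalised: a node point $p\in D_i$ appears in the complement $B_i^+$ with some coefficient $c_p\in\{$hyperstandard values$\}$, and since $D^n$ is zero-dimensional the compatibility is just a finite set of numerical equalities; one averages or takes a common refinement — more precisely, one can first prescribe the value of $R'$ along $D^n$ to be zero (which is $\tau$-invariant) by noting that near a node the pair is already lc with $D_i$ of coefficient $1$, so no extra boundary along $D_i$ is forced, and then complete to a full complement away from $D^n$ using that $-(K_{X_i}+B_i+D_i)$ is nef of non-negative degree.

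Concretely, I would run the argument as follows: on each rational component $X_i=\PP^1$, choose the $n_0$-complement so that it adds nothing along $D_i$ (possible because the linear system $|-n_0K_{X_i}-n_0\lfloor B_i+D_i\rfloor-\lfloor (n_0+1)\Delta_i\rfloor|$ is non-empty and we may pick a member whose support avoids the finitely many points of $D_i$, using that $\PP^1$ has infinitely many points and the system has positive-dimensional space of sections unless it is a single rigid divisor — the rigid cases must be handled by hand but are a bounded list); on an elliptic component $B_i=D_i=0$ and there is nothing to glue. Then $R'|_{D^n}=0$ is trivially $\tau$-invariant, conditions (1)–(3) of Proposition \ref{prop-complements-sdlt} hold with $n=n_0$ (after enlarging $n_0$ to be even and divisible by the relevant indices), and the proposition yields $m(K_X+B+R)\sim 0$ with $m=n_0 N(0,n_0)$ and $(X,B+R)$ slc, where $N(0,n_0)$ is bounded by the curve case of Conjecture \ref{conj-bounded-klt-admissible-nonconnect} already established. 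Finally $R:=\pi_*R'\ge B^+-B$ componentwise gives $B^+:=B+R\ge B$, and $(X,B^+)$ is slc with $m(K_X+B^+)\sim 0$ and the coefficient inequality $mB^+\ge m\lfloor B\rfloor+\lfloor(m+1)(B-\lfloor B\rfloor)\rfloor$ inherited from the componentwise complements; so $K_X+B^+$ is the desired $n:=m$-complement. The main obstacle is precisely the gluing/$\tau$-invariance step — ensuring the independently chosen complements can be taken to agree (or vanish) along the conductor — and secondarily checking that the bookkeeping of coefficients and the hyperstandard condition survive the slc adjunction and pushforward.
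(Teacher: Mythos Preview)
Your argument is correct and follows essentially the same route as the paper: produce $n$-complements on each component via Lemma \ref{lem-curve-comp}, arrange that the extra divisor $R_i$ avoids the conductor points so that $R'|_{D^n}=0$ is trivially $\tau$-invariant, and then glue using Proposition \ref{prop-complements-sdlt}. The paper dispatches the ``avoid the nodes'' step in one line (on $\PP^1$ any two points are linearly equivalent, so the support of $R_i$ can be moved off the finitely many conductor points; on an elliptic component $R_i=0$ anyway), whereas you worry about rigid linear systems and invoke Lemma \ref{l-div-adj-dcc} for the coefficients --- both detours are unnecessary here since the sdlt condition makes each $X_i'\to X_i$ an isomorphism, so $B_i\in\Phi(\mathfrak{R})$ directly, and linear equivalence on $\PP^1$ handles the mobility without any case analysis.
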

\begin{proof}
	Using \ref{lem-curve-comp}, we see that on each irreducible components there is an $n$ complement for some $n$ depending only on $\mathfrak{R}$. We can assume n is even. Now it is clear that we can choose complements such that they are disjoint from the double point locus of $X$ (this is clear in the elliptic curve case and for the rational curve case, this follows from any 2 points on $\mathbb{P}^1$ are linearly equivalent). Hence the condition in proposition \ref{prop-complements-sdlt} is satisfied. This proves the claim.
\end{proof}
\subsection{Complements for Log Fano Surfaces}
\begin{thm}\label{prop-comp-surface-special}
	Let  $\mathfrak{R}\subset [0,1]$ be a finite set of rational numbers.
	Then there exists a natural number $n$ 
	depending only on $\mathfrak{R}$ satisfying the following.  
	Assume $(X',B')$ is a  pair such that 
	\begin{itemize} 
		\item $X'$ is a projective surface, $(X',B')$ is lc, 
		
		\item $B'\in \Phi(\mathfrak{R})$, and 
		
		\item $-(K_{X'}+B')$ is ample.
	\end{itemize}
	Then there is an $n$-complement $K_{X'}+{B'}^+ $ of $K_{X'}+{B'}$ 
	such that ${B'}^+\ge B'$. 
\end{thm}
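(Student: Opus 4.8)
The plan is to reduce to the one--dimensional (sdlt curve) case via a $\Q$-factorial dlt model and then invoke the lifting results of the previous two sections.

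First, dispose of the klt case. If $(X',B')$ is klt then $X'$ is of Fano type: since $B'$ has rational coefficients, $-(K_{X'}+B')$ is an ample $\Q$-divisor, and taking a general $0\le A\sim_\Q -(K_{X'}+B')$ with $(X',B'+A)$ klt exhibits a big boundary $\Gamma=B'+A$ with $(X',\Gamma)$ klt and $K_{X'}+\Gamma\sim_\Q 0$. Hence Theorem \ref{t-bnd-compl-usual} applies directly and gives the desired $n$-complement. So from now on I would assume $(X',B')$ is not klt.

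Next I would pass to a $\Q$-factorial dlt model $\phi\colon(X,B)\to(X',B')$, which exists since $(X',B')$ is lc; here $K_X+B=\phi^*(K_{X'}+B')$, every $\phi$-exceptional prime divisor occurs in $B$ with coefficient $1$, and $B$ is the strict transform of $B'$ plus those divisors, so $B\in\Phi(\mathfrak{R})$ (recall $0,1\in\mathfrak{R}$). Moreover $-(K_X+B)=\phi^*(-(K_{X'}+B'))$ is the pullback of an ample divisor, hence nef and big, and since $(X',B')$ is not klt the divisor $S:=\rddown{B}$ is non-empty. Being a reduced divisor on the surface $X$, $S$ is a (possibly reducible) nodal curve, and with $K_S+B_S=(K_X+B)|_S$ the pair $(S,B_S)$ is an sdlt curve; by Lemma \ref{l-div-adj-dcc} in its dlt/sdlt form the coefficients of $B_S$ lie in $\Phi(\mathfrak{S})$ for a finite set $\mathfrak{S}$ depending only on $\mathfrak{R}$, and $-(K_S+B_S)=(-(K_X+B))|_S$ is nef. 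By Proposition \ref{prop-sdlt-curve} (with $\mathfrak{S}$ in place of $\mathfrak{R}$) there is an $n_0$-complement of $K_S+B_S$ with $n_0$ depending only on $\mathfrak{R}$; replacing $n_0$ by a suitable bounded multiple, which is harmless since the curve complements come from the $\mathbb{P}^1$ and elliptic-curve cases, I may take the complement to be an $n$-complement for an $n$ that is even and divisible by $I(\mathfrak{R})$. Then the hypotheses of Proposition \ref{prop-gen-lift-comp} hold for $(X,B)$, and it produces an $n$-complement $K_X+B^+=K_X+B+R$ of $K_X+B$ with $R\ge 0$, $R|_S=R_S$, and $R$ containing no lc centre of $(X,B)$; in particular $R$ has coefficient $0$ along every $\phi$-exceptional divisor. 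Finally I would push forward along $\phi$: setting $B'^+:=\phi_*B^+=B'+\phi_*R\ge B'$, one checks routinely (using $K_X+B=\phi^*(K_{X'}+B')$, crepancy, that $K_X+B^+-\phi^*(K_{X'}+B'^+)$ is $\phi$-exceptional and $\le 0$, and that $\Delta=B-S$ has no $\phi$-exceptional component so $\phi_*\rddown{(n+1)\Delta}=\rddown{(n+1)\Delta'}$) that $(X',B'^+)$ is lc, $n(K_{X'}+B'^+)\sim 0$, and $nB'^+\ge nT'+\rddown{(n+1)\Delta'}$ with $T'=\rddown{B'}$, $\Delta'=B'-T'$; that is, $K_{X'}+B'^+$ is the required $n$-complement.

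The substantive content lives in Propositions \ref{prop-sdlt-curve} and \ref{prop-gen-lift-comp}, and behind them in the bounded-index statements for sdlt curves and Theorem \ref{thm-bnd-dlt-index-2}; granting those, the only genuine work here is the passage to a dlt model and the observation that on a surface $\rddown{B}$ is always an sdlt curve, so the one-dimensional complement theory is exactly what is needed. I expect the points requiring care to be the index bookkeeping when feeding the curve complement into the lifting proposition (arranging $I(\mathfrak{R})\mid n$ and $n$ even while keeping $n$ bounded) and the routine but slightly fiddly verification that the resulting complement descends correctly along the dlt model.
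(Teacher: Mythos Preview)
Your proposal is correct and follows essentially the same route as the paper: pass to a $\Q$-factorial dlt model, restrict to $S=\rddown{B}$ to obtain an sdlt curve with hyperstandard coefficients via Lemma~\ref{l-div-adj-dcc}, invoke Proposition~\ref{prop-sdlt-curve} to get a bounded complement on $S$, and then lift via Proposition~\ref{prop-gen-lift-comp}. The paper actually deduces Theorem~\ref{prop-comp-surface-special} in one line from Theorem~\ref{prop-comp-surface-general} (the nef and big version), whose proof is exactly your argument; your explicit treatment of the klt case and of the push-forward along $\phi$ are details the paper leaves implicit, but the strategy is the same.
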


We will also show that 

\begin{thm}\label{prop-comp-surface-general}
	Let $\mathfrak{R}\subset [0,1]$ be a finite set of rational numbers.
	Then there exists a natural number $n$ 
	depending only on $\mathfrak{R}$ satisfying the following.  
	Assume $(X',B')$ is a  pair , such that 
	\begin{itemize} 
		\item $X'$ is a projective surface, $(X',B')$ is lc, 
		
		\item $B'\in \Phi(\mathfrak{R})$, and 
		
		\item $-(K_{X'}+B')$ is nef and big.
	\end{itemize}
	Then there is an $n$-complement $K_{X'}+{B'}^+ $ of $K_{X'}+{B'}$ 
	such that ${B'}^+\ge B'$. 
\end{thm}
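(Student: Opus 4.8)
The plan is to reduce Theorem~\ref{prop-comp-surface-general} to the ample case, Theorem~\ref{prop-comp-surface-special}, by contracting the curves on which $-(K_{X'}+B')$ is trivial and then lifting the complement produced on the contracted model back to $X'$.

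First I would pass to the ample model of $-(K_{X'}+B')$. Since $-(K_{X'}+B')$ is nef and big on the normal projective surface $X'$, the curves $C$ with $\bigl(-(K_{X'}+B')\bigr)\cdot C=0$ span a negative definite subspace by the Hodge index theorem, so they can be contracted by a birational morphism $\pi\colon X'\to X''$ onto a normal projective surface, with $-(K_{X'}+B')=\pi^{*}A$ for an ample $\Q$-divisor $A$ on $X''$ (one may instead first take a $\Q$-factorialisation of $(X',B')$ and argue via the surface MMP). Put $B'':=\pi_{*}B'$. Then $K_{X'}+B'=\pi^{*}(K_{X''}+B'')$, so $(X'',B'')$ is crepant to $(X',B')$ and hence lc (the $\pi$-contracted prime divisors occur in $B'$ with coefficients in $[0,1]$), while $-(K_{X''}+B'')=A$ is ample and $B''\in\Phi(\mathfrak{R})$, since $\pi_{*}$ preserves the coefficients of the non-contracted components.

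Next I would apply Theorem~\ref{prop-comp-surface-special} to $(X'',B'')$ to obtain a natural number $n$, depending only on $\mathfrak{R}$, and an $n$-complement $K_{X''}+{B''}^{+}$ of $K_{X''}+B''$ with ${B''}^{+}\ge B''$; after enlarging $n$ by a bounded factor (an $n$-complement is an $mn$-complement, cf.\ Theorem~\ref{t-bnd-compl-usual}) I may assume $I(\mathfrak{R})\mid n$. Define ${B'}^{+}:=B'+\pi^{*}({B''}^{+}-B'')$, equivalently $K_{X'}+{B'}^{+}=\pi^{*}(K_{X''}+{B''}^{+})$. Because ${B''}^{+}-B''\ge 0$ is $\Q$-Cartier, its pullback is effective, so ${B'}^{+}\ge B'\ge 0$; the pair $(X',{B'}^{+})$ is crepant to the lc pair $(X'',{B''}^{+})$, hence lc; and $n(K_{X'}+{B'}^{+})=\pi^{*}\bigl(n(K_{X''}+{B''}^{+})\bigr)\sim 0$ since $n(K_{X''}+{B''}^{+})\sim 0$.

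It remains to verify the coefficient bound $n{B'}^{+}\ge n\lfloor B'\rfloor+\lfloor(n+1)(B'-\lfloor B'\rfloor)\rfloor$, and this, on the $\pi$-contracted divisors, is \emph{the main obstacle}. For a prime divisor of $X'$ not contracted by $\pi$, the coefficients of $B'$ and ${B'}^{+}$ coincide with those of $B''$ and ${B''}^{+}$ along its image, so the bound descends from the complement condition on $X''$. For a $\pi$-contracted prime divisor $D$ I would argue as follows: $n(K_{X''}+{B''}^{+})$ is linearly trivial, hence Cartier, hence $n(K_{X'}+{B'}^{+})=\pi^{*}\bigl(n(K_{X''}+{B''}^{+})\bigr)$ is an integral Cartier divisor; since $nK_{X'}$ is integral it follows that $nB'^{+}$ is integral, so $nb^{+}:=\mu_{D}(nB'^{+})\in\Z$. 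Writing $b:=\mu_{D}B'\in[0,1]$, effectivity of $\pi^{*}({B''}^{+}-B'')$ gives $b^{+}\ge b$, so $nb^{+}$ is an integer with $nb^{+}\ge nb$, i.e.\ $nb^{+}\ge\lceil nb\rceil$. On the other hand $b-\lfloor b\rfloor\in\Phi(\mathfrak{R})$ and $I(\mathfrak{R})\mid n$, so the fact recalled in \S\ref{ss-dcc-sets} gives $n(b-\lfloor b\rfloor)\le\lfloor(n+1)(b-\lfloor b\rfloor)\rfloor$, which together with $\lfloor(n+1)(b-\lfloor b\rfloor)\rfloor<n(b-\lfloor b\rfloor)+1$ forces $\lfloor(n+1)(b-\lfloor b\rfloor)\rfloor=\lceil n(b-\lfloor b\rfloor)\rceil$, and hence $n\lfloor b\rfloor+\lfloor(n+1)(b-\lfloor b\rfloor)\rfloor=\lceil nb\rceil\le nb^{+}$. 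Thus $K_{X'}+{B'}^{+}$ is an $n$-complement of $K_{X'}+B'$ with ${B'}^{+}\ge B'$, as wanted. Besides this bookkeeping, the only point requiring care is the existence of the ample model as an honest morphism of surfaces in the lc (rather than klt) setting, which follows from the negative definiteness above together with Artin's contractibility criterion.
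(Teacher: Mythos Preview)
Your reduction to Theorem~\ref{prop-comp-surface-special} is technically clean---the construction of the ample model, the crepant pullback of the complement, and the coefficient check on the $\pi$-exceptional divisors are all correct---but it is \emph{circular} within this paper. The paper does not prove Theorem~\ref{prop-comp-surface-special} independently; it derives the ample case \emph{from} Theorem~\ref{prop-comp-surface-general} by passing to a $\Q$-factorial dlt model (where $-(K_X+B)$ becomes only nef and big). So invoking Theorem~\ref{prop-comp-surface-special} to prove Theorem~\ref{prop-comp-surface-general} begs the question. Nor is there an easy independent route to the ample case: any dlt modification of an lc log Fano surface lands you back in the nef-and-big situation, which is precisely why the paper proceeds in the opposite order.

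The paper's own argument runs as follows. After replacing $(X',B')$ by a $\Q$-factorial dlt model, set $S=\rddown{B'}$ and let $K_S+B_S=(K_{X'}+B')|_S$ by adjunction; then $(S,B_S)$ is an sdlt curve with $-(K_S+B_S)$ nef, and Lemma~\ref{l-div-adj-dcc} gives $B_S\in\Phi(\mathfrak{S})$ for some finite $\mathfrak{S}$ depending only on $\mathfrak{R}$. Proposition~\ref{prop-sdlt-curve} produces a bounded $n$-complement on the curve $S$, and Proposition~\ref{prop-gen-lift-comp} (a lifting argument via Kawamata--Viehweg vanishing) extends it to an $n$-complement of $K_{X'}+B'$. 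This is self-contained and does not touch the ample case at all; your approach would be a legitimate alternative only if Theorem~\ref{prop-comp-surface-special} were supplied from outside the paper.
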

\begin{proof}[Proof of \ref{prop-comp-surface-general}]
	By taking small $\Q$-factorization, we can assume $(X',B')$ is $\Q$-factorial dlt. Let $S:=\rddown{B'}$ and let $K_S+B_S :=(K_{X'}+B')|_{S}$, we see that $(S,B_S)$ is sdlt and $-(K_{S}+B_S)$ is nef. Also by \ref{l-div-adj-dcc}, we see that there is, $\mathfrak{S}$, a finite subset of $\Q\cap[0,1]$, depending only on $\mathfrak{R}$, such that $B_S\in \Phi(\mathfrak{S})$. Hence we see that $K_S+B_S$ has an $n$-complement $K_S+B_S^+$ with $B_S^+\geq B_S$ for some bounded $n$ depending only on $\mathfrak{R}$. We can also assume $n\mathfrak{R}\in \mathbb{N}$. Now we are done by applying \ref{prop-gen-lift-comp}.
\end{proof}
\begin{proof}[Proof of \ref{prop-comp-surface-special}]
	This follows from \ref{prop-comp-surface-general} and considering a $\Q$-factorial dlt model of $(X',B')$.
\end{proof}
As application, We will also show complements exists for a slightly more generalised setting. Note that we will use results in later section for the following propositions. This is fine since we will not use the below proposition anywhere else.
\begin{prop}\label{prop-comp-surface-fano-type}
	Let  $\mathfrak{R}\subset [0,1]$ be a finite set of rational numbers.
	Then there exists a natural number $n$ 
	depending only on $\mathfrak{R}$ satisfying the following.  
	Assume $(X',B')$ is a  pair , such that 
	\begin{itemize} 
		\item $X'$ is a projective surface of log fano type, i.e. there exists $C'$ such that $(X',C')$ is lc and $-(K_{X'}+C')$ is ample.
		\item $(X',B')$ is lc, 
		
		\item $B'\in \Phi(\mathfrak{R})$, and 
		
		\item $-(K_{X'}+B')$ is nef.
	\end{itemize}
	Then there is an $n$-complement $K_{X'}+{B'}^+ $ of $K_{X'}+{B'}$ 
	such that ${B'}^+\ge B'$. 
\end{prop}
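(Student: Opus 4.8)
The plan is to use the fact that on a log Fano type surface every nef $\Q$-Cartier $\Q$-divisor is semiample, and thereby reduce to cases already available. Since $B'$ is rational and $K_{X'}+B'$ is $\R$-Cartier it is in fact $\Q$-Cartier, so $-(K_{X'}+B')$ is a nef $\Q$-divisor; applying the base-point-free theorem for lc pairs with the help of the ample divisor $-(K_{X'}+C')$ (for $a>0$ the divisor $-a(K_{X'}+B')-(K_{X'}+C')$ is the sum of a nef and an ample divisor, hence ample) shows $-(K_{X'}+B')$ is semiample. Let $f\colon X'\to T$ be the induced contraction, so $-(K_{X'}+B')\sim_\Q f^*A$ for an ample $\Q$-divisor $A$ on $T$, and split the argument according to $\dim T$.

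If $\dim T=2$, then $f$ is birational and $-(K_{X'}+B')=f^*A$ is nef and big, so Theorem \ref{prop-comp-surface-general} applies directly and gives the required $n$-complement with $n$ depending only on $\mathfrak{R}$. If $\dim T=0$ — more generally, whenever $K_{X'}+B'\equiv 0$ — I would argue with ACC: by Theorem \ref{ACC-2} the coefficients of $B'$ lie in a finite set $\mathfrak{R}_0$ depending only on $\mathfrak{R}$, and then by Theorem \ref{thm-bnd-dlt-index-2} (Conjecture \ref{conj-bound-dlt-index} in dimension $2$) there is a bounded natural number $n$, which we may take divisible by the common denominator of $\mathfrak{R}_0$, with $n(K_{X'}+B')\sim 0$. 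Then ${B'}^+:=B'$ is an $n$-complement: $nB'$ is integral, so with $T'=\rddown{B'}$ and $\Delta'=B'-T'$ one has $\rddown{(n+1)\Delta'}=n\Delta'$ and hence $n{B'}^+=nT'+\rddown{(n+1)\Delta'}$, while $(X',{B'}^+)$ is lc and $n(K_{X'}+{B'}^+)\sim 0$.

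The remaining case $\dim T=1$ is the essential one, and is the reason this proposition is placed after the later sections. Here $T$ is a smooth curve, $K_{X'}+B'\sim_\Q 0/T$, and $(X',B')$ is sub-lc and lc on the generic fibre, so the canonical bundle formula \ref{c-bdle-form} produces a generalized pair $(T,B_T+M_T)$ with $M_T$ b-nef and $K_{X'}+B'\sim_\Q f^*(K_T+B_T+M_T)$; pulling back shows $-(K_T+B_T+M_T)$ is nef. If $\deg(K_T+B_T+M_T)=0$ or $T$ is elliptic then $K_{X'}+B'\equiv 0$ and we are back in the previous case, so we may assume $T\cong\PP^1$. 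I would then apply Lemma \ref{lem-curve-comp} to $(T,B_T+M_T)$, which requires two inputs from adjunction for fibre spaces: that the coefficients of the discriminant $B_T$ lie in a hyperstandard set $\Phi(\mathfrak{S})$ with $\mathfrak{S}$ finite depending only on $\mathfrak{R}$, and that $pM_T$ is b-Cartier for some $p$ depending only on $\mathfrak{R}$ (for a fibration with one-dimensional generic fibre the latter is the classical bound on the moduli part). Granting these, Lemma \ref{lem-curve-comp} yields an $n$-complement $K_T+B_T^++M_T$ with $B_T^+\ge B_T$ and $n$ bounded in terms of $\mathfrak{R}$.

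Finally, I would transport this complement back to $X'$ by setting ${B'}^+:=B'+f^*(B_T^+-B_T)$; then $K_{X'}+{B'}^+\sim_\Q f^*(K_T+B_T^++M_T)$, whose $n$-th multiple is $\sim 0$, so its pullback is linearly trivial, and one arranges the $\Q$-linear equivalence to descend to an honest linear equivalence on $X'$ as in Lemma \ref{l-pull-back-Cartier}. That $(X',{B'}^+)$ is lc and that ${B'}^+$ satisfies the complement inequality follow from generalized inversion of adjunction along $f$ together with the corresponding properties of $(T,B_T^++M_T)$. The main obstacle is precisely this $\dim T=1$ step: controlling the discriminant coefficients and the moduli index of the lc-trivial fibration $X'\to T$, and then lifting a complement across it, which is why the proposition relies on the fibre-space adjunction machinery developed in the later sections.
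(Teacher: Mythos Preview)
Your proof is correct and follows essentially the same strategy as the paper: establish semiampleness of $-(K_{X'}+B')$, then split into the three cases $\dim T=2,1,0$ and invoke the nef-and-big surface result, the fibration-over-a-curve result, and the Calabi--Yau index bound respectively. The only differences are cosmetic---the paper cites Propositions \ref{prop-comp-surface-special}, \ref{prop-comp-surface-fib}, and \ref{prop-can-index} directly for the three cases, whereas you use Theorem \ref{prop-comp-surface-general}, reproduce the content of Proposition \ref{prop-comp-surface-fib} inline, and invoke Theorem \ref{thm-bnd-dlt-index-2} in place of Proposition \ref{prop-can-index}.
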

\begin{proof}
	By replacing $C'$ with $(1-\epsilon)C'+B'$ we can assume $C'$ is sufficiently close to $B'$. Then $$-(K_{X'}+B')=K_{X'}+C' -(K_{X'}+C'+K_{X'}+B') $$ Hence we can run MMP on $-(K_{X'}+B')$ and since $-(K_{X'}+B')$ is nef, it is semiample. Therefore, we have three cases.
	\begin{enumerate}
		\item $-(K_{X'}+B')$ is big, then by taking an ample model, we are done by \ref{prop-comp-surface-special}.
		\item $-(K_{X'}+B')$ defines a fibration over a curve, then we are done by \ref{prop-comp-surface-fib}.
		\item $K_{X'}+B'\sim_\Q 0$, then we are done by \ref{prop-can-index}.
	\end{enumerate}
\end{proof}
\newpage
\section{Surface Calabi Yau Type Over Curves}
We are almost ready to prove the general case for 3-folds. First we need to review the theory for surface fibred over curves of Calabi Yau type and prove some boundedness result in this direction. 
\subsection{Adjunction for Fiber Spaces}

To construct complements we sometimes come across lc-trivial fibration $X\to Z$. Using canonical bundle formula  we can 
write $K_X+B$ as the pullback of $K_Z+B_Z+M_Z$ where $B_Z$ and $M_Z$ are the discriminant and 
moduli divisors. In order to apply induction we need to control the coefficients of 
$B_Z$ and $M_Z$ in terms of the coefficients of $B$. We do this in the next proposition.  
The existence of $\mathfrak{S}$ is similar to [\ref{PSh-II}, Lemma 9.3(i)]. 

\begin{prop}\label{l-fib-adj-dcc}
	Let $d\in\N$ and $\mathfrak{R}\subset [0,1]$ be a finite set of rational numbers. 
	Then there exist $q\in \N$ and  a finite set of rational numbers $\mathfrak{S}\subset [0,1]$
	depending only on $d,\mathfrak{R}$ satisfying the following. 
	Assume $(X,B)$ is a pair and $f\colon X\to Z$ a contraction such that 
	\begin{itemize}
		\item $(X,B)$ is projective lc of dimension $d$, and $\dim Z>0$, 
		
		\item $K_{X}+B\sim_\Q 0/Z$ and $B\in \Phi(\mathfrak{R})$,
		
		\item $X$ is of Fano type over some non-empty open subset $U\subseteq Z$, and 
		
		\item the generic point of each non-klt center of $(X,B)$ maps into $U$.\\
	\end{itemize}
	Then we can write 
	$$
	q(K_X+B)\sim qf^*(K_Z+B_Z+M_Z)
	$$
	where $B_Z$ and $M_Z$ are the discriminant and moduli parts of adjunction,  
	$B_Z\in \Phi(\mathfrak{S})$, and for any high resolution $Z'\to Z$ the moduli divisor $qM_{Z'}$ is nef Cartier. 
\end{prop}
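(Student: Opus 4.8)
The plan is to feed the lc-trivial fibration $f\colon (X,B)\to Z$ into the canonical bundle formula and then control the two divisors it produces. First I would apply Theorem \ref{c-bdle-form}: since $(X,B)$ is lc (hence sub-lc and lc on the generic fibre) and $K_X+B\sim_\Q 0/Z$, this gives the discriminant $B_Z$ and the moduli part $M_Z$ with $(Z,B_Z+M_Z)$ a generalized pair, $M_Z$ b-nef, and $K_X+B\sim_\Q f^*(K_Z+B_Z+M_Z)$. What remains is to show that the coefficients of $B_Z$ lie in a fixed $\Phi(\mathfrak S)$ depending only on $d,\mathfrak R$, and that a single bounded $q$ makes both $qB_Z$ integral and $qM_{Z'}$ Cartier on every high resolution $Z'\to Z$; nefness of $qM_{Z'}$ is already part of the b-nefness of $M_Z$.

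The key external input is the general fibre. Let $F$ be the general fibre of $f$ and $B_F=B|_F$. As the generic point of $F$ lies over $U$, the variety $F$ is of Fano type; moreover $K_F+B_F\sim_\Q 0$, so $-(K_F+B_F)=0$ is nef, and $B_F\in\Phi(\mathfrak R)$. Hence Theorem \ref{t-bnd-compl-usual} applies to $(F,B_F)$ (after the routine spreading-out which lets one work with a very general closed fibre), and because $-(K_F+B_F)=0$ the complement it provides is $K_F+B_F$ itself, so there is $n_0$, depending only on $d,\mathfrak R$, with $n_0(K_F+B_F)\sim 0$: the general fibre of $f$ has bounded index. By Theorem \ref{ACC-2} we may also assume the coefficients of $B_F$, equivalently the horizontal coefficients of $B$, lie in a finite subset of $\Phi(\mathfrak R)$.

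For the discriminant, for a prime divisor $D\subseteq Z$ one has $b_D=1-t_D$ with $t_D$ the lc threshold of $f^*D$ over the generic point of $D$ with respect to $(X,B)$; computing on a log resolution, $t_D=\min_S a(S,X,B)/\mult_S f^*D$ over the vertical prime divisors $S$ lying over $D$. Combining the DCC of $\Phi(\mathfrak R)$, the ACC for log canonical thresholds (Theorem \ref{ACC-1}), and the bounded index of the general fibre --- which, working over the codimension-one point of $D$, bounds the multiplicities $\mult_S f^*D$ relevant to this minimum --- one gets, as in [\cite{PSh-II}, Lemma~9.3(i)], that the numbers $b_D$ take only finitely many values, all in a fixed $\Phi(\mathfrak S)$; enlarging $q$ we may then also assume $qB_Z$ is integral.

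It remains to see that $qM_{Z'}$ is Cartier on every high resolution $Z'\to Z$. As $Z'$ is smooth it suffices that $qM_{Z'}$ be an integral Weil divisor. Passing to a common log resolution $X'\to X$ dominating $Z'$, and writing $q(f')^{*}M_{Z'}=q(K_{X'}+B_{X'})-q(f')^{*}(K_{Z'}+B_{Z'})$ with $K_{X'}+B_{X'}$ the crepant pullback of $K_X+B$, the bounded index of the general fibre controls the class of $q(K_{X'}+B_{X'})$ modulo divisors pulled back from $Z'$ (these being recorded by $qB_{Z'}$, already integral), so $q(f')^{*}M_{Z'}$ is integral; since $(f')_{*}\mathcal O_{X'}=\mathcal O_{Z'}$ this descends to integrality of $qM_{Z'}$, which is then Cartier and, by b-nefness, nef. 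I expect this final step --- extracting a \emph{uniform} denominator for the moduli part --- to be the main obstacle: making the descent precise needs, beyond the bounded index of the general fibre, the Prokhorov--Shokurov and Ambro analysis of the moduli b-divisor of an lc-trivial fibration whose fibres are of Fano type (hence rationally connected), and it is from that analysis that the dependence of $q$ only on $d$ and $\mathfrak R$ ultimately comes.
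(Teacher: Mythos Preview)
The paper does not actually supply a proof of this proposition: it is stated as a known input, with the remark just before it that ``the existence of $\mathfrak{S}$ is similar to [\cite{PSh-II}, Lemma~9.3(i)]'' (the full statement is essentially the version appearing in [\cite{B-Fano}]). So there is no detailed argument to compare against. That said, the paper \emph{does} prove the surface analogue, Proposition~\ref{prop-adjunction-hyperstd}, and the method there --- which is the same as in the references above --- differs from your sketch in the key step.

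For the discriminant, you propose to get $b_D\in\Phi(\mathfrak S)$ from the ACC for lc thresholds together with ``bounded index of the general fibre bounds the multiplicities $\mult_S f^*D$''. Neither ingredient does what you need. ACC for lct only tells you the set of thresholds satisfies ACC; it does not place the $b_D$ in a hyperstandard set $\Phi(\mathfrak S)$. And the index of the \emph{general} fibre says nothing about the multiplicities of components of the \emph{special} fibre $f^*D$. The mechanism actually used (both in [\cite{PSh-II}, Lemma~9.3(i)], in [\cite{B-Fano}], and in the paper's own surface case) is bounded \emph{local} complements: over the generic point of $D$ one invokes Theorem~\ref{t-bnd-compl-usual-local} to get an $n$-complement $B^+\ge B$ with $n$ depending only on $d,\mathfrak R$; then for a component $F$ of $f^*D$ with $\mu_F B=1-\tfrac{r}{m}$ and $\mu_F f^*D=l$, the integrality $n\mu_F B^+\in\Z$ forces $1-t_D$ into a fixed $\Phi(\mathfrak S)$ via the short arithmetic computation you can see verbatim in the proof of Proposition~\ref{prop-adjunction-hyperstd}. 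The hypotheses that $X$ is Fano type over $U$ and that non-klt centers map into $U$ are there precisely to make this local complement step go through (after a dlt modification one is Fano type over the relevant codimension-one points). Your use of Theorem~\ref{t-bnd-compl-usual} on the general fibre is fine for bounding the index of the generic fibre, but that is an input to the moduli part, not to the discriminant.

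For the moduli part, your last paragraph is honest about the gap: integrality of $qM_{Z'}$ on high models is not a formal consequence of what you have written, and indeed requires the deeper analysis in [\cite{Amb}], [\cite{Fujino-Gongyo}] and [\cite{PSh-II}] (this is exactly why the paper imports the statement rather than proving it). So the overall architecture of your proposal is right, but the discriminant step should be rewritten around local $n$-complements rather than ACC, and the moduli step should be cited rather than argued.
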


Let $S$ be a projective surface with klt singularities and with numerically
trivial canonical class $K_S \equiv 0$. Then $S$ is called a \emph{log Enriques	surface}. The minimal index of complementary $n$ is called the \emph{canonical index} of $S$, i.e. $n = \min\{ q\in \N | qK_S \sim 0 \}$. It is known that $n \le 21$ [\ref{Bl}], [\ref{Zhang}]. A similar result also holds for surfaces of Calabi-Yau type. The argument below is the same as [\ref{PSh-II}, Corollary 1.11]. We put it here for the reader's convenience.

\begin{prop}\label{prop-can-index}
	Let $\mathfrak{F}\subset [0,1]$ be a DCC set of rational numbers. Then there exists $n\in \N$ depending only on $\mathfrak{F}$ satisfying the following. Assume $(S,B)$ is a pair such that
	\begin{itemize}
		\item $S$ is a projective surface, $(S,B)$ is lc, and
		
		\item $K_{S}+B\sim_\Q 0$ and, $B\in \mathfrak{F}$.\\
	\end{itemize}
	Then $n(K_S+B)\sim 0$.
\end{prop}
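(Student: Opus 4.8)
The plan is to reduce to a situation where the known boundedness results for numerically trivial surface pairs apply, following the strategy of [\ref{PSh-II}, Corollary 1.11]. First I would apply Theorem \ref{ACC-2} to the DCC set $\mathfrak{F}$ in dimension $2$: this yields a finite subset $\mathfrak{F}_0\subset\mathfrak{F}$ such that, since $K_S+B\equiv 0$, all coefficients of $B$ actually lie in $\mathfrak{F}_0$. Thus we may assume from the outset that $B$ has coefficients in a fixed finite set of rationals, and in particular there is a fixed $I\in\N$ with $IB$ integral. The content is now to show that the index of $K_S+B$ (which is a priori a torsion element of $\Pic$ up to the relevant equivalence) is bounded in terms of $\mathfrak{F}_0$.

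Next I would take a $\Q$-factorial dlt model $\phi\colon (Y,B_Y)\to (S,B)$ with $K_Y+B_Y=\phi^*(K_S+B)\sim_\Q 0$, so $n(K_S+B)\sim 0$ iff $n(K_Y+B_Y)\sim 0$ (one direction is pullback, the other is pushforward of sections since $\phi$ is a birational contraction between normal surfaces, so $H^0$ agree). Now run an MMP to analyze $Y$. If $\rddown{B_Y}\neq 0$, then $(Y,B_Y)$ is dlt with nonempty reduced boundary and $K_Y+B_Y\sim_\Q 0$, so I would restrict to $S_Y:=\rddown{B_Y}$ via adjunction to get an sdlt $1$-dimensional pair $(S_Y, B_{S_Y})$ with $K_{S_Y}+B_{S_Y}\sim_\Q 0$ and coefficients in a finite set (by Lemma \ref{l-div-adj-dcc} applied component-wise), handle the index there by the curve case together with Proposition \ref{prop-bnd-index-slc} and Proposition \ref{prop-adimissable-sections-generation} in dimension $1$, and then lift back to $Y$ by the inversion-of-adjunction/complement-lifting technology of Proposition \ref{prop-gen-lift-comp}. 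If instead $(Y,B_Y)$ is klt, i.e.\ $\rddown{B_Y}=0$, then $Y$ has klt singularities with $K_Y+B_Y\equiv 0$; here I would invoke the classification/boundedness of canonical index for klt Calabi--Yau surface pairs (equivalently log Enriques surfaces when $B_Y=0$, where $n\le 21$ by [\ref{Bl}], [\ref{Zhang}]), extended to allow the bounded boundary $B_Y$, which is again essentially the statement of [\ref{PSh-II}, Corollary 1.11]; alternatively one can use global ACC (Theorem \ref{ACC-2}) together with the boundedness of the relevant index once the coefficients are fixed.

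The main obstacle is the klt case: proving that the canonical index $n$ with $n(K_Y+B_Y)\sim 0$ is bounded solely in terms of the finite coefficient set $\mathfrak{F}_0$. For $B_Y=0$ this is the classical bound on log Enriques surfaces, but with a nontrivial boundary one needs either an effective Nakayama/Fujita-type argument or to cite the surface case of the index conjecture; I would structure the proof so this input is isolated and clearly attributed. The rest — the ACC reduction, passing to a dlt model, the adjunction to the reduced boundary and lifting complements — is routine given the machinery already set up in the earlier sections, in particular Propositions \ref{prop-gen-lift-comp}, \ref{prop-bnd-index-slc}, and \ref{prop-adimissable-sections-generation}.
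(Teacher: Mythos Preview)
Your approach has a genuine gap in the case $\rddown{B_Y}\neq 0$. You propose to bound the index on the sdlt curve $S_Y=\rddown{B_Y}$ (which is fine) and then lift to $Y$ via Proposition \ref{prop-gen-lift-comp}. But that proposition requires $-(K_X+B)$ to be nef and \emph{big}: its proof uses Kawamata--Viehweg vanishing for $(n+1)N$ with $N=-(K_Y+B_Y)$. Here $K_Y+B_Y\sim_\Q 0$, so $N\equiv 0$ is not big and the vanishing step fails. The other lifting mechanisms available (Lemma \ref{lem-fujino-4.5}, Proposition \ref{prop-adimissable-sections-generation}) go in the wrong direction for your purpose: they assume $m(K_Y+B_Y)\sim 0$ for a \emph{given} $m$ and produce (pre)admissible sections, so to get bounded output you need bounded input $m$, which is exactly what you are trying to prove. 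In the paper's logical order the proof of Theorem \ref{thm-bnd-dlt-index-2} invokes Proposition \ref{prop-can-index}, so that circle of ideas cannot be used here without circularity.

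The paper's route avoids this lifting problem entirely and does not split into klt versus non-klt. After passing to a $\Q$-factorial dlt model: if $B=0$ it cites the log Enriques bound $n\le 21$; if $B\neq 0$ then $K_S$ is not pseudo-effective, and a $K_S$-MMP terminates in a Mori fibre space $g\colon S'\to Z$. When $Z$ is a point, $S'$ is of Fano type and Theorem \ref{t-bnd-compl-usual} supplies a bounded $n$-complement $B'^+\ge B'$; since $K_{S'}+B'\sim_\Q 0$ forces $B'^+=B'$, one gets $n(K_{S'}+B')\sim 0$ and pulls back. When $Z$ is a curve, Proposition \ref{l-fib-adj-dcc} writes $q(K_{S'}+B')\sim qg^*(K_Z+B_Z+M_Z)$ with $B_Z\in\Phi(\mathfrak{S})$ and $qM_Z$ nef Cartier, and Lemma \ref{lem-curve-comp} bounds the index of the generalised curve pair. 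Thus the case you flagged as the main obstacle (klt with $B\neq 0$) is handled by the Mori fibre space argument rather than by an external citation.
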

\begin{proof}
	Replacing $(S,B)$ we can assume it is $\Q$-factorial dlt. If $K_S \sim_\Q 0$ (which implies $B=0$), then $S$ is a log Enriques surface and such $n$ exists. We therefore assume $K_S \nsim_\Q 0$. Thanks to global ACC Theorem \ref{ACC-2} we can assume $\mathfrak{F}$ is a finite set. Run an MMP on $K_S$ which terminates at a Mori fiber space $g:S' \rightarrow Z$ as $K_S$ is not pseudo-effective. If $Z$ is a point, then $S'$ is a Fano variety which in turn implies that there is an $n$-complement $K_{S'}+B'^+$ of $K_{S'}+B'$ such that $B'^+ \ge B'$. Since $K_{S'}+B' \sim_\Q 0$, we deduce $B'^+=B'$, hence $n(K_S+B)$ is Cartier. Now we assume $Z$ is a curve. Replacing $(S,B)$ with $(S',B')$ and applying Proposition \ref{l-fib-adj-dcc}, we get a generalized polarized pair $(Z,B_Z+M_Z)$ such that $n(K_S+B) \sim n g^* (K_Z+B_Z+M_Z)$, $B_Z \in \Phi(\mathfrak{S})$ for some finite set $\mathfrak{S}$ depending only on $\mathfrak{F}$, and $nM_Z$ is Cartier. Note that either $Z$ is a rational curve or an elliptic curve. In both cases $M_Z$ is effective semi-ample, by Lemma \ref{lem-curve-comp}, which in turn implies that $n(K_Z+B_Z+M_Z) \sim 0$ after replacing $n$ with some multiple. Hence the conclusion follows immediately. 	
\end{proof}
We first state a theorem by Shokurov.
\begin{prop}[\cite{PSh-II}, Theorem 8.1] 8.1]\label{prop-adjunction-shokurov}
	Let $\mathfrak{R}\subset [0,1]$ be a finite set of rational numbers. 
	Then there exists $q\in \N$ depending only on $\mathfrak{R}$ satisfying the following. 
	Assume $(S,B)$ is a pair and $f\colon S\to C$ a contraction such that 
	\begin{itemize}
		\item $S$ is a projective surface, $(S,B)$ is lc, $C$ is a curve, and
		
		\item $K_{S}+B\sim_\Q 0/C$ and $B_h\in \Phi(\mathfrak{R})$ where $B_h$ is the horizontal part of $B$ over $C$.\\
	\end{itemize}
	Then we can write 
	$$
	q(K_S+B)\sim qf^*(K_C+B_C+M_C)
	$$
	where $B_C$ and $M_C$ are the discriminant and moduli parts of adjunction,  and the moduli divisor $qM_C\geq 0$ is effective semi-ample  and Cartier. 
	
\end{prop}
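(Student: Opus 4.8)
The plan is to run the canonical bundle formula for the lc-trivial fibration $f\colon S\to C$ and then bound, one at a time, the discriminant $B_C$, the moduli part $M_C$, and the index of the resulting equivalence, using throughout that both the base $C$ and the general fibre $F$ of $f$ are curves. First I would apply Theorem \ref{c-bdle-form}: as $(S,B)$ is lc and $K_S+B\sim_\Q 0/C$, there are a boundary $\Q$-divisor $B_C$ and a $\Q$-divisor $M_C$ on $C$ with $(C,B_C+M_C)$ a generalized pair, $M_C$ b-nef, and $K_S+B\sim_\Q f^*(K_C+B_C+M_C)$; since $C$ is a smooth curve it has no nontrivial birational models, so $M_C$ is just a nef (i.e.\ $\deg\ge 0$) $\Q$-divisor. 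It then suffices to prove (a) $B_C\in\Phi(\mathfrak S)$ for a finite set $\mathfrak S\subset[0,1]$ depending only on $\mathfrak R$, (b) there is $q\in\N$ depending only on $\mathfrak R$ with $qM_C\ge 0$ Cartier and semi-ample, and (c) that the $\Q$-linear equivalence above becomes an honest linear equivalence after multiplying by a bounded integer.

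For (a): the general fibre $F$ is $\PP^1$ or an elliptic curve, and $B_F:=B_h|_F$ satisfies $K_F+B_F\sim_\Q 0$. Since $B_h\in\Phi(\mathfrak R)$ and $\Phi(\mathfrak R)$ is a DCC set whose only accumulation point is $1$, Theorem \ref{ACC-2} puts the coefficients of $B_F$ in a finite set $\mathfrak R_0$ depending only on $\mathfrak R$; as these coefficients are sums of the coefficients of $B_h$ weighted by local intersection numbers with $F$, a short argument then shows that the coefficients of $B_h$ themselves and the intersection numbers $H_j\cdot F$ all lie in a fixed finite set, and in particular $\deg B_F\le 2$ bounds the number of horizontal components of $B$. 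For a closed point $P\in C$ one has $b_P=1-t_P$, where $t_P$ is the log canonical threshold of $f^*P$ with respect to $(S,B)$ over the generic point of $P$; I would compute $t_P$ on the surface germ of $S$ along $f^{-1}(P)$, where, using that $(S,B)$ is lc with $K_S+B\sim_\Q 0/C$, it is governed by the multiplicities of the components of $f^{-1}(P)$ and by the coefficients of $B_h$ along that fibre — this is the two-dimensional case of adjunction for fibre spaces, as in [\cite{PSh-II}, \S\S 7--8]. Feeding this into the ACC for log canonical thresholds on surfaces (Theorem \ref{ACC-1}) pins the $b_P$ down to a fixed hyperstandard set $\Phi(\mathfrak S)$, in which moreover the integer ``$m$'' in $b_P=1-\tfrac rm$ divides the multiplicity of $f^{-1}(P)$; together with $B_h$ having finite coefficient set, this guarantees that after a bounded multiple $K_S+B$ and $f^*(K_C+B_C+M_C)$ have matching integral representatives.

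For (b) I would split according to $F$. If $F$ is an elliptic curve then $B_h=0$, $S$ is birational over $C$ to a relatively minimal elliptic surface, and Kodaira's canonical bundle formula gives that $12M_C\ge 0$ is semi-ample and Cartier. If $F\cong\PP^1$ then $B_F=\sum a_jp_j$ with $\sum a_j=2$ and the $a_j$ in the finite set found above, so $(F,B_F)$ varies in a bounded family of marked rational curves; the moduli map of the fibration factors through a fixed finite-type moduli space, and after a base change $C'\to C$ of degree bounded in terms of $\mathfrak R$ along which the fibration becomes semistable, a bounded multiple of the moduli part is $\ge 0$, semi-ample and Cartier, which descends to $C$. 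For (c), Lemma \ref{lem-curve-comp} applied to $(F,B_F)$ (for which $-(K_F+B_F)$ is nef) gives $q_0\in\N$ depending only on $\mathfrak R$ with $q_0(K_F+B_F)\sim 0$; thus $q_0(K_S+B)$ is trivial on the general fibre, so after subtracting a principal divisor it has no horizontal component, i.e.\ is vertical over $C$, and Lemma \ref{l-pull-back-Cartier} (together with the fibre-multiplicity divisibility from (a)) identifies a bounded multiple of $K_S+B$ with a Cartier pullback from $C$, upgrading $\sim_\Q$ to $\sim$.

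The hard parts are (a) and (b): forcing the discriminant coefficients to be \emph{hyperstandard} rather than merely DCC, and controlling simultaneously the effectivity, semi-ampleness and denominators of the moduli part. Both genuinely use that $f\colon S\to C$ is a surface fibred over a curve — the first through the explicit surface adjunction along $f^{-1}(P)$ combined with the ACC for log canonical thresholds on surfaces, the second through Kodaira's canonical bundle formula in the elliptic case and the boundedness of the configuration of marked points on $\PP^1$ in the rational case, the latter boundedness being precisely what the hypothesis $B_h\in\Phi(\mathfrak R)$ buys us via global ACC.
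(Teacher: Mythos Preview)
The paper does not prove this proposition at all: it is quoted verbatim from [\cite{PSh-II}, Theorem~8.1] and then used as a black box to prove Lemma~\ref{lem-rel-complement} and Proposition~\ref{prop-adjunction-hyperstd}. So there is no ``paper's proof'' to compare your outline against; what you have written is a sketch of (parts of) the Prokhorov--Shokurov argument itself.

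Your case split in (b) on the generic fibre (elliptic via Kodaira's formula, rational via boundedness of the marked configuration $(\PP^1,B_F)$) is the correct strategy and is exactly how [\cite{PSh-II}] bounds the moduli part. That part of your outline is sound in spirit, though the rational case needs the full Hodge-theoretic machinery of the Prokhorov--Shokurov paper (their \S\S7--8) to pin down both the Cartier index and a canonical effective representative of $M_C$; the phrase ``factors through a fixed finite-type moduli space'' hides real work.

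There is, however, a genuine mismatch in your step (a). The proposition as stated only assumes $B_h\in\Phi(\mathfrak R)$; the vertical part $B_v$ is completely unconstrained. Since the coefficient of $B_C$ at $P\in C$ is $1-t_P$ with $t_P$ the lc threshold of $f^*P$ with respect to $(S,B)$, and this threshold depends on the coefficients of $B_v$ along $f^{-1}(P)$, one \emph{cannot} conclude $B_C\in\Phi(\mathfrak S)$ from the hypotheses here. Indeed the paper proves that statement separately, as Proposition~\ref{prop-adjunction-hyperstd}, under the stronger assumption $B\in\Phi(\mathfrak R)$, and does so by first invoking the present proposition to produce bounded relative complements (Lemma~\ref{lem-rel-complement}) and then reading off the coefficient of $B_C$ from the complement. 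Your plan reverses this dependency. Since you also feed ``the fibre-multiplicity divisibility from (a)'' into your argument for (c), this error propagates: drop claim (a) entirely, and for (c) argue directly that a bounded multiple $q_0(K_S+B)$ restricts to a principal divisor on the general fibre (by the bounded index of $(F,B_F)$, which only uses $B_h$), hence is vertical and pulled back from $C$ after subtracting a principal divisor; on the smooth curve $C$ no further divisibility is needed.
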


We will use it to prove theory on relative complements as following.

\begin{lem}\label{lem-rel-complement}
	Let $\mathfrak{R}\subset [0,1]$ be a finite set of rational numbers. 
	Then there exists a natural $n\in \N$ 
	depending only on $\mathfrak{R}$ satisfying the following. 
	Assume $(S,B)$ is a pair and $f\colon S\to C$ a contraction such that 
	\begin{itemize}
		\item $S$ is a surface, $(S,B)$ is lc, and $C$ is a rational curve, and
		
		\item $K_{S}+B\sim_\Q 0/C$ and $B\in \Phi(\mathfrak{R})$.\\
	\end{itemize}
	Then for any point $z \in C$, there is an $n$-complement $K_S+B^+$ of $K_S+B$ over $z$ such that $B^+ \ge B$.
\end{lem}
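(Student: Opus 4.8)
The plan is to reduce to a complement problem on the base curve $C$ and then invoke the local boundedness of complements in dimension $1$. First I would run an MMP over $C$ to put $(S,B)$ in a convenient form. Since $K_S+B\sim_\Q 0/C$, running a $(K_S+B-\epsilon\rddown{B})$-MMP (or simply taking a $\Q$-factorial dlt model and then a relative minimal model) over $C$ does not change the relative numerical class, so we may assume $(S,B)$ is $\Q$-factorial dlt and that either $S\to C$ is already a Mori fibre space over $C$, or $S$ is birational over $C$ to such a model; in any case the property of admitting an $n$-complement over $z$ pulls back and pushes forward along such birational maps (by the usual argument, lifting sections of $|-nK_S-nT-\rddown{(n+1)\Delta}|$ via a log resolution, exactly as in the proof of Proposition \ref{prop-gen-lift-comp}). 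So I may assume $S\to C$ has relative Picard number one, or has been replaced by a minimal model on which $-(K_S+B)$ is nef over $C$.

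Next I would apply Shokurov's canonical bundle formula for surfaces over curves, Proposition \ref{prop-adjunction-shokurov}: there is $q\in\N$ depending only on $\mathfrak{R}$ such that
$$
q(K_S+B)\sim q f^*(K_C+B_C+M_C),
$$
where $qM_C\ge 0$ is effective, semi-ample and Cartier, and (applying Proposition \ref{l-fib-adj-dcc}, or directly [\cite{PSh-II}, Lemma 9.3]) the discriminant $B_C$ has coefficients in $\Phi(\mathfrak{S})$ for a finite set $\mathfrak{S}$ depending only on $\mathfrak{R}$. Since $C$ is a rational curve, $(C, B_C+M_C)$ is a generalized pair with $-(K_C+B_C+M_C)$ automatically... well, we do not know it is nef a priori over a global point, but we only need a complement over $z\in C$, and locally over $z$ any divisor is trivial, so after shrinking $C$ around $z$ we have $K_C+B_C+M_C\sim_\Q 0$ over $z$ and we may even arrange $-(K_C+B_C+M_C)$ nef globally after an auxiliary modification, or simply work with the local statement directly.

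The key step is then to produce an $n$-complement of $K_C+B_C+M_C$ over $z$ with $n$ depending only on $\mathfrak{R}$: this is exactly the content of Lemma \ref{lem-curve-comp} (the generalized local complement statement on curves), applied with the finite set $\mathfrak{S}$ in place of $\mathfrak{R}$ and with $p=q$ so that $pM_C$ is integral. This gives $B_C^+\ge B_C$ with $(C,B_C^++M_C)$ lc and $n(K_C+B_C^++M_C)\sim 0$ over $z$, for $n$ divisible by $q$ and bounded in terms of $\mathfrak{R}$. I would then set $B^+ := B + f^*(B_C^+-B_C)$; pulling back, $n(K_S+B^+)\sim n f^*(K_C+B_C^++M_C)\sim 0$ over $z$, the pair $(S,B^+)$ is lc over $z$ (the added part is pulled back from $C$, and $(C,B_C^++M_C)$ being lc controls the discriminant — here one uses that the lc threshold computation defining $B_C$ is reversible, i.e. adding $f^*(B_C^+-B_C)$ keeps $(S,B)$ sub-lc over $z$, since $B_C^+-B_C$ sits below the lc-threshold bound $1-b_D=t_D$ at every prime divisor $D\ni z$), and the inequality $n B^+\ge n T+\rddown{(n+1)\Delta}$ holds on the horizontal part because $nb\le\rddown{(n+1)b}$ for $b\in\Phi(\mathfrak{R})$ whenever $I(\mathfrak{R})\mid n$ (see \ref{ss-dcc-sets}), while on the vertical part it follows from $B^+\ge B$. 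Finally I would transport the complement back through the MMP/dlt-model reductions of the first paragraph.

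The main obstacle I expect is the second half of the last step: checking that adding the vertical divisor $f^*(B_C^+-B_C)$ keeps the pair log canonical (equivalently, that $B_C^+\le B_C + (\text{lc-threshold gap})$ fibre-componentwise), and ensuring the complement inequality holds on fibre components of $B$ that were not already in $\rddown{B}$ — this requires a careful bookkeeping between the discriminant coefficients $b_D$, the complement $B_C^+$ on the base, and the hyperstandard arithmetic, and is the place where the finiteness of $\mathfrak{S}$ and the divisibility $I(\mathfrak{R})\mid n$ are genuinely used. Everything else is the standard lifting-of-complements formalism already developed in \ref{ss-compl} and Proposition \ref{prop-gen-lift-comp}.
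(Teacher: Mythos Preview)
Your plan is in the right spirit---use the canonical bundle formula to pass to the base curve and complement there---but it takes an unnecessary detour that introduces a real gap.

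The problem is your appeal to $B_C\in\Phi(\mathfrak{S})$. Proposition~\ref{l-fib-adj-dcc} requires that $X$ be of Fano type over a nonempty open $U\subseteq Z$ and that every non-klt center map into $U$; neither hypothesis is given here (the general fibre of $S\to C$ may be elliptic, and $\rddown{B}$ may have vertical components). The alternative route, Proposition~\ref{prop-adjunction-hyperstd}, is proved in the paper \emph{using} Lemma~\ref{lem-rel-complement}, so invoking it would be circular. Without control on the coefficients of $B_C$ you cannot bound the complement index on $C$ via Lemma~\ref{lem-curve-comp} (which in any case is a global statement, not a local one over $z$). So as written the argument does not close.

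The paper sidesteps all of this by never needing $B_C\in\Phi(\mathfrak{S})$ and never finding a complement on $C$ by a boundedness theorem. Instead it goes directly: with $q$ as in Proposition~\ref{prop-adjunction-shokurov}, set $D=f^*z$, let $t=\mathrm{lct}(D;S,B)$, and put $B^+:=B+tD$. Then $(S,B^+)$ is lc by definition of $t$, and since $\mu_z(B_C)=1-t$ the new discriminant has $\mu_z(B_C^+)=1$. Now $qK_C$ is Cartier (smooth curve), $qM_C$ is Cartier (Shokurov), and $qB_C^+$ is Cartier near $z$ because its coefficient there is the integer $q$; hence $q(K_C+B_C^++M_C)$ is Cartier, so principal, near $z$, and pulling back gives $q(K_S+B^+)\sim 0$ over $z$. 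The bounded index is simply $n=q$. Your MMP reduction and the hyperstandard bookkeeping on the base are both unnecessary, and the ``main obstacle'' you anticipated (the lc check after adding the vertical divisor) evaporates: adding exactly the lc threshold is lc by definition.
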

\begin{proof}
	Firstly, $C$ is normal hence it is smooth. let $z\in C$ and let $D :=f^*z$ be a Cartier divisor. Write $$q(K_S+B)\sim qf^*(K_C+B_C+M_C)$$ as in \ref{prop-adjunction-shokurov}. Note that $qK_C,qM_C$ are both Cartier hence integral. Now let $t := lct(D,S,B)$ be the log canonical threshold of $D$ with respect to $K_S+B$, we see that $\mu_z(B_C) = 1-t$ from definition of canonical bundle formula. This is because that $t$ is also the log canonical threshold of $D$ w.r.t to $K_S+B$ over $z$ (as $z$ is Cartier and $D$ is supported in the fiber over $z$). Hence if we let $B^+ := B+tD$, we see that $(S,B^+)$ is lc by definition and $B^+$ has the same horizontal components as $B$. Hence we get $$q(K_S+B^+)\sim qf^*(K_C+B_C^+ + M_C)$$, where $B_C^+ := B_C+tz$. Hence we get $B_C^+$ is Cartier near $z$ as $\mu_z(B_C^+)=1$. Therefore, we get $q(K_C+B_C+M_C)\sim 0$ in an open neighbourhood of $z$, hence we get $q(K_X+B^+)\sim 0$ over an open neighbourhood of $z$. Now $q$ is the bounded $n$ that we are looking at. The last claim is clear.
\end{proof}
We are now ready to show the following proposition that is similar to \ref{l-fib-adj-dcc}.

\begin{prop}\label{prop-adjunction-hyperstd}
	Let $\mathfrak{R}\subset [0,1]$ be a finite set of rational numbers. 
	Then there exist $q\in \N$ and a finite set of rational numbers $\mathfrak{S}\subset [0,1]$
	depending only on $\mathfrak{R}$ satisfying the following. 
	Assume $(S,B)$ is a pair and $f\colon S\to C$ a contraction such that 
	\begin{itemize}
		\item $S$ is a projective surface, $(S,B)$ is lc, $C$ is a curve, and
		
		\item $K_{S}+B\sim_\Q 0/C$ and $B\in \Phi(\mathfrak{R})$.\\
	\end{itemize}
	Then we can write 
	$$
	q(K_S+B)\sim qf^*(K_C+B_C+M_C)
	$$
	where $B_C$ and $M_C$ are the discriminant and moduli parts of adjunction,  
	$B_C\in \Phi(\mathfrak{S})$, and the moduli divisor $qM_C\geq 0$ is effective semiample and Cartier. 
	
\end{prop}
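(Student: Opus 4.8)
The plan is to deduce the proposition from Shokurov's Theorem~\ref{prop-adjunction-shokurov} together with a control of the discriminant coefficients in the spirit of [\cite{PSh-II}, Lemma~9.3]. First I would apply Theorem~\ref{prop-adjunction-shokurov} to $f\colon(S,B)\to C$: since $B\in\Phi(\mathfrak R)$ we have in particular $B_h\in\Phi(\mathfrak R)$, so there is $q\in\N$ depending only on $\mathfrak R$ with
\[
q(K_S+B)\sim q f^*(K_C+B_C+M_C),
\]
where $B_C$, $M_C$ are the discriminant and moduli parts of adjunction and $qM_C\ge 0$ is effective, semiample and Cartier. Every assertion of the proposition except $B_C\in\Phi(\mathfrak S)$ is then already in hand, so the whole problem reduces to producing a finite set $\mathfrak S\subset[0,1]$, depending only on $\mathfrak R$, with every coefficient of $B_C$ lying in $\Phi(\mathfrak S)$.

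Fix a closed point $z\in C$. By the construction of the discriminant recalled after Theorem~\ref{c-bdle-form}, its coefficient at $z$ is $\mu_zB_C=1-t_z$, where $t_z$ is the lc threshold of $f^*z$ with respect to $(S,B)$ over the generic point of $z$. I would work over the germ $(C,z)$: after shrinking $C$ around $z$ the divisor $z$ is principal, hence $f^*z\sim0$ and $(S,B+t_zf^*z)$ is an lc pair with $K_S+B+t_zf^*z\sim_\Q0$ over the germ. If $t_z=0$ then $\mu_zB_C=1\in\Phi(\mathfrak R)$, so we may assume $t_z>0$; then $\Supp(t_zf^*z)=f^{-1}(z)$ and $(S,B+t_zf^*z)$ admits an lc place $E$ whose centre on $S$ maps into $z$. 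Since $S$ is a surface, this centre is either a curve, forcing $E$ to be a component $F$ of $f^{-1}(z)$, or a closed point.

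In the curve case, $S$ is regular at the generic point of $F$, so $0=a(F,S,B+t_zf^*z)=1-\mu_FB-t_z\,\mathrm{mult}_Ff^*z$, whence $t_z=(1-\mu_FB)/\mathrm{mult}_Ff^*z$; as $\mu_FB\in\Phi(\mathfrak R)$ and $\mathrm{mult}_Ff^*z\in\N$, this gives $\mu_zB_C=1-t_z\in\Phi(\mathfrak R)$ directly. I expect the point case to be the main obstacle. Following [\cite{PSh-II}, Lemma~9.3], I would extract $E$ by a crepant $\Q$-factorial birational morphism $g\colon S'\to S$; writing $B_{S'}$ for the crepant transform of $B$, one has $g^*(K_S+B+t_zf^*z)=K_{S'}+B_{S'}+t_zg^*f^*z$, and the divisor $E$ appears with coefficient exactly $1$ in $B_{S'}+t_zg^*f^*z$, so divisorial adjunction applies to $E$ and yields a proper curve pair $(E,\Delta_E)$ with $K_E+\Delta_E\sim_\Q0$ — either an elliptic curve with $\Delta_E=0$, or $(\PP^1,\Delta_E)$ with $\deg\Delta_E=2$ — where $\Delta_E$ is a crepant restriction of $B_{S'}+t_zg^*f^*z-E$.

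The coefficients of $\Delta_E$ are of two kinds: those descending from $B$, which lie in a fixed $\Phi(\mathfrak S')$ by Lemma~\ref{l-div-adj-dcc} and the standard different formula, and those coming from the fibral part $t_zg^*f^*z$, which are of the form $t_z\cdot(\text{positive integer})$, possibly twisted by a bounded local index. The relation $\deg\Delta_E=2$ with $\Delta_E\ge0$ bounds the number of points of $E$ carrying a coefficient bounded away from $0$, and the ACC for log canonical thresholds (Theorem~\ref{ACC-1}) confines the possible $t_z$ to an ACC set; combining these two constraints rules out all but finitely many denominators, so that $\mu_zB_C=1-t_z$ lies in $\Phi(\mathfrak S'')$ for a finite set $\mathfrak S''$ depending only on $\mathfrak R$. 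Taking $\mathfrak S$ to be the finite set assembled from the curve and point cases gives $B_C\in\Phi(\mathfrak S)$, and together with the first paragraph this completes the proof. The delicate part, as in [\cite{PSh-II}, Lemma~9.3], is precisely the point case: extracting the right lc place and squeezing the two descriptions of the coefficients of $\Delta_E$ against one another to pin down $t_z$ to a hyperstandard form.
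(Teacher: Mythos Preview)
Your reduction via Theorem~\ref{prop-adjunction-shokurov} to controlling $B_C$ is correct, and your curve case is fine. But the paper takes a different, and cleaner, route that avoids your case split entirely. It uses Lemma~\ref{lem-rel-complement}: over any $z\in C$ there is an $n$-complement $K_S+B^+$ of $K_S+B$ with bounded $n$ and $B^+=B+t_zf^*z$ (this is exactly how that lemma builds its complement). Since $n(K_S+B^+)\sim 0$ near $z$, the divisor $nB^+$ is integral, so for \emph{any} component $F$ of $f^*z$, with $\mu_FB=1-r/m$ and $l=\mu_Ff^*z$, the number $a:=n(1-r/m+t_zl)$ is an integer in $\{1,\dots,n\}$. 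If $a=n$ one reads off $1-t_z=1-r/(ml)\in\Phi(\mathfrak R)$; if $a<n$, then $r/m>1-a/n\ge 1/n$ forces $m<n$, so $a/n-1+r/m$ ranges over a finite set and $1-t_z=1-(a/n-1+r/m)/l$ is hyperstandard. No extraction of exceptional divisors is needed: the integrality forced by the bounded complement does all the work, uniformly in both of your cases.

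Your point case, as written, has a genuine gap. Knowing that $t_z$ lies in an ACC set (Theorem~\ref{ACC-1}) and that $\deg\Delta_E=2$ does not by itself produce the shape $t_z=s/l$ with $s$ in a fixed finite set and $l\in\N$; an ACC set can be far from hyperstandard. To complete your argument you would need to isolate a point of $E$ whose coefficient in $\Delta_E$ reads $1-1/m+(\alpha+t_z\beta)/m$ with $\alpha$ drawn from a finite set and $\beta\in\N$, and then invoke integrality or Global ACC on $(E,\Delta_E)$ to force that coefficient into finitely many values---only then can you solve back for $t_z$ in hyperstandard form. Your citation of [\cite{PSh-II}, Lemma~9.3] points in the right direction, but the step you describe as ``combining these two constraints rules out all but finitely many denominators'' is exactly where the work lies, and your sketch does not supply it.
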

\begin{proof}
Most parts of the claim are the same as \ref{prop-adjunction-shokurov} except the coefficients of $B_C$. We will show this using $n$-complements. The question is local on $C$. Pick any $z\in C$. Let $B^+\geq B$ be an n-complement $K_X+B$ over $z$ as in \ref{lem-rel-complement}. We see that 
$$q(K_S+B)\sim qf^*(K_C+B_C+M_C).$$ Now we have $\mu_z(B_C)=1-t$, where $B^+ =B +tf^* z$. Let $F$ be an component of $f^*z$ and let $l := \mu_F(f^* z)\in \mathbb{N}$ and let $b := \mu_F(B)$. We have $b=1-\frac{r}{m}$ for some $r\in \mathfrak{R}$ and $m\in \mathbb{N}$. Hence we have $\mu_F(B^+)= 1-\frac{r}{m}+tl$. Also we have $n\mu_F(B^+)\in\mathbb{N}$. Therefore we get $$n\geq n(1-\frac{r}{m}+tl)\in \mathbb{N}.$$
 If $t=0$, then we have nothing to prove. Hence we can assume $t>0$, hence letting $ a := n(1-\frac{r}{m}+tl)$, we have $$t = \frac{\frac{a}{n}-1+\frac{r}{m}}{l}$$ Now if $a=n$, then it is clear that $1-t=1-\frac{r}{ml}\in \Phi(\mathfrak{R})$. If $\frac{a}{n}<1$, we have $\frac{r}{m}>1-\frac{a}{n}\geq \frac{1}{n}$ as $t>0$. Now since $r\leq 1$, we have $\frac{1}{m}>\frac{1}{n}$. Hence $m<n$, and hence there are only finitely many choice for $\frac{r}{m}$, and hence only finitely many choice for $\frac{a}{n}-1+\frac{r}{m}$. Denote this set union $\mathfrak{R}$ to be $\mathfrak{S}$, we see that $\mu_z(B_C)=1-t\in \Phi(\mathfrak{S})$, which proves the claim.
\end{proof}
We end the section by a proposition about global complements for surface fibred over rational curve.

\begin{prop}\label{prop-comp-surface-fib}
	Let $\mathfrak{R}\subset [0,1]$ be a finite set of rational numbers.
	Then there exists a natural number $q$ 
	depending only on $\mathfrak{R}$ satisfying the following.  
	Assume $(X,B)$ is a  pair , such that 
	\begin{itemize} 
		\item $X$ is a projective surface, $(X,B)$ is lc, 
		
		\item $B\in \Phi(\mathfrak{R})$, and 
		
		\item There is a contraction $f:X\rightarrow C$ such that $K_{X}+B\sim_\Q0/C$ and $C$ is rational curve.
		\item $-(K_X+B)$ is nef.
	\end{itemize}
    We write $q(K_X+B)\sim qf^*(K_C+B_C+M_C)$, where $B_C\in\Phi(\mathfrak{S})$, $qM_C$ is effective semiample Cartier divisor and $q,\mathfrak{S}$ as in \ref{prop-adjunction-hyperstd} depending only on $\mathfrak{R}$. Then any $pq$-complement $K_C+B_C^++M_C$ of $K_C+B_C+M_C$  with $B_C^+\geq B_C$ lifts to an $pq$-complement $K_X+B^+$ of $K_X+B$ with $B^+ := B+f^*(B_C^+-B_C)\geq B$. In particular, $K_X+B$ has a $n$-complement for some $n$ depending only on $\mathfrak{R}$.
\end{prop}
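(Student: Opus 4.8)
The plan is to take $B^+:=B+f^*(B_C^+-B_C)$ exactly as prescribed and verify the three defining conditions of a $pq$-complement of $K_X+B$, and then to produce the required complement on $C$ to obtain the last assertion. Since $B_C^+\ge B_C$ and $f^*(B_C^+-B_C)$ is an effective vertical divisor, $B^+\ge B\ge 0$. For the linear equivalence, observe that $pqB_C^+$ and $pqB_C$ are both integral divisors (the former because $pq(K_C+B_C^++M_C)\sim 0$, the latter from $q(K_X+B)\sim qf^*(K_C+B_C+M_C)$), so $pq(K_X+B^+)=pq(K_X+B)+f^*\big(pq(B_C^+-B_C)\big)\sim pqf^*(K_C+B_C^++M_C)\sim 0$, while $qM_C$ is Cartier on the curve $C$, so the b-Cartier requirement is automatic.

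For the log canonical condition, the statement is local over $C$; away from the finite support of $B_C^+-B_C$ we have $B^+=B$ near the relevant fibre, hence $(X,B^+)$ is lc there, and over a point $z$ in that support we have $B^+=B+c_zf^*z$ with $c_z=\mu_zB_C^+-\mu_zB_C$. By the construction of the discriminant, $\mu_zB_C=1-t_z$ with $t_z=\lct(f^*z;X,B)$ over the generic point of $z$, and because $(C,B_C^++M_C)$ is generalized lc with $M_C\ge 0$ we have $\mu_zB_C^+\le 1$, whence $0\le c_z\le t_z$. Thus $B\le B^+\le B+t_zf^*z$, and since $(X,B+t_zf^*z)$ is lc over $z$ and decreasing the boundary of an lc pair keeps it lc, $(X,B^+)$ is lc near $f^{-1}(z)$; therefore $(X,B^+)$ is lc.

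The real work is the coefficient inequality $pqB^+\ge pqT+\rddown{(pq+1)\Delta}$, equivalently $pq\,f^*(B_C^+-B_C)\ge \rddown{(pq+1)\Delta}-pq\Delta$; the right-hand side is effective by the hyperstandard estimate $pq\,b\le\rddown{(pq+1)b}$ (valid since $I(\mathfrak{R})\mid pq$) and has coefficient $\delta_D-\nrd{(pq+1)\delta_D}$ at a component $D$ of $\Delta$, where $\nrd{x}=x-\rddown{x}$. On a component $D$ horizontal over $C$ the left-hand side vanishes, so I must have $\nrd{(pq+1)\delta_D}\ge\delta_D$, that is, $pq\,\delta_D\in\Z$; I would arrange this by applying Theorem \ref{ACC-2} to the log Calabi--Yau pair $(F,B|_F)$ on a general fibre $F$, which confines the horizontal coefficients of $B$ to a finite subset of $\Phi(\mathfrak{R})$ depending only on $\mathfrak{R}$, and then enlarging $q$ (still depending only on $\mathfrak{R}$) so that $q\,\delta_D\in\Z$ for every such $\delta_D$. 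On a component $F$ of a fibre $f^*z$ the left-hand side equals $pq\,\ell_F(t_z-s_z)$ with $\ell_F=\mu_F(f^*z)$ and $s_z=1-\mu_zB_C^+$; here I would combine the $pq$-complement inequality on $C$ (which bounds $s_z$ above by $\tfrac1{pq}(\lceil(pq+1)t_z\rceil-1)$), the adjunction inequality $\mu_FB+t_z\ell_F\le 1$, the numerical identity $(K_X+B)\cdot F=0$ forced by $K_X+B\sim_\Q 0/C$, and the hyperstandard membership $B_C\in\Phi(\mathfrak{S})$ from Proposition \ref{prop-adjunction-hyperstd}, and check the inequality component by component. This is arithmetic bookkeeping of the same nature as the coefficient computation at the end of the proof of Proposition \ref{prop-gen-lift-comp}, and it is the step I expect to be the main obstacle.

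Finally, to deduce the last sentence: $-(K_C+B_C+M_C)$ is nef, since its $f$-pullback $-(K_X+B)$ is nef and $f$ is a surjective contraction; $(C,B_C+M_C)$ is generalized lc with $B_C\in\Phi(\mathfrak{S})$ and $qM_C$ integral; so Lemma \ref{lem-curve-comp} provides a complement $K_C+B_C^++M_C$ with $B_C^+\ge B_C$ of index depending only on $\mathfrak{R}$, which (after replacing $q$ by a bounded multiple, if necessary) we may take to be a $pq$-complement for a suitable $p=p(\mathfrak{R})$, and the lifting above then yields an $n$-complement $K_X+B^+$ of $K_X+B$ with $n=pq$.
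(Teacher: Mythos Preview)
Your construction of $B^+=B+f^*(B_C^+-B_C)$ and your verifications of $pq(K_X+B^+)\sim 0$ and of log canonicity are exactly the paper's argument; the paper phrases the lc check as a contradiction on a log resolution rather than directly via the lct interpretation of the discriminant, but the substance is identical, and the final sentence (existence of a bounded complement on $C$ via Lemma~\ref{lem-curve-comp}, then pull back) is also the same.

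Where you diverge is in the coefficient inequality $pq\,B^+\ge pq\,T+\rddown{(pq+1)\Delta}$. The paper's proof does not verify this at all: after establishing the linear equivalence it writes ``hence it suffices to show $(X,B+D)$ is lc'' and stops once lc is proved, effectively treating ``$B^+\ge B$, $(X,B^+)$ lc, $n(K_X+B^+)\sim 0$'' as the conclusion (and this is what is actually used downstream). You are correct that the full inequality is not automatic from $B^+\ge B$, since for a horizontal component $D$ one has $\mu_DB^+=\mu_DB$ while $\rddown{(pq+1)\mu_DB}$ can strictly exceed $pq\,\mu_DB$. Your fix via global ACC on the general fibre, forcing the horizontal coefficients into a finite set and then enlarging $q$, is the right device and is sound. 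The vertical case you leave as a sketch; it can be carried out along the lines you indicate, but since the paper itself omits this entire verification you are already being more careful than the source, not less.
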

\begin{proof}
	By \ref{lem-curve-comp}, $K_C+B_C+M_C$ has $p$-complement for some $p$ depending only  $\mathfrak{R}$. Let $B_C^+ := B_C+D_C$, where $D_C\geq 0$, be such a $p$-complement. Then letting $n=pq$, we have $$n(K_X+B+f^*(D_C))\sim nf^*(K_C+B_C^++M_C)\sim 0.$$ Hence it suffices to show $(X,B+D)$ is lc where $D:= f^*D_C$. This follows from the fact that $(C,B_C^+ + M_C)$ is generalised lc: Indeed, consider a log resolution of $(X,B+D)$, $g:(Y,B_Y+D_Y)\rightarrow X$, where $K_Y+B_Y=g^*(K_X+B)$ and $D_Y := g^* D$. Now if $(X,B+D)$ is not lc, then there exist an irreducible component E such that $\mu_E(B_Y+D_Y)>1$, hence $\mu_E(D_Y)>0$, which means $E$ is vertical over $C$. But by property and definition of the canonical bundle formula, we see that $K_X+B+D \sim_\Q f^*(K_C+B_C^+ +M_C)$, hence if $E$ is mapped to $z\in C$, then $\mu_z(B_C^+)>1$, which is a contradiction.
\end{proof}

Now we need to strengthen the above proposition in order to consider gluing. 
\begin{prop}\label{prop-comp-surface-fib-glue}
	Take the setting and notation as in \ref{prop-comp-surface-fib}. Assume further that $(X,B)$ is dlt.  We assume $q|n$ and $n $ is even. Assume $B_h $, the horizontal part of $\rddown{B}$ over $C$ is non-trivial. Then either $B_h$ is irreducible and $\deg(B_h:C)=1$ or $2$ or $B_h=B_1+B_2$, where $B_i$ irreducible and $deg(B_1:C)=1$. \begin{enumerate}
		\item If $B_h:= B_1$ irreducible and $\deg(B_1:C)=1$.  Then any "general" $n$-complements $K_{B_1}+\Diff(B-B_1)+R_{B_1}$ of $K_{B_1}+\Diff(B-B_1)$ with $R_{B_1}\geq 0$ lifting to an n-complement of $K_X+B+R$ such that $R|_{B_1}=R_{B_1}$.
		\item If $B_h:= B_1+B_2$, $B_i$ irreducible and $\deg(B_i:C)=1$.  Then any "general" $n$-complements $K_{B_1}+\Diff(B-B_1)+R_{B_1}$ of $K_{B_1}+\Diff(B-B_1)$ with $R_{B_1}\geq 0$ lifts to an n-complement of $K_X+B+R$ such that $R|_{B_1}=R_{B_1}$.
		\item If $B_h:= B_1$, $B_1$ irreducible and $\deg(B_1:C)=2$, let $\sigma\in Gal(B_1/C)$ represent the involution. Then any "general" $n$-complements $K_{B_1}+\Diff(B-B_1)+R_{B_1}$ of $K_{B_1}+\Diff(B-B_1)$, with $R_{B_1}\geq 0$  and $R_{B_1}$ is $\sigma$-invariant, lifts to an n-complement of $K_X+B+R$ such that $R|_{B_1}=R_{B_1}$.
	\end{enumerate}
 Here general means that if we write $R_{B_1}=f|_{B_1}^*(R_C)$, then $Supp(R_C)$ is disjoint from $Supp(B_C+M_C)$. Note that such $n$-complements are indeed general if $B_1$ and $C$ are rational curves.
\end{prop}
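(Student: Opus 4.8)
The plan is to push the given complement on $B_1$ down to the base curve $C$, recognise it there as an $n$-complement of the generalised pair produced by the canonical bundle formula, and then re-apply the lifting mechanism of Proposition \ref{prop-comp-surface-fib} to pull it back up to $X$; throughout I keep $n$ divisible by $q$. As a first step I would record the trichotomy. Let $F$ be a general fibre of $f$. Since $-(K_X+B)$ is nef, $K_X+B\sim_\Q 0/C$ and $(X,B)$ is dlt, adjunction on the general fibre shows $(F,B|_F)$ is lc with $B|_F\ge 0$ and $K_F+B|_F\sim_\Q 0$; as $B_h\ne 0$ is horizontal, $B|_F\ne 0$, so $F$ cannot be an elliptic curve and hence $F\cong\PP^1$ with $\deg(B_h|_F)\le\deg(-K_F)=2$. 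Because every component of $B_h$ has coefficient $1$ in $B$ and $f$ is generically smooth, $B_h\cap F$ consists of at most two reduced points, which yields exactly the three listed cases; this is the relative analogue of Lemma \ref{lem-dlt-MFS-boundary}. Each $B_i$ is a component of $\rddown{B}$, hence normal since $(X,B)$ is dlt, so $B_i$ is a smooth curve.

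Next I would descend the complement to $C$. Given a ``general'' $n$-complement $K_{B_1}+\Diff(B-B_1)+R_{B_1}$ with $R_{B_1}=(f|_{B_1})^*R_C$ and $\Supp R_C$ disjoint from $\Supp(B_C+M_C)$, put $B_C^+:=B_C+R_C$. In cases (1) and (2) the morphism $f|_{B_1}\colon B_1\to C$ is a finite birational morphism of smooth curves, hence an isomorphism, so $R_C$ is simply the image of $R_{B_1}$; in case (3) the $\sigma$-invariance of $R_{B_1}$, together with the genericity (which lets us take $R_{B_1}$ disjoint from the ramification of $f|_{B_1}$), produces a well-defined effective integral $R_C$. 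Restricting the canonical bundle formula $q(K_X+B)\sim qf^*(K_C+B_C+M_C)$ along $B_1$ and using $(K_X+B)|_{B_1}=K_{B_1}+\Diff(B-B_1)$ gives $q(K_{B_1}+\Diff(B-B_1)+R_{B_1})\sim q(f|_{B_1})^*(K_C+B_C^++M_C)$; since $n(K_{B_1}+\Diff(B-B_1)+R_{B_1})\sim 0$ and $(f|_{B_1})^*$ is injective on the torsion-free divisor class group of $C\cong\PP^1$, it follows that $n(K_C+B_C^++M_C)\sim 0$. Moreover $(C,B_C^++M_C)$ is generalised lc, since on a curve this only requires the coefficients to lie in $[0,1]$: those of $B_C$ do, and those of $R_C$ do because $(B_1,\Diff(B-B_1)+R_{B_1})$ is lc and the two supports are disjoint. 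Finally $B_C^+\ge B_C$, and as the coefficients of $B_C^+$ lie in $\frac{1}{n}\Z$ (because $n(K_C+B_C^++M_C)$ is integral and $qM_C$ is Cartier) and dominate those of $B_C\in\Phi(\mathfrak{S})$, the rounding inequality $nb^+\ge\rddown{(n+1)b}$ holds coefficientwise; hence $K_C+B_C^++M_C$ is an $n$-complement of $K_C+B_C+M_C$ with $B_C^+\ge B_C$.

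Now I would apply Proposition \ref{prop-comp-surface-fib} (or rather the argument in its proof) to this $n$-complement on $C$: it lifts to an $n$-complement $K_X+B^+$ of $K_X+B$ with $B^+=B+f^*(B_C^+-B_C)=B+f^*R_C$, where lc-ness on $X$ follows, as there, from the generalised lc-ness of $(C,B_C^++M_C)$ through the canonical bundle formula (a non-lc place of $K_X+B^+$ over $K_X+B$ would be vertical over $C$ and would map to a point $z$ with $\mu_z(B_C^+)>1$). Setting $R:=f^*R_C\ge 0$ we obtain $R|_{B_1}=(f|_{B_1})^*R_C=R_{B_1}$, which is the desired compatibility, in each of the three cases. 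For the last assertion: when $B_1$ and $C$ are rational curves, $-(K_{B_1}+\Diff(B-B_1))=-(K_X+B)|_{B_1}$ is nef on $B_1\cong\PP^1$, so an $n$-complement on $B_1$ exists by Lemma \ref{lem-curve-comp}, and since any two points of $\PP^1$ are linearly equivalent we may move the support of $nR_{B_1}$ within its linear system into general position — in case (3) keeping it $\sigma$-invariant and off the two fixed points — so that indeed $R_{B_1}=(f|_{B_1})^*R_C$ with $\Supp R_C$ disjoint from $\Supp(B_C+M_C)$.

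The step I expect to be the real obstacle is the case-(3) descent: passing from a $\sigma$-invariant $n$-complement on the double cover $B_1\to C$ to an honest effective $\Q$-divisor $R_C$ on $C$ while keeping all the properties of an $n$-complement of $(C,B_C+M_C)$ — in particular controlling the behaviour over the branch points of $f|_{B_1}$, which is exactly what forces the genericity hypothesis on $R_{B_1}$ (so that $\Supp R_C$ avoids the branch locus in addition to $\Supp(B_C+M_C)$). The complementary point, that the vertical divisor $f^*R_C$ creates no new non-lc place on $X$, is handled verbatim as in Proposition \ref{prop-comp-surface-fib}.
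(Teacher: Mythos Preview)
Your proposal is correct and follows the same overall strategy as the paper: establish the trichotomy via the general fibre, push $R_{B_1}$ down to an effective $R_C$ on $C$, verify that $K_C+B_C+R_C+M_C$ is an $n$-complement of the generalised pair on $C$, and then invoke Proposition \ref{prop-comp-surface-fib} to pull back $f^*R_C$ as the desired $R$.

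The one point where you and the paper diverge is in case~(3), when checking $n(K_C+B_C+R_C+M_C)\sim 0$. The paper argues by Galois descent of the rational function: since $q(K_{B_1}+\Diff(B-B_1))=q(f|_{B_1})^*(K_C+B_C+M_C)$ and $R_{B_1}$ are $\sigma$-invariant, the rational function $F\in\C(B_1)$ realising $n(K_{B_1}+\Diff(B-B_1)+R_{B_1})\sim 0$ can be taken in $\C(B_1)^{\sigma}=\C(C)$, yielding the linear equivalence on $C$ directly. You instead use that $(f|_{B_1})^*$ is injective on $\Pic(C)$ since $C\cong\PP^1$. Both are valid; yours is quicker here because $C$ is rational, while the paper's descent argument is what one would need over a non-rational base. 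Your additional explicit verification of the generalised lc condition and the rounding inequality on $C$ is more thorough than the paper, which simply says ``the remaining claim is obvious'' after noting generality forces generalised lc-ness.
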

\begin{proof}
	We firstly note that since $B_i$ are irreducible lc centre of $(X,B)$, we see that $B_i$ are all smooth curves. Restricting to the general fiber, we see that $B_h$ must be one of the cases as we listed. Now we deal with lifting complements in each case respectively.
	\begin{enumerate}
		\item Here we see that $B_1\cong C$ via $f$. Hence we have $q(K_{B_1}+\Diff(B-B_1))= q(K_C+B_C+M_C)$. Now given such $R_{B_1}$, we can first push it down to $C$ via $f|_{B_1}$, call it $R_C$, and then pullback to $X$. The generality will make sure $(K_C+B_C+R_C+M_C)$ is generalised lc. Then the remaining claim is obvious.
		\item This is precisely the same as the previous case.
		\item If $R_{B_1}$ is $\sigma$-invariant, we see that $R_{B_1}=f|_{B_1}^* (R_C)$ for some $R_C\geq 0$. Let $\sigma: B_1\rightarrow B_1$ also denote the isomorphism such that the following diagram commutes, (in general it should be birational map, but since $B_1$ is a smooth curve, it is isomorphism) \\
\[\begin{tikzcd}[column sep=tiny]
  (B_1,\Diff(B-B_1))\arrow[rr,"\sigma"]\arrow[dr,"f|_{B_1}"']& &  (B_1,\Diff(B-B_1)) \arrow[dl,"f|_{B_1}"]
\\
   & (C,B_C+M_C) &
\end{tikzcd}\]
and since we have $q(K_{B_1}+\Diff(B-B_1))= qf|_{B_1}^*(K_C+B_C+M_C)$, then $\sigma^*(K_{B_1}+\Diff(B-B_1))=K_{B_1}+\Diff(B-B_1)$. Also note that functions on $B_1$ descend to functions on $C$ if and only if it is $\sigma$-invariant. Now since $R_{B_1}$ is also $\sigma$-invariant, we see that if $n(K_{B_1}+\Diff(B-B_1)+R_{B_1})\sim 0$ via $F\in \C(B_1)$, then $F\in \C(C)$ and hence $n(K_C+B_C+R_C+M_C)\sim 0$ as required. The rest is same as (1).
	\end{enumerate}
\end{proof}
  
Now we are ready to show complements for 3-folds.
We first need some trivial facts about curves that follow from Riemann Hurwitz formula.
\begin{lem}\label{lem-curve-reimann-hurwitz}
	Let $\mathfrak{R}\subset[0,1]$ be a finite subset of rationals. Let $f: C\rightarrow T$ be a finite morphism between smooth rational curves. Assume $f$ is Galois, i.e. $\mathbb{C}(T)\subset \C(C)$ is a Galois extension. Let $B_C\geq 0$ be a $\Q$-divisor on C such that $-(K_C+B_C)$ is ample and assume $B_C\in \Phi(\mathfrak{R})$. Also assume $K_C+B_C$ is $Gal(C/T)$ invariant. Then there exists $B_T\geq 0$ such that $B_T\in \Phi(\mathfrak{R})$ and $K_C+B_C= f^*(K_T+B_T)$. In particular, there is a $n$, depending only on $\mathfrak{R}$ such that there is an $n$-complement $K_C+B_C+R_C$ for $K_C+B_C$ with $R_C\geq 0$, $R_C$ is in general position and $R_C$ is $Gal(C/T)$ invariant.
\end{lem}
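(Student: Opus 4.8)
The plan is to push $K_C+B_C$ down along the Galois cover $f\colon C\to T$, obtaining a boundary $B_T$ on $T$; to check that the Riemann--Hurwitz bookkeeping keeps the coefficients of $B_T$ inside $\Phi(\mathfrak R)$; to invoke Lemma \ref{lem-curve-comp} to produce an $n$-complement of $K_T+B_T$ with $n$ depending only on $\mathfrak R$; and finally to pull that complement back to $C$. Recall both curves are $\mathbb P^1$ by hypothesis (and indeed $C$ cannot have nonnegative Kodaira dimension, since $B_C\ge 0$ and $-(K_C+B_C)$ is ample). Fix $K_T$ and set $K_C:=f^*K_T+R_f$, where $R_f=\sum_p(e_p-1)[p]$ is the (tame) ramification divisor, so that $K_C$ is the canonical class occurring in Riemann--Hurwitz. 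As $f$ is Galois, $R_f$ is $G$-invariant with $G:=\mathrm{Gal}(C/T)$, and by hypothesis $K_C+B_C$ is $G$-invariant, hence so is $B_C+R_f=(K_C+B_C)-f^*K_T$. Since $G$ acts transitively on fibres, all points over a fixed $s\in T$ carry a common coefficient $c_s$ in $B_C+R_f$; putting $B_T:=\sum_s\frac{c_s}{e_s}[s]$ gives $f^*B_T=B_C+R_f$, hence $K_C+B_C=f^*(K_T+B_T)$, with $B_T\ge 0$ because $B_C,R_f\ge 0$, and $-(K_T+B_T)$ ample because $f$ is finite.

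The coefficient check is where I would spend the most care. Comparing coefficients of $K_C+B_C=f^*(K_T+B_T)$ at a point $p$ over $s$ with ramification index $e=e_s$ gives $(e-1)+\mu_p(B_C)=e\,\mu_s(B_T)$, that is
\[
\mu_s(B_T)=1-\frac{1-\mu_p(B_C)}{e}.
\]
Writing $\mu_p(B_C)=1-\tfrac rm$ with $r\in\mathfrak R$, $m\in\mathbb N$ (this also covers $\mu_p(B_C)=0$ since $0,1\in\mathfrak R$), we obtain $\mu_s(B_T)=1-\tfrac{r}{me}\in\Phi(\mathfrak R)$. So $B_T\in\Phi(\mathfrak R)$, its coefficients lie in $[0,1]$, and $(T,B_T)$ is lc. Note moreover that at every branch point ($e\ge 2$) the same formula gives the estimate $\mu_s(B_T)\ge 1-\tfrac1e$, which I will use below.

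Now apply Lemma \ref{lem-curve-comp} to $(T,B_T)$ with $M=0$: there is an $n$ depending only on $\mathfrak R$ and an $n$-complement $K_T+B_T^+$ of $K_T+B_T$ with $B_T^+\ge B_T$. Put $R_T:=B_T^+-B_T\ge 0$ and $R_C:=f^*R_T\ge 0$; being a pullback, $R_C$ is automatically $G$-invariant. Then $K_C+B_C+R_C=f^*(K_T+B_T^+)$, so $n(K_C+B_C+R_C)=f^*\bigl(n(K_T+B_T^+)\bigr)\sim 0$. For the lc property, comparing coefficients at $p$ over $s$ gives $\mu_p(B_C+R_C)=1-e\bigl(1-\mu_s(B_T^+)\bigr)$, which lies in $[0,1]$: it is $\le 1$ since $\mu_s(B_T^+)\le 1$, and $\ge 0$ since at branch points $\mu_s(B_T^+)\ge\mu_s(B_T)\ge 1-\tfrac1e$, with nothing to check at unramified $s$. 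As $C$ is a smooth curve this shows $(C,B_C+R_C)$ is lc, so $K_C+B_C+R_C$ is an $n$-complement of $K_C+B_C$ with $R_C\ge 0$ and $G$-invariant.

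For the ``general position'' assertion I would use that on $T\cong\mathbb P^1$ one may replace $nB_T^+$ by any effective divisor of degree $2n$ that dominates the forced divisor $n\rddown{B_T}+\rddown{(n+1)(B_T-\rddown{B_T})}$ and has all multiplicities $\le n$: each such divisor is linearly equivalent to $-nK_T$, hence still defines an $n$-complement, so the part of $R_T$ not forced by the complement inequality can be placed at general points, and $\mathrm{Supp}(R_C)=f^{-1}(\mathrm{Supp}(R_T))$ then avoids any prescribed finite set in the sense needed by the applications. The only genuinely delicate step is the Riemann--Hurwitz computation that simultaneously yields $B_T\in\Phi(\mathfrak R)$ and the branch-point estimate $\mu_s(B_T)\ge 1-\tfrac1e$ keeping $(C,B_C+R_C)$ lc; everything else is straightforward descent along a Galois cover.
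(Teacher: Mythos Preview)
Your argument is correct and follows essentially the same route as the paper: Riemann--Hurwitz to build $B_T$, the coefficient computation $\mu_s(B_T)=1-\tfrac{r}{me}\in\Phi(\mathfrak R)$, and then pulling back an $n$-complement from $T$. You are in fact more careful than the paper, which simply asserts the last part is ``clear'': you explicitly verify that the pullback stays lc via the branch-point estimate $\mu_s(B_T)\ge 1-\tfrac1e$, and you spell out why $R_C$ can be taken general.
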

\begin{proof}
	We apply the Reimann-Hurwitz formula. We have $K_C= f^*(K_T) + \displaystyle \sum_{Q\in C} (e_Q-1) Q$ where $e_Q$ is the ramification index at $Q$. Now since $f$ is Galois, we see that $e_Q=e_{Q'}$ if $f(Q)=f(Q')$. Hence we can define $e_P := e_{Q\in f^{-1}P}$ for $P \in T$, which is well defined. It is clear that we have $\displaystyle f^*P = e_P\sum_{Q: f(Q)=P} Q$. Furthermore, the above formula becomes $$K_C =f^*(K_T+\sum_{P\in T}(1-\frac{1}{e_P})P)$$ Now since $B_C$ is $Gal(C/T)$ invariant, we can write $\displaystyle B_C = \sum_{P\in T}a_P (\sum_{Q: f(Q)=P} Q)$, where $a_P\in \Phi(\mathfrak{R})$. Hence we have $B_C = f^*(\displaystyle \sum_{P\in T} \frac{a_P}{e_P}P )$. Hence we have $$K_C+B_C= f^*(K_T+\sum_{P\in T}(1-\frac{1-a_P}{e_P})P)=: f^*(K_T+B_T)$$ Now if $a_P = 1-\frac{r}{m}$ for some $r\in \mathfrak{R}$ and $m\in \N$, then we have $\mu_P (B_T)= 1-\frac{r}{me_P}\in \Phi(\mathfrak{R})$. The last part of the claim is clear by taking an n-complement on $C$ to be the pullback of an $n$-complement on $T$.
\end{proof}

\newpage

\section{Complements for Log Fano Threefold}
Finally, we are ready to prove the main theorem of this paper. 
\begin{thm}\label{thm-comp-3fold-general}
	Let  $\mathfrak{R}\subset [0,1]$ be a finite set of rational numbers.
	Then there exists a natural number $n$ 
	depending only on $\mathfrak{R}$ satisfying the following.  
	Assume $(X',B')$ is a  pair such that 
	\begin{itemize} 
		\item $X'$ is a projective 3-fold, $(X',B')$ is lc, 
		
		\item $B'\in \Phi(\mathfrak{R})$, and 
		
		\item $-(K_{X'}+B')$ is ample.
	\end{itemize}
	Then there is an $n$-complement $K_{X'}+{B'}^+ $ of $K_{X'}+{B'}$ 
	such that ${B'}^+\ge B'$. 
\end{thm}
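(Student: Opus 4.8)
The plan is to reduce to a $\Q$-factorial dlt model, pass to its reduced boundary — an sdlt surface — and build the complement upward from complements on curves, using the gluing machinery of the previous sections.

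First, if $(X',B')$ is klt then $X'$ is of Fano type and Theorem~\ref{t-bnd-compl-usual} applies directly, so assume $(X',B')$ is not klt. Take a $\Q$-factorial dlt model $\pi\colon (X,B)\to (X',B')$: then $(X,B)$ is $\Q$-factorial dlt, $B\in\Phi(\mathfrak{R})$ (strict transforms keep their coefficients and the $\pi$-exceptional divisors get coefficient $1\in\Phi(\mathfrak{R})$), $-(K_X+B)=\pi^*(-(K_{X'}+B'))$ is nef and big, and $S:=\rddown{B}\neq 0$ because $(X',B')$ is not klt. Granting a bounded $m$-complement $K_X+B^+$ of $K_X+B$ with $B^+\geq B$, we descend it: setting $B'^+:=\pi_*B^+$, both $m(K_X+B^+)$ and $m(K_{X'}+B'^+)$ are $\sim 0$, so the $\pi$-exceptional divisor $m(K_X+B^+)-\pi^*(m(K_{X'}+B'^+))$ is $\sim_\Q 0/X'$, hence zero by the negativity lemma; thus $K_X+B^+=\pi^*(K_{X'}+B'^+)$ is crepant, $(X',B'^+)$ is lc, $m(K_{X'}+B'^+)\sim 0$, and $B'^+\geq B'$. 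So it suffices to complement $(X,B)$, and by Proposition~\ref{prop-gen-lift-comp} (applied with $S=\rddown{B}$) it suffices in turn to produce a bounded complement on the sdlt surface $(S,B_S)$, where $K_S+B_S=(K_X+B)|_S$.

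For the sdlt surface: by the sdlt analogue of Lemma~\ref{l-div-adj-dcc} we have $B_S\in\Phi(\mathfrak{S})$ for a finite set $\mathfrak{S}=\mathfrak{S}(\mathfrak{R})$, and $-(K_S+B_S)=(\pi|_S)^*$ of an ample divisor is nef. Normalise $S^\nu=\sqcup S_i\to S$ with conductor $D^n$ and involution $\tau$; each $(S_i,\Theta_i)$, with $\Theta_i$ the sum of $B_{S_i}$ and the conductor (coefficient $1$), is $\Q$-factorial dlt with coefficients in $\Phi(\mathfrak{S})$. Let $W_i\subseteq X'$ be the image of $S_i$, an lc centre of $(X',B')$, on which $(-(K_{X'}+B'))|_{W_i}$ is ample by subadjunction. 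Split into three cases by $\dim W_i$: if $\dim W_i=2$ then $S_i\to W_i$ is birational and $-(K_{S_i}+\Theta_i)$ is nef and big; if $\dim W_i=1$ then the base of the Stein factorisation of $S_i\to W_i$ is a curve that is log Fano (it carries a pullback of an ample divisor), hence $\cong\mathbb{P}^1$, and $K_{S_i}+\Theta_i\sim_\Q 0$ over it; if $\dim W_i=0$ then $K_{S_i}+\Theta_i\sim_\Q 0$ with bounded index by Proposition~\ref{prop-can-index}.

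Finally the gluing. For each component $T$ of $D^n$, its image in $X'$ is an lc centre that does not depend on which $S_i$ contains $T$: if it is a point the complement on $T$ is forced trivial on every incident component; if it is a curve that curve is $\cong\mathbb{P}^1$, and the complement on $T$ must be the pullback of a fixed $n$-complement there, hence automatically $\tau$- and involution-invariant — in both cases a single coherent choice. I would fix such $n$-complements on all components of $D^n$ and on the other components of the various $\rddown{\Theta_i}$, in general position (possible on rational curves, and compatible with Galois covers by Lemma~\ref{lem-curve-reimann-hurwitz}), assembling them on each sdlt curve $\rddown{\Theta_i}$ via Lemma~\ref{lem-curve-comp} and Proposition~\ref{prop-sdlt-curve}; these then lift to each $S_i$ with the prescribed restriction — by Proposition~\ref{prop-gen-lift-comp} when $\dim W_i=2$, by Propositions~\ref{prop-comp-surface-fib} and \ref{prop-comp-surface-fib-glue} when $\dim W_i=1$, and trivially when $\dim W_i=0$. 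The resulting $R^\nu=\sqcup R_i$ satisfies the hypotheses of Proposition~\ref{prop-complements-sdlt} (lc total pair, $n(K_{S^\nu}+B_{S^\nu}+R^\nu)\sim 0$ after an even common multiple, $R^\nu|_{D^n}$ $\tau$-invariant), so it descends to $R$ on $S$ with $m(K_S+B_S+R)\sim 0$ and $(S,B_S+R)$ slc, $m$ bounded; feeding this into Proposition~\ref{prop-gen-lift-comp} yields the $m$-complement on $(X,B)$, which we push down to $X'$ as above. I expect the genuine difficulty to lie exactly in this last paragraph: making one choice of complements on the conductor curves that simultaneously meets the ``general position'' and invariance hypotheses of the surface lifting lemmas at every incident component and matches along shared curves, which is why the rationality of lc centres and the Riemann--Hurwitz argument of Lemma~\ref{lem-curve-reimann-hurwitz} are essential.
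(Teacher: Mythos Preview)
Your proposal is correct and follows essentially the same approach as the paper: reduce to a $\Q$-factorial dlt model, pass to the sdlt surface $S=\rddown{B}$ via Proposition~\ref{prop-gen-lift-comp}, first construct compatible Galois-invariant $n$-complements on the one-dimensional lc centres using Koll\'ar's structure theorem and Lemma~\ref{lem-curve-reimann-hurwitz}, then lift to each component $S_i$ according to the dimension of its image in $X'$ (via Proposition~\ref{prop-gen-lift-comp}, Propositions~\ref{prop-comp-surface-fib}--\ref{prop-comp-surface-fib-glue}, or Proposition~\ref{prop-can-index}), and finally glue via Proposition~\ref{prop-complements-sdlt}. The paper separates out the irreducible-$S$ case (your Step~0 is absorbed into the general argument) and is more explicit about the case split on the horizontal part $B_h$ in Proposition~\ref{prop-comp-surface-fib-glue}, but the architecture and all the key inputs are the same.
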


We need to first state a result in \cite{Kol}.
\begin{prop}[\cite{Kol}, Theorem 4.45]
	 Let $f: (X,B)\rightarrow (X',B')$ be a $\Q$-factorial dlt model, with $(X',B')$ lc. Let $Z$ be an lc centre of $(X',B')$, and let $W$ be a minimal lc centre of $(X,B)$ that dominates $Z$. Let $K_W+B_W := (K_X+B)|_W$, and let $W\rightarrow Z_s\rightarrow Z$ be the stein factorization. Then isomorphism class of $Z_s$ are independent of $W$ and $X$ and B-birational class of $(W,B_W)$ are also independent of $W$ and choice of $X$. Also all such $(W,B_W)$ are $\mathbb{P}^1$-linked. Furthermore, $Z_s\rightarrow Z$ is Galois and $\Bir(W,B_W)\rightarrow Gal(Z_s/Z)$ is surjective.
\end{prop}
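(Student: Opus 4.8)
Since the statement is \cite{Kol}, Theorem~4.45 --- the ``source and springs'' theorem --- the plan is to organise Kollár's argument around the tools already developed in this paper rather than to reprove it from the ground up; I indicate the main steps. First I would reduce everything to the generic point $\eta_Z$ of $Z$: all the assertions concern only the B-birational class of $(W,B_W)$ and the scheme $Z_s$ as an object over $Z$, both of which are birational in nature over $Z$, so after shrinking $X'$ I may assume $(X',B')$ is lc in a neighbourhood of $\eta_Z$ and work there. Since $(X,B)$ is $\Q$-factorial dlt, its lc centres are the irreducible components of intersections of components of $\rddown{B}$; iterated divisorial adjunction (\S3) equips each minimal lc centre $W$ with a $\Q$-divisor $B_W$ satisfying $K_W+B_W=(K_X+B)|_W$, and in fact $(W,B_W)$ is klt, because by inversion of adjunction (cf.\ Lemma~\ref{prop-sdlt-inversion} and \cite{Kol}, Thm.~4.9) an lc centre of $(W,B_W)$ would be an lc centre of $(X,B)$ strictly contained in $W$, contradicting minimality. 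As $f$ is a dlt \emph{model} of $(X',B')$, every lc centre of $(X',B')$ is dominated by an lc centre of $(X,B)$, so a minimal $W$ with $f(W)=Z$ exists; moreover $W\to Z$ is generically finite, so its Stein factorisation has the stated shape $W\to Z_s\to Z$ with $Z_s\to Z$ finite and $W\to Z_s$ fibrewise connected.

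Next I would dispatch the independence and $\mathbb{P}^1$-linking assertions together. Any two $\Q$-factorial dlt models of a fixed lc pair are connected by a chain of crepant birational maps which are isomorphisms in codimension one; tracking lc centres through one such step shows that a minimal lc centre over $Z$ is carried to a minimal lc centre over $Z$ and that the corresponding klt pairs $(W,B_W)$ differ by a crepant birational map, so $Z_s$ and the B-birational class of $(W,B_W)$ do not depend on the model. For two minimal lc centres $W_1,W_2$ on the same $X$ lying over the same point of $Z$, I would apply the connectedness principle (Lemma~\ref{l-connectedness}) near a fibre of $W_i\to Z$, and then the $\mathbb{P}^1$-bundle structure of Proposition~\ref{prop-dlt-boundary} together with Lemma~\ref{lem-dlt-MFS-boundary} --- an intermediate lc centre $V$ carrying a $\mathbb{P}^1$-fibration of which $W_1,W_2$ are two sections, each B-birational to the base --- to produce a chain of such elementary links identifying $(W_1,B_{W_1})$ and $(W_2,B_{W_2})$ B-birationally. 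This is exactly the assertion that all minimal lc centres over $Z$ are $\mathbb{P}^1$-linked, and it simultaneously yields independence of the choice of $W$ on a fixed $X$.

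The hard part will be the remaining two assertions: that $Z_s\to Z$ is Galois and that $\Bir(W,B_W)\to\operatorname{Gal}(Z_s/Z)$ is surjective. Here I would localise at $\eta_Z$ and let $W=W^{(1)},\dots,W^{(r)}$ be the minimal lc centres over $\eta_Z$, i.e.\ the points of the fibre of $Z_s\to Z$, all of which are B-birational to $W$ by the previous step. A crepant birational self-map of $(W,B_W)$ permutes the minimal lc centres over each point of $Z$ and hence induces an automorphism of $Z_s$ over $Z$; this defines the natural map $\Bir(W,B_W)\to\operatorname{Gal}(Z_s/Z)$. To see that it is onto and that $Z_s\to Z$ is Galois, I would pass to the Galois closure $\overline{Z_s}\to Z$ and run the connectedness principle over a general curve through $\eta_Z$: connectedness of the non-klt locus over a neighbourhood of that curve forces the deck group to act transitively on the sheets $W^{(i)}$, and combined with uniqueness of the B-birational class --- each sheet-swap being realised by a $\mathbb{P}^1$-link, hence by an element of $\Bir(W,B_W)$ --- this shows the full deck group already acts through automorphisms of $Z_s$ over $Z$, so $Z_s\to Z$ is Galois and every element of $\operatorname{Gal}(Z_s/Z)$ lifts to a crepant birational self-map of $(W,B_W)$. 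The genuine difficulty in this last step is that it needs, at once, uniqueness of the B-birational class, transitivity of the monodromy on minimal lc centres via connectedness, and compatibility of the $\mathbb{P}^1$-links with the Galois action; a self-contained argument here would essentially reproduce Kollár's development of sources and springs in \cite{Kol}, 4.45--4.47, which is the result we invoke.
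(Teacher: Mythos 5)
The paper does not prove this proposition at all: it is imported verbatim from \cite{Kol}, Theorem 4.45, and your proposal, which explicitly defers the essential content (the $\mathbb{P}^1$-linking, the Galois property of $Z_s\to Z$, and the surjectivity of $\Bir(W,B_W)\to \operatorname{Gal}(Z_s/Z)$) to Kollár's sources-and-springs development, therefore takes essentially the same approach. Your surrounding outline is a fair summary of that argument, though two side remarks in it are unnecessary and false in general: $W\to Z$ need not be generically finite (the Stein factorization has the stated shape for any proper morphism), and two $\Q$-factorial dlt models need not be isomorphic in codimension one, since they may extract different lc places.
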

Now we are ready to show the main theorem of the paper.
\begin{proof}[Proof of \ref{thm-comp-3fold-general}]
	We can assume that $(X,B)$ is not klt. Let $f: (X,B)\rightarrow (X',B')$ be an $\Q$-factorial dlt model of $(X,B)$ with $K_{X}+B=f^*(K_{X'}+B')$. Let $S := \rddown{B}$ and write $K_S+B_S := (K_X+B)|_S$. Then by Prop. \ref{prop-gen-lift-comp}, it suffices to show $K_S+B_S$ has an $n$-complement with $n$, depending only on $\mathfrak{R}$. First we will show there are complements on each irreducible components of $S$ and then we will use \ref{prop-complements-sdlt} to show they glue to give an complement for $K_S+B_S$. We note that $B_S\in \Phi(\mathfrak{S})$ with $S$ a finite set of rationals depending only on $\mathfrak{R}$. We note that $S$ is connected by connectedness theorem since $-(K_X+B)$ is nef and big.\\\\
	Step 0: We first settle the case when $S$ is irreducible. In this case, let $T'$ be image of $S$ on $X'$ and let $ S\xrightarrow{g} T\rightarrow T'$ be the stein factorization. Then we see that either $T$ has dimension 0, in which case we have $K_S+B_S\sim_\Q 0$ or T is a curve or a surface. We can apply Proposition \ref{prop-can-index} or Proposition \ref{prop-comp-surface-fib} or \ref{prop-comp-surface-special}, to show there is an n-complement with $n$ depending only on $\mathfrak{R}$ for $K_S+B_S$. Now we are done by applying Prop. \ref{prop-gen-lift-comp}. Hence from now on, we will assume $S$ has multiple irreducible components.\\\\
	Step 1: We first consider complements on curves. Let $T$ be an irreducible one dimensional lc centre on $K_X+B$. Write $K_T+B_T := (K_X+B)|_T$ (note this is well defined up to sign by \cite{Kol} and we will always assume $n$ to be even). We note that coefficients of $B_T$ lies in $\Phi(\mathfrak{F})$ with $\mathfrak{F}$ depending only on $\mathfrak{R}$. Then either $T$ is contracted by $f$ or image of $T$ is a curve. \begin{enumerate}
		\item If $T$ is contracted, then $K_T+B_T\sim_\Q 0$ and hence $n(K_T+B_T)\sim 0$ for some $n$ depending only on $\mathfrak{R}$. In this case, we let $R_T=0$ is an $n$-complement for $(T,B_T)$.
		\item If $T$ is not contracted by $f$, let $T'$ be its image on $X$, then we have $K_T+B_T\sim_\Q f|_T^*(-A)$ for some $A$ ample on $T'$. Hence we see that $T,T'$ are rational curves. since $T$ is the minimal lc centre dominating $T'$, we see that $T\rightarrow T'$ is Galois. Furthermore, by \cite{Kol}, If $(\hat{T},B_{\hat{T}})$ is another one dimensional lc centre on $(X,B)$ dominating $T'$, then $(T,B_T)$ are naturally $B$-birational to $(\hat{T},B_{\hat{T}})$ in the sense that we have a commutative diagram \[\begin{tikzcd}
		(T,B_T)\arrow[rr,"\sigma"]\arrow[dr,"f|_T"']&&(\hat{T},B_{\hat{T}})\arrow[dl,"f|_{\hat{T}}"]\\
		&T'&	
		\end{tikzcd}\] We note that $\sigma$ maybe not unique.	
			\\ In this case, we have $-(K_T+B_T)$ is ample and we can find an bounded $n$-complement $K_T+B_T+R_T$ with $R_T\geq 0$ by \ref{lem-curve-comp}. Furthermore, if we let $\bar{T}$ be the normalisation of $T'$, then $T\rightarrow \bar{T}$ is also Galois. Also note that $Gal(T/\bar{T})=Gal(T/T')$. Now since $Bir(T,B_T)\rightarrow Gal(T/T')$ is surjective, we see that $K_T+B_T$ is $Gal(T/\bar{T})$ invariant. Now we can apply Lemma \ref{lem-curve-reimann-hurwitz} and we can assume that $R_T$ is $Gal(T/\bar{T})$ invariant. Now using the isomorphism between $\hat{T}$ and $T$, we can get a compatible set $n$-complement on all $\hat{T}$ mapping to $T'$. Note that although $\sigma: (T,B_T)\rightarrow (\hat{T},B_{\hat{T}})$ is not unique, $\sigma(R_T)$ is well defined since $R_T$ is $Gal(T/\bar{T})$ invariant. Also by generality, we can assume $R_T$ doesn't contain any other lc centre on $(X,B)$.
	\end{enumerate}
 Hence now for each $T$, dimension 1 irreducible lc centre of $(X,B)$, we have contructed an $n$-complement $K_T+B_T+R_T$ with $R_T\geq 0$ and $R_T$ are disjoint from any other lc centre on $(X,B)$.\\\\
 Step 2: Now let $S := \cup S_i$, where $S_i$ are the irreducible components of $S$. Then either $S_i$ are mapped to points, curves or surfaces on $X'$. We distinguish the 3 cases. Let $W$ be a general $S_i$.
 \begin{enumerate}
 	\item If $W$ is mapped to a point on $X'$, then $K_{W}+B_W := (K_X+B)|_W \sim_\Q 0$, and the coefficients of $B_W$ are $\Phi(\mathfrak{S})$ for some finite set of rationals $\mathfrak{S}$ depending only on $\mathfrak{R}$. Hence by Prop. \ref{prop-can-index}, there is $n$, depending only on $\mathfrak{R}$, such that $n(K_W+B_W)\sim 0$, in this case, the n-complement $R_W=0$.
 	\item If $W$ is mapped to a surface on $X'$, then $-(K_W+B_W)$ is nef and big. Let $V := \rddown{B_W}$ and $K_V+B_V := (K_W+B_W)|_V$, we see that $V$ is an sdlt curve and $-(K_V+B_V)$ is nef. In particular, for each irreducible component of $V$, we have already created an $n$-complement with $n$ depending only $\mathfrak{R}$ such that they are disjoint from non-normal locus of $V$. Hence by Proposition \ref{prop-complements-sdlt}, we have already found an $n$-complements for $K_V+B_V$ in the form of $K_V+B_V+R_V$, with $R_V\geq 0$. Therefore, by Proposition \ref{prop-gen-lift-comp}, we can lift these complements to a $n$-complement $K_W+B_W+R_W$ for $K_W+B_W$ such that $R_W|_W := R_V$, i.e. for each irreducible components $T$ in $V$, we have $R_W|_T= R_T$ defined above. 
 	\item The last case is that $W$ is mapped to a curve $T'$. Let $f: W\xrightarrow{g} C\rightarrow T$ be the stein factorization. By Prop. \ref{prop-adjunction-hyperstd}, we can find a $q$ depending only $\mathfrak{R}$, such that $$q(K_W+B_W)\sim q(K_C+B_C+M_C)$$ 
 	We note that $-(K_C+B_C+M_C)$ is $\Q$-linearly equivalent to the pullback of an ample divisor on $T'$, hence we see that $C$ is a smooth rational curve. Now We split into further cases depending on $B_h$, the horizontal over $C$ part of $\rddown{B_W}$.
 	\begin{enumerate}
 		\item Case 1: $B_h =0$, then by  \ref{lem-curve-comp},we can simply choose any $n$-complement $K_C+B_C+R_C+M_C$ for $K_C+B_C+M_C$ and using \ref{prop-comp-surface-fib}, lift to an $n$-complement $K_W+B_W+R_W$ for $K_W+R_W$ with $R_C\geq 0$ and $R_W := g^*(R_C)$. Note that in this case, for any $D$, an irreducible component of $\rddown{B_W}$, we have $R_W|_D =0$ since if $D$ is mapped to $z\in C$, then $\mu_z(B_C)=1$ and hence $\mu_z(R_C)=0$.
 		\item Case 2: $B_h\neq 0$. Then we are in the case of Prop.\ref{prop-comp-surface-fib-glue}. In particular we have already found $n$-complements  on $(B_h,\Diff(B_W-B_h))$ satisfying the criteria for \ref{prop-comp-surface-fib-glue}: Indeed, the there are two non-trivial cases as following. \begin{enumerate}
 			\item $B_h$ is irreducible and $deg(B_h,C)=2$, but in this case, we have $Gal(B_h/C)\subset Gal(B_h/T')$. Hence we complements we have constructed for $K_{B_h}+\Diff(B_W-B_h)$ satisfies the assumption in \ref{prop-comp-surface-fib-glue}. Hence we see that there is a $n$ depending only on $\mathfrak{R}$ such that there is a $n$-complement $K_W+B_W+R_W$ for $K_W+B_W$, with $R_W\geq 0$ lifting $n$-complement $K_{B_h}+\Diff(B_W-B_h)+R_{B_h}$ for $K_{B_h}+\Diff(B_W-B_h)$, which are precisely the n-complements we constructed in Step 1.
 			\item The other case is:  $B_h=D_1+D_2$, where $D_1,D_2$ are irreducible and $deg(D_i,C)=1$. In this case, we can also apply \ref{prop-comp-surface-fib-glue} to get an $n$-complement $K_W+B_W+R_W$ for $K_W+B_W$, with $R_W\geq 0$ lifting both $n$-complement $K_{D_i}+\Diff(B_W-D_i)+R_{D_i}$ for $K_{D_i}+\Diff(B_W-D_i)$ for $i=1,2$, (since $n$-complements on $D_i$ are constructed in a compatible way) which are precisely the n-complements we constructed in Step 1.
 		\end{enumerate}  Also we can easily see that the $R_W|_D=0$ for any $D$, an irreducible vertical component of $\rddown{B_W}$ by similar reasons as in (a).
 		\end{enumerate}
 	
 \end{enumerate}
Now summing up, we have found $n$, depending only on $\mathfrak{R}$ such that for each $W$, irreducible component of $S$, there is an n-complement $K_W+B_W+R_W$ for $K_W+B_W$ with $R_W\geq 0$ and for each irreducible components $T$ in $\rddown{B_W}$, we have $R_W|_T= R_T$ defined above in step 1.\\\\
Step 3 : We are now done by applying \ref{prop-complements-sdlt} and \ref{prop-gen-lift-comp} again. More precisely, by \ref{prop-complements-sdlt} we can get an $n$-complement for $K_S+B_S$, which will lift to an $n$-complement for $K_X+B$ by \ref{prop-gen-lift-comp}. Pushing forward to $X'$, we get an $n$-complement for $K_{X'}+B'$, which finishes the proof.
\end{proof}

\newpage

\newpage

\end{document}